\author{Christian Bär}
\author{Lashi Bandara}
\title[First-order elliptic noncompact boundary value problems]
{First-order elliptic boundary value problems on manifolds with noncompact boundary}
\date{\today}
\address{Christian Bär, 
Institut für Mathematik,
Universität Potsdam, 
D-14476, Potsdam, Germany
}
\urladdr{\href{https://www.math.uni-potsdam.de/baer}{https://www.math.uni-potsdam.de/baer}}
\email{\href{mailto:cbaer@uni-potsdam.de}{cbaer@uni-potsdam.de}}
\address{Lashi Bandara,
Deakin University, 
Melbourne Burwood Campus, 
221 Burwood Highway, 
Burwood, 
Victoria, 
Australia, 
3125
}
\urladdr{\href{http://www.lashi.org/}{http://www.lashi.org/}}
\email{\href{mailto:lashi.bandara@deakin.edu.au}{lashi.bandara@deakin.edu.au}}
\keywords{First-order elliptic operator, Dirac-type operator, Callias potential, para-Callias potential, boundary value problems, noncompact boundary, nonlocal boundary conditions, Fredholm extensions, $\mathrm{H}^\infty$-functional calculus, boundary regularity, abstract Rellich theory}
\subjclass[2020]{Primary 35J56, 47A53, 58J05, 58J32; Secondary 46E35, 53C21}
\def\colour{\colour}
\newcommand{\sym}{\upsigma}
\newcommand{\spec}{\mathrm{spec}}% Spectrum of #1
\newcommand{\res}{\mathrm{res}}% Spectrum of #1
\newcommand{\rest}[1]{{{\lvert_{}}_{}}_{#1}}
\newcommand{\R}{\mathbb{R}}
\newcommand{\C}{\mathbb{C}}
\newcommand{\N}{\mathbb{N}}
\newcommand{\cbrac}[1]{\left(#1\right)}
\newcommand{\dbrac}[1]{\left\{#1\right\}}
\newcommand{\ad}{\ast} 
\newcommand{\interior}[1]{\mathring{#1}}
\newcommand{\id}{\mathrm{id}}
\newcommand{\dtilde}[1]{\tilde{\tilde{#1}}}% Double tilde
\newcommand{\modulus}[1]{|#1|}
\newcommand{\norm}[1]{\| #1 \|}% Norm
\newcommand{\set}[1]{\dbrac{#1}}
\newcommand{\Lp}[2][{}]{{\rm L}^{#2}_{\rm #1}}% L_{#1}^{#2}
\newcommand{\Ck}[2][{}]{{\rm C}^{#2}_{\rm #1}}% C^k_c
\newcommand{\Dk}[2][{}]{{\rm D}^{#2}_{\rm #1}}% C^k_c
\newcommand{\Hard}[2][{}]{{\rm H}^{#2}_{\rm #1}}% H^k_{c}  Sobolev space
\newcommand{\SobH}[2][{}]{\Hard[#1]{\rm #2}}
\newcommand{\OpSobH}[2][{}]{{\mathcal{H}}^{#2}_{\rm #1}}
\newcommand{\checkH}{\check{\mathrm{H}}}
\newcommand{\hatH}{\hat{\mathrm{H}}}
\newcommand{\Hil}{\mathscr{H}}
\newcommand{\Spa}{\mathscr{X}}
\newcommand{\core}{\mathscr{C}}
\newcommand{\rc}{\mathrm{c}}
\newcommand{\embed}{\hookrightarrow}% Embeded in
\newcommand{\ceil}[1]{\lceil #1 \rceil}% Ceiling function
\newcommand{\nul}{\mathrm{ker}}
\newcommand{\ran}{\mathrm{ran}}
\newcommand{\dom}{\mathrm{dom}}
\newcommand{\Ric}{\mathrm{Ric}}
\newcommand{\KK}{\mathscr{K}}
\newcommand{\intersect}{\cap} 
\newcommand{\close}[1]{\overline{#1}} 
\newcommand{\union}{\cup}
\DeclareMathOperator{\spt}{spt}% Support of a function 
\DeclareMathOperator{\sgn}{sgn}% Signum 
\DeclareMathOperator{\End}{End} % Endomorphism bundle
\newcommand{\inprod}[1]{\left\langle #1 \right\rangle}% inner product braces
\newcommand{\ext}{\mathcal{E}}
\newcommand{\embeds}{\hookrightarrow}%Embedding
\newcommand{\DDD}{\mathscr{D}}% Interior "Connection"
\newcommand{\AAA}{\mathscr{A}}% Interior "Connection"
\newcommand{\RRR}{\mathscr{R}}% Interior "Connection"
\newcommand{\APS}{\mathrm{APS}}
\newcommand{\Match}{\mathrm{Match}}
\renewcommand{\epsilon}{\varepsilon}
\newcommand{\Hinfty}{$\mathrm{H}^\infty$}
\newcommand{\dM}{\partial M}
\renewcommand{\div}{\mathrm{div}}
 \newtheorem{theorem}{Theorem}[section]
 \newtheorem{lemma}[theorem]{Lemma}
 \newtheorem{corollary}[theorem]{Corollary}
 \newtheorem{proposition}[theorem]{Proposition}
 \theoremstyle{definition}
 \newtheorem{definition}[theorem]{Definition}
 \newtheorem{remark}[theorem]{Remark}
 \newtheorem{assumptions}[theorem]{Assumptions}
\begin{document}

\begin{abstract}
We consider first-order elliptic differential operators acting on vector bundles over smooth manifolds with smooth boundary, which is permitted to be noncompact.
Under very mild assumptions, we obtain a regularity theory for sections in the maximal domain.
Under additional geometric assumptions, and assumptions on an adapted boundary operator, we obtain a trace theorem on the maximal domain.
This allows us to systematically study both local and nonlocal boundary conditions.
In particular, the Atiyah-Patodi-Singer boundary condition occurs as a special case.
Furthermore, we study contexts which induce semi-Fredholm and Fredholm extensions.
\end{abstract}
\maketitle

\tableofcontents

\parindent0cm
\setlength{\parskip}{\baselineskip}

\section{Introduction}

Boundary value problems are fundamental to engineering and physics and have been studied extensively in that context.
Historically, the study of boundary value problems has been carried out on Euclidean domains (or more generally in the setting of smooth manifolds with smooth compact boundary) with a focus on \emph{local} boundary conditions.
There have been many significant generalisations - nonsmooth boundary and nonsmooth coefficients for second-order operators in divergence form as well as in the nondivergence form setting.

In the last fifty years, boundary value problems for first-order elliptic operators have become important in geometry, exemplified through the study of scalar curvature via the use of the Dirac operator on spin manifolds.
A description of the maximal domain was obtained by Seeley \cite{Seeley} in the 1960s via the use of Calderón projectors.
While this was useful in the study of PDEs in geometric settings, it was the identification of the APS boundary condition by Atiyah, Patodi and Singer which made a profound impact on index theory.
This is highlighted by their celebrated index theorem  for Dirac-type operators on manifolds with smooth, compact boundary in in \cites{APS-Ann,APS1, APS2, APS3}.

The identification of the APS boundary condition was not only useful from an index theory perspective, but  it also accounted for ``half'' of all possible boundary conditions.
This was understood much later, whereby on taking a sum of the APS boundary condition with its ``anti-APS'' counterpart, a trace theorem on the entire maximal domain of the operator was obtained.
Such a description allows for the study of \emph{all} boundary conditions and is useful in index theory as it permits for the study of continuous deformations  of boundary conditions.

The ability to describe all boundary conditions requires the study of spectral projectors associated with an \emph{adapted boundary operator} on the boundary.
This is a perspective that matured and  culminated in the work of Bär-Ballmann \cite{BB12} and later Bär-Bandara \cite{BBan}.
The former paper assumes that the bounded adapted operator can be chosen selfadjoint, essentially restricting the setting to Dirac-type operators.
In the latter paper, this restriction is removed so that there are  no additional assumptions on the operator other than requiring it to be  first-order elliptic.
An alternative approach is taken in \cite{BGS} allowing for general-order elliptic differential operators on compact manifolds. 

The analysis in \cite{BB12} and earlier in \cite{APS1} utilised Fourier circle methods that are possible due to selfadjointness of the adapted boundary operator and the presence of discrete spectrum.
In contrast, the analysis carried out in \cite{BBan}  differs significantly as adapted boundary operators may no longer be selfadjoint.
The Fourier circle perspective here was replaced by obtaining an \Hinfty-functional calculus for the adapted boundary operator.

In this paper, we dispense with the requirement that the boundary is compact, only assuming that it is smooth.
In this direction, there have been a number of developments, which either assume strong conditions on the underlying geometry or modify the operator through adding a potential to gain a handle on its spectral theory.
In \cite{GN} Große and Nakad consider boundary value problems for noncompact boundary, including applications to index theory.
These results are obtained for local boundary conditions under the assumption of bounded geometry.
In \cites{BS0, BS1, BS2, Shi}, Braverman and Shi investigate both local and nonlocal boundary value problems for Callias-type operators for noncompact boundary.

In our work, under very minimal assumptions which we call the standard setup (cf.\ Subsection~\ref{StdSetup}), we obtain regularity theory for sections in the maximal domain (Theorems~\ref{Thm:Trace1}~and~\ref{Thm:Reg}).
We then move on to describe the maximal domain of the operator.
To account for the lack of compactness of the boundary, we impose geometric conditions which are natural and are much weaker than bounded geometry.
However, we are forced to consider selfadjoint adapted boundary operators, mirroring the assumption made in \cite{BB12}.
This still allows for a very large class of operators including all Dirac-type operators.
These assumptions are what we call the geometric setup and are listed in Subsection~\ref{subsec:GeometricSetup}.

Due to the fact that the boundary is allowed to be noncompact, this operator may have the entire real line as spectrum.
Therefore, Fourier circle methods cannot be used in the analysis.
Instead, we perform the analysis in the same vein as \cite{BBan}, alluding to the \Hinfty-functional calculus of the adapted boundary operator.
This posits the methods and ideas utilised in this paper closer to those arising from real-variable harmonic analysis.

Mirroring the trace results for the maximal domain in \cite{BB12} and \cite{BBan}, Theorem~\ref{Thm:MaxDom} establishes a trace theorem on the maximal domain.
As in the compact boundary case, this allows for an understanding of \emph{all} boundary conditions, including those that are nonlocal.
Moreover, this allows us to define the APS boundary condition in the noncompact boundary setting.

In Section~\ref{Sec:Freddy}, we study  semi-Fredholm and Fredholm extensions, despite the noncompactness of the boundary.
We identify an appropriate  notion of coercivity  (cf.\ Definition~\ref{Def:Coercive}) with respect to a boundary condition.
When such a boundary condition is sufficiently regular, we show that operators satisfying this notion of coercivity are semi-Fredholm.
In particular, the APS boundary condition is a Fredholm one when the operator is coercive.

Finally, in Section~\ref{Sec:DiracType}, we study applications to Dirac-type operators.
In particular, we show how the minimal and geometric setup can be satisfied under natural curvature assumptions. 
Together with the results obtained in Section~\ref{Sec:Freddy}, we show Fredholm extensions for Dirac-type operators for the APS boundary condition as well as chiral boundary conditions.
In particular, this includes the setup considered by Große and Nakad in \cite{GN}.
We also consider Callias-type perturbations of Dirac-type operators in the spirit of Braverman and Shi in \cites{BS0, BS1, BS2}, extending some of their results.

\textbf{Acknowledgements.}
This work has been financially supported by the priority programme SPP~2026 ``Geometry at Infinity'' funded by Deutsche Forschungsgemeinschaft.
L.B.\ thanks the University of Potsdam for its hospitality.
Both authors thank the anonymous referee whose suggestions helped improve the paper.

\section{Setup and statement of main results}

\subsection{Notation}
Throughout, we use the analysts' inequality $a \lesssim b$ to mean that there exists a constant $C < \infty$ such that $a \leq C b$.
The objects $a$ and $b$ will be quantified and the dependency of $C$ will often be clear from context.
Otherwise, it will be spelt out.
By $a \simeq b$, we mean that $a \lesssim b$ and $b \lesssim a$.

On Banach spaces $\Spa_1, \Spa_2$, we consider $T: \Spa_1 \to \Spa_2$ to typically be an unbounded operator.
The domain on which it acts will be written as $\dom(T) \subset \Spa_1$, with its range denoted by $\ran(T) \subset \Spa_2$.
Its kernel is given by $\nul(T)$.
When $\ran(T) = \Spa_2$, we say that the operator $T$ is \emph{invertible} if it is injective and has a bounded inverse.
An operator $T$ is \emph{densely-defined} if $\dom(T)$ is dense in $\Spa_1$ and it is \emph{closed} if its graph is closed in $\Spa_1 \times \Spa_2$.
When $\Spa = \Spa_1 = \Spa_2$, let $\res(T)$ be the \emph{resolvent set} consisting of  $\zeta \in \C$ such that $(\zeta - T)$ is invertible.
The \emph{spectrum} is $\spec(T) = \C \setminus \res(T)$.

For $M$ a smooth manifold which is not necessarily compact, and $E \to M$ a vector bundle over $M$, we do not obtain canonical Sobolev spaces as Banach spaces.
Nevertheless, we obtain local versions as locally convex linear spaces which are independent of measure and metric.
For $\alpha \in \R$, let $\SobH[loc]{\alpha}(M;E)$ be the set of all distributional sections $u$ of $E$ such that $u\rest{\Omega} \in \SobH{\alpha}(\Omega;E)$ for every precompact $\Omega \subset M$.

Typically, we will be in the setting of $(M,\mu)$, where $M$ is a smooth manifold with smooth measure $\mu$.
The spaces $\SobH[loc]{\alpha}(M;E)$ are locally convex linear spaces with the topology induced by the family of semi-norms $ \set{ \rho_{\Omega}(u) := \norm{u\rest{\Omega}}_{\SobH{\alpha}(\Omega)}: \Omega \text{ precompact} }.$
If $(E,h^E) \to M$ is a Hermitian vector bundle, then for $p \in [1,\infty)$, we obtain  Banach spaces $\Lp{p}(M;E)$ as the space of equivalence classes of measurable sections of $E$ with
$$ \int_{M} h^E(u,u)^{\frac{p}{2}}\ d\mu < \infty.$$
This is, by definition,  the norm $\norm{\cdot}_{\Lp{p}}^p$,  in $\Lp{p}(M;E)$ raised to the power $p$.
Two sections are considered equivalent if they coincide outside a set of measure zero.
For $p = \infty$, the space $\Lp{\infty}(M;E)$ is the space (of equivalence classes) of essentially bounded measurable sections of $E$.
The special case $p = 2$ is a Hilbert space, with inner product
$$\inprod{u,v}_{\Lp{2}} = \int_{M} h^E(u,v)\ d\mu.$$

For a first-order differential operator $D: \Ck{\infty}(M;E) \to \Ck{\infty}(M;F)$, where $(E,h^E), (F,h^F) \to M$ are two Hermitian bundles, there exists a unique formal adjoint $D^\dagger: \Ck{\infty}(M;F) \to \Ck{\infty}(M;E)$.
Then, the  maximal and minimal extensions  of $D$ are given by
$$ D_{\max} = (D^\dagger)^\ad,\quad D_{\min} = \close{D_{cc}},$$
where $D_{cc} = D$ with $\dom(D_{cc}) = \Ck[cc]{\infty}(M;E)$, the space of smooth sections with compact support contained in the interior of $M$.
Similarly, we define $D^\dagger_{\max}$ an $D^\dagger_{\min}$ by interchanging the roles of $D$ and $D^\dagger$.
The principal symbol of $D$ is denoted by $\sym_D(x,\xi)$, which in the first-order case is given by the commutator $[D, f_\xi]$, where $f_{\xi} \in \Ck[c]{\infty}(M)$ with $d f_{\xi}(x) = \xi$.

\subsection{Minimal setup and regularity}

The most general setup, the background setup,  under which we obtain results, is the following.
\begin{enumerate}[label=(M\arabic*), labelwidth=0pt, labelindent=2pt, leftmargin=26pt]
\label{StdSetup}
\item \label{Hyp:StdFirst}
      $M$ is a smooth manifold with smooth boundary $\dM$, equipped with a smooth measure $\mu$;
\item  \label{Hyp:InteriorVec}
      $\vec{T}$ is an interior pointing vector field along $\dM$ and $\tau$ the associated covector field;
\item $(E,h^E),\ (F,h^F) \to M$ are Hermitian vector bundles over $M$;
\item $D$ is a first-order elliptic differential operator from $E$ to $F$;
\item \label{Hyp:StdLast}
      \label{Hyp:Complete}
      $D$ and $D^\dagger$ are complete: i.e., compactly supported sections in $\dom(D_{\max})$ and $\dom(D^\dagger_{\max})$  are dense in the respective graph norms.
\end{enumerate}

From \ref{Hyp:InteriorVec}, combining with \ref{Hyp:StdFirst}, the induced measure on $\dM$ is given by
$$\mu_{\tau}(\xi_1, \dots, \xi_{n-1}) := \mu(\tau, \xi_1, \dots, \xi_{n-1})$$
for $\xi_i \in T^\ast \dM$ and by viewing $\mu$ as a density.

The assumption \ref{Hyp:Complete} is readily verified.
Theorem~\ref{Thm:cherwolf} provides a general criterion.

Under these assumptions, we first note we can make sense of the boundary trace map as follows.

\begin{theorem}
\label{Thm:Trace1}
Under the assumptions~\ref{Hyp:StdFirst}--\ref{Hyp:StdLast}, the space $\Ck[c]{\infty}(M;E)$ is dense in $\dom(D_{\max})$  with respect to the corresponding graph norm.
Moreover, the restriction map to the boundary
$$u \mapsto u \rest{\partial M}: \Ck[c]{\infty}(M;E) \to \Ck[c]{\infty}(\partial M;E)$$
has a unique bounded extension
$$ u \mapsto u\rest{\partial M}: \dom(D_{\max}) \to \SobH[loc]{-\frac12}(\partial M;E).$$
\end{theorem}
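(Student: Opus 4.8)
The plan is to establish the two assertions in sequence, first density and then the trace extension, working locally near the boundary and using completeness of $D$ to pass between compact support and the full maximal domain.

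For the density statement, I would argue in two stages. First, using assumption~\ref{Hyp:Complete} together with a partition of unity and cutoff functions that exhaust $M$, one reduces to approximating an element $u \in \dom(D_{\max})$ with \emph{compact} support by smooth compactly supported sections. The completeness hypothesis gives density of compactly supported elements of $\dom(D_{\max})$ in the graph norm, so this reduction is legitimate; care is needed because the cutoffs must be chosen so that the commutator terms $[D,\chi_k]u$ tend to zero in $\Lp{2}$, which follows since $D$ is first-order and $u \in \Lp{2}$. Second, for $u \in \dom(D_{\max})$ with compact support $K$, one covers $K$ by finitely many charts, splits $u$ by a subordinate partition of unity, and in each chart applies Friedrichs mollification. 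Interior pieces are classical; for the pieces meeting $\dM$ one uses a boundary collar $\dM \times [0,\epsilon)$ and mollifies only in the directions tangent to $\dM$, or translates inward in the normal direction before mollifying, so that $D_{\max}$ of the mollified section converges to $D_{\max}u$ in $\Lp{2}$. This is the standard Friedrichs-extension-lemma argument for first-order operators; ellipticity is not even needed here.

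For the trace extension, the key is a local a priori estimate: for $u \in \Ck[c]{\infty}(M;E)$ supported in a boundary collar chart and any precompact $\Omega \subset \dM$,
\[
\norm{u\rest{\dM}}_{\SobH{-1/2}(\Omega)} \lesssim \norm{u}_{\Lp{2}(\tilde\Omega)} + \norm{D u}_{\Lp{2}(\tilde\Omega)},
\]
where $\tilde\Omega$ is a slightly larger collar neighborhood and the implicit constant depends only on $\Omega$ and the coefficients of $D$ on $\tilde\Omega$. To prove this I would freeze the normal direction: in the collar, write $D$ in the form $\sigma_D(\tau)(\partial_t + \text{(tangential first-order operator)} + \text{zeroth order})$, invert the symbol $\sigma_D(\tau)$ (possible by ellipticity), and integrate the resulting transport-type identity in $t$ against a test section, using the fundamental theorem of calculus in $t$ to recover the boundary value. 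The $\SobH{-1/2}$ loss is the expected one-half derivative loss and comes from pairing against $\SobH{1/2}(\Omega)$ test sections and using the trace theorem $\SobH{1}(\tilde\Omega) \hookrightarrow \SobH{1/2}(\partial) $ for the test side together with a one-dimensional interpolation in $t$. Summing such estimates over a locally finite cover of $\dM$ by collar charts, and noting that each semi-norm $\rho_\Omega$ of $\SobH[loc]{-1/2}(\dM;E)$ is controlled by finitely many of these local estimates, shows the restriction map is continuous from $(\Ck[c]{\infty}(M;E), \|\cdot\|_{D_{\max}})$ into $\SobH[loc]{-1/2}(\dM;E)$. Combined with the density just proved, it extends uniquely and boundedly to all of $\dom(D_{\max})$.

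The main obstacle is the trace estimate in the noncompact setting: one must ensure that the collar neighborhood, the coordinates, and the normalization of $\sigma_D(\tau)$ can be arranged so that the constant in the local estimate is genuinely local — i.e.\ depends only on the fixed precompact $\Omega$ and not on global geometry — so that the target is honestly $\SobH[loc]{-1/2}$ with its Fréchet topology rather than a single global Hilbert space. Since $\Omega \Subset \dM$ is precompact, only finitely many collar charts are involved, so this is a matter of bookkeeping rather than a new difficulty, but it is the point where noncompactness must be handled with some care. Everything else is a routine adaptation of the compact-boundary arguments of \cite{BB12,BBan}.
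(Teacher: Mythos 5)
Your proposal is correct, but it takes a genuinely different route from the paper. The paper does not prove either assertion ``by hand'': instead it builds, for each boundary point, a local extension $\tilde D$ of $D$ with constant coefficients near the edges of a boundary chart (Lemma~\ref{Lem:ExtOp}), doubles the chart across those edges to obtain a \emph{compact} manifold $N_x$ with compact boundary carrying an elliptic operator $\dtilde D$ that agrees with $D$ on a smaller collar piece (Corollary~\ref{Cor:EmbMf}), and then simply quotes the compact-boundary results: Theorem~6.7 of \cite{BB12} for density and Theorem~2.3 of \cite{BBan} for the trace bound $\norm{u\rest{\dM}}_{\checkH(A)}\lesssim\norm{u}_{\dtilde D}$, which is then weakened to $\SobH{-\frac12}$ via the embedding $\checkH(A)\embeds\SobH{-\frac12}$; the only new work is the negative-Sobolev bookkeeping needed to patch the local estimates (Lemmas~\ref{Lem:NegSobIneq} and~\ref{Lem:OmegaEst}). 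You instead run the Friedrichs mollification-plus-inward-translation argument directly for density (which is indeed what underlies the quoted compact-boundary theorem, and where ellipticity is not needed), and for the trace you prove the local a priori estimate by duality: pair $\sym_0 u\rest{\dM}$ against $\SobH{\frac12}$ test sections and use Green's formula together with a controlled extension of the test section into the collar. One small imprecision: what you need on the test side is the \emph{surjectivity} of the trace map $\SobH{1}\to\SobH{\frac12}$ with a bounded right inverse (an extension operator), not the boundedness of the restriction in the direction you wrote; with that fixed, the argument closes, and the constants are local because each semi-norm of $\SobH[loc]{-\frac12}(\dM;E)$ only involves finitely many precompact charts. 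The trade-off is that your route is more self-contained and elementary, whereas the paper's doubling construction is an investment that pays off repeatedly later (it is reused in the proof of Theorem~\ref{Thm:Reg}, Lemma~\ref{Lem:RestReg} and Theorem~\ref{Thm:LocEllReg}), and the compact-case citation actually delivers the stronger local $\checkH(A)$-control before it is discarded in favour of $\SobH{-\frac12}$.
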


Recall the Green's formula
\begin{equation}
\label{A:IntParts}
\inprod{D u, v}_{\Lp{2}(M; F)} - \inprod{u, D^\dagger v}_{\Lp{2}(M; E)} = -\inprod{\sym_0 u\rest{\dM}, v\rest{\dM}}_{\Lp{2}(\dM;F)},
\end{equation}
for $u \in \Ck[c]{\infty}(M;E)$ and $v \in \Ck[c]{\infty}(M;F)$.
Here $\sym_0=\sym_D(\tau)$.
We will generalise this formula to $u$ and $v$ in the respective maximal domains in Theorem~\ref{Thm:MaxDom}.

By making sense of the trace, we can then proceed to prove the following regularity result under the minimal setup.

\begin{theorem}
\label{Thm:Reg}
Under the minimal setup~\ref{Hyp:StdFirst}--\ref{Hyp:StdLast}, we have that:
\begin{align}
\label{Eq:MaxDom}
\dom(D_{\max}) \intersect & \SobH[loc]{k}(M;E) \notag \\
                          & =
\big\{ u \in \dom(D_{\max}): Du \in \SobH[loc]{k-1}(M;E)\text{ and } u\rest{\dM} \in \SobH[loc]{k-\frac{1}{2}}(\dM;E)\big\}.
\end{align}
In particular, by the Sobolev embedding theorem,
\begin{align*}
\dom(D_{\max}) \intersect \Ck{\infty}(M;E)
=
\set{ u \in \dom(D_{\max}): Du \in \Ck{\infty}(M;E)\text{ and } u\rest{\dM} \in \Ck{\infty}(\dM;E)}.
\end{align*}

\end{theorem}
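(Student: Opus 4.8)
The plan is to reduce Theorem~\ref{Thm:Reg} to a local statement and then run an induction on $k$ via a reflection/doubling argument across $\dM$. The inclusion ``$\subseteq$'' is immediate and I would dispose of it first: if $u \in \dom(D_{\max}) \intersect \SobH[loc]{k}(M;E)$, then $Du \in \SobH[loc]{k-1}(M;F)$ because $D$ is first order, and $u\rest{\dM} \in \SobH[loc]{k-\frac12}(\dM;E)$ by the classical Sobolev trace theorem applied in precompact boundary charts. For the reverse inclusion, I would exploit that it is purely local: away from $\dM$, interior elliptic regularity (a parametrix for $D$, equivalently G{\aa}rding's inequality for $D^\dagger D + 1$) already gives that $u \in \dom(D_{\max})$ with $Du \in \SobH[loc]{k-1}$ lies in $\SobH[loc]{k}$ on any precompact set disjoint from $\dM$, with no boundary input. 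So it suffices to fix a precompact $\Omega$ meeting $\dM$, cover $\close{\Omega}$ by finitely many collar charts, and improve regularity in a single such chart. I would then induct on $k$; the case $k=0$ is vacuous (both sides reduce to $\dom(D_{\max})$ after intersecting with $\Lp[loc]{2}$, using Theorem~\ref{Thm:Trace1}), and in the inductive step from $k-1$ to $k$ I may assume $u \in \SobH[loc]{k-1}(M;E)$ already.

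For the inductive step, fix a collar $\dM \times [0,T)$ with normal coordinate $t$ (so $\partial_t$ is a positive multiple of $\vec T$). Ellipticity means $\sym_D(\tau)$ is invertible, and over the collar $D = \sigma_t(\partial_t + B_t)$ with $\sigma_t\colon E \to F$ a smooth $t$-family of bundle isomorphisms over the slices ($\sigma_0 = \sym_0$) and $B_t$ a smooth $t$-family of first-order elliptic differential operators on $\dM$; extending the coefficients smoothly to $t \in (-T,0)$, keeping $\sigma_t$ invertible and $B_t$ elliptic near $t=0$, produces a first-order elliptic operator $\widetilde D$ on a doubled collar $\dM \times (-T,T)$ that agrees with $D$ for $t \geq 0$. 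Now let $g = u\rest{\dM} \in \SobH[loc]{k-\frac12}(\dM;E)$; by a local right inverse of the Sobolev trace, choose a section $v$ of class $\SobH[loc]{k}$ on the collar with $v\rest{\dM}$ equal to $g$ on the relevant tangential patch, and set $w = \chi(u-v)$ for a cutoff $\chi$ supported in the collar with $\chi \equiv 1$ near the patch where we want regularity, chosen so that $w\rest{\dM} = 0$. Then $w \in \dom(D_{\max})$ has vanishing trace, and $Dw = \chi(Du - Dv) + [D,\chi](u-v)$ lies in $\SobH[loc]{k-1}$, since $Du \in \SobH[loc]{k-1}$, $Dv$ is of class $\SobH[loc]{k-1}$ because $v$ is of class $\SobH[loc]{k}$, and $[D,\chi]$ has order zero while $u - v$ is of class $\SobH[loc]{k-1}$ by the inductive hypothesis.

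The key point is that extending $w$ by zero across $\dM$ to a section $\hat w$ on $\dM \times (-T,T)$ lands in $\dom(\widetilde D_{\max})$ with $\widetilde D \hat w = \widehat{Dw}$, the section that is $Dw$ for $t \geq 0$ and $0$ for $t < 0$, \emph{with no distributional contribution along} $\{t=0\}\times\dM$: testing against $\phi \in \Ck[c]{\infty}$ and integrating by parts on the $t \geq 0$ part via Green's formula~\eqref{A:IntParts}, the only boundary term is $\inprod{\sym_0 w\rest{\dM}, \phi\rest{\dM}}$, which vanishes because $w\rest{\dM}=0$. Interior elliptic regularity for $\widetilde D$ on the boundaryless $\dM \times (-T,T)$ then yields $\hat w$ of class $\SobH[loc]{k}$, hence $w$ of class $\SobH[loc]{k}$ up to $\dM$, hence $u$ of class $\SobH[loc]{k}$ near the patch. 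Covering a neighbourhood of $\dM$ in $\close{\Omega}$ by finitely many such patches and combining with interior regularity gives $u \in \SobH[loc]{k}(M;E)$, completing the induction. The final $\Ck{\infty}$ statement follows by intersecting over all $k$ and applying the Sobolev embedding on $M$ and on $\dM$.

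I expect the main obstacle to be making the ``no jump across $\dM$'' step rigorous: Green's formula~\eqref{A:IntParts} is stated only for smooth compactly supported sections, so its use for $w \in \dom(D_{\max})$ requires the density of $\Ck[c]{\infty}(M;E)$ in $\dom(D_{\max})$ together with continuity of the trace map $\dom(D_{\max}) \to \SobH[loc]{-\frac12}(\dM;E)$ — both supplied by Theorem~\ref{Thm:Trace1}, which is in turn where the completeness hypothesis~\ref{Hyp:Complete} enters. The remaining care is bookkeeping: arranging nested cutoffs so that $w\rest{\dM}$ genuinely vanishes where needed and everything stays inside one collar chart, and checking that the extended operator $\widetilde D$ can be kept elliptic near $t=0$ (shrinking $T$ if necessary, which is harmless since regularity is local). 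Notably, the noncompactness of $\dM$ never enters, because the whole argument takes place on precompact patches; this is precisely why Theorem~\ref{Thm:Reg} already holds under the minimal setup, without the selfadjoint adapted boundary operator or functional calculus needed later for the trace description of the maximal domain.
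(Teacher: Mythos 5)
Your ``$\subseteq$'' direction, the localisation to precompact collar patches, and the reduction to a section $w \in \dom(D_{\max})$ with $w\rest{\dM} = 0$ and $Dw \in \SobH[loc]{k-1}$ are all sound, and your justification of the extended Green's formula (density of $\Ck[c]{\infty}(M;E)$ in $\dom(D_{\max})$ plus continuity of the trace into $\SobH[loc]{-\frac12}(\dM;E)$, both from Theorem~\ref{Thm:Trace1}) is exactly the right way to see that the zero extension $\hat w$ satisfies $\widetilde D \hat w = \widehat{Dw}$ distributionally. The gap is in the final step: ``interior elliptic regularity for $\widetilde D$ then yields $\hat w \in \SobH[loc]{k}$'' fails for $k \geq 2$. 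Interior regularity requires $\widehat{Dw} \in \SobH[loc]{k-1}$ on the doubled collar, but the extension by zero of a section of class $\SobH{k-1}$ on $\{t\ge 0\}$ lies in $\SobH{s}$ of the double only for $s < \tfrac12$ unless the trace of $Dw$ on $\{t=0\}$ vanishes, which there is no reason to expect; concretely, $\partial_t\widehat{Dw}$ contains a surface distribution supported on $\{t=0\}$ proportional to $(Dw)\rest{t=0}$. So your argument proves the case $k=1$, but for every $k\ge 2$ the interior estimate on the double caps out at $\SobH{\frac32-\epsilon}$, and in particular the $\Ck{\infty}$ statement is out of reach.

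To go beyond $k=1$ one needs genuine higher-order boundary regularity: either a tangential bootstrap (commute $D$ with tangential derivatives, apply the $k=1$ case to them, and recover the normal derivative from $\partial_t w = \sigma_t^{-1}Dw - B_t w$), or --- as the paper does --- transfer the localised problem to a compact manifold with compact boundary (the $N_i$ of Corollary~\ref{Cor:EmbMf}) and invoke the full compact-boundary regularity theorem, Theorem~2.4 of \cite{BBan}, which is proved there with model-operator and functional-calculus machinery rather than by reflection. You are right that noncompactness of $\dM$ plays no role beyond the localisation; the missing ingredient is the compact-boundary regularity theorem itself, which the zero-extension trick does not reproduce for $k\ge 2$.
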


Obviously, on interchanging the roles of $D$ and $D^\dagger$, we obtain the same conclusions of Theorem~\ref{Thm:Trace1} and  Theorem~\ref{Thm:Reg} for $D^\dagger$.

\subsection{Geometric setup and existence}
\label{subsec:GeometricSetup}

To understand the maximal domain of the operator and study boundary value problems, we impose an additional set of assumptions which we call the geometric setup.
In what is to follow, we will  use the notation $Y_{[0,r)} := [0,r) \times \dM$ for $r > 0$ to denote the $r$-cylinder over $\dM$.

The assumptions below are automatically satisfied when the boundary $\dM$ is compact, and they are identified here as a sufficiently general and geometrically verifiable set of assumptions beyond compact boundary.

\begin{enumerate}[label=(G\arabic*), labelwidth=0pt, labelindent=2pt, leftmargin=26pt]
\label{ExtSetup}

\item \label{Hyp:ExtFirst}
      Let $\tau$ be as in \ref{Hyp:InteriorVec} and let $A$ and $\tilde{A} = -(\sym_{D}(\cdot,\tau(\cdot))^{-1})^\ast A \sym_{D}(\cdot,\tau(\cdot))^\ast$ be essentially selfadjoint first-order differential operators on  $E\rest{\partial M}$ and $F\rest{\partial M}$  respectively, adapted to the operators $D$ and $D^\dagger$ in the sense that for all $\xi \in T^*\dM$, 
      \begin{align*}
      \sym_{A}(x,\xi)         & = \sym_{D}(x,\tau(x))^{-1} \circ \sym_{D}(x,\xi) \text{ and}      \\
      \sym_{\tilde{A}}(x,\xi) & = \sym_{D^\dagger}(x,\tau(x))^{-1} \circ \sym_{D^\dagger}(x,\xi).
      \end{align*}

\item  \label{Hyp:Metric}
      Let $U \supset \dM$ in $M$ be an open neighbourhood and let $\Phi = (t, \phi)\colon U \to Y_{[0,T_0)}$ be a diffeomorphism with $T_0 > 0$ such that
      \begin{enumerate}[(i)]
      \item $\dM = t^{-1}(0)$
      \item $\phi\rest{\dM} = \id_{\dM}$
      \item $\tau = dt$ along $\dM$,
      \item $d\Phi(\vec{T}) = \partial/\partial t$ on $\dM$ where $\vec{T}$ is the associated vector field to $\tau$, and
      \item $\Phi_{\ast} \mu = \modulus{dt} \otimes \mu_{\tau}$.
      \end{enumerate}

\item \label{Hyp:RemControl}
      \label{Hyp:ExtLast}
      There exists a $T \in (0, T_{0})$ such that on $\Phi^{-1} Y_{[0,T)}$:
      $$D = \sym_t(\partial_t + A + R_t)\quad\text{and}\quad D^\dagger = - \sym_t^\ast(\partial_t + \tilde{A} + \tilde{R}_t),$$
      where $\sym_t(x) := \sym_{D}(t,x,dt)$ and for which there exists $C \geq 1$ such that for all $(t,x) \in \Phi^{-1} Y_{[0,T)}$ and $u\in\Ck[c]{\infty}(\dM;E)$,
      \begin{gather*}
      C^{-1}\modulus{u(x)}_{h^E} \leq \modulus{\sym_t(x)u(x)}_{h^F} \leq C \modulus{u(x)}_{h^E}, \\
      \norm{R_t u}_{\Lp{2}(\dM)} \leq C t  \norm{A u}_{\Lp{2}(\dM)} + C\norm{ u}_{\Lp{2}(\dM)}, \\
      \norm{\tilde{R}_t u}_{\Lp{2}(\dM)} \leq  C  t\norm{\tilde{A} u}_{\Lp{2}(\dM)} + C \norm{ u}_{\Lp{2}(\dM)}.
      \end{gather*}
\end{enumerate}

As a consequence of \ref{Hyp:Metric}, for a given $\rho \leq T$, we write $Z_{[0,\rho)} := \Phi^{-1} Y_{[0,\rho)}$.
If $\rho < T$, then define $Z_{[0, \rho]} := \Phi^{-1} Y_{[0, \rho]}$.

\begin{remark}
Note that the $\tau$ in \ref{Hyp:ExtFirst} is a particular choice of $\tau$, which is related to the operators $A$ and $\tilde{A}$ through the alluded principal symbol condition.
Contrast this to the $\tau$ appearing in \ref{Hyp:InteriorVec}, which was arbitrary.
\end{remark}

By the fact that $A$ are selfadjoint operators, we obtain that $\chi^{+}(A) = \chi_{[0,\infty)}(A)$ and $\chi^-(A) = \chi_{(-\infty,0)}(A)$ are bounded selfadjoint projections on $\dom(\modulus{A}^\alpha)$ as well as its dual space $\dom(\modulus{A}^\alpha)^\ast$  for all $\alpha \geq 0$.
Therefore, define
$$
\checkH(A) := \chi^-(A) \dom(\modulus{A}^{\frac12}) \oplus \chi^+(A) \dom(\modulus{A}^{\frac12})^\ast,
$$
with norm
$$
\norm{u}_{\checkH(A)} := \norm{\chi^-(A)u}_{\dom(\modulus{A}^\frac12)}  + \norm{\chi^+(A)u}_{ \dom(\modulus{A}^\frac12)^\ast}.
$$
For $u\in\Lp{2}(\dM;E)$ the norm $\norm{u}_{\dom(\modulus{A}^\frac12)^\ast}$ is equivalent to $\norm{(1+|A|)^{-\frac12}u}_{\Lp{2}}$ and, if $A$ is invertible, to $\norm{|A|^{-\frac12}u}_{\Lp{2}}$.
After defining $\hatH(A) := \checkH(-A)$, the $\Lp{2}$-inner product extends to a perfect pairing $\inprod{\cdot,\cdot}_{\checkH(A) \times \hatH(A)} \to \C$.

\begin{theorem}
\label{Thm:MaxDom}
Under the  setup~\ref{Hyp:StdFirst}--\ref{Hyp:StdLast} and  \ref{Hyp:ExtFirst}--\ref{Hyp:ExtLast}, we obtain the following:
\begin{enumerate}[label=(\roman*), labelwidth=0pt, labelindent=2pt, leftmargin=21pt]
\item \label{Thm:MaxDom:1}
      the trace maps $\Ck[c]{\infty}(M;E) \to \Ck[c]{\infty}(\dM;E)$ and $\Ck[c]{\infty}(M;F) \to \Ck[c]{\infty}(\dM;F)$
      given by $u \mapsto u\rest{\dM}$ extend uniquely to surjective bounded linear maps
      $\dom( D_{\max}) \to \checkH(A)$ and $\dom( (D^\dagger)_{\max}) \to \checkH(\tilde{A})$;
\item \label{Thm:MaxDom:1.5}
      the kernel of this extension $\dom( D_{\max}) \to \checkH(A)$ coincides with $\dom( D_{\min})$ and similarly for $D^\dagger$;
\item \label{Thm:MaxDom:1.7}
      $\Ck[c]{\infty}(\dM;E)$ is dense in $\checkH(A)$ and in $\hatH(A)$;
\item \label{Thm:MaxDom:2}
      for all $u \in \dom(D_{\max})$ and $v \in \dom((D^\dagger)_{\max})$,
      \begin{equation}
      \label{Eq:MaxDInt}
      \inprod{D_{\max} u, v}_{\Lp{2}(M;F)} - \inprod{u, (D^\dagger)_{\max}v}_{\Lp{2}(M;E)} = -\inprod{ u\rest{\dM}, \sym_0^\ast v\rest{\dM}}_{\checkH(A) \times \hatH(A)},
      \end{equation}
      where $\sym_0: E_{\dM} \to F_{\dM}$ is given by $\sym_0(x) = \sym_D(x,\tau(x))$. 
This induces an isomorphism $\checkH(A)\to \hatH(\tilde{A})$.
\end{enumerate}
\end{theorem}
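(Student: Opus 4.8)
The plan is to prove all four assertions first on the model half-cylinder $Y_{[0,\infty)} := [0,\infty)\times\dM$, for the constant-coefficient operator $D_0 := \sym_0(\partial_t + A)$, and then transplant them to $M$. By \ref{Hyp:RemControl} we may write $D = \sym_t(\partial_t + A + R_t)$ on the collar $Z_{[0,T)} = \Phi^{-1}Y_{[0,T)}$, so that a section $u\in\dom(D_{\max})$, restricted to $Z_{[0,T)}$, becomes a curve $t\mapsto u(t)\in\Lp{2}(\dM;E)$ solving $\partial_t u + (A + R_t)u = \sym_t^{-1}(D_{\max}u)$ weakly. Shrinking $T$, the bounds in \ref{Hyp:RemControl} let us regard $R_t$ and the bundle map $\sym_t$ as perturbations of $D_0$: $R_t$ is of the same order as $A$ but carries the small coefficient $t$, and together with the bounded part it is absorbed through the bisectorial \Hinfty-functional calculus (equivalently, quadratic estimates of Kato square root type) for the relevant operators on $\dM$, exactly as in \cite{BBan}. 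The essential selfadjointness of $A$ --- equivalently, its Borel, in particular \Hinfty-, functional calculus --- is what replaces the eigenfunction expansion available for compact $\dM$ in \cite{BB12}.

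On the model, the two spectral halves of a solution behave oppositely. On the positive part, $u_+(t) = \e^{-tA}\chi^+(A)w$ solves $\partial_t u + Au = 0$ and lies in $\Lp{2}(Z_{[0,T)};E)$ exactly when $w\in\dom(\modulus{A}^{\frac12})^\ast$; on the negative part the homogeneous solutions grow, so one is forced onto $u_-(t) = \e^{-t\modulus{A}}\chi^-(A)w$, for which $D_0 u_-\in\Lp{2}$ exactly when $w\in\dom(\modulus{A}^{\frac12})$; the inhomogeneous equation is handled by a Duhamel formula built from $\e^{-\modulus{t-s}\modulus{A}}$ and $\chi^\pm(A)$, with the two halves treated separately. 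Estimating the resulting $t$-integrals in both directions yields the two-sided bound $\norm{u\rest{\dM}}_{\checkH(A)}\lesssim\norm{u}_{\Lp{2}} + \norm{D_0 u}_{\Lp{2}}$ on a cylinder together with surjectivity of the trace onto $\checkH(A)$, witnessed by the $u_\pm$ above cut off in $t$ by a function equal to $1$ near $t=0$; transplanting via a boundary cutoff and the interior regularity of Theorem~\ref{Thm:Reg}, and extending by density from $\Ck[c]{\infty}(M;E)$ as in Theorem~\ref{Thm:Trace1}, gives \ref{Thm:MaxDom:1}. For \ref{Thm:MaxDom:1.5}, $\dom(D_{\min})\subseteq\nul(u\mapsto u\rest{\dM})$ is immediate from the density of $\Ck[cc]{\infty}$ and the continuity of the trace; conversely, a section with vanishing trace is, along the collar, approximated in graph norm by sections supported in $\{t\geq\epsilon\}$ --- a Hardy-type consequence of the representation formula, the vanishing trace forcing the relevant $t$-averages of $\norm{u(t)}_{\Lp{2}(\dM)}$ near $t=0$ to be $o(1)$ --- after which a standard mollification and a cutoff at infinity (using \ref{Hyp:Complete}) produce an approximant in $\Ck[cc]{\infty}(M;E)$. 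The same arguments applied to $D^\dagger$ and $\tilde A$ give the corresponding statements.

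Part \ref{Thm:MaxDom:1.7} is soft: the spectral truncations $\chi_{[-n,0)}(A)$ and $\chi_{[0,n]}(A)$ show, by dominated convergence in the spectral calculus of $A$, that $\dom(\modulus{A})$ is dense in both $\checkH(A)$ and $\hatH(A) = \checkH(-A)$ in their respective norms, while essential selfadjointness of $A$ means precisely that $\Ck[c]{\infty}(\dM;E)$ is a core for $A$, hence dense in $\dom(\modulus{A})$ in the graph norm, which dominates both of those norms; composing the two density statements gives the claim. For \ref{Thm:MaxDom:2}, one starts from the symbol identity $\sym_{D^\dagger}(\xi) = -\sym_D(\xi)^\ast$, which together with the adaptedness conditions \ref{Hyp:ExtFirst} and the selfadjointness of $A$ and $\tilde A$ forces $\sym_0 A + \tilde A\sym_0$ and $\sym_0^\ast\tilde A + A\sym_0^\ast$ to be bounded operators; since $\sym_0$ is a uniformly invertible bundle map by \ref{Hyp:RemControl}, it intertwines $A$ with $-\tilde A$ modulo bounded operators, hence maps $\dom(\modulus{A}^{\frac12})$ onto $\dom(\modulus{\tilde A}^{\frac12})$ and its dual onto $\dom(\modulus{\tilde A}^{\frac12})^\ast$, and therefore restricts to a bounded isomorphism $\checkH(A)\to\hatH(\tilde A)$; symmetrically, $\sym_0^\ast$ restricts to a bounded isomorphism $\checkH(\tilde A)\to\hatH(A)$. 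In particular the right-hand side of \eqref{Eq:MaxDInt} is a well-defined, jointly continuous form on $\dom(D_{\max})\times\dom((D^\dagger)_{\max})$, and the identity follows by approximating $u$ and $v$ by compactly supported smooth sections in the respective graph norms (Theorem~\ref{Thm:Trace1}), applying the classical Green's formula \eqref{A:IntParts} --- on which the $\checkH(A)\times\hatH(A)$ pairing reduces to the $\Lp{2}(\dM)$ inner product --- and passing to the limit using \ref{Thm:MaxDom:1}; the isomorphism ``induced'' by the formula is the map $\sym_0\colon\checkH(A)\to\hatH(\tilde A)$ just constructed, equivalently obtained by duality since \eqref{Eq:MaxDInt} realizes $\inprod{\cdot,\cdot}_{\checkH(A)\times\hatH(A)}$ as a perfect pairing and $\hatH(\tilde A)\cong\checkH(\tilde A)^\ast$.

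The main obstacle is the model trace theorem of the second paragraph: obtaining the sharp two-sided estimate with the correct split regularity --- the $\chi^-(A)$-part in $\dom(\modulus{A}^{\frac12})$ and the $\chi^+(A)$-part only in $\dom(\modulus{A}^{\frac12})^\ast$ --- while absorbing the perturbations $R_t$ and $\sym_t$ uniformly on a small collar through the bisectorial \Hinfty-calculus; the Hardy-type bound underlying \ref{Thm:MaxDom:1.5} is the other delicate point. Part \ref{Thm:MaxDom:1.7} and the limiting step in \ref{Thm:MaxDom:2} are essentially soft once the trace space and the pairing are available.
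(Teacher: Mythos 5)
Your overall architecture coincides with the paper's: reduce to the model operator $D_0=\sym_0(\partial_t+A)$ on the collar, use the Borel/\Hinfty-calculus quadratic estimates to see that the $\chi^-(A)$-half must be measured in $\dom(\modulus{A}^{\frac12})$ and the $\chi^+(A)$-half in $\dom(\modulus{A}^{\frac12})^\ast$, witness surjectivity by the cut-off semigroup extension, absorb $R_t$ using the factor $t$ in the bound of \ref{Hyp:RemControl}, and obtain Green's formula and the remaining assertions by density. Your argument for \ref{Thm:MaxDom:1.7} (essential selfadjointness makes $\Ck[c]{\infty}(\dM;E)$ a core for $A$, and spectral truncation makes $\dom(\modulus{A})$ dense in $\checkH(A)$ and $\hatH(A)$) is a clean alternative to the paper's route through surjectivity of the trace map, and it is correct.

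Two steps are under-substantiated. First, in \ref{Thm:MaxDom:2} you pass from ``$\sym_0$ intertwines $A$ with $-\tilde{A}$ modulo bounded operators'' to ``$\sym_0$ restricts to an isomorphism $\checkH(A)\to\hatH(\tilde{A})$''. Interpolation does identify $\sym_0^\ast\dom(\modulus{\tilde A}^{\frac12})$ with $\dom(\modulus{A}^{\frac12})$ up to equivalence of norms, but $\checkH$ and $\hatH$ are built from the sharp spectral cuts $\chi^{\pm}$, and these are not stable under bounded selfadjoint perturbations: a bounded perturbation can move spectrum across $0$, and the resulting projections need not differ by anything controllable on $\dom(\modulus{A}^{\frac12})^\ast$. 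The paper's Lemma~\ref{Lem:BdyHom} avoids comparing projections altogether; it derives the isomorphism from the model trace estimate (Corollary~\ref{Cor:TraceBd}) applied to $D_0^\dagger$ together with the extension-operator bound (Proposition~\ref{Prop:ModelBnd}). You should either do the same or establish the exact intertwining $\tilde{A}=-(\sym_0^{-1})^\ast A\sym_0^\ast$ for the model operator and work with that. Second, for the converse inclusion in \ref{Thm:MaxDom:1.5} your ``Hardy-type'' push-off of the support from $\{t=0\}$ is the hard route and is not justified as stated: the commutator term $\sym_D(d\chi_\epsilon)u$ has size of order $\epsilon^{-1}\norm{u}_{\Lp{2}(Z_{[0,2\epsilon)})}$, and vanishing of the trace in $\checkH(A)$ does not by itself yield the decay of $\norm{u(t)}_{\Lp{2}(\dM)}$ near $t=0$ needed to kill it, since for $u\in\dom(D_{\max})$ one cannot separate $\partial_t u$ from $Au$. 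Once \ref{Thm:MaxDom:2} is available the statement is soft: vanishing trace gives $\inprod{D_{\max}u,v}=\inprod{u,D^\dagger_{\max}v}$ for all $v\in\dom(D^\dagger_{\max})$, hence $u\in\dom((D^\dagger_{\max})^\ast)=\dom(D_{\min}^{\ast\ast})=\dom(D_{\min})$; this is the paper's argument and is the one you should use.
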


\begin{remark}
\label{rem:maxmin}
Assertions~\ref{Thm:MaxDom:1} and \ref{Thm:MaxDom:1.5} in Theorem~\ref{Thm:MaxDom}, together with the open mapping theorem, imply that the trace map induces an isomorphism (in the sense of Banach spaces)
\begin{equation*}
\faktor{\dom(D_{\max})}{\dom(D_{\min})} \to \checkH(A).
\end{equation*}
\end{remark}

Theorem~\ref{Thm:MaxDom} and Remark~\ref{rem:maxmin} motivate the following definition.
\begin{definition}[Boundary condition, semi-regularity, regularity]
A \emph{boundary condition} for $D$ is a closed subspace $B \subset \checkH(A)$.
A \emph{semi-regular} boundary condition is one which satisfies $B \subset \SobH[loc]{\frac12}(\dM;E)$ and it is said to be \emph{regular} if in addition, $B^\ast \subset \SobH[loc]{\frac12}(M;F)$, where $B^\ast$ is the adjoint boundary condition.
\end{definition}

The associated operator $D_B$ is the extension with domain $\dom(D_B) = \set{u \in\dom(D_{\max}): u\rest\dM \in B}$, which is the pullback of $B$ under the trace map.
See  Section~\ref{Sec:BVPs} for more details.
In Section~\ref{Sec:Freddy}, a notion of coercivity of $D$ with respect to a boundary condition $B$ is introduced.
Loosely speaking, this is to say that the operator $D$ is invertible away from a compact set $K$ on $\dom(D_B)$.
This allows us to obtain classes of Fredholm boundary conditions despite the lack of compactness of both the manifold and boundary.

\begin{theorem}
\label{Thm:Coercive}
Suppose $D$ satisfies \ref{Hyp:StdFirst}--\ref{Hyp:StdLast} and \ref{Hyp:ExtFirst}--\ref{Hyp:ExtLast} and $B$ is a semi-regular boundary condition.
If $D$ is $B$-coercive with respect to a compact $K$ (cf.\ Definition~\ref{Def:Coercive}), then $\ker(D_B)$ is finite dimensional and $\ran(D_B)$ is closed.
\end{theorem}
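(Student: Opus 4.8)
The plan is to show that $D_B$ is upper semi-Fredholm (finite-dimensional kernel, closed range) by verifying the standard functional-analytic criterion: a closed, densely-defined operator $T$ between Hilbert spaces has these properties as soon as there are a Banach space $\mathcal{Z}$, a \emph{compact} operator $\mathcal{S}\colon(\dom T,\norm{\cdot}_T)\to\mathcal{Z}$ from the domain with its graph norm, and a constant $C$ with $\norm{u}\le C(\norm{Tu}+\norm{\mathcal{S}u})$ for all $u\in\dom T$; indeed the bound forces the unit ball of $\nul T$ (on which the graph norm agrees with the ambient norm) to be $\mathcal{S}$-precompact hence compact, and a routine contradiction argument using compactness of $\mathcal{S}$ and closedness of $T$ then yields $\norm{u}\lesssim\norm{Tu}$ on a closed complement of $\nul T$. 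I would apply this with $T=D_B\colon\Lp{2}(M;E)\to\Lp{2}(M;F)$. This $T$ is densely defined because $\Ck[cc]{\infty}(M;E)\subset\dom(D_{\min})\subset\dom(D_B)$, and it is closed because it is the restriction of the closed operator $D_{\max}$ to the preimage of the closed subspace $B\subset\checkH(A)$ under the bounded trace map of Theorem~\ref{Thm:MaxDom}.

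Next I would produce the compact operator $\mathcal{S}$. Since $B$ is semi-regular, $B\subset\SobH[loc]{\frac12}(\dM;E)$, so Theorem~\ref{Thm:Reg} gives $\dom(D_B)\subset\SobH[loc]{1}(M;E)$; and for any precompact open $\Omega\subset M$ (with, say, smooth boundary) the restriction $u\mapsto u\rest\Omega$ is a closed, everywhere-defined linear map $(\dom(D_B),\norm{\cdot}_{D_B})\to\SobH{1}(\Omega;E)$, hence bounded by the closed graph theorem. Composing with the Rellich embedding $\SobH{1}(\Omega;E)\embed\Lp{2}(\Omega;E)$ shows that $\mathcal{S}\colon u\mapsto u\rest\Omega$ is compact into $\Lp{2}(\Omega;E)$, for any such $\Omega$.

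The heart of the matter is the a priori estimate. Let $K\subset M$ be a compact set witnessing $B$-coercivity (Definition~\ref{Def:Coercive}), so that $\norm{u}_{\Lp{2}(M)}\lesssim\norm{Du}_{\Lp{2}(M)}$ for every $u\in\dom(D_B)$ with $\spt u\intersect K=\emptyset$. I would then pick a cutoff $\chi\in\Ck[c]{\infty}(M)$ with $0\le\chi\le1$, $\chi\equiv1$ on a neighbourhood of $K$, and such that $\chi$ (hence $1-\chi$) is \emph{constant along $\dM$}; this last property guarantees that $\chi u$ and $(1-\chi)u$ again lie in $\dom(D_B)$ whenever $u$ does, since their boundary traces are scalar multiples of $u\rest\dM\in B$ and $B$ is a linear subspace. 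Applying coercivity to $(1-\chi)u$ (which is supported away from $K$) and using $D_{\max}(\chi u)=\chi Du+\sym_D(\cdot,d\chi)u$ with $\sym_D(\cdot,d\chi)$ a bounded bundle map supported in $\spt d\chi$, one gets $\norm{(1-\chi)u}_{\Lp{2}(M)}\lesssim\norm{Du}_{\Lp{2}(M)}+\norm{u}_{\Lp{2}(\spt d\chi)}$, and adding $\norm{\chi u}_{\Lp{2}(M)}\le\norm{u}_{\Lp{2}(\spt\chi)}$ produces $\norm{u}_{\Lp{2}(M)}\lesssim\norm{D_B u}_{\Lp{2}(M)}+\norm{u}_{\Lp{2}(\Omega)}$ for any precompact open $\Omega\supset\spt\chi$. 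Feeding this estimate and the compact operator $\mathcal{S}$ of the previous paragraph into the criterion completes the argument.

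I expect the main obstacle to be exactly the construction of the cutoff $\chi$ when $K$ meets $\dM$: because $\dM$ is noncompact while $\spt\chi$ must be compact, a cutoff that is constant along $\dM$ is necessarily $0$ there, which is incompatible with $\chi\equiv1$ on $K$. The remedy is to treat a collar $Z_{[0,\rho)}$ of $\dM$ separately, establishing the required coercivity-type estimate near $\dM$ directly from the normal form $D=\sym_t(\partial_t+A+R_t)$ of \ref{Hyp:ExtLast}, the selfadjointness of $A$, and the trace estimates of Theorem~\ref{Thm:MaxDom} --- this is precisely where the geometric setup is genuinely used, and it runs parallel to the half-cylinder analysis underlying Theorem~\ref{Thm:MaxDom}. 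With the collar handled, the cutoff argument above only needs to localize in the interior region $M\setminus Z_{[0,\rho/2)}$, where the relevant compact set lies in $\interior M$ and $\chi$ may be taken to vanish near $\dM$, so the clash disappears.
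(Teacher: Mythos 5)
Your overall strategy is the paper's: establish that $D_B$ is closed and densely defined, obtain the a priori estimate $\norm{u}\lesssim\norm{D_Bu}+\norm{u}_{\Lp{2}(\Omega)}$ for a fixed compact $\Omega$, and combine it with the compactness of $u\mapsto u\rest{\Omega}$ on $(\dom(D_B),\norm{\cdot}_{D})$ — which follows from semi-regularity, Theorem~\ref{Thm:Reg}, the closed graph theorem and Rellich — to conclude upper semi-Fredholmness (the paper invokes Proposition~A.3 of \cite{BB12} in the equivalent subsequence formulation). Those parts of your argument are fine.

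The problem lies in the cutoff step, and it comes from not using the full hypothesis. Definition~\ref{Def:Coercive} has \emph{two} clauses: besides the lower bound $\norm{Du}\ge c\norm{u}$ for $u\in\dom(D_B)$ with $\spt u\intersect K=\emptyset$, clause~\ref{Def:Coercive:1} already \emph{postulates} the existence of $\eta_K\in\Ck[c]{\infty}(M)$ with $\eta_K=1$ on $K$ and $\eta_K\dom(D_B)\subset\dom(D_B)$. You used only the second clause and tried to manufacture the cutoff yourself by demanding that it be constant along $\dM$; you then correctly observed that this is impossible when $K$ meets the noncompact boundary, and proposed to repair matters with a separate coercivity estimate on a collar $Z_{[0,\rho)}$. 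That detour is unnecessary — with the $\eta_K$ supplied by the hypothesis, $(1-\eta_K)u\in\dom(D_B)$ is supported off $K$, clause~\ref{Def:Coercive:2} applies directly, and the commutator term $\sym_{D}(d\eta_K)u$ is absorbed into $\norm{u}_{\Lp{2}(\spt\eta_K)}$ — and, as sketched, it is a genuine gap: for a boundary condition that is merely semi-regular one cannot derive the collar estimate you allude to from \ref{Hyp:ExtLast} and the selfadjointness of $A$ alone, since $B\subset\SobH[loc]{\frac12}(\dM;E)$ gives no global control of the boundary term $\inprod{\modulus{A}\sgn(A)u\rest{\dM},u\rest{\dM}}$ appearing in Lemma~\ref{Lem:Ell}. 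The paper proves such a collar estimate only under the stronger hypothesis of $A$-semi-regularity (Corollary~\ref{Cor:ASemiRegNearBoundary}) and deploys it in the separate Theorem~\ref{Thm:Freddy:Discrete}, which additionally assumes discrete spectrum of $A$; it is not available under the hypotheses of Theorem~\ref{Thm:Coercive}. Replacing your cutoff by the $\eta_K$ from Definition~\ref{Def:Coercive}~\ref{Def:Coercive:1} removes the obstacle entirely and makes your argument coincide with the paper's proof.
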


\begin{corollary}
\label{Cor:Coercive}
If further $B$ is regular and $D^\dagger$ is $B^\ast$-coercive (where $B^\ast$ the adjoint boundary condition of $B$) with respect to a compact $K'$, then $D_B$ is Fredholm.
\hfill\qed
\end{corollary}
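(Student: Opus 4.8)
The plan is to upgrade the semi-Fredholm conclusion of Theorem~\ref{Thm:Coercive} to a genuine Fredholm statement by applying that theorem a second time, to the formal adjoint $D^\dagger$ together with the adjoint boundary condition $B^\ast$, and then identifying $(D_B)^\ast$ with $(D^\dagger)_{B^\ast}$. First I would recall that an operator is Fredholm precisely when both it and its adjoint have finite-dimensional kernel and closed range; in fact, once $\ran(D_B)$ is closed, one has $\ran(D_B)^\perp = \ker((D_B)^\ast)$, so the cokernel of $D_B$ is $\ker((D_B)^\ast)$, and it remains only to show this kernel is finite-dimensional. Since $B$ is assumed regular, $B^\ast \subset \SobH[loc]{1/2}(\dM;F)$ is semi-regular as a boundary condition for $D^\dagger$, and $D^\dagger$ satisfies the standard and geometric setups (with the roles of $D,D^\dagger$ and $A,\tilde A$ interchanged, as noted after Theorem~\ref{Thm:Reg}). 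Hence the hypothesis that $D^\dagger$ is $B^\ast$-coercive with respect to a compact $K'$ puts us in position to invoke Theorem~\ref{Thm:Coercive} for the pair $(D^\dagger, B^\ast)$: this yields that $\ker((D^\dagger)_{B^\ast})$ is finite-dimensional and $\ran((D^\dagger)_{B^\ast})$ is closed.

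The step that does the real work is the identification $(D_B)^\ast = (D^\dagger)_{B^\ast}$ as closed, densely defined operators on $\Lp{2}(M;F) \to \Lp{2}(M;E)$. This is where the adjoint boundary condition $B^\ast$ enters by definition: it is designed (see Section~\ref{Sec:BVPs}) exactly so that the Green's formula~\eqref{Eq:MaxDInt}, via the perfect pairing $\checkH(A)\times\hatH(A)\to\C$ and the induced isomorphism $\checkH(A)\to\hatH(\tilde A)$ from Theorem~\ref{Thm:MaxDom}\ref{Thm:MaxDom:2}, makes the boundary term vanish for $u\in\dom(D_B)$ and $v\in\dom((D^\dagger)_{B^\ast})$; this gives the inclusion $(D^\dagger)_{B^\ast}\subseteq (D_B)^\ast$. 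For the reverse inclusion one takes $v\in\dom((D_B)^\ast)$, tests against all $u\in\dom(D_{\min})\subseteq\dom(D_B)$ to conclude $v\in\dom((D^\dagger)_{\max})$ (using Theorem~\ref{Thm:MaxDom}\ref{Thm:MaxDom:1.5}), and then tests against general $u\in\dom(D_B)$ to force the boundary trace $v\rest{\dM}$ into the annihilator of $B$ under the pairing, which is $B^\ast$; here surjectivity of the trace map $\dom(D_{\max})\to\checkH(A)$ from Theorem~\ref{Thm:MaxDom}\ref{Thm:MaxDom:1} is what allows one to realise arbitrary boundary data.

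Once $(D_B)^\ast = (D^\dagger)_{B^\ast}$ is established, the conclusion is immediate: $\ker(D_B)$ is finite-dimensional and $\ran(D_B)$ is closed by Theorem~\ref{Thm:Coercive} applied to $(D,B)$; and since $\ran(D_B)$ is closed, $\mathrm{coker}(D_B)\cong \ker((D_B)^\ast) = \ker((D^\dagger)_{B^\ast})$, which is finite-dimensional by Theorem~\ref{Thm:Coercive} applied to $(D^\dagger, B^\ast)$. Therefore $D_B$ has finite-dimensional kernel and cokernel and closed range, i.e.\ it is Fredholm. I expect the main obstacle to be the careful verification of the adjoint identity $(D_B)^\ast=(D^\dagger)_{B^\ast}$ — in particular checking that the regularity of $B$ is exactly what guarantees $B^\ast$ is a bona fide (semi-regular) boundary condition for which Theorem~\ref{Thm:Coercive} is applicable, and handling the functional-analytic bookkeeping of the $\checkH$/$\hatH$ pairing so that the boundary term in~\eqref{Eq:MaxDInt} genuinely vanishes on the nose rather than merely up to a bounded perturbation. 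Everything else is a formal consequence of results already in hand.
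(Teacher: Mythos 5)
Your proposal is correct and is exactly the argument the paper intends (the corollary is stated with \qed as an immediate consequence): apply Theorem~\ref{Thm:Coercive} to $(D^\dagger, B^\ast)$, use the identification $(D_{B})^\ast = (D^\dagger)_{B^\ast}$ — which is Proposition~\ref{Prop:AdjBC} together with the fact that semi-regularity gives $D_B = D_{B,\max}$ — and conclude via closed range plus finite-dimensional kernel of both the operator and its adjoint. No gaps.
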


An important special case is when the adapted operator has discrete spectrum.
In this case, we obtain the following important consequence where we are able to obtain semi-Fredholmness where we only make a demand on the minimal extension for $A$-semi-regular boundary conditions, but with slightly more stringent geometric requirements than those in the geometric setup. 
Fredholmness follows when the boundary condition in question is $A$-regular.

\begin{theorem}
\label{Thm:Freddy:Discrete}
Suppose $D$ satisfies \ref{Hyp:StdFirst}--\ref{Hyp:StdLast} and \ref{Hyp:ExtFirst}--\ref{Hyp:ExtLast}. 
Assume the following.
  \begin{enumerate}[label=(\roman*), labelwidth=0pt, labelindent=2pt, leftmargin=21pt]
\item \label{Itm:Freddy:Discrete1} 
$M$ carries a complete Riemannian metric $g$  and constant $C < \infty$ such that $\modulus{ \sym_{D}(\xi)}_{h^E \to h^E} \leq C \modulus{\xi}_{g}$.\footnote{By Theorem~\ref{Thm:cherwolf}, this assumption implies \ref{Hyp:Complete}.}
\item \label{Itm:Freddy:Discrete3} 
The spectrum of $A$ is discrete.
\item \label{Itm:Freddy:Discrete4} 
$B$ is an $A$-semi-regular boundary condition. 
\end{enumerate} 
If $D$ is $0$-coercive with respect to a compact $K$ (cf.\ Definition~\ref{Def:Coercive}), then $\ker(D_B)$ is finite dimensional and $\ran(D_B)$ is closed.

If further $B$ is $A$-elliptically-regular, $D^\dagger$ is $0$-coercive with respect to a compact $K'$, then $D_B$ is Fredholm.
\end{theorem}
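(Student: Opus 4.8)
The plan is to exhibit $D_B$ as a closed operator and then to verify the standard semi-Fredholm criterion. Closedness is immediate from assertions~\ref{Thm:MaxDom:1} and~\ref{Thm:MaxDom:1.5} of Theorem~\ref{Thm:MaxDom}: the trace map $\dom(D_{\max})\to\checkH(A)$ is bounded with kernel $\dom(D_{\min})$, so $\dom(D_B)$ is the preimage of the closed subspace $B$ and hence closed in the graph norm. It then suffices to produce a Banach space $\mathscr{Z}$, a linear map $\iota\colon\dom(D_B)\to\mathscr{Z}$ that is bounded for the graph norm and \emph{compact}, and a constant $C<\infty$ with
\[
\norm{u}_{\Lp{2}(M;E)}\ \le\ C\big(\norm{Du}_{\Lp{2}(M;F)}+\norm{\iota u}_{\mathscr{Z}}\big)\qquad\text{for all }u\in\dom(D_B);
\]
this forces $\nul(D_B)$ finite-dimensional and $\ran(D_B)$ closed. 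I would take $\mathscr{Z}=\Lp{2}(K';E)\oplus\chi^{+}(A)\dom(\modulus{A}^{\frac12})^{\ast}$ with $\iota u=\big(u\rest{K'},\,\chi^{+}(A)(u\rest{\dM})\big)$ for a suitable compact $K'\subset M$. Compactness of $\iota$ has two parts. Since $B$ is $A$-semi-regular (hypothesis~\ref{Itm:Freddy:Discrete4}), Theorem~\ref{Thm:Reg} gives $\dom(D_B)\subset\SobH[loc]{1}(M;E)$ with the graph norm dominating the $\SobH[loc]{1}$-seminorms — hypothesis~\ref{Itm:Freddy:Discrete1} providing the uniform local elliptic estimates — so $u\mapsto u\rest{K'}$ is compact into $\Lp{2}(K';E)$ by Rellich. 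For the second component, hypothesis~\ref{Itm:Freddy:Discrete3} means $A$ has compact resolvent, whence the inclusion $\dom(\modulus{A}^{\frac12})\embed\dom(\modulus{A}^{\frac12})^{\ast}$ is compact; composing with the bounded map $\dom(D_B)\to B\embed\dom(\modulus{A}^{\frac12})$ (the trace bound of assertion~\ref{Thm:MaxDom:1} together with $A$-semi-regularity) shows $u\mapsto\chi^{+}(A)(u\rest{\dM})$ is compact as well.

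The analytic heart is the a priori estimate above, which I would assemble by localising $M$ into the ``interior at infinity'' $M\setminus Z_{[0,\rho)}$ and the collar $Z_{[0,T)}$. On the former, a cutoff of $u$ vanishing near $\dM$ and off a compact neighbourhood $K'$ of the set $K$ furnished by $0$-coercivity lies in $\dom(D_{\min})$, so Definition~\ref{Def:Coercive} controls $\norm{u}_{\Lp{2}(M\setminus Z_{[0,\rho)})}$ by $\norm{Du}_{\Lp{2}}$, $\norm{u}_{\Lp{2}(K')}$ and a remainder supported in a thin collar slab. On the collar, \ref{Hyp:RemControl} writes $D=\sym_t(\partial_t+A+R_t)$; using the discreteness of $\spec A$ to pass to the eigenspaces of $A$ reduces matters to the scalar problems $\partial_t u_\lambda+\lambda u_\lambda=f_\lambda$ on $[0,T)$, where $f=\sym_t^{-1}Du-R_tu$. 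For $\lambda\ge0$ the forward solution is bounded in $\Lp{2}$ by $\chi^{+}(A)(u\rest{\dM})$ measured in $\dom(\modulus{A}^{\frac12})^{\ast}$ (exactly as in the compact-boundary theory) and by $\norm{Du}_{\Lp{2}}$; for $\lambda<0$ the solution propagated backward from an interior slice $t=\rho'<T$ is a contraction, hence bounded by $\norm{Du}_{\Lp{2}}$ and the slice value, the latter being swallowed by the interior estimate. The perturbation $R_t$ is absorbed into the principal terms via the bounds in \ref{Hyp:RemControl} once $T$ is taken small, and choosing the radii $\rho<\rho'<T$ carefully closes the feedback between the collar and interior contributions, yielding the displayed estimate.

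I expect the collar estimate to be the main obstacle: it amounts to running the model analysis for $\partial_t+A$ on a \emph{finite} cylinder over a \emph{noncompact} boundary, which requires constants uniform over the unbounded spectrum of $A$ — this is precisely where the discreteness of $\spec A$ and the $\dom(\modulus{A}^{\frac12})$-control supplied by $A$-semi-regularity are indispensable — together with absorbing the first-order term $R_t$ and gluing the near-boundary and interior-at-infinity estimates without circularity.

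For the Fredholm statement, assume additionally that $B$ is $A$-elliptically-regular, that $D^\dagger$ is $0$-coercive with respect to a compact $K'$, and that the adapted operator $\tilde{A}$ of $D^\dagger$ has discrete spectrum. Then $D^\dagger$ satisfies \ref{Hyp:StdFirst}--\ref{Hyp:StdLast} and \ref{Hyp:ExtFirst}--\ref{Hyp:ExtLast} with adapted operator $\tilde{A}$, hypothesis~\ref{Itm:Freddy:Discrete1} holds for the same complete metric $g$ since $\sym_{D^\dagger}$ has the same size as $\sym_{D}$, and the adjoint boundary condition $B^{\ast}$ is $\tilde{A}$-semi-regular precisely because $B$ is $A$-elliptically-regular. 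Applying the first part of the theorem to $D^\dagger$ and $B^{\ast}$ yields that $\nul\big((D^\dagger)_{B^{\ast}}\big)$ is finite-dimensional and $\ran\big((D^\dagger)_{B^{\ast}}\big)$ is closed. Since $(D_B)^{\ast}=(D^\dagger)_{B^{\ast}}$ by the definition of the adjoint boundary condition (cf.\ Section~\ref{Sec:BVPs}) and $\ran(D_B)$ is already closed, the quotient $\Lp{2}(M;F)/\ran(D_B)$ is isomorphic to $\nul\big((D_B)^{\ast}\big)=\nul\big((D^\dagger)_{B^{\ast}}\big)$, which is finite-dimensional; hence $D_B$ is Fredholm.
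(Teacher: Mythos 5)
Your overall framework (closedness of $D_B$, a compact map $\iota$, and an a~priori estimate $\norm{u}\lesssim\norm{Du}+\norm{\iota u}$), and your reduction of the Fredholm statement to the semi-Fredholm one applied to $D^\dagger_{B^*}=(D_B)^*$, are both sound and match the paper's strategy in spirit. The genuine gap is in the a~priori estimate, and it is tied to your choice of $\mathscr{Z}$: with $\iota u=\bigl(u\rest{K'},\chi^{+}(A)(u\rest{\dM})\bigr)$ the estimate cannot be closed by the collar analysis you sketch. On the collar you must absorb $R_t$, and \ref{Hyp:RemControl} only gives $\norm{R_tu}\le Ct\norm{Au}+C\norm{u}$, so your source term $f=\sym_t^{-1}Du-R_tu$ is controlled only if you already control $\norm{Au}$ on the collar. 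The only available route to that control is the integration-by-parts identity of Lemma~\ref{Lem:Ell} (this is exactly Corollary~\ref{Cor:ASemiRegNearBoundary}), whose boundary term is $\inprod{\modulus{A}\sgn(A)u_0,u_0}\le\norm{\modulus{A}^{\frac12}u\rest{\dM}}^2_{\Lp{2}(\dM)}$ --- the \emph{full} $\dom(\modulus{A}^{\frac12})$-seminorm of the trace, both spectral halves. Via the norm equivalence on $B$ this is bounded by $\norm{u}_{\checkH(A)}\lesssim\norm{u}_{D}$, i.e.\ by the full graph norm including $\norm{u}_{\Lp{2}(M)}$, not by $\norm{Du}+\norm{\chi^+(A)(u\rest{\dM})}_{\dom(\modulus{A}^{\frac12})^*}$. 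The same defect appears in your treatment of the negative modes, whose backward propagation needs a trace on an interior slice that is again only controlled by the graph norm; the resulting feedback term $\norm{u}_{\Lp{2}(\text{collar})}$ enters with a constant of order one and cannot be absorbed. So the collar contribution is bounded by the quantity you are trying to estimate, and the argument is circular as written.

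The repair is to put the collar $\Lp{2}$-restriction itself into the compact map, i.e.\ take $\iota u=\bigl(u\rest{K'},\,\chi_1u\bigr)$ with $\chi_1$ a cutoff supported in $Z_{[0,T_d)}$; then the estimate becomes the easy part (coercivity handles the third piece of $u=\chi_1u+(1-\chi_1)\chi_2u+(1-\chi_1)(1-\chi_2)u$, with all commutator terms supported in $\spt d\chi_1\cup K'$ and hence dominated by $\norm{\iota u}$), and the entire analytic burden shifts to proving that $u\mapsto\chi_1u$ is compact from $\dom(D_B)$ into $\Lp{2}$. This is where discreteness of $\spec(A)$ is really spent: Corollary~\ref{Cor:ASemiRegNearBoundary} bounds $\norm{\chi_1 u}_{\SobH[T_d]{1}(A,\partial_t)}$ by the graph norm (using $A$-semi-regularity for the boundary term), and one then needs the compact embedding $\SobH[r]{1}(A,\partial_t)\embed\Lp{2}(Y_{[0,r)};E)$, which is the content of the auxiliary lemma preceding the paper's proof and does not follow from the compactness of $\dom(\modulus{A}^{\frac12})\embed\dom(\modulus{A}^{\frac12})^*$ alone. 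Your second component of $\iota$ and its compactness argument are correct but end up being unnecessary once the collar piece is included. The final paragraph on Fredholmness is correct as stated.
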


\begin{remark}
Note that for the boundary condition $B = 0$, which corresponds to the minimal extension $D_{\min}$, if $\eta \in \Ck[c]{\infty}(M)$, then $\eta \dom(D_{\min}) \subset \dom(D_{\min})$.
So Definition~\ref{Def:Coercive}~\ref{Def:Coercive:1} is automatically satisfied. 
Therefore, the $0$-coercivity condition on $D$ with respect to a compact set $K$ in Theorem~\ref{Thm:Freddy:Discrete} reduces to verifying that there is a constant $C < \infty$ such that and for all $u \in \Ck[cc]{\infty}(M;E)$ with $\spt u \cap K = \emptyset$, we have $\norm{Du} \geq C \norm{u}$. 
\end{remark}

\section{Sufficient criterion for completeness}

By definition, completeness of $D$ and $D^\dagger$ holds if $M$ is compact.
In the noncompact case, completeness can often be checked using the following theorem:

\begin{theorem}\label{Thm:cherwolf}
Let $D\colon \Ck{\infty}(M;E)\to\Ck{\infty}(M;F)$ be a first-order differential operator.
Suppose that $M$ carries a complete Riemannian metric with respect to which the principal symbol of $D$ satisfies an estimate
\begin{equation}
|\sym_D(\xi)| \le C(\mathrm{dist}(x,p))\cdot|\xi|
\label{eq:symest}
\end{equation}
for all $x\in M$ and $\xi\in T_x^*M$, where $p\in M$ is a fixed point and $C:[0,\infty)\to\R$ is a positive monotonically increasing continuous function with
\begin{equation}
\int_0^\infty \frac{dr}{C(r)} = \infty .
\label{eq:chernoff}
\end{equation}
Then $D$ and $D^\dagger$ are complete.
\end{theorem}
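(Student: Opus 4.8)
The plan is to prove completeness by a cutoff argument: given $u \in \dom(D_{\max})$ with compact support, I want to approximate it in the graph norm by smooth compactly supported sections, and given an arbitrary $u \in \dom(D_{\max})$ I first reduce to the compactly supported case by multiplying with suitable cutoff functions. The key quantitative input is that the growth condition~\eqref{eq:chernoff} on $C$ allows one to build cutoffs whose gradients are controlled well enough that the commutator terms $[D,\eta]u = \sym_D(d\eta)u$ go to zero in $\Lp{2}$.

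Here is the sequence of steps. First, using completeness of the Riemannian metric and the integral condition~\eqref{eq:chernoff}, I would construct a sequence of cutoff functions $\eta_k \in \Ck[c]{\infty}(M)$ with $0 \le \eta_k \le 1$, with $\eta_k \equiv 1$ on larger and larger sets exhausting $M$, and with $|d\eta_k| \le \varepsilon_k / C(\mathrm{dist}(x,p))$ pointwise for a sequence $\varepsilon_k \to 0$. The standard device is to compose a fixed Lipschitz profile with the function $\psi(x) = \int_0^{\mathrm{dist}(x,p)} \frac{dr}{C(r)}$, which is Lipschitz with controlled gradient because $C$ is monotone, and which is a proper exhaustion function precisely because the integral in~\eqref{eq:chernoff} diverges; one then smooths. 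Second, for $u \in \dom(D_{\max})$ I consider $\eta_k u$: it has compact support, lies in $\dom(D_{\max})$, and $D(\eta_k u) = \eta_k Du + \sym_D(d\eta_k) u$, where by the symbol estimate~\eqref{eq:symest} the second term is bounded in norm by $C(\mathrm{dist}(x,p)) |d\eta_k| |u| \le \varepsilon_k |u|$; hence $\eta_k u \to u$ in $\Lp{2}$ and $D(\eta_k u) \to Du$ in $\Lp{2}$, so compactly supported sections are dense in $\dom(D_{\max})$. Third, for a compactly supported $u \in \dom(D_{\max})$, I would mollify: choose a partition-of-unity/coordinate-chart decomposition near $\spt u$ and apply a standard Friedrichs mollifier in each chart, using that $D$ is first-order with smooth coefficients so that the Friedrichs commutator lemma gives $D(u \ast \rho_\delta) - (Du)\ast\rho_\delta \to 0$ in $\Lp{2}$; this produces smooth compactly supported approximants. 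The same argument applies verbatim to $D^\dagger$, since the symbol estimate is symmetric in passing to the formal adjoint (the principal symbol of $D^\dagger$ is $\pm \sym_D^\ast$, hence satisfies the same bound).

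The main obstacle is the cutoff construction and the verification that the distance function can be regularized without destroying the gradient bound: $\mathrm{dist}(\cdot,p)$ is only Lipschitz, so one must either work with Lipschitz cutoffs throughout and mollify at the end, or invoke a smooth proper exhaustion function comparable to the distance with gradient bounds (available on complete manifolds, e.g.\ by the Greene--Wu smoothing). Care is needed that the smoothing does not spoil the pointwise inequality $C(\mathrm{dist}(x,p))|d\eta_k| \le \varepsilon_k$; this is handled by allowing a harmless fixed multiplicative constant in $\varepsilon_k$ and using the monotonicity of $C$ together with the fact that smoothing changes distances by a bounded amount on the region of interest. The mollification step is routine, the only subtlety being to mollify \emph{after} cutting off so that all mollifiers act on compactly supported sections and the Friedrichs lemma applies with uniform constants on a fixed compact neighbourhood.
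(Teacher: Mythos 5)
Your proposal is correct and follows essentially the same route as the paper: cutoff functions whose differentials are damped by $1/C(\mathrm{dist}(\cdot,p))$, so that the symbol estimate \eqref{eq:symest} makes the commutator term $\sym_D(d\eta_k)u$ small; the paper packages the general case as a conformal rescaling $h=f^{-2}g$ reducing to constant $C$, which is the same device as your exhaustion function $\psi(x)=\int_0^{\mathrm{dist}(x,p)}dr/C(r)$. One remark: your third (mollification) step is superfluous, since completeness in the sense of condition~\ref{Hyp:Complete} only requires density of \emph{compactly supported} sections of $\dom(D_{\max})$, not smooth ones --- which is just as well, because mollifying up to the boundary $\dM$ is not routine and is handled separately in the paper (Lemma~\ref{Lem:Density}) via an extension/doubling construction rather than by a chartwise Friedrichs mollifier.
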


This theorem applies, for instance, if $C$ is a constant.
This is often the case when  the operator arises geometrically.
Note that we do not assume that $\mu$ is the volume element induced by the Riemannian metric.
The proof is essentially identical with the discussion in Section~3 of \cite{BB12}.
We give it for the sake of completeness.

\begin{proof}[Proof of Theorem~\ref{Thm:cherwolf}]
We first prove the theorem under the assumption that $C>0$ is constant.
Let $r:M\to\R$ be the distance function from the point $p$, $r(x)=\mathrm{dist}(x,\dM)$.
Then $r$ is a Lipschitz function with Lipschitz constant $1$.
Choose $\rho\in\Ck{\infty}(\R,\R)$ so that $0\le\rho\le 1$, $\rho(t)=0$ for $t\ge2$,
$\rho(t)=1$ for $t\le1$, and $|\rho'|\le 2$.
For $m\in\N$ set
\[
\chi_m(x) := \rho\left(\frac{r(x)}{m}\right).
\]
Then $\chi_m$ is a Lipschitz function and we have almost everywhere
\[
|d\chi_m(x)| \le \frac{2}{m} .
\]
Moreover, $\{\chi_m\}_m$ is a uniformly bounded sequence of functions converging pointwise to $1$.

Now let $ u\in\dom(D_{\max})$.
Then $\| \chi_m u -  u \|_{\Lp{2}(M)} \to 0$ as $m\to\infty$ by Lebesgue's theorem.
Furthermore, $\chi_m u$ has compact support and $\chi_m u\in\dom(D_{\max})$, see Lemma~3.1 in \cite{BB12}.
Since
\begin{align*}
\| D_{\max}(\chi_m u) & - D_{\max} u \|_{\Lp{2}(M)}                                                       \\
                      & \le \| (1-\chi_m)D_{\max} u\|_{\Lp{2}(M)} + \| \sigma_D(d\chi_m) u \|_{\Lp{2}(M)} \\
                      & \le \| (1-\chi_m)D_{\max} u\|_{\Lp{2}(M)} + \frac{2C}{m}\,\|  u \|_{\Lp{2}(M)}
\to 0
\end{align*}
as $m\to\infty$, we conclude that $\chi_m u \to  u$ in the graph norm of $D_{\max}$.

The same discussion applies to $D^\dagger$ because $|\sigma_{D^\dagger}(\xi)|=|-\sigma_{D}(\xi)^*|=|\sigma_{D}(\xi)|$.
This proves the theorem in case $C$ is constant.

Now we only assume that $C$ satisfies \eqref{eq:chernoff}.
Choose a smooth function $f:M\to\R$ with
\[
C(\mathrm{dist}(x,p)) \leq f(x) \le 2\, C(\mathrm{dist}(x,p))
\]
for all $x\in M$.
Let $g$ be the complete Riemannian metric for which \eqref{eq:symest} holds.
Then $h:=f^{-2}g$ is also complete because the $h$-length of a curve $c:[0,\infty)\to M$, starting at $p$ and parametrised by arc-length with respect to $g$, is estimated by
\begin{align*}
L_h(c)
 & =
\int_0^\infty \frac{|c'(t)|_g}{f(c(t))}\, dt
\ge
\frac12 \int_0^\infty \frac{|c'(t)|_g}{C(\mathrm{dist}(c(t),p))}\, dt
\ge
\frac12 \int_0^\infty \frac{1}{C(t)}\, dt
=
\infty .
\end{align*}
With respect to $h$, the principal symbol $\sigma_D$ is uniformly bounded and we have reduced the discussion to the case that $C$ is constant.
\end{proof}

\section{The first trace theorem and regularity}

In the compact boundary case, the regularity assertions for when the operator has higher regularity were given in terms of the $\SobH{k}$ spaces on the boundary.
Here, the compactness of these spaces meant that $\SobH{k}$ spaces on the boundary were canonically determined.
In our case, these assertions have to be rephrased to be local, and therefore, we require a localisation argument.
One issue with localisation at the boundary is that this introduces boundary to a subset that is already on the boundary.
Moreover, before we can begin to consider the questions of local regularity, we need to make sense of the boundary trace as a map from $\dom(D_{\max})$. For that, we introduce some important geometric constructions that will allow us to prove the trace theorem along with the regularity results.

\begin{lemma}
\label{Lem:ExtOp}
For every $x \in \partial M$, there exist compact neighbourhoods $V_x, U_x$ of $x$ in $\partial M$, a compact neighbourhood $Z_x$ of $x$ in $M$ which is contained in a chart, a $\delta > 0$, and a diffeomorphism $\Phi_x: [0, \delta] \times U_x \to Z_x$ such that:
\begin{enumerate}[label=(\roman*), labelwidth=0pt, labelindent=2pt, leftmargin=21pt]
\item \label{Lem:ExtOp:1} $V_x\subset\interior{U}_x$ and $\partial V_x$ and $\partial U_x$ are smooth, compact, boundaryless manifolds,
\item \label{Lem:ExtOp:2} $\Phi_x(0,y) = y$ for all $y\in U_x$,
\item \label{Lem:ExtOp:3} $E$ and $F$ are trivialised over a chart containing $Z_x$,
\item \label{Lem:ExtOp:4} there exists an elliptic first-order differential operator
      $$\tilde{D}: \Ck{\infty}(Z_x; E) \to \Ck{\infty}(Z_x; F)$$
      with
      $$ \tilde{D} \rest{\Phi_x([0,\frac12 \delta] \times V_x)} = D \rest{\Phi_x([0,\frac12 \delta] \times V_x)}$$
      and also such that, w.r.t.\ the chart and the chosen local trivialisations, $\tilde{D}$ has constant coefficients on a neighbourhood of $\Phi_x([0,\delta]\times\partial U_x \cup \{\delta\}\times U_x)$ in $Z_x$.
\end{enumerate}
\end{lemma}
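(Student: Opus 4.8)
The plan is to construct all the data by a single boundary chart argument, then modify the operator away from a smaller concentric box. First I would fix $x \in \partial M$ and choose a boundary chart: since $\partial M$ is a smooth manifold, there is a compact coordinate neighbourhood of $x$ in $\partial M$ diffeomorphic to a closed Euclidean ball. Using the collar neighbourhood theorem (or simply a chart for $M$ adapted to the boundary) I obtain $\delta > 0$ and a diffeomorphism $\Phi_x \colon [0,\delta] \times U_x \to Z_x$ with $\Phi_x(0,y) = y$, where $Z_x$ is a compact neighbourhood of $x$ in $M$ contained in a single coordinate chart of $M$. Shrinking if necessary, I can arrange that $E$ and $F$ are trivialised over that chart, giving \ref{Lem:ExtOp:3} and \ref{Lem:ExtOp:2}. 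For \ref{Lem:ExtOp:1}, inside $U_x$ (which I may take to be a closed ball) I pick a slightly smaller closed ball $V_x$ with $V_x \subset \interior{U}_x$; both $\partial V_x$ and $\partial U_x$ are then spheres, hence smooth compact boundaryless manifolds. All of this is routine differential topology.

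The substantive part is \ref{Lem:ExtOp:4}: producing $\tilde D$ that agrees with $D$ on the inner box $\Phi_x([0,\tfrac12\delta]\times V_x)$ but has constant coefficients near the ``outer'' part of the boundary of the box, namely near $\Phi_x([0,\delta]\times\partial U_x \cup \{\delta\}\times U_x)$. In the chart and trivialisations, $D$ is written as $D = \sum_j a_j(z)\,\partial_{z_j} + b(z)$ with smooth matrix-valued coefficients $a_j, b$ on $Z_x$, and ellipticity means $\sum_j a_j(z)\xi_j$ is invertible for $\xi \neq 0$. I would choose a cutoff function $\psi \in \Ck{\infty}(Z_x)$ with $\psi \equiv 1$ on a neighbourhood of $\Phi_x([0,\tfrac12\delta]\times V_x)$ and $\psi \equiv 0$ on a neighbourhood of $\Phi_x([0,\delta]\times\partial U_x \cup \{\delta\}\times U_x)$, and then set
$$
\tilde D := \sum_j \bigl(\psi(z) a_j(z) + (1-\psi(z)) a_j(z_0)\bigr)\partial_{z_j} + \psi(z) b(z),
$$
where $z_0$ is the coordinate of $x$. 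This clearly equals $D$ where $\psi \equiv 1$ and has constant coefficients ($a_j(z_0)$ and zeroth order term $0$) where $\psi \equiv 0$.

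The one point that needs care — and the main obstacle — is that the interpolated leading coefficient $\psi(z)a_j(z) + (1-\psi(z))a_j(z_0)$ must still be elliptic, i.e.\ $\sum_j\bigl(\psi a_j(z) + (1-\psi)a_j(z_0)\bigr)\xi_j$ invertible for all $z \in Z_x$ and $\xi \neq 0$. Ellipticity is not a convex condition in general, so straight-line interpolation of symbols can fail. The fix is to first shrink $\delta$ and $U_x$ so that $Z_x$ is small enough that $\sum_j a_j(z)\xi_j$ is, uniformly in $\xi$ on the unit sphere, close to $\sum_j a_j(z_0)\xi_j$ in operator norm — possible by continuity and compactness of $\overline{Z_x}$ and the unit sphere bundle. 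On such a small box every convex combination of $\sum_j a_j(z)\xi_j$ and $\sum_j a_j(z_0)\xi_j$ stays within a fixed operator-norm ball of the invertible matrix $\sum_j a_j(z_0)\xi_j$ on which inversion is defined, hence remains invertible; a standard Neumann-series estimate makes this quantitative. Thus, after this preliminary shrinking (which does not affect properties \ref{Lem:ExtOp:1}--\ref{Lem:ExtOp:3}, since those only require $Z_x$ to lie in a chart), the operator $\tilde D$ is elliptic, agrees with $D$ on $\Phi_x([0,\tfrac12\delta]\times V_x)$, and has constant coefficients near the outer faces, completing the construction.
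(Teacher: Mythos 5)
Your proposal is correct, but the key step \ref{Lem:ExtOp:4} is handled by a genuinely different device than in the paper. The paper does not interpolate coefficients at all: writing $D=\sum_j A_j\partial_{x^j}+B$ in the chart, it constructs a smooth map $\Xi$ of the coordinate box into itself which is the identity on an inner ball around $x$ and collapses everything outside a slightly larger ball to the single point $x$, and then sets $\tilde D=\sum_j(A_j\circ\Xi)\partial_{x^j}+B\circ\Xi$. Since the coefficients of $\tilde D$ at any point are values of the coefficients of $D$ at some point of the chart, the principal symbol of $\tilde D$ at $z$ equals that of $D$ at $\Xi(z)$, so ellipticity is inherited for free and no shrinking of the neighbourhood is needed; constancy of the coefficients near the outer faces holds because $\Xi$ is constant there. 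Your route instead freezes the coefficients at $x$ via a convex combination $\psi a_j(z)+(1-\psi)a_j(z_0)$, and you correctly identify that ellipticity is not convex; your fix --- shrink the box so that $\sum_j a_j(z)\xi_j$ stays uniformly close to the invertible $\sum_j a_j(z_0)\xi_j$ on the unit sphere, then invoke a Neumann-series perturbation --- is sound (uniformity follows from compactness of $\overline{Z_x}\times S^{n-1}$, and homogeneity extends invertibility to all $\xi\neq0$). What the paper's retraction buys is that the construction works on the box as given, without the preliminary smallness argument; what your version buys is that it is the standard freezing-of-coefficients argument, at the cost of one extra shrinking step which, as you note, does not disturb \ref{Lem:ExtOp:1}--\ref{Lem:ExtOp:3}.
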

\begin{proof}
Given $x \in \dM$, using a pre-compact chart $(W_x,\psi_x)$ allows us to pull the problem into a precompact open set  $W_x' := \psi_x(W_x) \subset  [0, \infty)\times \R^{n-1}$.
Shrinking the chart if necessary, there exist trivialisations for $E$ and $F$ over it.
With a slight abuse of notation, we identify $x\in\dM$ with $\psi_x(x)\in\{0\}\times\R^{n-1}$.

Let $t_3 \in (0,1)$  such that $B_{t_3}(x) \subset W_x'$.
Let $\delta \in (0,1)$ and $t_2 \in (0, t_3)$ such that  $[0, \delta]\times (B_{t_2}(x) \cap \R^{n-1})  \subset B_{t_3}(x)$.
Let $U_x' := B_{t_2}(x) \cap \R^{n-1}$.
Now, we can choose $0 < t_1 < \frac{3}{4}t_2$ such that $[0, \frac12 \delta]\times (B_{t_1}(x) \cap \R^{n-1})  \subset B_{\frac34 t_2}(x)$, see Figure~\ref{fig:neighbourhoods}.
Let $V_x' := B_{t_1}(x) \cap \R^{n-1}$.
We write the restriction to the boundary of the chart as $\psi_x(y)=(0,\phi_x(y))$.
    Put $U_x := \phi_x^{-1}( U_x')$ and $V_x := \phi_x^{-1}(V_x')$.
Define $\Phi_x$ on $[0,\delta]\times U_x$ by $\Phi_x(t,y)  := \psi_x^{-1}(t, \phi_x(y))$ and denote its image by $Z_x$.
With these choices, assertions \ref{Lem:ExtOp:1}--\ref{Lem:ExtOp:3} hold.

\begin{figure}[ht]
%\begin{overpic}[scale=0.7, grid, tics=10]{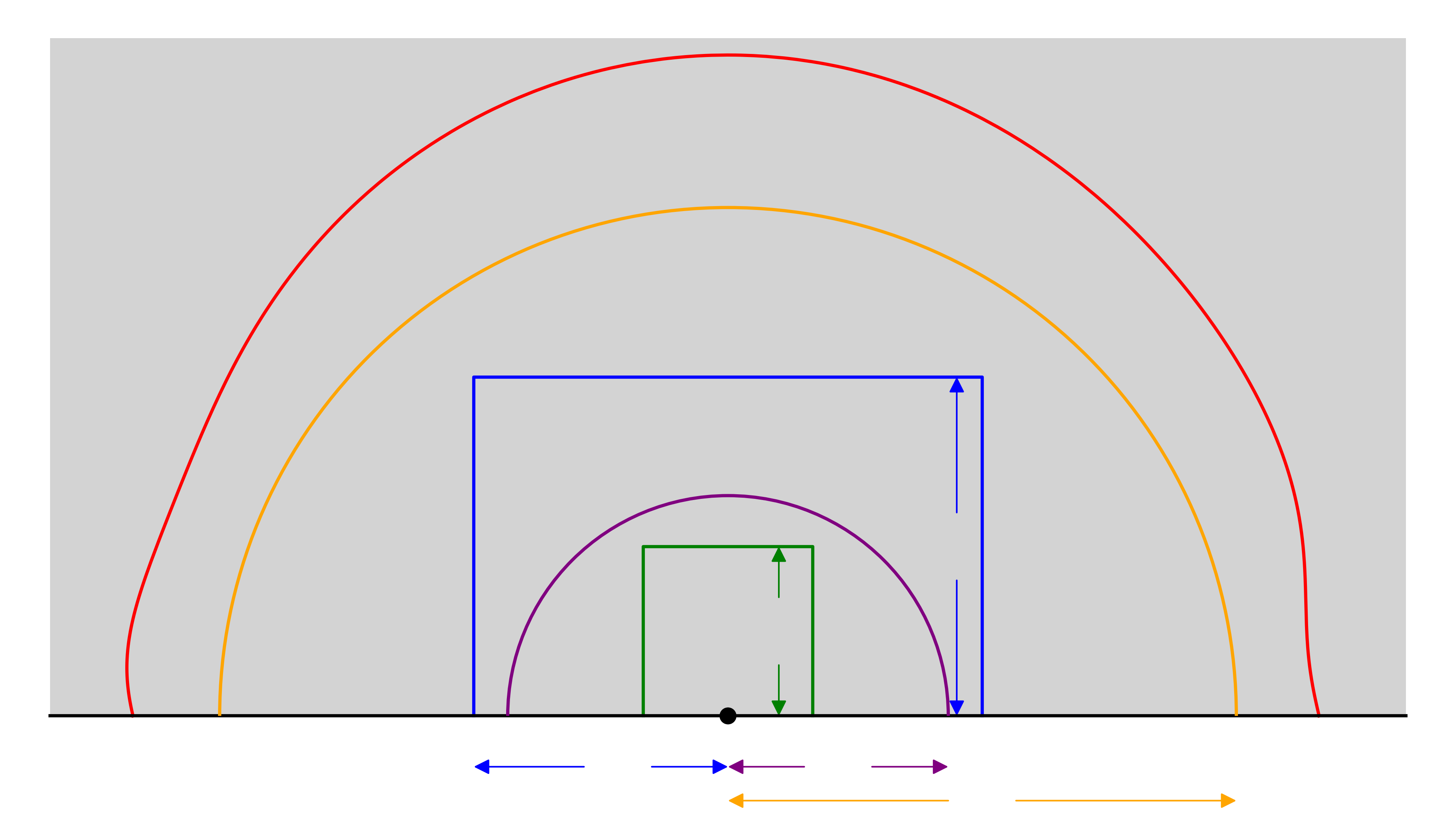}
\begin{overpic}[scale=0.7]{Fig1.png}
\put(48,9){$x$}
\put(98,7){$\R^{n-1}$}
\put(98,30){$[0,\infty)\times\R^{n-1}$}
\put(41,4){\textcolor{blue}{$t_2$}}
\put(55.2,4){\textcolor{violet}{$\frac34 t_2$}}
\put(66.5,1.5){\textcolor{orange}{$t_3$}}
\put(52,13.5){\textcolor{OliveGreen}{$\frac12\delta$}}
\put(65,19){\textcolor{blue}{$\delta$}}
\put(82,42){\textcolor{red}{$W_x'$}}
\end{overpic}

\caption{Choice of neighbourhoods}
\label{fig:neighbourhoods}
\end{figure}

Now, we show \ref{Lem:ExtOp:4}.
Let $\eta \in \Ck{\infty}(\R, [0,1])$ such that $\eta(t) = 0$ for $t\le t_1$ and $\eta(t) = 1$ for $t\ge \frac34 t_2$.
Define $\Xi: [0,\delta] \times U_x' \to [0, \delta] \times U_x'$ by
$$
\Xi(y) := y + \eta \cbrac{|y-x|}(x - y).
$$
Clearly $\Xi$ is a smooth transformation and $\Xi\rest{B_{t_1}(x)} = \id$ and $\Xi\rest{[0,\delta] \times U_x' \setminus B_{t_2}(x)} = x$.

Since $D$ is a first-order operator, we can write $D = \sum_{j=1}^n A_j\frac{\partial}{\partial x^j} + B$ with respect to the local coordinates and chosen trivialisations, where $A_1,\dots,A_n,B$ are matrix-valued functions.
Therefore,  defining
$$
\tilde{D} = \sum_{j=1}^n (A_j\circ\Xi)\frac{\partial}{\partial x^j} + B\circ\Xi
$$
yields the desired operator.
\end{proof}

With this, we obtain the following important proposition.

\begin{corollary}
\label{Cor:EmbMf}
The manifold $N_x = Z_x\cup_{[0,\delta]\times U_x} Z_x'$ obtained by gluing a copy $Z_x'$ of $Z_x$ to $Z_x$ along $\Phi_x([0,\delta]\times \partial U_x)$ carries a smooth measure, Hermitian vector bundles $\tilde{E}$, $\tilde{F}$ and an elliptic first-order differential operator
$$
\dtilde{D}: \Ck{\infty}(N_x, \tilde{E}) \to \Ck{\infty}(N_x, \tilde{F})
$$
such that
\begin{enumerate}[label=(\roman*), labelwidth=0pt, labelindent=2pt, leftmargin=21pt]
\item\label{Cor:EmbMf:1}
the measure on $N_x$ extends the given measure on $Z_x$ smoothly,
\item \label{Cor:EmbMf:2}
      $\tilde{E}\rest{Z_x}=E\rest{Z_x}$ and $\tilde{F}\rest{Z_x}=F\rest{Z_x}$,
      \item\label{Cor:EmbMf:3}
      $\dtilde{D}\rest{\Phi_x([0, \frac12\delta] \times V_x)} = D\rest{\Phi_x([0,\frac12\delta] \times V_x])}$.
\end{enumerate}
\end{corollary}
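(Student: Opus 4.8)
The plan is to realise $N_x$ as the double of $Z_x$ — which via $\Phi_x$ is the compact manifold with corners $[0,\delta]\times U_x$ — across its lateral face $L:=\Phi_x([0,\delta]\times\partial U_x)$, and then to carry the measure, the Hermitian metrics, and the operator through this gluing. By Lemma~\ref{Lem:ExtOp}\ref{Lem:ExtOp:1} the boundary $\partial U_x$ is a smooth closed hypersurface of $\interior{U}_x$, so $L$ is a smooth codimension-one submanifold-with-boundary of $Z_x$ whose boundary $\{0,\delta\}\times\partial U_x$ lies in the other two faces. Hence gluing a second copy $Z_x'$ of $Z_x$ onto $Z_x$ along $L$ by the identity — with the smooth structure on the union fixed by a collar of $L$ — produces a \emph{compact} smooth manifold with boundary $N_x$, the former corners of $Z_x$ now being ordinary points of $\partial N_x$.

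For the bundles I would take $\tilde E$ and $\tilde F$ to be the trivial bundles on $N_x$ of the appropriate ranks, using the trivialisations of $E$ and $F$ over the chart around $Z_x$ from Lemma~\ref{Lem:ExtOp}\ref{Lem:ExtOp:3} to identify $\tilde E\rest{Z_x}=E\rest{Z_x}$ and $\tilde F\rest{Z_x}=F\rest{Z_x}$, which is \ref{Cor:EmbMf:2}. The Hermitian fibre metrics of $E,F$ and the positive density defining $\mu$, written out in the chart, are smooth functions on the $Z_x$-side of $L$ which are positive-definite, resp.\ positive; by a standard extension theorem they extend smoothly across $L$, and, $L$ being compact, the extensions stay positive-definite resp.\ positive on a smaller neighbourhood. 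Interpolating them by a partition of unity with flat data on the rest of $Z_x'$ (the positive-definite cone and $(0,\infty)$ being convex) yields a smooth measure $\mu_{N_x}$ and smooth Hermitian metrics on $N_x$ restricting to the given ones on $Z_x$; this is \ref{Cor:EmbMf:1}.

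For the operator I would invoke Lemma~\ref{Lem:ExtOp}\ref{Lem:ExtOp:4}: with respect to the chart and trivialisations, $\tilde D$ coincides on a neighbourhood of $L$ in $Z_x$ with a \emph{constant-coefficient} first-order operator $D_0=\sum_j A_j\partial_j+B$, which is elliptic on all of $\R^n$ because its symbol $\sum_j A_j\xi_j$ is already invertible for every $\xi\neq0$. I would then fix the smooth structure of $N_x$ near $L$ via the chart obtained by extending $\psi_x$ from a neighbourhood of $L$ in $Z_x$ to a two-sided neighbourhood in $\R^n$, with $Z_x$ appearing as one side and $Z_x'$ as the other; in this chart $\dtilde D:=D_0$ is a smooth elliptic operator on a neighbourhood of $L$ in $N_x$, agreeing with $\tilde D$ on the $Z_x$-side, and its principal symbol is constant there. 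Since a constant symbol is null-homotopic in the space of invertible symbols and $Z_x'\cong[0,\delta]\times U_x$ is contractible, $\dtilde D$ extends from this neighbourhood to a smooth elliptic first-order operator on all of $Z_x'$, hence on all of $N_x$; alternatively one extends $\dtilde D$ over $Z_x'$ by the device used in the proof of Lemma~\ref{Lem:ExtOp}. Finally $\dtilde D\rest{\Phi_x([0,\frac12\delta]\times V_x)}=\tilde D\rest{\Phi_x([0,\frac12\delta]\times V_x)}=D\rest{\Phi_x([0,\frac12\delta]\times V_x)}$ by Lemma~\ref{Lem:ExtOp}\ref{Lem:ExtOp:4}, giving \ref{Cor:EmbMf:3}.

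I expect the only genuinely delicate point to be the extension of $\dtilde D$ over $Z_x'$, i.e.\ reconciling smoothness across the gluing face $L$ with global ellipticity: it is precisely the constant-coefficient form of $\tilde D$ near $L$ supplied by Lemma~\ref{Lem:ExtOp}\ref{Lem:ExtOp:4} that removes the obstruction. Beyond that everything is routine bookkeeping, since a differential operator need not be compatible with the metric or measure, and compactness of $N_x$ takes care of the completeness of $\dtilde D$ and $\dtilde D^\dagger$.
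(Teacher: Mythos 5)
Your proposal is correct and follows essentially the same route as the paper: double $Z_x$ along the gluing face, extend the bundles and the measure in the obvious way via the trivialisations, and use the constant-coefficient form of $\tilde D$ near the gluing locus from Lemma~\ref{Lem:ExtOp}~\ref{Lem:ExtOp:4} to continue the operator smoothly and elliptically over the second copy. The symbol-homotopy digression is superfluous — since the constant-coefficient operator is elliptic on all of $\R^n$, one simply takes it as $\dtilde{D}$ on the whole of $Z_x'$ in the extended chart, which is the alternative you already mention.
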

\begin{proof}
The bundles double across along the respective identification.
Since the operator $\tilde{D}$ constructed in the proposition has constant coefficients near $\Phi_x([0, \delta] \times \partial U_x \union \set{\delta} \times U_x)$, the operator also doubles smoothly.
Together, this yields \ref{Cor:EmbMf:2} and \ref{Cor:EmbMf:3}.
The existence of the extension of the measure as stated in \ref{Cor:EmbMf:1} is obvious.
\end{proof}

With this, let us first obtain the following preliminary trace result.

\begin{corollary}
\label{Cor:Trace0}
Fix $x \in \partial M$ and let $V_x$ and $\delta > 0$ be as in Corollary~\ref{Cor:EmbMf}.
Then, there exists $C > 0$ such that for each $u \in \Ck[c]{\infty}(M;E)$ with $\spt u \subset \Phi_x([0, \frac12 \delta ) \times V_x)$ we have

$$
\norm{u\rest{\partial M}}_{\SobH{-\frac12}(V_x)} \leq C \norm{u}_{D}.$$
\end{corollary}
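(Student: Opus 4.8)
The plan is to transplant $u$ to the compact manifold $N_x$ produced in Corollary~\ref{Cor:EmbMf} and there invoke the classical trace estimate for first-order elliptic operators on a compact manifold with boundary; Lemma~\ref{Lem:ExtOp} and Corollary~\ref{Cor:EmbMf} are engineered precisely to legitimise this reduction.

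First, fix $u\in\Ck[c]{\infty}(M;E)$ with $\spt u\subset\Phi_x([0,\tfrac12\delta)\times V_x)$. Because $V_x\subset\interior{U}_x$ (Lemma~\ref{Lem:ExtOp}~\ref{Lem:ExtOp:1}), the support of $u$ is disjoint from the gluing locus $\Phi_x([0,\delta]\times\partial U_x)$ along which $Z_x'$ is attached, so extension by zero turns $u$ into a section of $\tilde E$ over $N_x$ which is smooth (smooth on $Z_x$, identically zero near the gluing locus and on $Z_x'$), i.e.\ $u\in\Ck{\infty}(N_x;\tilde E)$ --- though $u$ need not vanish on the part $\Phi_x(\{0\}\times V_x)\subset\dM$ of $\partial N_x$. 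By Corollary~\ref{Cor:EmbMf}~\ref{Cor:EmbMf:1}--\ref{Cor:EmbMf:2} the measure and bundle metrics on $N_x$ restrict over $Z_x$ to the given ones, and by Corollary~\ref{Cor:EmbMf}~\ref{Cor:EmbMf:3} one has $\dtilde D u=Du$ on $\Phi_x([0,\tfrac12\delta]\times V_x)\supset\spt u$; since $\dtilde D$ and $D$ are local and $\spt(\dtilde D u),\spt(Du)\subset\spt u$, this yields $\norm{u}_{\Lp{2}(N_x;\tilde E)}=\norm{u}_{\Lp{2}(M;E)}$ and $\norm{\dtilde D u}_{\Lp{2}(N_x;\tilde F)}=\norm{Du}_{\Lp{2}(M;F)}$, so the $\dtilde D$-graph norm of $u$ on $N_x$ equals $\norm{u}_D=\norm{u}_{\Lp{2}(M;E)}+\norm{Du}_{\Lp{2}(M;F)}$.

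Next, $N_x$ is a smooth compact manifold with smooth boundary: $\partial N_x$ is the disjoint union of the double of $U_x$ across $\partial U_x$ and the double of $\Phi_x(\{\delta\}\times U_x)$ across $\Phi_x(\{\delta\}\times\partial U_x)$, which are smooth since $\partial U_x$ is smooth, while $\dtilde D$, $\tilde E$, $\tilde F$ and the measure extend smoothly across the gluing thanks to the constant-coefficient construction of Lemma~\ref{Lem:ExtOp}. Hence the trace theorem for first-order elliptic operators on compact manifolds with boundary (see \cite{BB12}) applies to $\dtilde D$ on $N_x$: there is a constant $C_1$, depending on $x$ only, with
\begin{equation*}
	\norm{v\rest{\partial N_x}}_{\SobH{-\frac12}(\partial N_x;\tilde E)}\leq C_1\big(\norm{v}_{\Lp{2}(N_x)}+\norm{\dtilde D v}_{\Lp{2}(N_x)}\big)\qquad\text{for all }v\in\Ck{\infty}(N_x;\tilde E).
\end{equation*}
Applied to $v=u$, this and the previous paragraph give $\norm{u\rest{\partial N_x}}_{\SobH{-\frac12}(\partial N_x)}\leq C_1\norm{u}_D$.

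Finally, $u\rest{\dM}$ is supported in $V_x$ (as $\spt u\subset\Phi_x([0,\tfrac12\delta)\times V_x)$) and coincides with the restriction to $V_x$ of $u\rest{\partial N_x}$, whence $\norm{u\rest{\dM}}_{\SobH{-\frac12}(V_x)}\lesssim\norm{u\rest{\partial N_x}}_{\SobH{-\frac12}(\partial N_x)}\leq C_1\norm{u}_D$, which is the claim. The only substantive ingredient is the compact-boundary trace theorem used above; the rest is the bookkeeping that ensures $u$, $\norm{u}_{\Lp{2}}$, $\norm{Du}_{\Lp{2}}$ and $u\rest{\dM}$ are all faithfully represented inside $N_x$. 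The single place that needs a little care is the last comparison of a local $\SobH{-\frac12}$-norm on $V_x$ with the global one on $\partial N_x$; it is harmless because the support hypothesis keeps $u\rest{\dM}$ well inside $U_x$ (multiply by a fixed $\chi\in\Ck[c]{\infty}(\interior{U}_x)$ that is $1$ on $V_x$, which acts boundedly on $\SobH{-\frac12}$), so no borderline $\SobH{1/2}$-extension issue intervenes.
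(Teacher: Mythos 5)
Your proposal is correct and follows essentially the same route as the paper: extend $u$ by zero to the compact double $N_x$, observe $\dtilde{D}u=Du$ by locality and the support hypothesis, invoke the compact-boundary trace theorem, and restrict back to $V_x$. The only discrepancy is the citation: since $\dtilde{D}$ is a general first-order elliptic operator (no selfadjoint adapted boundary operator is available at this stage), the relevant compact-boundary trace theorem is Theorem~2.3 of \cite{BBan} rather than \cite{BB12}, which the paper applies to get a bound in $\checkH(A)$ on $\partial N_x$ and then composes with the continuous embedding $\checkH(A)\embeds\SobH{-\frac12}(\partial N_x;E)$ to reach the stated estimate.
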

\begin{proof}
Under the stated assumptions, by locality, we see that $\dtilde{D}u = Du$.
Invoking Theorem~2.3 in \cite{BBan}, we obtain that
$$ \norm{u\rest{\partial M}}_{\checkH(A)}  \lesssim \norm{u}_{\dtilde{D}} = \norm{u}_{D},$$
where
$$ \checkH(A) = \chi^-(A)\SobH{\frac12}(\partial N_x,E) \oplus \chi^+(A)\SobH{-\frac12}(\partial N_x, E).$$
Since $\checkH(A) \embeds \SobH{-\frac12}(\partial N_x,E)$ is a continuous embedding, the assertion follows.
\end{proof}

First, applying this proposition, we prove the following important density result.
The proposition allows for the reduction of the density claim to the well-known assertion for compactly boundary considered in \cite{BB12}.

\begin{lemma}
\label{Lem:Density}
Assume \ref{Hyp:StdFirst}--\ref{Hyp:StdLast}.
Then the space $\Ck[c]{\infty}(M;E)$ is dense in $\dom(D_{\max})$ in $\norm{\cdot}_D$.
\end{lemma}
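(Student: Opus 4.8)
The plan is to prove density by combining a cutoff argument near the boundary with the known density result on manifolds with compact boundary from \cite{BB12}. The completeness hypothesis \ref{Hyp:Complete} already gives that compactly supported sections in $\dom(D_{\max})$ are dense, so it suffices to approximate an arbitrary \emph{compactly supported} $u \in \dom(D_{\max})$ by elements of $\Ck[c]{\infty}(M;E)$ in the graph norm. Fix such a $u$ with $K := \spt u$ compact.

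First I would handle the part of $u$ away from the boundary. Using a partition of unity subordinate to a finite cover of $K$, write $u = u_0 + \sum_{j} u_j$ where $u_0$ is supported in the interior of $M$ and each $u_j$ is supported in a boundary chart $Z_{x_j}$ of the type furnished by Lemma~\ref{Lem:ExtOp}; each $u_j$ still lies in $\dom(D_{\max})$ (multiplication by a smooth compactly supported function preserves the maximal domain, since it only perturbs $Du$ by a zeroth-order term $\sym_D(d\eta)u \in \Lp2$). The interior piece $u_0$ can be approximated in the graph norm by sections in $\Ck[cc]{\infty}(M;E)$ by the standard interior mollification (Friedrichs). So the whole problem reduces to approximating a single $u_j$ supported in $\Phi_{x_j}([0,\tfrac12\delta)\times V_{x_j})$.

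For such a $u_j$, the key device is the doubled manifold $N_{x_j}$ from Corollary~\ref{Cor:EmbMf}: since $\dtilde D = D$ on the support of $u_j$ by locality, $u_j$ extends by zero to an element of $\dom(\dtilde D_{\max})$ on $N_{x_j}$, which is a manifold with \emph{compact} boundary $\partial N_{x_j} = \partial Z_{x_j}$ and, being compact itself, automatically has a complete $\dtilde D$. Now invoke the density theorem of \cite{BB12} (Theorem 3.7 there, or the relevant statement) on $N_{x_j}$: $\Ck[c]{\infty}(N_{x_j};\tilde E)$ is dense in $\dom(\dtilde D_{\max})$ in the graph norm. Approximate $u_j$ by smooth $w_k \to u_j$; multiplying by a fixed cutoff $\chi \in \Ck[c]{\infty}(N_{x_j})$ that equals $1$ on a neighbourhood of $\spt u_j$ and is supported in $\Phi_{x_j}([0,\tfrac12\delta)\times V_{x_j})$, the sections $\chi w_k$ are still smooth, still converge to $\chi u_j = u_j$ in the graph norm (the extra term is $\sym_{\dtilde D}(d\chi)w_k \to \sym_{\dtilde D}(d\chi)u_j = 0$ since $d\chi = 0$ on $\spt u_j$), and are now supported in the region where $\dtilde D = D$, hence restrict to genuine elements of $\Ck[c]{\infty}(M;E)$ with $\dtilde D(\chi w_k) = D(\chi w_k)$. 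This gives the desired approximants; summing over $j$ and adding back the interior approximation completes the proof.

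The main obstacle I anticipate is bookkeeping rather than a conceptual one: ensuring the cutoff $\chi$ is chosen so that the $\chi w_k$ genuinely lie in $\Ck[c]{\infty}(M;E)$ — that is, supported in the region $\Phi_{x_j}([0,\tfrac12\delta)\times V_{x_j})$ where the glued operator and bundle agree with the original ones — while simultaneously $\chi \equiv 1$ near $\spt u_j$ so that the commutator term vanishes in the limit. The nested neighbourhoods $V_{x_j} \subset \interior U_{x_j}$ and the two scales $\tfrac12\delta < \delta$ built into Lemma~\ref{Lem:ExtOp} are precisely what make room for this; one just needs to track that $\spt u_j$ sits strictly inside the "$\tfrac12\delta$, $V_{x_j}$" box and choose $\chi$ between that box and the slightly larger one. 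A minor point to check is that extension-by-zero of $u_j$ from $M$ to $N_{x_j}$ indeed lands in $\dom(\dtilde D_{\max})$, which follows because $u_j$ vanishes near $\partial U_{x_j}$ (where the gluing happens) so no distributional boundary term is created along the seam.
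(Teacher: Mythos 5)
Your proposal is correct and follows essentially the same route as the paper: reduce to compact support via completeness, split off the interior part with a cutoff, push each boundary-localised piece onto the doubled manifold $N_{x_j}$ of Corollary~\ref{Cor:EmbMf}, and invoke the compact-boundary density theorem of \cite{BB12} there. The only cosmetic difference is that you enforce the support condition on the approximants by an explicit extra cutoff $\chi$ (checking the commutator term vanishes), where the paper simply asserts the sequence from \cite{BB12} can be chosen with the right supports; your version is, if anything, slightly more explicit on that point.
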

\begin{proof}
By \ref{Hyp:Complete}, we can assume that $\spt u$ is compact.
Let $\Omega \subset M$ be a smooth domain with $\spt u \subset \Omega$.
Using Corollary~\ref{Cor:EmbMf}, cover $\Omega \cap  M$ by sets $\interior{V}_i$, where $V_i$ and $U_i$ are the sets guaranteed by Lemma~\ref{Lem:ExtOp}.
Choose $\delta = \min\set{\delta_i}$ and let $\chi \in \Ck[c]{\infty}(M)$ such that $\chi = 1$ on $\Phi_i([0, \frac12 \delta) \times \interior{V}_i)$ and $0$ outside of $\Phi_i[0,\delta) \times \interior{U}_i$.
Then, $u = \chi u + (1 - \chi)u$ and $\spt (1 - \chi u) \subset \interior{M}$.
Therefore, by interior regularity, there exists $u^0_n \to (1  - \chi u)$ with $u^0_n \in \Ck[cc]{\infty}(M,E)$.

We now consider approximating $\chi u$ by smooth sections.
For this, let $\set{\eta_i}$ be a smooth partition of unity subordinate to $\Phi_i ([0,\delta)\times \interior{V}_i)$ extended to $0$ to the whole of $N_i$.
Now, since $D$ and $\dtilde{D}_i$ are equal on $\Phi_i ([0,\delta) \times \interior{V}_i)$, we obtain that $\eta_i \chi u \in \dom(\dtilde{D}_{i,\max})$.
By Theorem 6.7 in \cite{BB12}, we obtain $\tilde{u}^i_n \to \eta_i \chi u$ in the $\dtilde{D}_i$-norm with $\tilde{u}^i_n \in \Ck[c]{\infty}(N_i;E)$.
However, by the support properties of $\eta_i \chi u$, this is equal to convergence in the $D$-norm.
This sequence can be chosen such that $\spt \tilde{u}^i_n \subset \Phi_i ([0,  \frac12 \delta) \times V_i))$ and by extension to $0$  in $M$, we obtain a sequence $u^i_n$.
Then,
$$ u_n := u^0_n + \dots + u^k_n \in \Ck[c]{\infty}(M,E)$$
and by construction, $u_n \to u$ the $D$-norm.
\end{proof}

Recall that for a bounded domain $\Omega \subset M$ (or in $\dM$), the negative order Sobolev space $\SobH{-\alpha}(\Omega;E) := \SobH[0]{\alpha}(\Omega;E)^\ast$.
In particular,
$$\norm{u}_{\SobH{-\alpha}(\Omega)} \simeq \sup_{0 \neq v \in \Ck[c]{\infty}(\Omega;E)} \frac{\inprod{u,v}}{\norm{v}_{\SobH{\alpha}(\Omega)}}.$$

\begin{lemma}
\label{Lem:NegSobIneq}
Let $\Omega \subset \interior{\tilde{\Omega}} \subset\tilde{\Omega}\subset  N$, where $N$ is a manifold and $\tilde{\Omega},\ \Omega$ are connected compact subsets with nonempty interior.
Then, for $\alpha \geq 0$ and  all $u \in \SobH{-\alpha}(\tilde{\Omega})$
$$ \norm{ u\rest{\Omega}}_{\SobH{-\alpha}(\Omega)} \lesssim  \norm{ u }_{\SobH{-\alpha}(\tilde{\Omega})}.$$
\end{lemma}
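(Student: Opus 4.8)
The plan is to prove the estimate by duality, reducing the bound on $\norm{u\rest{\Omega}}_{\SobH{-\alpha}(\Omega)}$ to a statement about extending test sections from $\Omega$ to $\tilde\Omega$. By the characterisation of negative-order norms recalled just before the lemma,
\[
	\norm{u\rest{\Omega}}_{\SobH{-\alpha}(\Omega)} \simeq \sup_{0\neq v\in\Ck[c]{\infty}(\Omega;E)} \frac{\inprod{u\rest{\Omega},v}}{\norm{v}_{\SobH{\alpha}(\Omega)}},
\]
and for $v\in\Ck[c]{\infty}(\Omega;E)$ we have $\inprod{u\rest\Omega,v} = \inprod{u,v}$ where on the right $v$ is viewed as an element of $\Ck[c]{\infty}(\tilde\Omega;E)$ by extension by zero (this uses $\spt v\subset\interior\Omega\subset\interior{\tilde\Omega}$ and is where the compact containment hypothesis is essential).

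First I would fix a cutoff $\psi\in\Ck[c]{\infty}(N)$ with $\psi\equiv 1$ on a neighbourhood of $\Omega$ and $\spt\psi\subset\interior{\tilde\Omega}$, which exists precisely because $\Omega$ is compact and contained in the open set $\interior{\tilde\Omega}$. Then for $v\in\Ck[c]{\infty}(\Omega;E)$ extended by zero to $\tilde\Omega$, multiplication by $\psi$ fixes $v$, so $\inprod{u,v} = \inprod{u,\psi v}$. The key step is that multiplication by the fixed smooth compactly supported function $\psi$ is a bounded operator on $\SobH{\alpha}(\tilde\Omega;E)$ — a standard fact for $\alpha\geq 0$ in a fixed coordinate/trivialisation patch (or obtained by localising via a finite atlas and using that $\psi$ and its derivatives are bounded), with operator norm depending only on $\psi$, $\tilde\Omega$, $\alpha$ and the bundle data but not on $v$. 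Hence
\[
	\modulus{\inprod{u,v}} = \modulus{\inprod{u,\psi v}} \leq \norm{u}_{\SobH{-\alpha}(\tilde\Omega)}\,\norm{\psi v}_{\SobH{\alpha}(\tilde\Omega)} \lesssim \norm{u}_{\SobH{-\alpha}(\tilde\Omega)}\,\norm{v}_{\SobH{\alpha}(\tilde\Omega)}.
\]
It remains to bound $\norm{v}_{\SobH{\alpha}(\tilde\Omega)}$ by $\norm{v}_{\SobH{\alpha}(\Omega)}$; since $v$ is supported in $\Omega$, these two norms are comparable (extension by zero from $\SobH[0]{\alpha}(\Omega)$ into $\SobH{\alpha}(\tilde\Omega)$ is an isometry onto its image up to equivalence of the ambient norms — again a standard localisation fact for $\alpha\geq 0$). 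Taking the supremum over $v$ gives the claim.

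The main obstacle is purely technical: one must be a little careful that all the implied constants in ``multiplication by $\psi$ is bounded on $\SobH{\alpha}$'' and ``extension by zero is bounded'' are uniform and depend only on the fixed data $(\Omega,\tilde\Omega,N,\psi,\alpha,E)$ and not on $u$ or $v$. This is most cleanly handled by covering $\tilde\Omega$ by finitely many coordinate charts over which $E$ is trivialised, reducing to the Euclidean model where both facts are classical (for $\alpha$ a nonnegative integer they are elementary; for general real $\alpha\geq 0$ one invokes the corresponding interpolation/Fourier-analytic statements). Since the lemma is stated on an abstract manifold $N$ with no metric fixed, one should also note at the outset that $\SobH{\alpha}$ on a compact subset is well-defined up to equivalence of norms independently of the auxiliary choices, so the statement is meaningful; this is already implicit in the paper's conventions. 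No genuine difficulty arises beyond bookkeeping.
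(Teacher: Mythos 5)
Your proposal is correct and follows essentially the same route as the paper: dualise, extend test sections by zero from $\Omega$ to $\tilde\Omega$ (where for integer order the two norms of the extended section actually coincide), and handle non-integer $\alpha$ by interpolation. The only difference is your cutoff $\psi$, which is redundant since $\psi v = v$ for $v\in\Ck[c]{\infty}(\Omega;E)$.
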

\begin{proof}
Taking the extension $\tilde{v} \in \Ck[c]{\infty}(\tilde{\Omega})$, we note
$$
\modulus{u\rest{\Omega}[v]}= \modulus{u[\tilde{v}]} \leq \norm{u}_{\SobH{-\ceil{\alpha}}(\Omega)} \norm{\tilde{v}}_{\SobH{\ceil{\alpha}}(\tilde{\Omega})}.$$
But, since $\spt \tilde{v} \subset \Omega$, we have that
$$\norm{\tilde{v}}_{\SobH{\ceil{\alpha}}(\tilde{\Omega})} =\norm{v}_{\SobH{\ceil{\alpha}}({\Omega})}.$$
The assertion follows since $\SobH{-s}(\Omega;E)$ and $\SobH{s}(\Omega;E)$ form interpolation scales.
\end{proof}

Next, we prove the following lemma which is essential to the proof of Theorem~\ref{Thm:Trace1}.

\begin{lemma}
\label{Lem:OmegaEst}
Let $\Omega_0 \subset \interior{\Omega}_1$ and  $\Omega_1 \subset \interior{\Omega}_2 \subset N$, where $N$ is a manifold and $\Omega_i$ are connected compact sets with nonempty interior.
For $\alpha \geq 0$ and all $u \in \Ck[cc]{\infty}(\Omega_2)$ with $\spt u \subset \Omega_0$,
$$ \norm{u}_{\SobH{-\alpha}(\Omega_2)} \lesssim \norm{u}_{\SobH{-\alpha}(\Omega_1)}.$$
\end{lemma}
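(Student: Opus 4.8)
The plan is to run the duality characterisation of the negative order space backwards, using a single fixed cutoff to localise the test functions. Recall that $\SobH{-\alpha}(\Omega_2;E) = \SobH[0]{\alpha}(\Omega_2;E)^\ast$, so that
$$\norm{u}_{\SobH{-\alpha}(\Omega_2)} \simeq \sup_{0\neq v\in\Ck[c]{\infty}(\Omega_2;E)} \frac{\modulus{\inprod{u,v}}}{\norm{v}_{\SobH{\alpha}(\Omega_2)}}.$$
Since $\spt u \subset \Omega_0$ is a compact subset of the open set $\interior{\Omega}_1$, fix once and for all a cutoff $\chi\in\Ck[c]{\infty}(\interior{\Omega}_1)$ with $\chi\equiv 1$ on a neighbourhood of $\Omega_0$; this $\chi$ depends only on $\Omega_0$ and $\Omega_1$, not on $u$ or the test functions. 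Then for every $v\in\Ck[c]{\infty}(\Omega_2;E)$ we have $\inprod{u,v} = \inprod{u,\chi v}$, and $\chi v \in \Ck[c]{\infty}(\interior{\Omega}_1;E)$.

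First I would bound $\modulus{\inprod{u,\chi v}}$ by applying the same duality characterisation on the smaller set $\Omega_1$: since $\chi v$ is an admissible test function on $\Omega_1$,
$$\modulus{\inprod{u,\chi v}} \lesssim \norm{u}_{\SobH{-\alpha}(\Omega_1)}\,\norm{\chi v}_{\SobH{\alpha}(\Omega_1)}.$$
Next I would absorb the cutoff in two elementary moves. Restriction-type Sobolev norms are monotone under shrinking the domain — any $\SobH{\alpha}(\Omega_2)$-extension of $\chi v$ is, after further restriction, a $\SobH{\alpha}(\Omega_1)$-extension — so $\norm{\chi v}_{\SobH{\alpha}(\Omega_1)} \le \norm{\chi v}_{\SobH{\alpha}(\Omega_2)}$. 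And multiplication by the fixed function $\chi$ is bounded on $\SobH{\alpha}$ for $\alpha\geq 0$ (the Leibniz rule for integer $\alpha$, interpolation for the remaining values, using that the $\SobH{s}$ form interpolation scales exactly as in the proof of Lemma~\ref{Lem:NegSobIneq}), so $\norm{\chi v}_{\SobH{\alpha}(\Omega_2)} \lesssim \norm{v}_{\SobH{\alpha}(\Omega_2)}$ with a constant depending only on $\chi$ and $\alpha$. Chaining the three estimates gives $\modulus{\inprod{u,v}} \lesssim \norm{u}_{\SobH{-\alpha}(\Omega_1)}\norm{v}_{\SobH{\alpha}(\Omega_2)}$; dividing by $\norm{v}_{\SobH{\alpha}(\Omega_2)}$ and taking the supremum over $0\neq v\in\Ck[c]{\infty}(\Omega_2;E)$ yields the claim.

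There is no genuine analytic obstacle here; the only points requiring care are bookkeeping ones. One has to pin down the convention that the negative order spaces are duals of the $\SobH[0]{\alpha}$ spaces so that the supremum characterisation above is legitimate, and then check the two facts about restriction-type Sobolev norms just invoked — monotonicity under shrinking the domain, and the multiplier bound for a fixed compactly supported smooth function — which is the only place where fractional orders need attention and is dealt with, as above, by interpolation (or one may first reduce to integer $\alpha$ by interpolation and then use only the Leibniz rule). For bundle-valued $u$ the statement is local: one trivialises $E$ over a finite cover of $\Omega_2$ by coordinate charts and runs the scalar argument componentwise with a subordinate partition of unity, the partition-of-unity and transition-function factors only affecting the implicit constants.
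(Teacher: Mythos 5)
Your argument is correct and is essentially the paper's own proof: both introduce a fixed cutoff $\chi$ equal to $1$ near $\Omega_0$ and supported in $\Omega_1$, replace $v$ by $\chi v$ in the duality pairing, and absorb the cutoff using the multiplier bound on $\SobH[0]{\ceil{\alpha}}$ (Leibniz rule) followed by interpolation to reach general $\alpha\ge 0$, together with the fact that the norm of the compactly supported function $\chi v$ does not increase when passing from $\Omega_2$ to $\Omega_1$. The only cosmetic difference is that you apply the dual-norm estimate on $\Omega_1$ directly to bound the pairing, whereas the paper reorganises the supremum; the content is identical.
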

\begin{proof}
Let $\chi \in \Ck[cc]{\infty}(\Omega_2)$ such that $\chi = 1$ on $\Omega_0$ and $\chi = 0$ in $\Omega_2 \setminus \Omega_1$.
Then,
\begin{align}
\label{Eq:OmegaEst1}
\norm{u}_{\SobH{-\alpha}(\Omega_2)}
\simeq
\sup_{v \in \Ck[cc]{\infty}(\Omega_2)} \frac{\modulus{\inprod{u, v}}}{\norm{v}_{\SobH[0]{\alpha}(\Omega_2)}}
=
\sup_{v \in \Ck[cc]{\infty}(\Omega_2)} \frac{\modulus{\inprod{\chi u, v}}}{\norm{v}_{\SobH[0]{\alpha}(\Omega_2)}}
=
\sup_{v \in \Ck[cc]{\infty}(\Omega_2)} \frac{\modulus{\inprod{ u, \chi v}}}{\norm{v}_{\SobH[0]{\alpha}(\Omega_2)}}
\end{align}
Now, note that
$$ \norm{\chi v}_{\SobH[0]{\ceil{\alpha}}(\Omega_1)}
=
\norm{\chi v}_{\SobH[0]{\ceil{\alpha}}(\Omega_2)}
\lesssim
\norm{v}_{\SobH[0]{\ceil{\alpha}}(\Omega_2)},
$$
and this inequality holds also for the case $\alpha = 0$, i.e., for $\Lp{2}$.
Therefore, on noting that $\Ck[cc]{\infty}(\Omega_2)$ is dense in $\SobH[0]{\beta}(\Omega_2)$ for $\beta \in [0,\ceil{\alpha}]$, by interpolation, we obtain $\norm{\chi v}_{\SobH[0]{\ceil{\alpha}}(\Omega_1)} \lesssim \norm{v}_{\SobH[0]{\ceil{\alpha}}(\Omega_2)}$.
Hence, substituting this into \eqref{Eq:OmegaEst1},
$$
\norm{u}_{\SobH{-\alpha}(\Omega_2)}
\lesssim
\sup_{v \in \Ck[cc]{\infty}(\Omega_2)} \frac{\modulus{\inprod{ u, \chi v}}}{\norm{\chi v}_{\SobH[0]{\alpha}(\Omega_1)}}
\leq
\sup_{w \in \Ck[cc]{\infty}(\Omega_1)} \frac{\modulus{\inprod{ \chi u, w}}}{\norm{w}_{\SobH[0]{\alpha}(\Omega_1)}}
= \norm{\chi u}_{\SobH{-\alpha}(\Omega_1)},$$
where the last inequality follows from the fact that $\set{ (\chi v)\rest{\Omega_1}: v \in \Ck[cc]{\infty}(\Omega_2)} \subset \Ck[cc]{\infty}(\Omega_1)$, and $\chi u = u$ on $\Omega_1$.
\end{proof}

With this, we are now able to prove Theorem~\ref{Thm:Trace1}.

\begin{proof}[Proof of Theorem~\ref{Thm:Trace1}]
In Lemma~\ref{Lem:Density}, we have already proved that $\Ck[c]{\infty}(M;E)$ is dense in $\dom(D_{\max})$.
Therefore, to prove that $u \mapsto u\rest{\partial M}$ extends uniquely to a bounded map $\dom(D_{\max}) \to \SobH[loc]{-\frac12}(\partial M;E)$, it suffices to prove that, for any $\Omega \subset \partial M$ compact with nonempty interior, we obtain that for all $u \in \Ck[c]{\infty}(M;E)$,
$$ \norm{u\rest{\partial M}}_{\SobH{-\frac12}(\Omega)} \lesssim \norm{u}_{D}.$$

For that, fix $u \in \Ck[c]{\infty}(M;E)$ and fix $\Omega \subset \partial M$ compact with nonempty interior.
Let $\set{V_i}_{i=1}^k$ and $\set{U_i}_{i=1}^k$ be compact subsets such that $\Omega \subset \union_{i=1}^k \interior{V}_i$ and $V_i \subset \interior{U}_i$ satisfying the conclusion of Lemma~\ref{Lem:ExtOp} along with manifolds $N_i$, $\delta_i$ and diffeomorphisms $\Phi_i:[0, \delta] \times U_i \to Z_i \subset N_i$ from Corollary~\ref{Cor:EmbMf}.
Choose $\delta := \min\set{\delta_i: 1 \leq i \leq k}$, and let $\set{\eta_i}$ be a smooth partition of unity subordinate to $\set{ \Phi_i ([0,\frac12 \delta) \times \interior{V}_i)}$, extended by zero to the entirety of $N_i$.

We first pass to $u' = \chi u$ for $\chi \in \Ck[c]{\infty}(M;[0,1])$ where $\chi = 1$ on $\union_i [0, \frac13 \delta) \times V_i$ and $0$ outside of $[0,\frac12) \times V_i$.
This is possible because $Du = D(u' + (1 - \chi)u) = Du' + \sym_{D}(\dot, d\chi)u$ and
\begin{equation}
\label{Eq:Trace1:1}
\norm{u'}_{D} \simeq \norm{Du'} + \norm{u'} \leq \norm{Du} + \sup_{x \in \spt \chi} \modulus{\sym_{D}(x,d\chi)} \norm{u} + \sup_{x \in \spt \chi} \modulus{\chi(x)} \norm{u} \lesssim \norm{u}_{D}.
\end{equation}

Note that $u'\rest{\partial M} = u\rest{\partial M}$ and  we obtain that
\begin{equation}
\label{Eq:Trace1:2}
\norm{u\rest{\partial M}}_{\SobH{-\frac12}(\Omega)} = \norm{u'\rest{\partial M}}_{\SobH{-\frac12}(\Omega)} \lesssim \sum_{i=1}^k \norm{(\eta_i u)\rest{\partial M}}_{\SobH{-\frac12}(\Omega_i)}
\lesssim
\sum_{i=1}^k \norm{(\eta_i u)\rest{\partial M}}_{\SobH{-\frac12}(U_i)},
\end{equation}
where the ultimate inequality follows from invoking Lemma~\ref{Lem:OmegaEst} by choosing $\Omega_0 = V_i$, $\Omega_1 = U_i$ and $\Omega_2 = \Omega$ for each $i$.

Now, since $\Phi_i([0,\delta] \times U_i) = Z_i \subset N_i$ and $\spt (\eta_i u') \subset \Phi_i([0, \frac12 \delta) \times V_i) \subset Z_i$, from Corollary~\ref{Cor:Trace0}, we obtain
\begin{equation}
\label{Eq:Trace1:3}
\norm{(\eta_i u')\rest{\partial M}} \lesssim \norm{\eta_i u'}_{\dtilde{D}_i} = \norm{ (\eta_i u')}_{D} \leq \norm{u'}_{D},
\end{equation}
by the support properties of $(\eta_i u')$, that $\dtilde{D}_i$ is a local operator along with the fact that it is equal to $D$ on $\spt (\eta_i u')$.
On noting that $(\eta_i u')\rest{\partial M} = (\eta_i u)\rest{\partial M}$, combining \eqref{Eq:Trace1:1}, \eqref{Eq:Trace1:2}, and \eqref{Eq:Trace1:3}  yields
$$
\norm{(\eta_i u)\rest{\partial M}}_{\SobH{-\frac12}(V_i)} = \norm{ (\eta_i u') \rest{\partial M}}_{\SobH{-\frac12}(V_i)}  \lesssim \norm{u'}_{D} \lesssim \norm{u}_{D}$$
for each $i$.
Combining these estimates, we obtain
\[
\norm{u\rest{\partial M}}_{\SobH{-\frac12}(\Omega)} \lesssim \sum_{i=1}^k \norm{(\eta_i u)\rest{\partial M}}_{\SobH{-\frac12}(V_i)}  \leq k \norm{u}_{D} \lesssim \norm{u}_{D}.
\qedhere
\]
\end{proof}

Next, we note the following important assertion regarding local Sobolev scales.

\begin{lemma}
\label{Lem:RestReg}
For $k > \frac12$,  $u \mapsto u\rest{\partial M}: \SobH[loc]{k}(M) \to \SobH[loc]{k-\frac{1}{2}}(\partial M)$ continuously.
\end{lemma}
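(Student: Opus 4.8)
The plan is to reduce the assertion to the classical trace estimate on the half-space $\R^n_+ := [0,\infty)\times\R^{n-1}$ by a finite cover of the boundary by adapted charts. Since $\SobH[loc]{k}(M)$ and $\SobH[loc]{k-\frac12}(\partial M)$ carry the locally convex topologies generated by the seminorms $u\mapsto\norm{u\rest{\Omega}}_{\SobH{k}(\Omega)}$, resp.\ $u\mapsto\norm{u\rest{\Omega}}_{\SobH{k-\frac12}(\Omega)}$, over precompact open sets, it suffices to prove: for every precompact open $\Omega\subset\partial M$ there is a precompact open $\tilde\Omega\subset M$ with $\close\Omega\subset\tilde\Omega$ and a constant $C<\infty$ such that $\norm{u\rest{\partial M}}_{\SobH{k-\frac12}(\Omega)}\le C\norm{u}_{\SobH{k}(\tilde\Omega)}$ for all $u\in\SobH[loc]{k}(M)$; by density of smooth sections it is enough to check this for $u\in\Ck{\infty}$. (The case of bundle coefficients follows componentwise after a local trivialisation, so I only discuss functions.)

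First I would cover $\close\Omega$ by finitely many boundary charts $(W_i,\psi_i)_{i=1}^{k}$, $\psi_i\colon W_i\to\psi_i(W_i)\subset\R^n_+$ a diffeomorphism onto an open set with $\psi_i(W_i\cap\partial M)\subset\{0\}\times\R^{n-1}$, arranged so that $\union_i W_i$ is contained in a single fixed precompact open $\tilde\Omega\subset M$ with $\close\Omega\subset\tilde\Omega$; choose $\{\eta_i\}\subset\Ck[c]{\infty}(W_i)$ with $\sum_i\eta_i\equiv1$ on a neighbourhood of $\close\Omega$. Then on $\Omega$ we have $u\rest{\partial M}=\sum_i(\eta_i u)\rest{\partial M}$, so it suffices to bound each term. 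Transport $\eta_i u$ to the half-space by $v_i:=(\psi_i^{-1})^\ast(\eta_i u)$, extended by zero to $\Ck[c]{\infty}(\R^n_+)$; under $\psi_i$ its restriction to $\{0\}\times\R^{n-1}$ corresponds, up to the fixed diffeomorphism $\psi_i\rest{W_i\cap\partial M}$, to $(\eta_i u)\rest{\partial M}$.

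On the half-space I would invoke the standard trace theorem: using a bounded extension operator $\SobH{k}(\R^n_+)\to\SobH{k}(\R^n)$ and the Fourier transform in the tangential variables, the identity $\int_{\R}(1+|\xi'|^2+\xi_n^2)^{k}\,d\xi_n\simeq(1+|\xi'|^2)^{k-\frac12}$, valid precisely because $k>\frac12$, yields $\norm{v_i\rest{\R^{n-1}}}_{\SobH{k-\frac12}(\R^{n-1})}\lesssim\norm{v_i}_{\SobH{k}(\R^n_+)}$. Pulling back by the fixed charts and summing over $i$ gives $\norm{u\rest{\partial M}}_{\SobH{k-\frac12}(\Omega)}\lesssim\sum_i\norm{\eta_i u}_{\SobH{k}(W_i)}\lesssim\norm{u}_{\SobH{k}(\tilde\Omega)}$, the constants absorbing the finitely many chart transition norms and the $\Ck{\infty}$-bounds on the $\eta_i$. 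The only genuinely analytic ingredient is this half-space trace estimate at the possibly non-integer order $k-\frac12$; the remaining work — fixing the cover and the enlarged precompact $\tilde\Omega$ independently of $u$, and checking that the cut-offs $\eta_i$ keep all mass inside $\tilde\Omega$ — is the routine chart-and-partition-of-unity bookkeeping that underlies the very definition of Sobolev spaces on manifolds, and causes no difficulty since $\close\Omega$ is compact in $\partial M$.
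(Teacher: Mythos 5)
Your argument is correct and is essentially the paper's: both localise $\close{\Omega}$ by a finite cover of boundary neighbourhoods with a subordinate partition of unity and then invoke the standard trace estimate $\SobH{k}\to\SobH{k-\frac12}$ for $k>\frac12$ (the paper transplants each piece $\eta_i u$ to the compact doubled manifolds $N_i$ of Corollary~\ref{Cor:EmbMf} rather than to half-space charts, but the analytic content is identical). One small slip: the Fourier identity should read $\int_{\R}(1+|\xi'|^2+\xi_n^2)^{-k}\,d\xi_n\simeq(1+|\xi'|^2)^{-k+\frac12}$ — with the exponent $+k$ as you wrote it the integral diverges — though this does not affect the validity of the classical half-space trace theorem you are invoking.
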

\begin{proof}
Fix $\Omega \subset \partial M$ that is compact with nonempty interior.
Let $\set{V_i}$ compact with smooth boundary with $\delta_i$ as guaranteed by Corollary~\ref{Cor:EmbMf} such that $\set{\interior{V}_i}$ cover $\Omega$.
Set $\delta = \min \set{\delta_i}$.
Letting $\eta_i$ be a smooth partition of unity subordinate to $[0,\frac12 \delta] \times V_i$, and for $u \in \SobH[loc]{k}(M;E)$,
\begin{align*}
\norm{u\rest{\partial M}}_{\SobH{k-\frac12}(\Omega)}
\lesssim \sum_{i=1}^\ell \norm{(\eta_i u)\rest{\partial M}}_{\SobH{k - \frac12}(V_i)}
\lesssim \sum_{i=1}^\ell \norm{(\eta_i u)}_{\SobH{k}([0,\delta] \times V_i)}
\lesssim \sum_{i=1}^\ell \norm{u}_{\SobH{k}([0,\delta] \times V_i)}.
\end{align*}
where the penultimate inequality is obtained from extending $\eta_i u$ to the whole of the compact manifold with boundary $N_i$.
Since $\Omega$ was arbitrary, these bounds are precisely the notion of continuity for locally convex linear spaces.
\end{proof}

We now have all the necessary ingredients to prove the main regularity theorem, Theorem~\ref{Thm:Reg}.

\begin{proof}[Proof of Theorem~\ref{Thm:Reg}]
For $k > 0$, if $u \in \dom(D_{\max}) \cap \SobH[loc]{k}(M;E)$, then clearly, $Du \in \SobH[loc]{k-1}(M;E)$ and from Lemma~\ref{Lem:RestReg}, we obtain that $u \in \SobH[loc]{k-\frac12}(\partial M;E)$. 
In the case of $k = 0$, Theorem~\ref{Thm:Trace1} yields $u \in \SobH[loc]{-\frac12}(\partial M;E)$.
This shows ``$\subset$''.

Now, we prove ``$\supset$''.
That is, suppose that $u \in \dom(D_{\max})$, $Du \in \SobH[loc]{k-1}(M;E)$ and $u\rest{\partial M} \in \SobH[loc]{k-\frac12}(\partial M;E)$.
To prove that $u \in \SobH[loc]{k}(M;E)$, we need to show that for each $\Omega \subset M$ which is compact with nonempty interior, we have that
$\norm{u}_{\SobH{k}(\Omega)}< \infty.$
Therefore, fix $\Omega \subset M$ compact with nonempty interior, and let $\Omega'$ be another compact subset with nonempty interior and with smooth boundary with $\Omega \subsetneqq \Omega'$.
Let $\chi \in \Ck[c]{\infty}(M)$ be such that $\chi = 1$ on $\Omega$ and $\chi = 0$ outside of $\Omega'$.
Set $u' = \chi u$.
It is easy to see that $u' \in \dom(D_{\max})$, $Du' \in \SobH[loc]{k-1}(M;E)$  and that $u' \rest{\partial M} \in \SobH[loc]{k-\frac12}(\partial M;E)$.
Let $\set{V_i}$ compact subsets of $\partial M$ with nonempty interior such that $\set{\interior{V}_i}$ cover $\Omega' \cap \partial M$ as guaranteed from Lemma~\ref{Lem:ExtOp} and let $\delta = \min\set{\delta_i}$.
Now, let $\xi = 1$ on $\union_{i=1}^\ell \Phi_i ([0,\frac12 \delta] \times V_i)$ and $0$ outside of $\union_{i=1}^\ell \Phi_i([0,\delta] \times V_i)$.
Let $u'' := (1 - \xi)u'$ and $u'' \in \dom(D_{\max})$ with $Du'' \in \SobH[loc]{k-1}(M;E)$.
Moreover, $\spt u'' \cap \partial M = \emptyset$ and therefore, from interior regularity theory, we have that $u'' \in \SobH[loc]{k}(M;E)$.
In particular, $u'' \in \SobH{k}(\Omega')$.

Now consider $v := \xi u'$.
We have that $u' = u'' + v$ and we show that $v \in \SobH{k}(\Omega')$.
This would then yield that $u' \in \SobH{k}(\Omega')$.
First let $\set{\eta_i}$ be a smooth partition of unity, subordinate to $\Phi_i([0, \frac34 \delta) \times \interior{V}_i)$.
Since $\spt v \subset \union_{i=1}^\ell \Phi_i( [0,\frac12] \times \interior{V}_i)$, we have that $v = \sum_{i=1}^\ell \eta_i v$.
Let $v_i \in \Lp{2}(N_i;E)$ be defined by
$$ v_i(x) = \begin{cases} \eta_i(x) v(x) & x \in \Phi_i( [0,\frac34 \delta ] \times V_i)     \\
              0              & x \not\in \Phi_i([0, \frac34 \delta] \times  V_i) \\
\end{cases}.$$
Note first that $\dtilde{D}_{i, \max}v_i = Dv_i$ and so  $v_i \in \dom(\dtilde{D}_{i, \max})$.
Furthermore, $Dv \in \SobH{k-1}(\Omega')$ yields that $\dtilde{D}_{i,\max}v_i \in \SobH{k-1}(N_i;E)$.
Moreover, since $v \in \SobH{k-\frac12}(\union_{i=1}^\ell \set{0} \times \partial V_i;E)$, we have that $v_i \in \SobH{k-\frac12 }(\partial N_i;E)$.
Therefore, by applying Theorem 2.4 in \cite{BBan},  we obtain that $v_i \in \SobH{k}(N_i;E)$.
However, since $v_i = 0$ outside of $[0,\frac34 \delta] \times V_i$, we obtain $\norm{\eta_i v}_{\SobH{k}(\Omega')} < \infty$.
Now $v \in \SobH{k}(\Omega')$ since
$$\norm{v}_{\SobH{k}(\Omega')} \lesssim \sum_{i=1}^\ell \norm{\eta_i v}_{\SobH{k}(\Omega')}.$$
This yields $u' \in \SobH{k}(\Omega';E)$ and since $u' = u$ on $\Omega$, we conclude $u \in \SobH{k}(\Omega;E)$.
Since $\Omega$ was an arbitrary compact set with nonempty interior, this proves $u \in\SobH[loc]{k}(M;E)$.
\end{proof}

\section{The model operator}

We first consider the ``model operator'' associated to our problem, which is denoted by $D_0 = \sym_0(\partial_t + A)$.

Throughout this section, due to the assumptions we make in \ref{ExtSetup} (in particular \ref{Hyp:ExtFirst}), we identify the unique closed extension $\close{A}$ with $A$, and similarly for $\tilde{A}$.
In \cite{BBan} the boundary was compact and therefore, the boundary operators exhibited pure point spectrum.
However, they were merely  bi-sectorial and not, in general, selfadjoint.
In particular, this meant that their generalised eigenspaces were in general nonorthogonal and possibly of higher algebraic multiplicity.
Bi-sectorial operators are well suited to analysis via the \Hinfty-functional calculus, which was afforded to us through pseudo-differential methods.

In our situation, despite the fact that the boundary defining operators are selfadjoint, we may have continuous spectrum, and therefore, we are again forced to abandon resorting to reasoning through eigenspaces.
However, motivated by the ``global'' approach we develop in \cite{BBan}, we treat this case operator more in the spirit of the bi-sectorial situation via functional calculus.
Unfortunately, due to noncompactness, it is unlikely we can resort to pseudo-differential methods to obtain access to the \Hinfty-functional calculus as we did in the compact case.
However, since we are in the selfadjoint context, we can instead allude to the Borel functional calculus as a suitable replacement.

Recall that by the selfadjointness of $A$, we obtain a Borel functional calculus via the spectral theorem.
In particular, this means we are able to construct bounded projectors $\chi_{I}(A)$, where $I \subset \R$ is an interval and $\chi_I$ its characteristic function.
Let us write $\chi^+(A) = \chi_{[0,\infty)}(A)$ and $\chi^-(A) = \chi_{(-\infty, 0)}(A)$ and define $\modulus{A} := A \sgn(A)$, where $\sgn(A) = \chi^+(A) - \chi^{-}(A)$.

In \cite{BB12} and \cite{BBan}, we were able to use the fact that $\chi^{\pm}(A)$ are pseudo-differential operators of order $0$ in order to see their boundedness properties on $\SobH{\alpha}(\dM;E)$, which coincided with $\dom(\modulus{A}^\alpha)$ as a consequence of the assumed compactness of $\dM$.
Since this assumption does not hold in our setting, we cannot reason in this manner and instead, we are forced to directly work with $\dom(\modulus{A}^\alpha)$ instead.

For $\epsilon>0$ put $|A|_\epsilon := (|A|+\epsilon I)$.
In light of Lemma~\ref{Lem:Frac} and Remark~\ref{Rem:FracPowerBd}, we see that $\modulus{A}_\epsilon^{-\alpha}$ is extended boundedly to $\dom(\modulus{A}^\alpha)^\ast$, the dual space of $\dom(\modulus{A}^\alpha)$ and, in fact,  $\norm{u}_{\dom(\modulus{A}^\alpha)^\ast} \simeq \norm{ \modulus{A}_\epsilon^{-\alpha} u}$.
By the same lemma, we obtain that $\norm{u}_{\dom(\modulus{A})^\alpha} \simeq \norm{\modulus{A}_\epsilon^\alpha u}$.
Also, from Lemma~\ref{Lem:ProjDualBdd}, we obtain that $\chi_{I}(A): \dom(\modulus{A}^\alpha)^\ast \to \dom(\modulus{A}^\alpha)^\ast$ extend as bounded projections.

Define
\begin{equation}
\label{Eq:Check}
\checkH(A) = \chi^{-}(A) \dom(\modulus{A}^{\frac12}) \oplus\chi^{+}(A) \dom(\modulus{A}^{\frac12})^\ast
\end{equation}
with norm
\begin{equation}
\norm{u}_{\checkH(A)}^2 := \norm{\chi^{-}(A)u}^2_{\dom(\modulus{A}^{\frac12})} + \norm{\chi^+(A)u}_{\dom(\modulus{A}^{\frac12})^\ast}.
\end{equation}
Also, define $\hatH(A) := \checkH(-A)$.
Proposition~\ref{Prop:CheckIso} tells us that $\checkH(A-rI) = \checkH(A)$ and $\hatH(A-rI) = \hatH(A)$ for any $r\in\R$.
In applications, we will be precise about the norm which is used in computations.

Fix $T > 0$ from \ref{Hyp:RemControl} and for $\rho \in (0,T]$ let $\eta_\rho \in \Ck[c]{\infty}([0,T))$ such that $\eta_\rho(t) = 1$ whenever $x \in [0, 1/2\rho]$ and  $\eta_\rho(t) = 0$ for $[3/4\rho, T)$.
Using this, and fixing some $\epsilon > 0$, define the extension operator
\begin{equation}
(\ext_\rho u)(t,x) = \eta_\rho(t) (\exp(-t\modulus{A}_\epsilon) u)(x)
\end{equation}
for $u \in \Lp{2}(Y_{[0,T)})$ on the cylinder $Y_{[0,T)}$.
Using $\Phi$ from \ref{Hyp:Metric}, we regard $\ext_\rho$ as an operator on $Z_{[0,T)}\subset M$.
Recall the notation $\Dk{\infty}(A) = \cap_{k =1}^\infty \dom(\modulus{A}^k)$, which is guaranteed to be a dense subset in $\dom(\modulus{A}^k)$ by Corollary~\ref{Cor:DinfDensity}.
Moreover, it is a dense subset in $\checkH(A)$ and $\hatH(A)$ by Corollary~\ref{Cor:Dinf}.
By the Sobolev embedding theorem, $\Dk{\infty}(A) \subset \Ck{\infty}(\dM;E)$ and if $\dM$ is compact we even have equality.
In general, however, sections in $\Dk{\infty}(A)$ enjoy a certain decay that is crucial for analysis in our noncompact setting.

\begin{lemma}
\label{Lem:ExpReg}
The operator $\ext_\rho$ extends to a linear map $\dom(A)^\ast \to \Ck{\infty}(Z_{(0,T)};E) \cap \Lp{2}(Z_{[0,T)};E)$.
Furthermore,
$$ \ran(\ext_\rho\rest{\Dk{\infty}(A)}) \subset \Ck{\infty}([0,T); \Dk{\infty}(A)) \subset \Ck{\infty}(Z_{[0,T)};E).$$
\end{lemma}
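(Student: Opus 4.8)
The plan is to reduce the whole statement to mapping properties of the one-parameter family $e^{-t\modulus{A}_\epsilon}$ produced by the Borel functional calculus of the selfadjoint operator $\modulus{A}_\epsilon = \modulus{A}+\epsilon I$, since by definition $\ext_\rho u = \eta_\rho(t)\,(e^{-t\modulus{A}_\epsilon}u)$ with $\eta_\rho\in\Ck[c]{\infty}([0,T))$ a fixed cut-off supported in $[0,\tfrac34\rho]$. First I would show that the semigroup smooths arbitrary data away from $t=0$, which gives the map into $\Ck{\infty}(Z_{(0,T)};E)$ on all of $\dom(A)^\ast$; then verify $\Lp{2}$-integrability on the finite collar; and finally upgrade to smoothness up to $t=0$ for data in $\Dk{\infty}(A)$.

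\emph{Smoothing.} Since $\spec(\modulus{A}_\epsilon)\subset[\epsilon,\infty)$, for every $k\in\N$ and every $t>0$ the function $\lambda\mapsto\lambda^{k}e^{-t\lambda}$ is bounded on $\spec(\modulus{A}_\epsilon)$, so $\modulus{A}_\epsilon^{k}e^{-t\modulus{A}_\epsilon}$ is bounded on $\Lp{2}(\dM;E)$ (with norm $\lesssim_k t^{-k}$ for small $t$). Recalling from the discussion preceding the lemma that membership in $\dom(A)^\ast = \dom(\modulus{A})^\ast$ is equivalent to $v:=\modulus{A}_\epsilon^{-1}u\in\Lp{2}(\dM;E)$, we obtain $e^{-t\modulus{A}_\epsilon}u = \modulus{A}_\epsilon e^{-t\modulus{A}_\epsilon}v\in\bigcap_k\dom(\modulus{A}^{k}) = \Dk{\infty}(A)$ for all $t>0$, and $t\mapsto e^{-t\modulus{A}_\epsilon}u$ is a $\Ck{\infty}$-map $(0,T)\to\Dk{\infty}(A)$ with $\partial_t^{j}(e^{-t\modulus{A}_\epsilon}u) = (-\modulus{A}_\epsilon)^{j}e^{-t\modulus{A}_\epsilon}u$. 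By ellipticity of $A$, local elliptic estimates on $\dM$ bound every $\SobH{k}$-seminorm over a precompact subset of $\dM$ by $\Lp{2}$-norms of powers of $\modulus{A}_\epsilon$; combining this with the smooth curve into $\Dk{\infty}(A)$ and its $t$-derivatives (and with $\Dk{\infty}(A)\embeds\Ck{\infty}(\dM;E)$, which holds by the Sobolev embedding theorem) produces a section that is jointly smooth on $Z_{(0,T)}$. Multiplying by $\eta_\rho$ preserves this, so $\ext_\rho u\in\Ck{\infty}(Z_{(0,T)};E)$.

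\emph{Integrability on the collar.} By \ref{Hyp:Metric}(v) the measure on $Z_{[0,T)}$ is $\modulus{dt}\otimes\mu_\tau$, whence $\norm{\ext_\rho u}^{2}_{\Lp{2}(Z_{[0,T)};E)} = \int_{0}^{T}\eta_\rho(t)^{2}\,\norm{e^{-t\modulus{A}_\epsilon}u}^{2}_{\Lp{2}(\dM;E)}\,dt$, an integral over $[0,\tfrac34\rho]$. Expressing $\norm{e^{-t\modulus{A}_\epsilon}u}^{2}_{\Lp{2}(\dM)}$ through the spectral measure of $\modulus{A}_\epsilon$ and integrating in $t$ first (Tonelli) turns finiteness into a square-function-type bound, in which the exponential gain from the $\epsilon$-shift together with the negative-order control of $u$ ensures convergence. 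I expect this step --- the size of $\norm{e^{-t\modulus{A}_\epsilon}u}_{\Lp{2}}$ as $t\downarrow0$ for rough data --- to be the main obstacle, since it is the only place where the cut-off, the shift by $\epsilon I$, and the precise negative-order norm all intervene; away from $t=0$ the argument is soft.

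\emph{Regularity up to the boundary.} If $u\in\Dk{\infty}(A)$, then for every $t\geq0$ the operator $e^{-t\modulus{A}_\epsilon}$ commutes with each $\modulus{A}^{k}$ and is a contraction on $\dom(\modulus{A}^{k})$, so $e^{-t\modulus{A}_\epsilon}u\in\Dk{\infty}(A)$; and since now $u\in\dom(\modulus{A}_\epsilon^{j})$ for all $j$, the derivatives $\partial_t^{j}(e^{-t\modulus{A}_\epsilon}u) = (-\modulus{A}_\epsilon)^{j}e^{-t\modulus{A}_\epsilon}u$ extend continuously to $t=0$, so $t\mapsto e^{-t\modulus{A}_\epsilon}u$ is $\Ck{\infty}$ on $[0,T)$ with values in $\Dk{\infty}(A)$. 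Multiplying by $\eta_\rho\in\Ck[c]{\infty}([0,T))$ gives $\ext_\rho u\in\Ck{\infty}([0,T);\Dk{\infty}(A))$, and the elliptic-estimate argument of the smoothing step, now valid up to $t=0$, yields the inclusion $\Ck{\infty}([0,T);\Dk{\infty}(A))\subset\Ck{\infty}(Z_{[0,T)};E)$, which completes the proof.
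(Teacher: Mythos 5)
Your smoothing and up-to-the-boundary steps follow essentially the same route as the paper, which delegates exactly this to the abstract semigroup result (Lemma~\ref{Lem:ExtReg} in the appendix): boundedness of $\lambda\mapsto\lambda^{m+k}e^{-t\lambda}$ on $[\epsilon,\infty)$ for each $t>0$, analyticity of $t\mapsto e^{-t\modulus{A}_\epsilon}u$ with values in $\Dk{\infty}(A)$, elliptic regularity plus Sobolev embedding to pass to joint smoothness, and, for $u\in\Dk{\infty}(A)$, continuity of all $t$-derivatives at $t=0$. Those parts are fine and match the paper.

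The step you yourself flag as the main obstacle is, however, where the argument does not close as written. Carrying out your Tonelli computation with the spectral measure, writing $u=\modulus{A}_\epsilon v$ with $v\in\Lp{2}(\dM;E)$, one finds
\begin{equation*}
\int_0^T\eta_\rho(t)^2\,\norm{e^{-t\modulus{A}_\epsilon}u}^2_{\Lp{2}(\dM)}\,dt \;\simeq\; \int_{[\epsilon,\infty)}\lambda\bigl(1-e^{-c\lambda}\bigr)\,d\norm{E_\lambda v}^2 \;\simeq\; \norm{\modulus{A}_\epsilon^{-\frac12}u}^2,
\end{equation*}
i.e.\ the $\dom(\modulus{A}^{\frac12})^\ast$-norm of $u$, not its $\dom(A)^\ast$-norm. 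The $\epsilon$-shift only produces a factor $e^{-\epsilon t}$, which is of no help as $t\downarrow0$, and one full negative power of $A$ yields only $\norm{e^{-t\modulus{A}_\epsilon}u}\lesssim t^{-1}\norm{\modulus{A}_\epsilon^{-1}u}$, which is not square-integrable near $t=0$; a lacunary eigenvector example with $v\in\Lp{2}\setminus\dom(\modulus{A}^{\frac12})$ shows the divergence is genuine and not an artifact of the upper bound. So the $\Lp{2}(Z_{[0,T)})$ conclusion holds exactly on $\dom(\modulus{A}^{\frac12})^\ast$ --- which contains $\checkH(A)$ and $\hatH(A)$, the only spaces on which the lemma is subsequently invoked --- and it is precisely the square-function estimate of Proposition~\ref{Prop:ModelBnd}, $\int_0^\infty\norm{t^{\frac12}\modulus{A}_\epsilon^{\frac12}e^{-t\modulus{A}_\epsilon}\modulus{A}_\epsilon^{-\frac12}u}^2\,\tfrac{dt}{t}\lesssim\norm{\modulus{A}_\epsilon^{-\frac12}u}^2$, that makes it precise. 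You should either restrict the $\Lp{2}$ assertion to $\dom(\modulus{A}^{\frac12})^\ast$ or prove it as that quadratic estimate; asserting convergence on all of $\dom(A)^\ast$ is the one step that would fail.
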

\begin{proof}
This follows from Corollary~\ref{Cor:DinfDensity}, along with the fact that $\Dk{\infty}(A) \subset \Ck{\infty}(\dM;E)$.
\end{proof}

This lemma ensures in particular that $\ext_\rho$ is defined on $\checkH(A)$ and $\hatH(A)$ which are densely embedded in $\dom(A)^\ast$.

\begin{proposition}
\label{Prop:ModelBnd}
For $u \in \checkH(A)$, we have that $\ext_\rho u \in \dom( D_{0,\max})$ and
$$ \norm{\ext_\rho u}_{D_0} \lesssim \norm{u}_{\checkH(A)},$$
where the implicit constants depend on $T$, $\eta_\rho$ and $\epsilon > 0$.
Similarly, for $v \in \hatH(A)$, we have that $\ext_\rho v \in \dom( (\sigma_0^{-1} D)^\dagger_{0,\max})$ and
$$ \norm{\ext_\rho u}_{(\sigma_0^{-1} D_0)^\dagger} \lesssim \norm{v}_{\hatH(A)}.$$
\end{proposition}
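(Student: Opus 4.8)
The plan is to reduce the estimate on the $r$-cylinder $Z_{[0,T)}$ to a one-variable computation in $t$, using the Borel functional calculus of the selfadjoint operator $A$ to make sense of $\exp(-t|A|_\epsilon)$ and its derivatives. First I would verify that $\ext_\rho u \in \dom(D_{0,\max})$: since $D_0 = \sym_0(\partial_t + A)$ on the cylinder, and for $u \in \Dk{\infty}(A)$ Lemma~\ref{Lem:ExpReg} gives $\ext_\rho u \in \Ck{\infty}([0,T);\Dk{\infty}(A))$, one can compute $D_0(\ext_\rho u)$ classically as
\[
D_0(\ext_\rho u) = \sym_0\big(\eta_\rho'(t)\exp(-t|A|_\epsilon)u + \eta_\rho(t)(A - |A|_\epsilon)\exp(-t|A|_\epsilon)u\big),
\]
and the claim for general $u\in\checkH(A)$ follows by density of $\Dk{\infty}(A)$ in $\checkH(A)$ (Corollary~\ref{Cor:Dinf}) together with the graph-norm bound we are about to establish, since $D_{0,\max}$ is closed. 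Note $A - |A|_\epsilon = -2|A|\chi^+(A) - \epsilon I$ (using $|A|_\epsilon = |A| + \epsilon I$), so on the range of $\chi^-(A)$ the factor is just $-\epsilon I$, while on the range of $\chi^+(A)$ it is $-2|A| - \epsilon I$; this asymmetry is exactly why $\checkH(A)$ uses $\dom(|A|^{1/2})$ on the negative part and its dual on the positive part.

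The core is the $\Lp{2}(Z_{[0,T)})$-bound, which by the measure splitting in \ref{Hyp:Metric}(v) is $\int_0^T \|(\ext_\rho u)(t,\cdot)\|_{\Lp{2}(\dM)}^2\,dt$ and similarly for $D_0(\ext_\rho u)$. Using the spectral measure $dE_\lambda$ of $A$, each such integral becomes $\int_0^T\!\!\int_\R m(t,\lambda)\,d\langle E_\lambda u,u\rangle\,dt$ for an explicit nonnegative symbol $m$; after performing the $t$-integral (where $\eta_\rho$, supported in $[0,3\rho/4)$, localises things and contributes constants depending on $T,\eta_\rho,\epsilon$), I would show the resulting multiplier in $\lambda$ is dominated by the defining symbol of $\|\cdot\|_{\checkH(A)}$, namely $(1+|\lambda|)$ on $\{\lambda<0\}$ and $(1+|\lambda|)^{-1}$ on $\{\lambda\ge 0\}$. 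Concretely, $\int_0^\infty e^{-2t(|\lambda|+\epsilon)}\,dt = \tfrac{1}{2(|\lambda|+\epsilon)} \simeq (1+|\lambda|)^{-1}$ handles the $\chi^+$ part of the $\Lp{2}$-norm; the $\eta_\rho'$ term is harmless as $\eta_\rho'$ is supported away from $0$ so $e^{-2t(|\lambda|+\epsilon)}$ is exponentially small there uniformly in $\lambda\ge 0$ (and bounded for $\lambda<0$); and the $A$-derivative term contributes $\int_0^\infty(2|\lambda|+\epsilon)^2 e^{-2t(|\lambda|+\epsilon)}\,dt \simeq (|\lambda|+\epsilon) \lesssim (1+|\lambda|)$ on $\chi^+$ and $\simeq \epsilon^2 (|\lambda|+\epsilon)^{-1}$ on $\chi^-$, matching $\dom(|A|^{1/2})$ there. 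Summing the pieces gives $\|\ext_\rho u\|_{D_0}^2 \lesssim \|\chi^-(A)u\|_{\dom(|A|^{1/2})}^2 + \|\chi^+(A)u\|_{\dom(|A|^{1/2})^\ast}^2 = \|u\|_{\checkH(A)}^2$, using the norm equivalences $\|v\|_{\dom(|A|^\alpha)}\simeq \||A|_\epsilon^\alpha v\|$ and $\|v\|_{\dom(|A|^\alpha)^\ast}\simeq \||A|_\epsilon^{-\alpha}v\|$ recorded before Lemma~\ref{Lem:Frac}.

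For the second assertion, one notes $(\sigma_0^{-1}D)^\dagger = -(\partial_t + \tilde A)\sigma_0^{-\ast}$-type expression, or rather that the model operator associated to $(\sigma_0^{-1}D)^\dagger$ on the cylinder has adapted operator $-A$ (the sign flip in Green's formula), so $\hatH(A) = \checkH(-A)$ is the correct trace space and the identical computation applies with $\chi^+(A)$ and $\chi^-(A)$ interchanged. The main obstacle I anticipate is purely bookkeeping rather than conceptual: carefully justifying that the formal $t$- and $\lambda$-manipulations (differentiation under the integral, Fubini between $dt$ and $d\langle E_\lambda u,u\rangle$, and the passage from $\Dk{\infty}(A)$ to $\checkH(A)$) are legitimate — for this one works first with $u\in\Dk{\infty}(A)$, where Lemma~\ref{Lem:ExpReg} guarantees smoothness and rapid spectral decay so all integrals converge absolutely and the manipulations are classical, and then extends by density using that both sides are continuous in $\|u\|_{\checkH(A)}$. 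A secondary point requiring care is that $R_t$ does not appear here because $D_0$ is defined as the model operator $\sym_0(\partial_t+A)$ without the remainder term; the remainder is absorbed later (presumably in the passage from $D_0$ to $D$), so nothing from \ref{Hyp:RemControl} beyond the existence of $T$ and the bound on $\sym_t$ is needed at this stage.
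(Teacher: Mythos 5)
Your overall strategy is the paper's: restrict to $u\in\Dk{\infty}(A)$, split $u=\chi^-(A)u+\chi^+(A)u$, compute $(\partial_t+A)\ext_\rho u$ explicitly, estimate each piece via the functional calculus of $\modulus{A}_\epsilon$, and extend by density; the adjoint statement is handled by replacing $A$ with $-A$, exactly as in the paper. The only genuine difference is cosmetic: you carry out the quadratic estimates by hand through the spectral measure, where the paper invokes the abstract \Hinfty-square-function estimate for $\modulus{A}_\epsilon$; for a selfadjoint operator these are the same computation.

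There is, however, a concrete sign error that breaks the key estimate as written. Since $A=\modulus{A}(\chi^+(A)-\chi^-(A))$ and $\modulus{A}=\modulus{A}(\chi^+(A)+\chi^-(A))$, one has
\[
A-\modulus{A}_\epsilon=-2\modulus{A}\chi^-(A)-\epsilon I,
\]
not $-2\modulus{A}\chi^+(A)-\epsilon I$: the unbounded factor $-2\modulus{A}-\epsilon$ acts on the \emph{negative} spectral subspace and only $-\epsilon$ on the positive one. You have these interchanged, and the error propagates into the multiplier computation, where you assign the contribution $\int_0^\infty(2\modulus{\lambda}+\epsilon)^2e^{-2t(\modulus{\lambda}+\epsilon)}\,dt\simeq\modulus{\lambda}+\epsilon$ to $\chi^+(A)u$ and claim it is acceptable because it is $\lesssim(1+\modulus{\lambda})$. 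But on $\{\lambda\ge0\}$ the $\checkH(A)$-norm only supplies the dual weight $(1+\modulus{\lambda})^{-1}$, so a multiplier of size $(1+\modulus{\lambda})$ there cannot be dominated and the estimate fails by a factor $(1+\modulus{\lambda})^{2}$. (Your own aside that the asymmetry ``explains'' why $\checkH(A)$ carries $\dom(\modulus{A}^{1/2})$ on the negative part is inconsistent with your formula: under your assignment the strong norm would be needed on the positive part.) The fix is immediate: with the correct identity the heavy term $-2\eta_\rho\modulus{A}_\epsilon\exp(-t\modulus{A}_\epsilon)u^-$ lands on $u^-=\chi^-(A)u$, where $\int_0^\infty\norm{t^{1/2}\modulus{A}_\epsilon^{1/2}\exp(-t\modulus{A}_\epsilon)\modulus{A}_\epsilon^{1/2}u^-}^2\,dt\lesssim\norm{u^-}^2_{\dom(\modulus{A}^{1/2})}$, while on $u^+=\chi^+(A)u$ only the bounded factors $\eta_\rho'+\epsilon\eta_\rho$ survive and $\int_0^\infty e^{-2t(\modulus{\lambda}+\epsilon)}\,dt\simeq(1+\modulus{\lambda})^{-1}$ matches $\dom(\modulus{A}^{1/2})^\ast$. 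After this correction your argument closes and coincides with the paper's proof.
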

\begin{proof}
We restrict our considerations here to $u \in \Dk{\infty}(A)$ because, by Corollary~\ref{Cor:Dinf}, $\Dk{\infty}(A)$ is dense in $\checkH(A)$.
That $\ext_\rho$ maps $\checkH(A)$ to $\Lp{2}(Z_{[0,T)};E)$ follows directly from Lemma~\ref{Lem:ExpReg}.

First, since $\modulus{\sym_0(x)}_{\mathrm{op}} \leq C$ by \ref{Hyp:RemControl}, we note that
$$ \norm{\ext_\rho u}_{D_0} \leq C (\norm{ (\partial_t + A) \ext_\rho u} + \norm{ \ext_\rho u}),$$
so to establish the claim, it suffices to estimate the right hand side.

Let $u = u^- + u^+$, where $u^{\pm} = \chi^{\pm}(A)$.
Then, $ \ext_\rho u = \ext_\rho u^- + \ext_\rho u^+$, and a calculation mimicking the proof of Proposition 5.4 in \cite{BBan} then yields
\begin{equation}
\label{Eqn:Soln}
\begin{aligned}
(\partial_t + A) u^- & = (\partial_t \eta_\rho(t) + \epsilon \eta_\rho(t)) \exp(-t \modulus{A}_\epsilon)u^- - 2 \eta_\rho(t) \modulus{A}_\epsilon \exp(-t\modulus{A}_\epsilon)u^- \\
(\partial_t + A) u^+ & = (\partial_t \eta_\rho(t) + \epsilon \eta_\rho(t)) \exp(-t \modulus{A}_\epsilon)u^+.
\end{aligned}
\end{equation}
Also,
\begin{align*}
\norm{ \ext_\rho u}^2 & = \int_{0}^\infty \eta_\rho(t)^2 \cbrac{\norm{\exp(-t\modulus{A}_\epsilon) u^-}^2 + \norm{\exp(-t\modulus{A}_\epsilon) u^+}^2}\ dt        \\
                      & \lesssim \int_{0}^T \eta_\rho(t)^2 \cbrac{\norm{\exp(-t\modulus{A}_\epsilon) u^-}^2\ dt + \norm{\exp(-t\modulus{A}_\epsilon) u^+}^2\ dt}.
\end{align*}
From the selfadjointness and positivity of  $\modulus{A}_\epsilon$, we have that $\norm{\exp(-t\modulus{A}_\epsilon)}_{\Lp{2}\to\Lp{2}} \leq 1$, and therefore,
$$
\int_{0}^T \eta_\rho(t)^2 \norm{\exp(-t\modulus{A}_\epsilon) u^-}^2\ dt  \leq T \norm{u^-} \lesssim \norm{u^-}_{\dom(\modulus{A}^{\frac12})}.$$
For the second term,
\begin{align*}
\int_{0}^\infty \norm{\exp(-t\modulus{A}_\epsilon) u^+}^2\ dt & \leq \int_0^\infty \norm{t^{\frac{1}{2}} \modulus{A}_\epsilon^{\frac{1}{2}} \exp(-t\modulus{A}_\epsilon) \modulus{A}_\epsilon^{-\frac{1}{2}} u^+}^2\ \frac{dt}{t} \\
                                                              & \lesssim \norm{\modulus{A}_\epsilon^{-\frac{1}{2}}u^+} \simeq \norm{u^+}_{\dom(\modulus{A}^{\frac12})^\ast},
\end{align*}
where the first inequality follows from Lemma~\ref{Lem:Frac} since $\modulus{A}_\epsilon$ has an \Hinfty-functional calculus.
To finish the proof,
\begin{align*}
\int_{0}^\infty \norm{ 2 \eta_\rho(t) \modulus{A}_\epsilon \exp(-t\modulus{A}_\epsilon)u^-}^2\ dt & \lesssim \int_{0}^\infty \norm{t^{\frac{1}{2}}\modulus{A}_\epsilon^{\frac{1}{2}} \exp(-t\modulus{A}_\epsilon) \modulus{A}_\epsilon^{\frac{1}{2}} u^-}^2\ dt \\
                                                                                                  & \lesssim \norm{\modulus{A}_\epsilon^{\frac{1}{2}}u^-} \simeq \norm{u^-}_{ \dom(\modulus{A}^{\frac12})},
\end{align*}
where again, we have alluded to the \Hinfty-functional calculus of $\modulus{A}_\epsilon$.
Combining these estimates proves the first claim.

For the second claim, note that $(\sigma_0^{-1} D_0)^\dagger = -(\partial_t - A)$.
Therefore, we repeat this argument with $-A$ in place of $A$.
Note that $\modulus{-A}  = \sqrt{ (-A)^2} = \sqrt{A^2} = \modulus{A}$, so $\ext_\rho^\ast = \ext_\rho$.
A repetition of the above estimates with $(\sigma_0^{-1} D_0)$ in place of $D_0$ then yields that, for $v \in \dom(A)$,
$$ \norm{\ext_\rho v} \lesssim \norm{v}_{\checkH(-A)}.$$
By definition $\hatH(A) = \checkH(-A)$ and again, by density, we obtain this estimate for all $v \in \hatH(A)$.
\end{proof}

The following is then argued similarly to Lemma 6.5 in \cite{BBan}, but with some minor modifications to account for the fact that $\dM$ is noncompact.
\begin{corollary}
\label{Cor:TraceBd}
For all $u \in \Ck[c]{\infty}(Z_{[0,T)};E)$
\begin{equation*}
\norm{u\rest{\dM}}_{\checkH(A)} \lesssim \norm{u}_{D_0}\quad\text{and}\quad\norm{u\rest{\dM}}_{\hatH(A)} \lesssim \norm{u}_{(\sym_0^{-1}D_0)^\dagger}.
\end{equation*}
\end{corollary}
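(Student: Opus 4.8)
The plan is to prove both inequalities by duality, using the extension operator $\ext_\rho$ from Proposition~\ref{Prop:ModelBnd} as the object against which the boundary trace is tested, and converting the resulting boundary pairing into a bulk pairing via Green's formula~\eqref{A:IntParts}. Fix $u\in\Ck[c]{\infty}(Z_{[0,T)};E)$, so that $u\rest{\dM}\in\Ck[c]{\infty}(\dM;E)\subset\Dk{\infty}(A)$ and hence both $\norm{u\rest{\dM}}_{\checkH(A)}$ and $\norm{u\rest{\dM}}_{\hatH(A)}$ are finite a priori. Since $\inprod{\cdot,\cdot}_{\checkH(A)\times\hatH(A)}$ is a perfect pairing extending the $\Lp{2}(\dM)$-inner product, and $\Dk{\infty}(A)$ is dense in both $\checkH(A)$ and $\hatH(A)$ (Corollary~\ref{Cor:Dinf}),
$$
\norm{u\rest{\dM}}_{\checkH(A)}\simeq\sup_{0\neq w\in\Dk{\infty}(A)}\frac{\modulus{\inprod{u\rest{\dM},w}_{\Lp{2}(\dM)}}}{\norm{w}_{\hatH(A)}},
$$
and symmetrically for $\norm{u\rest{\dM}}_{\hatH(A)}$ with the roles of $\checkH$ and $\hatH$ interchanged. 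It thus suffices to bound $\modulus{\inprod{u\rest{\dM},w}_{\Lp{2}(\dM)}}$ uniformly over $w\in\Dk{\infty}(A)$.

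For the first inequality, I would work with the normalised operator $\sym_0^{-1}D_0=\partial_t+A$, whose principal symbol in the conormal direction $\tau=dt$ is the identity. Given $w\in\Dk{\infty}(A)$, Lemma~\ref{Lem:ExpReg} ensures $\ext_\rho w\in\Ck{\infty}([0,T);\Dk{\infty}(A))$, so $\ext_\rho w$ is smooth up to $\dM$ with $(\ext_\rho w)\rest{\dM}=w$. Multiplying $\ext_\rho w$ by a cutoff that equals $1$ on a neighbourhood of $\spt u$ --- which alters none of the pairings below, since the commutator term produced by the cutoff is supported away from $\spt u$ and $u\rest{\dM}$ is supported where the cutoff is $1$ --- brings us into the range of validity of~\eqref{A:IntParts}, which yields
$$
\inprod{u\rest{\dM},w}_{\Lp{2}(\dM)}=\inprod{u,(\sym_0^{-1}D_0)^\dagger\ext_\rho w}_{\Lp{2}}-\inprod{\sym_0^{-1}D_0\,u,\ext_\rho w}_{\Lp{2}}.
$$
By Cauchy--Schwarz, $\modulus{\inprod{u\rest{\dM},w}_{\Lp{2}(\dM)}}\lesssim\norm{u}_{\sym_0^{-1}D_0}\,\norm{\ext_\rho w}_{(\sym_0^{-1}D_0)^\dagger}$; the first factor is $\lesssim\norm{u}_{D_0}$ since $\sym_0$ and $\sym_0^{-1}$ are bounded by~\ref{Hyp:RemControl}, and the second is $\lesssim\norm{w}_{\hatH(A)}$ by Proposition~\ref{Prop:ModelBnd}. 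Taking the supremum over $w$ gives $\norm{u\rest{\dM}}_{\checkH(A)}\lesssim\norm{u}_{D_0}$. The second inequality is the mirror-image computation: Green's formula for $(\sym_0^{-1}D_0)^\dagger=-\partial_t+A$ (conormal symbol $-\id$), tested against $\ext_\rho w$, gives $\modulus{\inprod{u\rest{\dM},w}_{\Lp{2}(\dM)}}\lesssim\norm{u}_{(\sym_0^{-1}D_0)^\dagger}\,\norm{\ext_\rho w}_{D_0}$, and the estimate $\norm{\ext_\rho w}_{D_0}\lesssim\norm{w}_{\checkH(A)}$ --- the other half of Proposition~\ref{Prop:ModelBnd} --- then gives $\norm{u\rest{\dM}}_{\hatH(A)}\lesssim\norm{u}_{(\sym_0^{-1}D_0)^\dagger}$ upon taking the supremum over $w\in\Dk{\infty}(A)$.

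The one step that calls for genuine care is the use of Green's formula, because $\ext_\rho w$ is not compactly supported whereas~\eqref{A:IntParts} is stated only for compactly supported smooth sections. The cutoff reduction above is exactly what handles this, and it succeeds precisely because $u$ has compact support while $\ext_\rho w$ is smooth up to the boundary for $w\in\Dk{\infty}(A)$; this is where Lemma~\ref{Lem:ExpReg} --- and hence the decay built into $\Dk{\infty}(A)$ rather than mere smoothness on $\dM$ --- enters. Everything else is bookkeeping with the bounded bundle maps $\sym_0^{\pm1}$ and with the duality $\hatH(A)\cong\checkH(A)^\ast$, the analytic substance having already been carried out in Proposition~\ref{Prop:ModelBnd}.
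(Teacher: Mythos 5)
Your proposal is correct and follows essentially the same route as the paper's proof: test $u\rest{\dM}$ against $w\in\Dk{\infty}(A)$ via the perfect pairing, extend $w$ by $\ext_\rho$, apply Green's formula and Cauchy--Schwarz, and invoke Proposition~\ref{Prop:ModelBnd} to control $\norm{\ext_\rho w}_{(\sym_0^{-1}D_0)^\dagger}$ by $\norm{w}_{\hatH(A)}$. Your cutoff argument merely makes explicit the justification the paper compresses into ``since $\spt u$ is compact, Green's formula yields \dots for all $v\in\Ck{\infty}(Z_{[0,T)};E)$'', and your treatment of the second inequality is the mirror image the paper leaves implicit.
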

\begin{proof}
Since $\spt u$ is compact, Green's formula yields
$$
\inprod{(\sigma_0^{-1} D_0)u, v}_{\Lp{2}(M)} - \inprod{u, (\sigma_0^{-1} D_0)^\dagger v}_{\Lp{2}(M)} 
= 
-\inprod{u\rest{\dM},v\rest{\dM}}_{\Lp{2}(\dM)}
$$
for all $v \in \Ck{\infty}(Z_{[0,T)};E)$.
Take $v_0 \in \Dk{\infty}(A)$, and let $v := \ext_\rho v_0$.
Clearly, $v \in \Ck{\infty}(Z_{[0,T)};E)$  by Lemma~\ref{Lem:ExpReg}.
Since $v_0 = v\rest{\dM}$,
\begin{align*}
\left|\inprod{u\rest{\dM},v_0}_{\Lp{2}(\dM)}\right|
 & \leq \left|\inprod{(\sigma_0^{-1} D_0)u, \ext_\rho v}_{\Lp{2}(M)}\right| + \left| \inprod{u, (\sigma_0^{-1} D_0)^\dagger \ext_\rho v}_{\Lp{2}(M)}\right| \\
 & \leq \norm{(\sigma_0^{-1} D_0)u}_{\Lp{2}(M)} \norm{\ext_\rho v}_{\Lp{2}(M)} +  \norm{u}_{\Lp{2}(M)} \norm{(\sigma_0^{-1} D_0)^{\dagger} \ext_\rho v}_{\Lp{2}(M)}          \\
 & \lesssim \norm{u}_{D_0} \norm{\ext_\rho v}_{(\sigma_0^{-1}D_0)^\dagger}                                                    \\
 & \lesssim \norm{u}_{D_0} \norm{v_0}_{\hatH(A)},
\end{align*}
where the ultimate inequality follows from Proposition~\ref{Prop:ModelBnd}.
Therefore,
$$ \norm{u\rest{\dM}}_{\checkH(A)}
\simeq \sup_{0 \neq v_0 \in \hatH(A)} \frac{\modulus{ \inprod{u,v_0}_{\Lp{2}(\dM)}}}{\norm{v_0}_{\hatH(A)}}
\lesssim \norm{u}_{D_0},$$
where we have used that $\inprod{\cdot,\cdot}_{\Lp{2}(\dM)} = \inprod{\cdot,\cdot}_{\checkH(A) \times \hatH(A)}$ on $\dom(A)$ by Lemma~\ref{Lem:Duality} and the fact that $\dom(A)$ is dense in  $\hatH(A)$.
\end{proof}

Also, the following holds as in \cite{BB12}.
\begin{lemma}
\label{Lem:Ell}
For all $u \in \Ck[c]{\infty}(Z_{[0,T)};E)$, the following equation holds.
$$\norm{(\sym_0^{-1} D_0)u}^2_{\Lp{2}(Z_{[0,T)})}
= \norm{u'}^2_{\Lp{2}(Z_{[0,T)})} + \norm{Au}^2 _{\Lp{2}(Z_{[0,T)})}- \inprod{\modulus{A}\sgn(A) u_0, u_0}_{\Lp{2}(\dM)}.$$
\end{lemma}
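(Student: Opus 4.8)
The plan is a direct integration-by-parts argument in the normal variable $t$, exploiting both the product structure of the measure on $Z_{[0,T)}$ and the selfadjointness of $A$. Since $D_0 = \sym_0(\partial_t + A)$ by definition, $\sym_0^{-1}D_0 = \partial_t + A$, so the left-hand side is $\norm{(\partial_t + A)u}^2_{\Lp{2}(Z_{[0,T)})}$. Using \ref{Hyp:Metric}(v) to identify $\Lp{2}(Z_{[0,T)};E)$ with $\Lp{2}([0,T);\Lp{2}(\dM;E))$ — so that $A$ acts fibrewise and commutes with $\partial_t$, and there is no extra Jacobian factor in the norm — I would expand the square:
\[
	\norm{(\partial_t + A)u}^2_{\Lp{2}(Z_{[0,T)})} = \norm{u'}^2_{\Lp{2}(Z_{[0,T)})} + \norm{Au}^2_{\Lp{2}(Z_{[0,T)})} + 2\Re\inprod{u', Au}_{\Lp{2}(Z_{[0,T)})},
\]
so everything reduces to identifying the cross term.

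For the cross term, set $f(t) := \inprod{u(t), Au(t)}_{\Lp{2}(\dM)}$, which is real-valued because $A$ is (essentially) selfadjoint. Since $u \in \Ck[c]{\infty}(Z_{[0,T)};E)$, the map $t \mapsto u(t)$ is a smooth curve in $\dom(A) \subset \Lp{2}(\dM;E)$, so $f$ is differentiable with $f'(t) = \inprod{u'(t), Au(t)}_{\Lp{2}(\dM)} + \inprod{u(t), Au'(t)}_{\Lp{2}(\dM)}$; using selfadjointness of $A$ and $Au' = (Au)'$ (as $A$ does not involve $t$), the second term equals $\inprod{Au(t), u'(t)}_{\Lp{2}(\dM)} = \overline{\inprod{u'(t), Au(t)}_{\Lp{2}(\dM)}}$, hence $f'(t) = 2\Re\inprod{u'(t), Au(t)}_{\Lp{2}(\dM)}$. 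Therefore $2\Re\inprod{u', Au}_{\Lp{2}(Z_{[0,T)})} = \int_0^T f'(t)\,dt = f(T) - f(0)$. Now $f(T) = 0$ since $\spt u$ is a compact subset of $Z_{[0,T)}$, so $u$ vanishes near $t = T$, while $f(0) = \inprod{u_0, Au_0}_{\Lp{2}(\dM)}$ with $u_0 = u\rest{\dM}$. Finally, since $A = \modulus{A}\sgn(A)$, this is $\inprod{\modulus{A}\sgn(A)u_0, u_0}_{\Lp{2}(\dM)}$, and substituting back yields the claimed identity.

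There is no genuinely hard step here: this is essentially the ``energy identity'' for the model operator, as in \cite{BB12}. The only points needing a word of care are the legitimacy of the fundamental theorem of calculus for the $\Lp{2}(\dM)$-valued function $f$ — immediate for smooth compactly supported $u$ — and the bookkeeping of the measure normalisation from \ref{Hyp:Metric}(v) that makes the $\Lp{2}(Z_{[0,T)})$-norm split as $\int_0^T \norm{\cdot}^2_{\Lp{2}(\dM)}\,dt$ exactly.
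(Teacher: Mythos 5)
Your proof is correct and follows the same route as the paper, which simply expands $\norm{(\partial_t+A)u}^2$ as an inner product and invokes the selfadjointness of $A$; you have merely written out the cross-term computation (the fundamental theorem of calculus for $t\mapsto\inprod{u(t),Au(t)}_{\Lp{2}(\dM)}$ and the vanishing at $t=T$ from compact support) that the paper leaves implicit. The sign and the identification $A=\modulus{A}\sgn(A)$ both check out.
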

\begin{proof}
Writing $\norm{(\sym^{-1}_0 D_0)u}_{\Lp{2}(Z_{[0,T)})}^2 = \inprod{ (\partial_t + A) u, (\partial_t + A)u}_{\Lp{2}(Z_{[0,T)})}$, the conclusion follows from the selfadjoint\-ness of $A$.
\end{proof}

Lastly we note the following.
\begin{lemma}
\label{Lem:BdyHom}
Over the boundary $\dM$, the homomorphism field $(\sym_0^{-1})^\ast: E_{\dM} \to F_{\dM}$ induces an isomorphism $\hatH(A) \to \checkH(\tilde{A})$, where $\tilde{A}$ is the adapted boundary operator for $D^\dagger$.
Also, we have that $(\sym_0^{-1})^\ast: \Dk{\infty}(A) \to \Dk{\infty}(\tilde{A})$.
Moreover, the pairing  $\beta: \hatH(A) \times \checkH(\tilde{A}) \to \C$, obtained by extending  $\beta(u,v) = -\inprod{\sym_0 u,v}_{\Lp{2}(\dM)}$ for $u \in \Lp{2}(\dM;E) \cap \hatH(A)$ and $v \in \Lp{2}(\dM;F) \cap \checkH(\tilde{A})$, is perfect.
\end{lemma}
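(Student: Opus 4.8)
The plan is to reduce all three assertions to the relation between $\tilde A$ and $A$ forced by \ref{Hyp:ExtFirst}. Write $\Psi := (\sym_0^{-1})^\ast = (\sym_0^\ast)^{-1}\colon E_\dM\to F_\dM$, a bundle isomorphism with $\Psi$ and $\Psi^{-1}$ bounded by \ref{Hyp:RemControl}. From $\sym_{D^\dagger}(\xi)=-\sym_D(\xi)^\ast$ and the adaptedness conditions one computes, for all $\xi\in T^\ast\dM$, $\sym_{\tilde A}(\xi) = (\sym_0^\ast)^{-1}\sym_D(\xi)^\ast = -\Psi\,\sym_A(\xi)\,\Psi^{-1}$, using skew-adjointness of $\sym_A(\xi)$ (selfadjointness of $A$). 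Hence $\tilde A$ and the operator $-\Psi A\Psi^{-1}$ on $F_\dM$ have the same principal symbol, so $W := \sym_0^\ast\tilde A + A\sym_0^\ast$ is a bundle homomorphism $F_\dM\to E_\dM$ of order zero. I would pin down $W$ at $\dM$ by matching the two normal forms in \ref{Hyp:RemControl} — working out $D^\dagger$ from $D = \sym_t(\partial_t + A + R_t)$ and comparing with $D^\dagger = -\sym_t^\ast(\partial_t + \tilde A + \tilde R_t)$ — which expresses $W$ through $R_0$, $\tilde R_0$ and $\partial_t\sym_t\rest{t=0}$; since $\norm{R_0 u}\lesssim\norm u$ and $\norm{\tilde R_0 u}\lesssim\norm u$ by \ref{Hyp:RemControl} (and $\sym_t$ may be taken $t$-independent near $\dM$ after a preliminary gauge), $W$ is \emph{bounded}. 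I expect the boundedness of $W$ to be the one genuinely technical point; granting it, the rest is essentially formal.

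Once $W$ is bounded, $\tilde A$ differs from $-\Psi A\Psi^{-1}$ by the bounded endomorphism $(\sym_0^\ast)^{-1}W$; since conjugation by the bounded-invertible $\Psi$ commutes with taking closures, an induction gives $\dom(\tilde A^k)=\Psi\dom(A^k)$ for all $k$, and intersecting over $k$ yields $\Psi\,\Dk{\infty}(A) = \Dk{\infty}(\tilde A)$ with inverse $\sym_0^\ast$. That settles the second assertion.

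For the first assertion I would argue through the model operators. With $\sym_0$ constant in $t$, computing the formal adjoint shows $D_0^\dagger = (\sym_0^{-1}D_0)^\dagger\circ(\text{multiplication by }\sym_0^\ast)$, so multiplication by $\sym_0^\ast$ is an isomorphism $\dom(D_{0,\max}^\dagger)\to\dom\big((\sym_0^{-1}D_0)^\dagger_{\max}\big)$ which intertwines the boundary restrictions, $(\sym_0^\ast v)\rest\dM = \sym_0^\ast(v\rest\dM)$. Moreover $D_0^\dagger$ and the model $\tilde D_0 = -\sym_0^\ast(\partial_t + \tilde A)$ of $D^\dagger$ have the same principal symbol and differ by $W$, hence $\dom(D_{0,\max}^\dagger)=\dom(\tilde D_{0,\max})$ with equivalent graph norms. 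Applying Corollary~\ref{Cor:TraceBd} and Proposition~\ref{Prop:ModelBnd} to $D^\dagger$ (a bounded boundary restriction $\dom(\tilde D_{0,\max})\to\checkH(\tilde A)$ with bounded right inverse $\ext_\rho$, since $(\ext_\rho u)\rest\dM = u$) and to $D$ (likewise for $\dom\big((\sym_0^{-1}D_0)^\dagger_{\max}\big)\to\hatH(A)$) and composing these maps with multiplication by $\sym_0^\ast$, by $(\sym_0^\ast)^{-1}$, and with $\ext_\rho$, then exhibits $\sym_0^\ast\colon\checkH(\tilde A)\to\hatH(A)$ and $\Psi = (\sym_0^\ast)^{-1}\colon\hatH(A)\to\checkH(\tilde A)$ as mutually inverse bounded maps, which is the claimed isomorphism.

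For the pairing, I would rewrite $\beta(u,v) = -\inprod{\sym_0 u, v}_{\Lp{2}(\dM)} = -\inprod{u,\sym_0^\ast v}_{\Lp{2}(\dM)}$ on the stated $\Lp{2}$-dense subspaces, and use that, by the previous step (together with its analogue for $D^\dagger$ and the identity $\hatH(\cdot)=\checkH(-\,\cdot\,)$), $\sym_0^\ast$ identifies $\checkH(\tilde A)$ with $\hatH(A)$ as Banach spaces. Feeding this into the perfect $\Lp{2}$-duality $\inprod{\cdot,\cdot}_{\checkH(A)\times\hatH(A)}$ of Lemma~\ref{Lem:Duality}, both the boundedness estimate $\modulus{\beta(u,v)}\lesssim\norm u\,\norm v$ and the non-degeneracy for $\beta$ reduce to the corresponding properties there, so $\beta$ extends to a perfect pairing. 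In short, everything hinges on the boundedness of the zeroth-order field $W$ relating $\tilde A$ to $-\Psi A\Psi^{-1}$; with that in place the three assertions follow from the model-operator trace theory already developed.
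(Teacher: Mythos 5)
Your overall architecture (model operators, Corollary~\ref{Cor:TraceBd}, Proposition~\ref{Prop:ModelBnd}, $\ext_\rho$, then Lemma~\ref{Lem:Duality} for the pairing) matches the paper's, but you diverge at the crucial point of how $\tilde A$ is related to $A$, and that is where the gaps are. The paper does not work with a zeroth-order discrepancy $W$ at all: it uses the \emph{exact} operator identity $\tilde A = -(\sym_0^{-1})^\ast A\, \sym_0^\ast$, which is what one gets by computing the formal adjoint of the model operator $D_0=\sym_0(\partial_t+A)$ with $\sym_0$ $t$-independent; i.e.\ $W=0$ by fiat. Your attempt to deduce boundedness of $W=\sym_0^\ast\tilde A + A\sym_0^\ast$ from \ref{Hyp:RemControl} does not close: equating the adjoint of $\sym_t(\partial_t+A+R_t)$ with $-\sym_t^\ast(\partial_t+\tilde A+\tilde R_t)$ at $t=0$ gives $W=(\partial_t\sym_t^\ast)\rest{t=0}-R_0^\dagger\sym_0^\ast-\sym_0^\ast\tilde R_0$; the terms $R_0^\dagger$ and $\tilde R_0$ are indeed bounded by \ref{Hyp:RemControl}, but $(\partial_t\sym_t^\ast)\rest{t=0}$ is controlled by nothing in the hypotheses (only $\sym_t$ itself is bounded above and below), and over a noncompact $\dM$ it can be unbounded. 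The parenthetical ``$\sym_t$ may be taken $t$-independent after a preliminary gauge'' is doing all the work here and is not justified: such a gauge transformation changes $A$, $R_t$ and $\tilde R_t$, so you would have to re-verify the whole geometric setup afterwards.

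More seriously, even granting $W$ bounded, your second step fails. From $\tilde A\Psi u = -\Psi Au + \Psi W\Psi u$ (with $\Psi=(\sym_0^\ast)^{-1}$), membership of $\tilde A\Psi u$ in $\dom(\tilde A)=\Psi\dom(A)$ requires $W\Psi u\in\dom(A)$; a merely bounded bundle homomorphism does not map $\dom(A)$ (let alone $\dom(A^k)$) into $\dom(A)$ — this is the usual ``$\Lp{\infty}$ does not multiply $\SobH{1}$ into $\SobH{1}$'' obstruction. So the induction giving $\dom(\tilde A^k)=\Psi\dom(A^k)$ breaks already at $k=2$, and the assertion $\Psi\,\Dk{\infty}(A)\subset\Dk{\infty}(\tilde A)$ does not follow; you would need $W$ smooth with all commutators $[A,\cdot]$ controlled, which is not available. (Your third and fourth steps — transferring the trace and extension bounds across the graph-norm equivalence $\dom(D^\dagger_{0,\max})=\dom(\tilde D_{0,\max})$ and reducing the pairing to Lemma~\ref{Lem:Duality} — are fine once the exact conjugation identity is in place, and in that case they reproduce the paper's argument.) The fix is to do what the paper does: take $\tilde A$ to \emph{be} $-(\sym_0^{-1})^\ast A\sym_0^\ast$, the adapted operator of the model operator $D_0^\dagger$, so that $\tilde A\Psi u=-\Psi Au$ exactly and both the $\Dk{\infty}$ mapping and the norm estimates are immediate.
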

\begin{proof}
Throughout, note by Assumption~\ref{Hyp:RemControl}, the map $\sigma_0$ is uniformly bounded.
By Assumption~\ref{Hyp:ExtFirst}, we note that $\tilde{A}$ is also the induced adapted operator for $D_0^\dagger$ from $A$, given by $\tilde{A} = -(\sigma_0^{-1})^\ast A \sigma_0^\ast$.
Let $u \in \Dk{\infty}(A)$ and since
$$\tilde{A} (\sigma_0^{-1})u  = -(\sigma_0^{-1})^\ast A \sigma_0^{-1}(\sigma_0^{-1})u = -(\sigma_0^{-1})^\ast A u,$$
we have that $(\sigma_0^{-1})u \in \dom(\tilde{A})$.
By repeated application of this calculation, we conclude that $(\sigma_0^{-1})u \in \Dk{\infty}(\tilde{A})$.

Now, note that $u = (\ext_{\rho} u)\rest{\dM}$ and therefore,
\begin{align*}
\norm{(\sigma_0^{-1})^\ast u}_{\checkH(\tilde{A})}
\lesssim \norm{(\sigma_0^{-1})^\ast \ext_{\rho} u}_{D_0^\dagger}
\simeq \norm{\ext_{\rho} u}_{(\sigma_0^{-1} D_0)^\dagger}
\lesssim \norm{u}_{\hatH(A)},
\end{align*}
where the first inequality follows from applying Corollary~\ref{Cor:TraceBd} to $D_0^\dagger$ and the last inequality from applying Proposition~\ref{Prop:ModelBnd}.
The space $\dom(\tilde{A})$ is dense in $\checkH(\tilde{A})$ and $\Dk{\infty}(A)$ is dense in $\hatH(A)$ by Lemma~\ref{Lem:ExpReg}.
Therefore, this inequality holds for all $u \in \hatH(A)$.

Using a similar argument, interchanging $\ext_\rho$ to $\tilde{\ext}_\rho$, where the latter is the extension map with respect to $\tilde{A}$, we obtain by a similar calculation as before that
$$ \norm{\sigma_0^\ast v}_{\hatH(A)} \lesssim \norm{\sigma_0^\ast \tilde{\ext}_{\rho} v}_{D_0^\dagger} \lesssim \norm{v}_{\checkH(\tilde{A})},$$
for all $v \in \checkH(\tilde{A})$.

Together, this shows that $\sigma_0^\ast$ induced the isomorphism between $\hatH(A)$ and $\checkH(\tilde{A})$ and also proves that the pairing given in the conclusion is perfect.
\end{proof}

\section{The maximal domain}

Let $\dom(D_{\max}; Z_{[0,r]})$ be the maximal domain of $D_{\max}$, considered as an operator in $Z_{[0,r]}$ as defined in the geometric setup in Section~\ref{subsec:GeometricSetup}.
The following result is immediate for compact boundary, but in the noncompact case requires a little work.

\begin{proposition}
\label{Prop:WP}
Let $T$ be as in \ref{Hyp:RemControl}.
There exists $C_{T} > 0$ and $T_c \in (0, T)$ such that:
\begin{enumerate}[label=(\roman*), labelwidth=0pt, labelindent=2pt, leftmargin=21pt]
\item  \label{Prop:WP:3}
      for all $u \in \Dk{\infty}(A)$, we have $\ext_{T_c} u \in \dom(D_{\max})$ and
      $$ \norm{\ext_{T_c} u}_{D} \lesssim \norm{u}_{\checkH(A)};$$
\item \label{Prop:WP:4}
      for all $u \in \Ck[c]{\infty}(Z_{[0,T_c)};E)$, we have
      $$ \norm{u\rest{\dM}}_{\checkH(A)} \lesssim \norm{u}_{D}\quad\text{and}\quad \norm{\sym_0^\ast u\rest{\dM}}_{\hatH(A)} \lesssim \norm{u}_{D^\dagger}.$$
\end{enumerate}
\end{proposition}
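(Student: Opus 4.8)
The plan is to transfer the bounds already established for the model operator $D_0=\sym_0(\partial_t+A)$ — Proposition~\ref{Prop:ModelBnd} and Corollary~\ref{Cor:TraceBd} — to $D$ itself by a perturbation argument confined to a sufficiently thin collar. The bridge is the decomposition from \ref{Hyp:RemControl}: on $Z_{[0,T)}$ we have $\sym_t^{-1}D=\partial_t+A+R_t$ and $\sym_0^{-1}D_0=\partial_t+A$, so $\sym_0^{-1}D_0w=\sym_t^{-1}Dw-R_tw$ there, and since $C^{-1}\modulus{v}\le\modulus{\sym_tv}\le C\modulus{v}$ (and likewise for $\sym_0$, its value at $t=0$) by \ref{Hyp:RemControl}, passing between $D$ and $\partial_t+A+R_t$, resp.\ between $D_0$ and $\partial_t+A$, only costs a factor $C$ in the $\Lp{2}$-graph norms. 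The constant $T_c\in(0,T)$ in the statement will be fixed at the end, depending only on the constant $C$ of \ref{Hyp:RemControl}, and the resulting $\lesssim$-constant serves as $C_T$.

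For \ref{Prop:WP:3}, fix $u\in\Dk{\infty}(A)$ and put $\rho=T_c$. By Lemma~\ref{Lem:ExpReg}, $\ext_{T_c}u$ is smooth on $Z_{(0,T)}$, lies in $\Lp{2}(Z_{[0,T)};E)$, and is supported where $t<\tfrac34 T_c$; extended by zero it is a smooth $\Lp{2}$ section over $M$, and — just as in the proof of Proposition~\ref{Prop:ModelBnd}, where no boundary term arises in Green's formula because test sections in $\Ck[cc]{\infty}$ vanish on $\dM$ — it will lie in $\dom(D_{\max})$ once we know $D\ext_{T_c}u\in\Lp{2}$. Write $D\ext_{T_c}u=\sym_t\bigl(\sym_0^{-1}D_0\ext_{T_c}u\bigr)+\sym_t\bigl(R_t\ext_{T_c}u\bigr)$. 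The first summand has $\Lp{2}$-norm $\lesssim\norm{\ext_{T_c}u}_{D_0}\lesssim\norm{u}_{\checkH(A)}$ by Proposition~\ref{Prop:ModelBnd}. For the second, $(\ext_{T_c}u)(t,\cdot)=\eta_{T_c}(t)\exp(-t\modulus{A}_\epsilon)u$, so \ref{Hyp:RemControl} bounds $\norm{R_t\ext_{T_c}u}_{\Lp{2}(Z_{[0,T)})}^2$ by
\[
\int_0^{T_c} t^2\,\norm{A\exp(-t\modulus{A}_\epsilon)u}_{\Lp{2}(\dM)}^2\,dt \;+\; \int_0^{T_c}\norm{\exp(-t\modulus{A}_\epsilon)u}_{\Lp{2}(\dM)}^2\,dt ,
\]
and, splitting $u=\chi^-(A)u+\chi^+(A)u$ and using $\modulus{A}\le\modulus{A}_\epsilon$, both integrals are $\lesssim\norm{u}_{\checkH(A)}^2$ by the same square-function estimates for the functional calculus of $\modulus{A}_\epsilon$ as in the proof of Proposition~\ref{Prop:ModelBnd} (the weight $t^2$ only helps, turning $z\mapsto z^{1/2}\mathrm{e}^{-z}$ into $z\mapsto z^{3/2}\mathrm{e}^{-z}$ on the $\chi^+(A)$-part). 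With $\norm{\ext_{T_c}u}_{\Lp{2}}\lesssim\norm{u}_{\checkH(A)}$ this gives \ref{Prop:WP:3}; this part is valid for every $T_c\in(0,T)$.

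For \ref{Prop:WP:4}, by Corollary~\ref{Cor:TraceBd} it suffices to prove $\norm{u}_{D_0}\lesssim\norm{u}_D$ for $u\in\Ck[c]{\infty}(Z_{[0,T_c)};E)$, and the only dangerous term is $\norm{R_tu}_{\Lp{2}}$, which \ref{Hyp:RemControl} bounds by $\norm{t\,Au}_{\Lp{2}(Z_{[0,T_c)})}+\norm{u}_{\Lp{2}}$ up to a constant. The key observation is that $w:=t\,u$ again lies in $\Ck[c]{\infty}(Z_{[0,T)};E)$ and satisfies $w\rest{\dM}=0$ (as $\dM=\{t=0\}$), so Lemma~\ref{Lem:Ell} applied to $w$ has vanishing boundary term; since $\sym_0^{-1}D_0w=u+t\,(\sym_0^{-1}D_0u)$ and $Aw=t\,Au$, it gives
\[
\norm{t\,Au}_{\Lp{2}(Z_{[0,T_c)})}^2 \;\le\; \norm{u+t\,(\sym_0^{-1}D_0u)}_{\Lp{2}(Z_{[0,T_c)})}^2 \;\le\; \bigl(\norm{u}_{\Lp{2}}+T_c\,\norm{\sym_0^{-1}D_0u}_{\Lp{2}}\bigr)^2 .
\]
Feeding this into $\norm{\sym_0^{-1}D_0u}_{\Lp{2}}\le\norm{\sym_t^{-1}Du}_{\Lp{2}}+\norm{R_tu}_{\Lp{2}}\lesssim\norm{Du}_{\Lp{2}}+\norm{R_tu}_{\Lp{2}}$ produces an inequality $(1-c\,C\,T_c)\norm{\sym_0^{-1}D_0u}_{\Lp{2}}\le c'\bigl(\norm{Du}_{\Lp{2}}+\norm{u}_{\Lp{2}}\bigr)$ with absolute constants; choosing $T_c<\min\{T,(2cC)^{-1}\}$ absorbs the left-hand factor and yields $\norm{u}_{D_0}\lesssim\norm{u}_D$, after which Corollary~\ref{Cor:TraceBd} gives the $\checkH(A)$-estimate. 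The $\hatH(A)$-estimate is the same argument run for $D^\dagger$: by \ref{Hyp:RemControl} one has $-(\sym_t^\ast)^{-1}D^\dagger=\partial_t+\tilde A+\tilde R_t$ with $\tilde A$ selfadjoint and $\tilde R_t$ obeying an estimate of the same form, so (with the analogues of Lemma~\ref{Lem:Ell} and Corollary~\ref{Cor:TraceBd} for $D^\dagger$) one obtains $\norm{v\rest{\dM}}_{\checkH(\tilde A)}\lesssim\norm{v}_{D^\dagger}$ for $v\in\Ck[c]{\infty}(Z_{[0,T_c)};F)$; applying this to $v=(\sym_t^\ast)^{-1}u$ and transporting through the bundle isomorphism $\sym_0^\ast\colon\checkH(\tilde A)\to\hatH(A)$ of Lemma~\ref{Lem:BdyHom}, using $u\rest{\dM}=\sym_0^\ast(v\rest{\dM})$ and $\norm{u}_{(\sym_t^{-1}D)^\dagger}=\norm{D^\dagger v}_{\Lp{2}}+\norm{u}_{\Lp{2}}\simeq\norm{v}_{D^\dagger}$, gives the claim, after possibly shrinking $T_c$ once more.

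The main obstacle throughout is the control of the remainder terms on the collar. In \ref{Prop:WP:3} it is essential to carry the factor $t$ from \ref{Hyp:RemControl} into the square functions with the correct powers of $t\modulus{A}_\epsilon$ rather than crudely estimating $\norm{A\exp(-t\modulus{A}_\epsilon)u}$ by $\norm{\modulus{A}_\epsilon u}$ in $\Lp{2}$-operator norm — the latter fails on the $\chi^+(A)$-part, where only the dual norm $\norm{\cdot}_{\dom(\modulus{A}^{1/2})^\ast}$ is available. In \ref{Prop:WP:4} the quantity $\norm{Au}_{\Lp{2}(Z_{[0,T_c)})}$ is a priori \emph{not} controlled by $\norm{u}_{D_0}$ (high $A$-frequency mass concentrating near $\dM$ is invisible to the graph norm modulo the boundary term), and the device of passing to $w=t\,u$ — which annihilates exactly that boundary term in Lemma~\ref{Lem:Ell} — is precisely what turns $\norm{t\,Au}_{\Lp{2}}$ into an absorbable quantity once $T_c$ is small. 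Reconciling the $\sym_t$- versus $\sym_0$-dependence on the $D^\dagger$ side is routine and costs only that last shrinking of $T_c$.
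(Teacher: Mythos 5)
Your proof of part~\ref{Prop:WP:3} is essentially the paper's own: decompose $D\ext_{T_c}u$ into the model part, controlled by Proposition~\ref{Prop:ModelBnd}, and the remainder $\sym_t R_t\ext_{T_c}u$, controlled via \ref{Hyp:RemControl} and the square-function estimates for $\modulus{A}_\epsilon$ with the extra weight $t$ (i.e.\ $z\mapsto z^{3/2}e^{-z}$ on the dual-norm part). This matches the paper step for step.

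For part~\ref{Prop:WP:4} you take a genuinely different and, as far as I can see, correct route. The paper argues by duality: it pairs $u\rest{\dM}$ against a dense family $v\in\hatH(A)$, realises $w=(\sym_0^{-1})^\ast v$ as the trace of $\tilde{\ext}_{T_c}w$ using part~(i) applied to $D^\dagger$ together with Lemma~\ref{Lem:BdyHom}, and closes with Green's formula \eqref{A:IntParts} and the perfect pairing $\checkH(A)\times\hatH(A)\to\C$; it never compares the graph norms of $D$ and $D_0$. You instead prove the stronger local statement $\norm{u}_{D_0}\lesssim\norm{u}_{D}$ for $u$ supported in a sufficiently thin collar and then quote Corollary~\ref{Cor:TraceBd}. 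The crux is your observation that Lemma~\ref{Lem:Ell} applied to $w=tu$, whose trace vanishes, has no boundary term and hence yields $\norm{tAu}_{\Lp{2}}\le\norm{u}_{\Lp{2}}+T_c\norm{\sym_0^{-1}D_0u}_{\Lp{2}}$, which lets you absorb the remainder estimate of \ref{Hyp:RemControl} once $T_c\lesssim C^{-1}$. This is the same absorption mechanism the paper deploys later in Corollary~\ref{Cor:ASemiRegNearBoundary}, except that there the boundary term is paid for by $\norm{\modulus{A}^{\frac12}v\rest{\dM}}$; your multiplication by $t$ removes it for free, which is exactly what is needed at this stage, where no trace control is yet available. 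Your handling of the $\hatH(A)$-estimate --- running the argument for $D^\dagger$ and transporting through the isomorphism $\sym_0^\ast\colon\checkH(\tilde A)\to\hatH(A)$ of Lemma~\ref{Lem:BdyHom}, whose proof rests only on the model-operator results, so no circularity arises --- is also fine. What your route buys is a reusable intermediate fact (equivalence of the $D$- and $D_0$-graph norms on a thin collar) and an elementary, quantitative proof; what it costs is the explicit smallness condition on $T_c$ and the $\sym_t^\ast$-bookkeeping on the adjoint side, both of which the paper's duality argument sidesteps.
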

\begin{proof}
To prove \ref{Prop:WP:3}, write $D = \sym_t( \sigma_0^{-1} D_0 + R_t)$ where  $R_t$ is at most a differential operator of order $1$.
Therefore, $R_t$ is closable and Assumption~\ref{Hyp:RemControl} then yields that $\dom(A) \subset \dom(\close{R_t})$ since $A$ is essentially selfadjoint and with  $\close{R_t}$ the closure of $R_t$ on $\Ck[c]{\infty}(\dM;E)$.
Since $\sym_t$ is uniformly bounded by Assumption~\ref{Hyp:RemControl},
$$\dom(D_{\max}) \supset \dom(D_{0,\max}) \cap \dom(\close{R_t}; Z_{[0,T_c]}).$$
By Proposition~\ref{Prop:ModelBnd} we have that $\ext_{T_c}u \in \dom(D_{0,\max})$ with the required estimate, and therefore, it suffices to show that $\ext_{T_c}u \in \dom(\close{R_t}; Z_{[0,T_c]})$ with the corresponding estimate.

Now, by a limiting argument, we note that the estimate in Assumption~\ref{Hyp:RemControl} still holds on $\dom(A)$.
In particular, for each $t \geq 0$, $\ext_{T_{c}}u(t) \in \dom(\close{R_t})$.
Therefore, to show that $\ext_{T_c} u \in \dom(\close{R_t}; Z_{[0,T_c]})$, with the required trace bound, we use Assumption~\ref{Hyp:RemControl} to write
\begin{align*}
\norm{R_t \ext_{T_c} u}_{\Lp{2}(Z_{[0,T_c]})}^2
 & = \int_{0}^T  \norm{R_t \ext_{T_c} u}_{\Lp{2}(\dM)}^2\ dt                                                               \\
 & \lesssim  \int_{0}^T t^2 \norm{A\ext_{T_c} u}^2_{\Lp{2}(\dM)}\ dt + \int_{0}^T \norm{\ext_{T_c} u}_{\Lp{2}(\dM)}^2\ dt.
\end{align*}
We note
$$A \ext_{T_c}u = A  \eta_{T_c} \exp(-t\modulus{A}_{\epsilon}) u =  \eta_{T_c} A \exp(-t\modulus{A}_{\epsilon}) u$$
and therefore,
\begin{align*}
\int_{0}^T t^2 \norm{A\ext_{T_c} u}^2_{\Lp{2}(\dM)}\ dt
 & \lesssim \int_{0}^T  \norm{t \modulus{A}_{\epsilon} \exp(-t\modulus{A}_\epsilon) u}^2_{\Lp{2}(\dM)}\ dt                                                                         \\
 & \lesssim \int_{0}^T  \norm{t \modulus{A}_{\epsilon}^{\frac32}  \exp(-t\modulus{A}_\epsilon)  \modulus{A}_{\epsilon}^{- \frac12} u}^2_{\Lp{2}(\dM)}\ dt                          \\
 & \lesssim \int_{0}^\infty  \norm{t^{\frac32} \modulus{A}_{\epsilon}^{\frac32}  \exp(-t\modulus{A}_\epsilon)  \modulus{A}_{\epsilon}^{- \frac12} u}^2_{\Lp{2}(\dM)}\ \frac{dt}{t} \\
 & \simeq \norm{\modulus{A}_{\epsilon}^{-\frac12} u}_{\Lp{2}(\dM)}                                                                                                                 \\
 & \simeq\norm{u}_{\dom(\modulus{A}^{\frac12})}                                                                                      \\
 & \lesssim \norm{u}_{\checkH(A)}.
\end{align*}
For the remaining term,
\begin{align*}
\int_{0}^T \norm{\ext_{T_c} u}_{\Lp{2}(\dM)}^2\ dt
 & \lesssim \int_{0}^\infty \norm{\exp(-t\modulus{A}_{\epsilon}) u}_{\Lp{2}(\dM)}^2\ dt                                                                        \\
 & =  \int_{0}^\infty \norm{\modulus{A}^{\frac12}_{\epsilon} \exp(-t\modulus{A}_{\epsilon})  \modulus{A}_{\epsilon}^{-\frac12} u}_{\Lp{2}(\dM)}^2 \frac{dt}{t} \\
 & \simeq \norm{\modulus{A}_{\epsilon}^{-\frac12} u}_{\Lp{2}(\dM)}                                                                                             \\
 & \lesssim \norm{u}_{\checkH(A)}.
\end{align*}
Combining these estimates,
$$
\norm{R_t \ext_{T_c} u}_{\Lp{2}(Z_{[0,T_c]})}^2 \lesssim \norm{u}_{\checkH(A)},$$
and from Proposition~\ref{Prop:ModelBnd}, we have that
$$\norm{\sigma_t \sigma_0^{-1} D_0 \ext_{T_c} u} \simeq \norm{D_0 \ext_{T_c} u} \lesssim \norm{u}_{\checkH(A)}.$$
On extension by $0$, we have that  $\ext_{T_c} u \in \dom(D_{\max})$ and
$$ \norm{\ext_{T_c}u}_{D} = \norm{\ext_{T_c}u}_{\dom(D_{\max};Z_{[0,T_c]})} \lesssim \norm{u}_{\checkH(A)}.$$

For Assertion~\ref{Prop:WP:4}, we first note that, by using $\tilde{\ext}_{T_c}$, the corresponding extension operator with respect to $\tilde{A}$, we obtain Assertion~\ref{Prop:WP:3} for the operator $D^\dagger$.
Let $v \in \Dk{\infty}(A)$ and let $w := (\sigma_0^{-1})^\ast v$.
By Lemma~\ref{Lem:BdyHom}, we have that $w \in \Dk{\infty}(\tilde{A})$ and therefore,
$$ \norm{\tilde{\ext}w}_{D^\dagger} \lesssim \norm{w}_{\checkH(\tilde{A})} \simeq \norm{v}_{\hatH(A)},$$
where in the first inequality we have used Assertion~\ref{Prop:WP:4} for $D^\dagger$ and in the equivalence we have used Lemma~\ref{Lem:BdyHom}.
Therefore, fixing $u \in \Ck[c]{\infty}(Z_{[0,T_c)};E)$ extended by $0$ to the whole of $M$ and using \eqref{A:IntParts}, we obtain
$$ -\inprod{u\rest{\dM},v}_{\checkH(A) \times \hatH(A)} =  -\inprod{u\rest{\dM},\sym_0^\ast w}_{\checkH(A) \times \hatH{A}} = \inprod{Du, \tilde{\ext}_{T_c} w} - \inprod{u, D^\dagger \tilde{\ext}_{T_c} w}.$$
Therefore,
\begin{align*}
\inprod{u\rest{\dM},v}_{\checkH(A) \times \hatH(A)}
\lesssim \norm{Du} \norm{\tilde{\ext}_{T_c} w} + \norm{u} \norm{D^\dagger \tilde{\ext}_{T_c} w}
 & \lesssim \norm{u}_{D} \norm{\tilde{\ext}_{T_c} w}_{D^\dagger} \\
\lesssim \norm{u}_{D} \norm{w}_{\checkH(\tilde{A})}
\simeq \norm{u}_{D} \norm{v}_{\hatH(A)}.
\end{align*}
Since such $v$ are dense in $\hatH(A)$, we obtain that
\[
\norm{u\rest{\dM}}_{\checkH(A)} \lesssim \norm{u}_{D}.
\]
The statement $\norm{u\rest{\dM}}_{\hatH(A)} \lesssim \norm{u}_{ D^\dagger}$ follows by a similar duality argument to establish $\norm{u\rest{\dM}}_{\checkH(\tilde{A})} \lesssim \norm{u}_{\tilde{D}}$ and then using Lemma~\ref{Lem:BdyHom} which asserts that $\norm{\sym_0^\ast u}_{\hatH(A)} \simeq \norm{u}_{\checkH(\tilde{A})}$.
\end{proof}

With the combination of these facts, we obtain the proof of the main theorem.
\begin{proof}[Proof of Theorem~\ref{Thm:MaxDom}]
For Statement~\ref{Thm:MaxDom:1} choose a cutoff function $\xi\in \Ck[c]{\infty}([0,\infty))$ so that $\xi = 1$ on $[0,\frac{1}{2}T]$ and $\xi = 0$ on $[\frac{3}{4}T,T]$.
On taking a section $u \in \Ck[c]{\infty}(M;E)$, we have that
$$ \norm{u \rest{\dM}}_{\checkH(A)} = \norm{(\xi u)\rest{\dM}}_{\checkH(A)} \lesssim \norm{ \xi u}_{D} \lesssim \norm{u}_{D}.$$
Here the penultimate inequality follows from Proposition~\ref{Prop:WP}~\ref{Prop:WP:4} and the ultimate inequality uses
$$\modulus{\sym_D (x,d \xi)} = \modulus{\sym_{D}(x,\partial_t \xi dt)} = \modulus{\partial_t \xi} \modulus{ \sym_{D}(x,\partial_t \xi dt)} \lesssim \modulus{\partial_t \xi(t)} \lesssim 1$$
for $x \in Z_{[0,T)}$ from \ref{Hyp:RemControl}.
The density of $\Ck[c]{\infty}(M;E)$ in $\dom(D_{\max})$ as asserted in Lemma~\ref{Lem:Density}, yields the statement that the trace map extends boundedly from $\dom(D_{\max}) \to \checkH(A)$.
Since $\Dk{\infty}(A)$ is dense in $\checkH(A)$, we obtain that
$$ \norm{\ext_{T_c} u} \lesssim \norm{u}_{\checkH(A)}$$
by applying Proposition~\ref{Prop:WP}~\ref{Prop:WP:3}.
This furnishes us with the fact that the boundary trace map is a surjection.
The argument for $D^\dagger$ is analogous.

Next we prove Assertion~\ref{Thm:MaxDom:1.7}.
For that, fix $u_0 \in \checkH(A)$.
By Assertion~\ref{Thm:MaxDom:1}, we are guaranteed there exists $u \in \dom(D_{\max})$ with $u\rest{\dM} = u_0$.
By the density of $\Ck[c]{\infty}(M;E)$ in $\dom(D_{\max})$, there exists $u_n \to u$ in $\dom(D_{\max})$ with $u_n \in \Ck[c]{\infty}(M;E)$.
Since we have obtained the extension of the boundary trace map as a bounded surjection, we obtain
$$\norm{u_0 - (u_n)\rest{\dM}}_{\checkH(A)} \lesssim \norm{u - u_n}_D \to 0.$$
Clearly, $(u_n)\rest{\dM} \in \Ck[c]{\infty}(\dM;E) = \Ck[c]{\infty}(\dM;E)$.
This shows that $\Ck[c]{\infty}(\dM;E)$ is dense in $\checkH(A)$.
The corresponding statement for $\hatH(A)$ follows from applying this for $D^\dagger$ to obtain the density of $\Ck[c]{\infty}(\dM;F)$ in $\checkH(\tilde{A})$, noting that $\sym_0^\ast:\Ck[c]{\infty}(\dM;F) \to \Ck[c]{\infty}(\dM;F)$ bijectively, and then using Lemma~\ref{Lem:BdyHom} to pull this density across to $\hatH(A)$ via $\sym_0^\ast$.

Assertion~\ref{Thm:MaxDom:2} is the well-known Green's formula for compactly supported smooth sections.
Density of compactly supported sections in $\dom(D_{\max})$ and $\dom((D^{\dagger})_{\max})$ as ensured by Lemma~\ref{Lem:Density} yields the claim.

Lastly, we prove  Assertion~\ref{Thm:MaxDom:1.5}.
First, note  if $u \in \dom(D_{\min})$, then there exists $u_n \in \Ck[cc]{\infty}(M;E)$ with $u_n \to u$ in the graph norm of $D$.
Without loss of generality, we can assume that $\spt u,\ \spt u_n \subset Z_{[0,T)}$.
Then, by Proposition~\ref{Prop:WP:3}, we obtain
$$\norm{u\rest{\dM}}_{\checkH(A)} = \norm{u\rest{\dM} - u_n\rest{\dM}}_{\checkH(A)} \lesssim \norm{u - u_n}_{D} \to 0$$
as $n \to \infty$.
Hence, $u \rest{\dM} = 0$.

Now, suppose that $u\rest{\dM} = 0$.
Then, by invoking \ref{Thm:MaxDom:2}, we obtain that $\inprod{u, D_{\max}^{\dagger} v}  = \inprod{D_{\max} u,v}$ for all $v \in \dom(D_{\max}^\dagger)$.
Therefore, we have that $\modulus{\inprod{u, D_{\max}^\dagger v}} \lesssim \norm{v}$, which yields $u \in \dom((D_{\max}^\dagger)^\ast)$.
However, by construction $D_{\max}^\dagger = D_{\min}^\ast$ and since $D_{\min}$ is closed, $(D_{\max}^\dagger)^\ast = D_{\min}^{\ast\ast} = D_{\min}$.
Therefore, $u \in \dom(D_{\min})$.
\end{proof}

\section{Boundary value problems}
\label{Sec:BVPs}

\subsection{Boundary conditions}
As a consequence of Theorem~\ref{Thm:MaxDom}, given a subspace $B \subset \checkH(A)$, we can define an associated operator $D_{B,\max}$ with domain  $$\dom(D_{B,\max}) = \set{u \in \dom(D_{\max}): u\rest{\dM} \in B}.$$
Conversely, given an operator $D'$ satisfying $D_{\min} \subset D' \subset D_{\max}$, define
$$B' = \set{u\rest{\dM}: u \in \dom(D')}.$$
With this notation, we obtain the following important lemma.

\begin{lemma}
\label{Lem:OpBdy}
\begin{enumerate}[label=(\roman*), labelwidth=0pt, labelindent=2pt, leftmargin=21pt]
\item \label{Lem:OpBdy:1}
      If $D'$ is an operator such that $D_{\min} \subset D' \subset D_{\max}$, we have that
      $$\dom(D') = \set{u \in \dom(D_{\max}): u\rest{\dM} \in B'}.$$
\item \label{Lem:OpBdy:2}
      The operator $D_{B,\max}$ is closed if and only if $B$ is closed.
\end{enumerate}
\end{lemma}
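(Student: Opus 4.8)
The plan is to treat the two parts separately: part~\ref{Lem:OpBdy:1} rests on the identification of $\dom(D_{\min})$ as the kernel of the boundary trace (Theorem~\ref{Thm:MaxDom}~\ref{Thm:MaxDom:1.5}), and part~\ref{Lem:OpBdy:2} on transporting ``closedness'' through the topological isomorphism $\dom(D_{\max})/\dom(D_{\min}) \to \checkH(A)$ furnished by Remark~\ref{rem:maxmin}.

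For \ref{Lem:OpBdy:1} I would argue as follows. The inclusion $\dom(D') \subseteq \set{u \in \dom(D_{\max}) : u\rest{\dM} \in B'}$ is immediate from the definition $B' = \set{u\rest{\dM} : u \in \dom(D')}$. For the reverse inclusion, given $u \in \dom(D_{\max})$ with $u\rest{\dM} \in B'$, choose $w \in \dom(D')$ with $w\rest{\dM} = u\rest{\dM}$; then $(u-w)\rest{\dM} = 0$, so Theorem~\ref{Thm:MaxDom}~\ref{Thm:MaxDom:1.5} gives $u - w \in \dom(D_{\min}) \subseteq \dom(D')$, whence $u = (u-w) + w \in \dom(D')$. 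This part should be entirely routine; the only thing being used besides the definitions is $D_{\min}\subset D'$.

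For \ref{Lem:OpBdy:2} I would first isolate the elementary principle that $D_{B,\max}$, being the restriction of the closed operator $D_{\max}$ with unchanged action, is closed precisely when $\dom(D_{B,\max})$ is a $\norm{\cdot}_D$-closed subspace of $\dom(D_{\max})$: its graph is closed in $\Lp{2}(M;E) \times \Lp{2}(M;F)$ if and only if it is closed inside $\mathrm{graph}(D_{\max})$, which is topologically identified with $(\dom(D_{\max}), \norm{\cdot}_D)$. I would then record three facts: $\dom(D_{\min}) \subseteq \dom(D_{B,\max})$ (since $0 \in B$); $\dom(D_{\min})$ is itself $\norm{\cdot}_D$-closed, being the domain of the closed restriction $D_{\min}$; and $\dom(D_{B,\max})$ is exactly the preimage of $B$ under the bounded surjection $\mathrm{tr}\colon \dom(D_{\max}) \to \checkH(A)$, $u \mapsto u\rest{\dM}$ (Theorem~\ref{Thm:MaxDom}~\ref{Thm:MaxDom:1}), which by Remark~\ref{rem:maxmin} descends to a Banach-space isomorphism $\overline{\mathrm{tr}}\colon \dom(D_{\max})/\dom(D_{\min}) \to \checkH(A)$ carrying $\dom(D_{B,\max})/\dom(D_{\min})$ onto $B$. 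Since a subspace of $\dom(D_{\max})$ containing $\dom(D_{\min})$ is $\norm{\cdot}_D$-closed if and only if its image under the continuous open quotient map is closed, I would then chain: $D_{B,\max}$ closed $\iff$ $\dom(D_{B,\max})$ $\norm{\cdot}_D$-closed $\iff$ $\dom(D_{B,\max})/\dom(D_{\min})$ closed in the quotient $\iff$ $B$ closed in $\checkH(A)$. The step deserving genuine care — rather than a real obstacle — is the last equivalence, which needs $\overline{\mathrm{tr}}$ to be a homeomorphism (not merely a continuous bijection); this is exactly what Remark~\ref{rem:maxmin} supplies via the open mapping theorem, so in the write-up I would simply invoke it there.
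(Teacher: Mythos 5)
Your proof is correct and follows the same route as the paper, which disposes of both assertions in one line by citing the isomorphism $\dom(D_{\max})/\dom(D_{\min}) \to \checkH(A)$ from Remark~\ref{rem:maxmin}; your write-up simply makes explicit the details (the kernel identification from Theorem~\ref{Thm:MaxDom}~\ref{Thm:MaxDom:1.5} for part~(i), and the closed-subspace correspondence through the quotient homeomorphism for part~(ii)) that the paper leaves to the reader.
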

\begin{proof}
The lemma follows immediately from the trace map inducing an isomorphism
\begin{equation*}
\faktor{\dom(D_{\max})}{\dom(D_{\min})} \to \checkH(A),
\end{equation*}
see Remark~\ref{rem:maxmin}.
\end{proof}

\begin{definition}
A boundary condition for $D$ is a \emph{closed} subspace $B \subset \checkH(A)$.
The associated operators are then given by $D_{B,\max}$ and $D_{B}$, with domains
\begin{gather*}
\dom(D_{B,\max}) = \set{ u \in \dom(D_{\max}): u\rest{\dM} \in B} \quad\text{and}\\
\dom(D_{B}) = \set{ u \in \dom(D_{\max}) \intersect \SobH[loc]{1}(M;E): u \rest\dM \in B} .
\end{gather*}
\end{definition}

As a consequence of Lemma~\ref{Lem:OpBdy}, for any boundary condition $B$, the operator $D_{B,\max}$ is closed.

\begin{proposition}
A boundary condition $B$ satisfies $B \subset \SobH[loc]{\frac{1}{2}}(\dM;E)$ if and only if $D_{B} = D_{B,\max}$.
\end{proposition}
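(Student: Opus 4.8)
The plan is to prove the two implications separately, with the nontrivial content being the regularity gain "$B \subset \SobH[loc]{\frac12}(\dM;E) \Longrightarrow D_B = D_{B,\max}$". Throughout, recall that by definition $\dom(D_B) = \dom(D_{B,\max}) \intersect \SobH[loc]{1}(M;E)$, so $D_B \subset D_{B,\max}$ always, and the question is purely about when the reverse containment holds.

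\emph{Easy direction.} Suppose $D_B = D_{B,\max}$. Take any $u_0 \in B$. By Theorem~\ref{Thm:MaxDom}\ref{Thm:MaxDom:1} the trace map $\dom(D_{\max}) \to \checkH(A)$ is surjective, so there is $u \in \dom(D_{\max})$ with $u\rest{\dM} = u_0$; then $u \in \dom(D_{B,\max}) = \dom(D_B) \subset \SobH[loc]{1}(M;E)$. By Lemma~\ref{Lem:RestReg}, $u_0 = u\rest{\dM} \in \SobH[loc]{\frac12}(\dM;E)$. Hence $B \subset \SobH[loc]{\frac12}(\dM;E)$.

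\emph{Main direction.} Assume $B \subset \SobH[loc]{\frac12}(\dM;E)$ and take $u \in \dom(D_{B,\max})$; we must show $u \in \SobH[loc]{1}(M;E)$. Since $u \in \dom(D_{\max})$ we have $Du \in \Lp{2}(M;F) \subset \SobH[loc]{0}(M;F)$, and by hypothesis $u\rest{\dM} \in B \subset \SobH[loc]{\frac12}(\dM;E)$. These are exactly the hypotheses on the right-hand side of \eqref{Eq:MaxDom} in Theorem~\ref{Thm:Reg} with $k = 1$: $Du \in \SobH[loc]{k-1}(M;E)$ and $u\rest{\dM} \in \SobH[loc]{k-\frac12}(\dM;E)$. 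Therefore Theorem~\ref{Thm:Reg} gives $u \in \dom(D_{\max}) \intersect \SobH[loc]{1}(M;E)$, which together with $u\rest{\dM} \in B$ means precisely $u \in \dom(D_B)$. Hence $\dom(D_{B,\max}) \subset \dom(D_B)$, and combined with the trivial inclusion we get $D_B = D_{B,\max}$.

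\emph{Where the work really sits.} In this write-up the proposition is essentially a corollary of Theorem~\ref{Thm:Reg}, so there is no genuine obstacle left at this point; the analytic difficulty has been front-loaded into the local regularity theorem (which in turn rests on the boundary regularity result Theorem~2.4 of \cite{BBan} applied on the doubled manifolds $N_i$ from Corollary~\ref{Cor:EmbMf}). The only points needing a word of care are: (i) checking that $u\rest{\dM} \in B$, a condition about $B \subset \checkH(A)$, is compatible with the $\SobH[loc]{\frac12}$ statement—this is fine since $B \subset \SobH[loc]{\frac12}(\dM;E)$ is an inclusion of subspaces of distributional sections, and the trace map of Theorem~\ref{Thm:MaxDom} refines the one of Theorem~\ref{Thm:Trace1}; and (ii) noting that $\SobH[loc]{\frac12}(\dM;E)$ here is genuinely a $\frac12$-regularity statement in the local sense, so that no uniformity across $\dM$ is needed—the localization is already built into Theorem~\ref{Thm:Reg}. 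Thus the proof is short, and the main "obstacle" is simply recognizing that it is an immediate consequence of the regularity theory already established.
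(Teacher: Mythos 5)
Your proof is correct and follows essentially the same route as the paper: the forward implication is exactly the paper's application of Theorem~\ref{Thm:Reg} with $k=1$, and your converse (surjectivity of the trace onto $B$ plus Lemma~\ref{Lem:RestReg}) is just a more explicit version of the paper's one-line argument via Lemma~\ref{Lem:OpBdy}. No gaps.
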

\begin{proof}
It is clear that we always have $D_{B} \subset D_{B,\max}$.
Assuming that $B \subset \SobH[loc]{\frac{1}{2}}(\dM;E)$, we prove that $D_{B,\max} \subset D_{B}$.
For that, let $u \in D_{B,\max}$, and so we have that $u \in \SobH[loc]{\frac{1}{2}}(\dM;E)$.
By Theorem~\ref{Thm:Reg}, we then obtain that $u \in \dom(D_{\max}) \intersect \SobH[loc]{1}(M;E)$.
This shows that $u \in \dom(D_{B})$.

For $u \in \dom(D_B)$, we have that $u\rest{\dM} \in \SobH[loc]{\frac{1}{2}}(\dM;E) \intersect B$.
Given that we assume $D_{B} = D_{B,\max}$, we have by Lemma~\ref{Lem:OpBdy}~\ref{Lem:OpBdy:1} that $u \in \dom(D_{B,\max})$ if and only if  $u\rest{\dM} \in B$ which can only happen if $B \subset \SobH[loc]{\frac{1}{2}}(\dM;E)$.
\end{proof}

Given a boundary condition $B$, operator theory implies that the adjoint operator $D_{B,\max}^\ad$ satisfies:
$$D_{B,\max}^\ad \subset (D_{\min})^\ad = D^\dagger_{\max}.$$
Therefore, as a consequence of Theorem~\ref{Thm:MaxDom}~\ref{Thm:MaxDom:2}, we define
\begin{equation}
\label{Eq:Bad}
B^{\ad} := \set{v \in \checkH(\tilde{A}): \inprod{\sym_0 u, v}_{\hatH(\tilde{A}) \times \checkH(\tilde{A})} = 0\ \quad\forall u \in B}.
\end{equation}

\begin{proposition}
\label{Prop:AdjBC}
The space  $$\sym_0^\ast B^{\ad} =  B^{\perp, \hatH(A)} = \set{v \in \hatH(A): \inprod{u,v}_{\checkH(A) \times \hatH(A)} = 0\ \ \forall u \in B}$$ and  $D_{B,\max}^\ad$ satisfies:
$$ \dom(D_{B,\max}^\ad)  = \set{v \in \dom( (D^\dagger)_{\max} ): v \rest{\dM} \in B^\ad}.$$
\end{proposition}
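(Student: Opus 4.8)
The plan is to establish the identity $\sym_0^\ast B^{\ad} = B^{\perp,\hatH(A)}$ first, and then deduce the domain description of $D_{B,\max}^\ad$ from Theorem~\ref{Thm:MaxDom}\ref{Thm:MaxDom:2} together with Lemma~\ref{Lem:OpBdy}\ref{Lem:OpBdy:1} applied to $D^\dagger$.

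\textbf{Step 1: the identity $\sym_0^\ast B^{\ad} = B^{\perp,\hatH(A)}$.}
By Lemma~\ref{Lem:BdyHom}, the map $(\sym_0^{-1})^\ast\colon \hatH(A) \to \checkH(\tilde A)$ is an isomorphism with perfect pairing $\beta(u,v) = -\inprod{\sym_0 u, v}_{\Lp{2}(\dM)}$ extending to $\hatH(A)\times\checkH(\tilde A)$; equivalently $\sym_0^\ast\colon \checkH(\tilde A)\to\hatH(A)$ is an isomorphism. Unwinding the definition \eqref{Eq:Bad}, $v\in B^{\ad}$ iff $\inprod{\sym_0 u, v}_{\hatH(\tilde A)\times\checkH(\tilde A)} = 0$ for all $u\in B$. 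Now by Theorem~\ref{Thm:MaxDom}\ref{Thm:MaxDom:2}, for $u\in B\subset\checkH(A)$ and $v\in\checkH(\tilde A)$ the boundary pairing $\inprod{u, \sym_0^\ast v}_{\checkH(A)\times\hatH(A)}$ is precisely $-\beta$-type, i.e. it agrees with $\inprod{\sym_0 u, v}$ under the identifications; so $v\in B^{\ad}$ iff $\inprod{u, \sym_0^\ast v}_{\checkH(A)\times\hatH(A)} = 0$ for all $u\in B$, which says exactly $\sym_0^\ast v \in B^{\perp,\hatH(A)}$ (the annihilator of $B$ in the dual $\hatH(A)$ of $\checkH(A)$ under the perfect pairing of Theorem~\ref{Thm:MaxDom}). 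Hence $\sym_0^\ast B^{\ad} = B^{\perp,\hatH(A)}$; since $\sym_0^\ast$ is an isomorphism, this also shows $B^{\ad}$ is a closed subspace of $\checkH(\tilde A)$.

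\textbf{Step 2: the domain of $D_{B,\max}^\ad$.}
From operator theory, $D_{B,\max}^\ad \subset (D_{\min})^\ad = D^\dagger_{\max}$, so $D_{B,\max}^\ad$ is a closed extension of $D^\dagger_{\min}$ sitting inside $D^\dagger_{\max}$, and by Lemma~\ref{Lem:OpBdy}\ref{Lem:OpBdy:1} (applied with $D^\dagger$ in place of $D$) it is determined by its boundary space $B' := \{v\rest\dM : v\in\dom(D_{B,\max}^\ad)\}\subset\checkH(\tilde A)$. A section $v\in\dom(D^\dagger_{\max})$ lies in $\dom(D_{B,\max}^\ad)$ iff $\inprod{D_{\max}u, v}_{\Lp{2}} = \inprod{u, D^\dagger_{\max}v}_{\Lp{2}}$ for all $u\in\dom(D_{B,\max})$. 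By Theorem~\ref{Thm:MaxDom}\ref{Thm:MaxDom:2}, the left minus right side equals $-\inprod{u\rest\dM, \sym_0^\ast v\rest\dM}_{\checkH(A)\times\hatH(A)}$, so the condition is that $\sym_0^\ast v\rest\dM$ annihilates $\{u\rest\dM : u\in\dom(D_{B,\max})\} = B$ in $\hatH(A)$ — that is, $\sym_0^\ast v\rest\dM \in B^{\perp,\hatH(A)} = \sym_0^\ast B^{\ad}$ by Step~1, equivalently $v\rest\dM\in B^{\ad}$. This gives $\dom(D_{B,\max}^\ad) = \{v\in\dom(D^\dagger_{\max}) : v\rest\dM\in B^{\ad}\}$, and in particular $B' = B^{\ad}$.

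\textbf{Main obstacle.}
The only delicate point is bookkeeping of the three pairings: the $\checkH(A)\times\hatH(A)$ pairing appearing in Green's formula \eqref{Eq:MaxDInt}, the $\hatH(\tilde A)\times\checkH(\tilde A)$ pairing used in the definition \eqref{Eq:Bad} of $B^{\ad}$, and the perfect pairing $\beta$ of Lemma~\ref{Lem:BdyHom}; one must check they are compatible (consistent up to the isomorphism $\sym_0^\ast$ and signs) on the $\Lp{2}$-dense subspaces, and then extend by density and boundedness. Once the identification $\inprod{u, \sym_0^\ast v}_{\checkH(A)\times\hatH(A)} = \inprod{\sym_0 u, v}_{\hatH(\tilde A)\times\checkH(\tilde A)}$ is pinned down on $\Ck[c]{\infty}(\dM;\cdot)$, everything else is a formal consequence of perfectness of the pairings and Lemma~\ref{Lem:OpBdy}.
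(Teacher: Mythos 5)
Your proposal is correct and follows essentially the same route as the paper: the identity $\sym_0^\ast B^{\ad}=B^{\perp,\hatH(A)}$ is obtained by transporting the defining pairing of $B^{\ad}$ through the isomorphism $\sym_0^\ast$ (the paper cites Lemma~\ref{Lem:Duality} where you cite Lemma~\ref{Lem:BdyHom}, but the key identification $\inprod{\sym_0 u,v}_{\hatH(\tilde A)\times\checkH(\tilde A)}=\inprod{u,\sym_0^\ast v}_{\checkH(A)\times\hatH(A)}$ is the same), and the domain description follows from Green's formula in Theorem~\ref{Thm:MaxDom}~\ref{Thm:MaxDom:2} together with Lemma~\ref{Lem:OpBdy}. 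Your Step~2 merely spells out in detail what the paper leaves as a one-line remark.
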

\begin{proof}
Using Lemma~\ref{Lem:Duality} and calculating:
\begin{align*}
B^\ad & = \set{v \in \checkH(\tilde{A}): \inprod{\sym_0 u, v}_{\hatH(\tilde{A}) \times \checkH(\tilde{A})} = 0\ \quad\forall u \in B} \\
      & =  \set{v \in \checkH(\tilde{A}): \inprod{u, \sym_0^\ast v}_{\checkH(A) \times \hatH(A)} = 0\ \quad\forall u \in B}           \\
      & =  \set{(\sym_{0}^{-1})^\ast w \in \checkH(\tilde{A}): \inprod{u, w}_{\checkH(A) \times \hatH(A)} = 0\ \quad\forall u \in B}  \\
      & = (\sym_{0}^{-1})^\ast \set{ w \in \hatH(A): \inprod{u, w}_{\checkH(A) \times \hatH(A)} = 0\ \quad\forall u \in B}.
\end{align*}

The remaining assertion follows directly from Theorem~\ref{Thm:MaxDom}, noting that
$$ \inprod{D_{B,\max}u,v} = \inprod{u, D_{B,\max}^\ad v}$$
for all $u \in \dom(D_{B,\max})$ and $v \in \dom(D_{B,\max}^\ad)$, along with Lemma~\ref{Lem:OpBdy}.
\end{proof}

\begin{definition}
\label{Def:EllReg}
We say that a boundary condition $B$ is \emph{elliptically semi-regular} if $B \subset \SobH[loc]{\frac12}(\dM;E)$ and \emph{elliptically regular} if $B$ is elliptically semi-regular and $B^\ad$ is also elliptically semi-regular.
\end{definition}

\begin{remark}
\label{Rem:EllReg}
Note that since $\sym_0: \Ck{\infty}(M;F) \to \Ck{\infty}(M;F)$, the elliptic semi-regularity of $B^\ad$ is equivalent to the assertion that $B^{\perp, \hatH(A)} \subset \SobH[loc]{\frac12}(\dM;E)$.
\end{remark}

Then we immediately obtain the following.

\begin{corollary}
\label{Cor:EllReg}
A boundary condition $B$ is elliptically regular if and only if $\dom(D_{B,\max}) \subset \SobH[loc]{1}(M;E)$ and $\dom(D_{B,\max}^\ast) \subset \SobH[loc]{1}(M;F)$.\hfill\qed
\end{corollary}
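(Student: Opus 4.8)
The plan is to isolate a single equivalence and then apply it twice, once to $D$ and once to $D^\dagger$. Concretely, I would first establish that for every boundary condition $B$,
\[
B\ \text{elliptically semi-regular}\qquad\Longleftrightarrow\qquad \dom(D_{B,\max})\subset\SobH[loc]{1}(M;E).
\]
Granting this, the corollary is immediate. By Definition~\ref{Def:EllReg}, $B$ is elliptically regular exactly when $B$ and $B^\ad$ are both elliptically semi-regular. The first condition is, by the equivalence applied to $D$, precisely $\dom(D_{B,\max})\subset\SobH[loc]{1}(M;E)$. The second condition, by the equivalence applied to $D^\dagger$ together with Proposition~\ref{Prop:AdjBC} — which identifies $D_{B,\max}^\ast$ with $(D^\dagger)_{B^\ad,\max}$ — is precisely $\dom(D_{B,\max}^\ast)\subset\SobH[loc]{1}(M;F)$. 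Combining these two statements gives the stated biconditional.

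To prove the equivalence I would unwind the definitions of $\dom(D_B)$ and $\dom(D_{B,\max})$ to record the tautology $\dom(D_B)=\dom(D_{B,\max})\cap\SobH[loc]{1}(M;E)$. If $\dom(D_{B,\max})\subset\SobH[loc]{1}(M;E)$, this tautology forces $\dom(D_B)=\dom(D_{B,\max})$, i.e.\ $D_B=D_{B,\max}$; the proposition proved above characterising $D_B=D_{B,\max}$ as equivalent to $B\subset\SobH[loc]{\frac12}(\dM;E)$ then says that $B$ is elliptically semi-regular. Conversely, if $B$ is elliptically semi-regular, that same proposition gives $D_B=D_{B,\max}$, so $\dom(D_{B,\max})=\dom(D_B)\subset\SobH[loc]{1}(M;E)$ directly from the definition of $\dom(D_B)$.

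For the adjoint side, I would note that $D^\dagger$ satisfies the same background and geometric hypotheses as $D$, with $(E,A)$ replaced by $(F,\tilde A)$, and that by Proposition~\ref{Prop:AdjBC} the space $B^\ad$ is a closed subspace of $\checkH(\tilde A)$, hence a bona fide boundary condition for $D^\dagger$. The equivalence established above therefore applies verbatim to the pair $(D^\dagger,B^\ad)$, yielding that $B^\ad$ is elliptically semi-regular if and only if $\dom\big((D^\dagger)_{B^\ad,\max}\big)=\dom(D_{B,\max}^\ast)\subset\SobH[loc]{1}(M;F)$, as needed. I do not expect a genuine obstacle here: all the analytic substance is already contained in Theorem~\ref{Thm:Reg} — channelled through the proposition on $D_B=D_{B,\max}$ — and in Proposition~\ref{Prop:AdjBC}; the only point demanding a little care is the bookkeeping, namely verifying the set-theoretic identity $\dom(D_B)=\dom(D_{B,\max})\cap\SobH[loc]{1}(M;E)$ and confirming that $B^\ad$ legitimately falls under the hypotheses for which the first equivalence was established.
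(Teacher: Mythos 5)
Your argument is correct and is exactly the route the paper intends: the corollary is stated without proof precisely because it follows by combining the proposition characterising $B\subset\SobH[loc]{\frac12}(\dM;E)$ via $D_B=D_{B,\max}$ with the tautology $\dom(D_B)=\dom(D_{B,\max})\cap\SobH[loc]{1}(M;E)$, and then applying the same equivalence to $(D^\dagger,B^\ad)$ through Proposition~\ref{Prop:AdjBC}. No gaps.
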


Note that in the compact case, the assertion of an elliptically (semi-)regular boundary condition can be formulated in terms of the adapted operator $A$.
Namely, since $\SobH{\alpha}(\dM;E) = \dom(\modulus{A}^\alpha)$, we can see from Definition~\ref{Def:EllReg} and Corollary~\ref{Cor:EllReg} that the spaces $\SobH{\alpha}(\dM;E)$ for $\alpha = \frac12$ and $\alpha = -\frac12$  can be described in terms of domains $\dom(\modulus{A}^{\frac12})$ and $\dom(\modulus{A}^{\frac12})^\ast$.
Clearly, in the noncompact setting, it is a futile effort to attempt to identify the spaces $\SobH[loc]{\alpha}(\dM;E)$, which are locally convex linear spaces, with domains of powers of $A$, which are Banach spaces.
That being said,  it is important in applications to consider the following class of boundary conditions whose regularity is captured in terms of $A$.

\begin{definition}
\label{Def:AEllReg}
We say a boundary condition $B$ is \emph{$A$-elliptically semi-regular} if $B \subset \dom(\modulus{A}^{\frac12})$.
If further $\sigma_0^\ast B^\ast \subset \dom(\modulus{A}^{\frac12})$, then we say that it is \emph{$A$-elliptically regular}.
\end{definition}

\begin{remark}
If $B$ is $A$-elliptically regular, then $B^*$ is $\tilde{A}$-elliptically regular.
\end{remark}

As the following corollary demonstrates, this is a more restricted class of elliptically (semi-)regular boundary conditions. 

\begin{corollary}
\label{Cor:AEllReg}
Every $A$-elliptically (semi-)regular boundary condition is elliptically (semi-)regular.
\end{corollary}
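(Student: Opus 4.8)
The plan is to reduce the entire statement to the single set-theoretic inclusion
\[
  \dom(\modulus{A}^{\frac12}) \subset \SobH[loc]{\frac12}(\dM;E),
\]
valid for any essentially selfadjoint first-order elliptic differential operator $A$ on a Hermitian bundle over the boundaryless manifold $\dM$, and then to read off both halves of the corollary. The semi-regular case is then immediate from Definition~\ref{Def:AEllReg}: if $B \subset \dom(\modulus{A}^{\frac12})$ then $B \subset \SobH[loc]{\frac12}(\dM;E)$, which is exactly elliptic semi-regularity. For the regular case I would invoke Proposition~\ref{Prop:AdjBC}, which identifies $\sym_0^\ast B^\ast = B^{\perp,\hatH(A)}$; hence $A$-elliptic regularity, i.e.\ $\sym_0^\ast B^\ast \subset \dom(\modulus{A}^{\frac12})$, together with the inclusion above gives $B^{\perp,\hatH(A)} \subset \SobH[loc]{\frac12}(\dM;E)$, and by Remark~\ref{Rem:EllReg} this is precisely the elliptic semi-regularity of $B^\ast$. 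Combined with the semi-regular case applied to $B$ itself, this shows that $B$ is elliptically regular.

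It remains to establish the inclusion, which I would do by interpolation. Since $A$ is essentially selfadjoint, $\close{A}$ is selfadjoint, $\dom(\close{A}) = \dom(\modulus{A})$ with $\norm{\modulus{A}u} = \norm{\close{A}u}$, and every $u \in \dom(\close{A})$ satisfies $\close{A}u = Au \in \Lp{2}(\dM;E)$ in the distributional sense. Interior elliptic regularity for the first-order elliptic operator $A$ therefore gives $\dom(\close{A}) \subset \SobH[loc]{1}(\dM;E)$; quantitatively, for a precompact open $\Omega$ with smooth boundary whose closure lies in a precompact open $\Omega'$, one has $\norm{u\rest{\Omega}}_{\SobH{1}(\Omega)} \lesssim \norm{\close{A}u}_{\Lp{2}(\Omega')} + \norm{u}_{\Lp{2}(\Omega')}$, so that restriction $\dom(\close{A}) \to \SobH{1}(\Omega)$ is bounded, as is restriction $\Lp{2}(\dM;E) \to \Lp{2}(\Omega)$. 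Interpolating these two restriction maps at exponent $\tfrac12$ then does the job: on the source side, $[\Lp{2}(\dM;E),\dom(\close{A})]_{\frac12} = \dom(\modulus{A}^{\frac12})$ with equivalent norms because $\modulus{A}$ is nonnegative selfadjoint; on the target side, the $\Lp{2}$-Sobolev spaces form an interpolation scale, so $[\Lp{2}(\Omega),\SobH{1}(\Omega)]_{\frac12} = \SobH{\frac12}(\Omega)$. This produces a bounded map $\dom(\modulus{A}^{\frac12}) \to \SobH{\frac12}(\Omega)$ which coincides with restriction on the dense subspace $\dom(\close{A})$, hence is restriction; since such $\Omega$ are cofinal among precompact subsets of $\dM$, this is exactly $\dom(\modulus{A}^{\frac12}) \subset \SobH[loc]{\frac12}(\dM;E)$.

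The only genuinely delicate point — and where the real content sits — is the identification $[\Lp{2}(\dM;E),\dom(\close{A})]_{\frac12} = \dom(\modulus{A}^{\frac12})$, which rests on the spectral theorem, and hence on the essential selfadjointness of $A$ that is part of the geometric setup~\ref{Hyp:ExtFirst}; the rest (interior elliptic regularity, and localising over precompact $\Omega$ so as to land in the Fréchet space $\SobH[loc]{\frac12}(\dM;E)$) is routine. As a consistency check, when $\dM$ is compact one has $\dom(\modulus{A}^{\alpha}) = \SobH{\alpha}(\dM;E)$ for all $\alpha$, so the inclusion becomes the equality underlying the classical coincidence of the two notions of (semi-)regularity in that setting.
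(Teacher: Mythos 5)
Your proposal is correct and follows the paper's argument: the paper's proof is simply the observation that elliptic regularity gives $\dom(\modulus{A}^{\frac12}) \subset \SobH[loc]{\frac12}(\dM;E)$, after which both halves follow from Definition~\ref{Def:AEllReg}, Proposition~\ref{Prop:AdjBC} and Remark~\ref{Rem:EllReg} exactly as you describe. Your interpolation argument (interior regularity $\dom(\close{A})\subset\SobH[loc]{1}$ combined with $[\Lp{2},\dom(\close{A})]_{\frac12}=\dom(\modulus{A}^{\frac12})$) just makes explicit what the paper compresses into the phrase ``from elliptic regularity.''
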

\begin{proof}
From elliptic regularity, $\dom(\modulus{A}^{\frac12}) \subset \SobH[loc]{\frac12}(\dM;E)$.
Therefore, if $B$ is elliptically semi-regular, then $B \subset \dom(\modulus{A}^{\frac12})$ and so in particular, $B \subset \SobH[loc]{\frac12}(\dM;E)$.
Similarly for $\sym_0^\ast B^\ast$.
\end{proof}

An important way in which the $A$-elliptic regularity can be detected in practice is captured in the following.
\begin{proposition}
\label{Prop:ProjBCAReg}
Suppose that $\sym_0B = B$, $\sym_0^\ast B^{\perp, \hatH(A)} = B^{\perp, \hatH(A)}$ and that $\sym_0 A = - A \sym_0$.
Then $B$ is $A$-elliptically regular.
\end{proposition}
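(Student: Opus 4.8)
The plan is to deduce from the three hypotheses that both $B$ and $\sym_0^\ast B^\ast$ lie in $\dom(\modulus{A}^{\frac12})$, which is exactly what Definition~\ref{Def:AEllReg} demands. The point is that the anticommutation $\sym_0 A = -A\sym_0$ makes $\sym_0$ an invertible operator that swaps the spectral half-lines of $A$ while commuting with $\modulus{A}$; here the hypotheses implicitly identify $E_{\dM}$ with $F_{\dM}$, so that $\sym_0$ and $\sym_0^\ast$ act on the same $\Lp{2}$-space (and, as we note below, $\tilde A = A$).

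First I would record the relevant operator identities. By Assumption~\ref{Hyp:RemControl} the endomorphism $\sym_0$ is uniformly bounded with uniformly bounded inverse, so $\sym_0,\sym_0^\ast$ and their inverses are bounded on $\Lp{2}(\dM;E)$ and preserve $\Ck[c]{\infty}(\dM;E)$; since $A$ is essentially selfadjoint there, $\sym_0 A = -A\sym_0$ extends to $\dom(A)$ and says $\sym_0^{-1}A\sym_0 = -A$. Transporting the Borel functional calculus along this bounded similarity gives $\sym_0^{-1}\varphi(A)\sym_0 = \varphi(-A)$ for every Borel $\varphi$; in particular $\sym_0\chi_I(A) = \chi_{-I}(A)\sym_0$ for every Borel $I\subset\R$ (with $-I:=\{-x:x\in I\}$) and, taking $\varphi$ even, $\sym_0\modulus{A} = \modulus{A}\sym_0$, hence $\sym_0$ commutes with $\modulus{A}_\epsilon^s$ for all $s\in\R$. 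By the norm equivalences $\norm{u}_{\dom(\modulus{A}^s)}\simeq\norm{\modulus{A}_\epsilon^s u}$ and $\norm{u}_{\dom(\modulus{A}^s)^\ast}\simeq\norm{\modulus{A}_\epsilon^{-s}u}$, it follows that $\sym_0$ restricts to a bounded isomorphism of $\dom(\modulus{A}^s)$ and of $\dom(\modulus{A}^s)^\ast$, with inverse $\sym_0^{-1}$. Taking adjoints in $\sym_0 A = -A\sym_0$ yields $\sym_0^\ast A = -A\sym_0^\ast$, so the same conclusions hold for $\sym_0^\ast$; combined with $\tilde A = -(\sym_0^{-1})^\ast A\sym_0^\ast$ from Lemma~\ref{Lem:BdyHom}, this also forces $\tilde A = A$.

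Next I would prove $B\subset\dom(\modulus{A}^{\frac12})$. Use the spectral splitting $\Lp{2}(\dM;E) = \ran\chi_{(-\infty,0)}(A)\oplus\nul(A)\oplus\ran\chi_{(0,\infty)}(A)$, recalling that all three projections extend boundedly to $\checkH(A)$ and to $\dom(\modulus{A}^{\frac12})^\ast$ (Lemma~\ref{Lem:ProjDualBdd}) and that $\nul(A)\subset\dom(\modulus{A}^{\frac12})$. Fix $u\in B\subset\checkH(A)$ and write $u = \chi_{(-\infty,0)}(A)u + \chi_{\{0\}}(A)u + \chi_{(0,\infty)}(A)u$. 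The first summand is $\chi^-(A)u\in\dom(\modulus{A}^{\frac12})$ by the definition of $\checkH(A)$, and $\chi_{\{0\}}(A)u\in\nul(A)\subset\dom(\modulus{A}^{\frac12})$; so it remains to control $\chi_{(0,\infty)}(A)u$. Here I use $\sym_0 B = B$: then $\sym_0 u\in B\subset\checkH(A)$, so $\chi^-(A)\sym_0 u\in\dom(\modulus{A}^{\frac12})$, while the intertwining gives $\chi^-(A)\sym_0 = \chi_{(-\infty,0)}(A)\sym_0 = \sym_0\chi_{(0,\infty)}(A)$; hence $\sym_0\chi_{(0,\infty)}(A)u\in\dom(\modulus{A}^{\frac12})$, and applying $\sym_0^{-1}$ (which preserves $\dom(\modulus{A}^{\frac12})$) gives $\chi_{(0,\infty)}(A)u\in\dom(\modulus{A}^{\frac12})$. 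Summing the three pieces yields $u\in\dom(\modulus{A}^{\frac12})$, i.e.\ $B$ is $A$-elliptically semi-regular.

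Finally I would run the identical argument with the triple $(-A,\ \sym_0^\ast,\ C)$ in place of $(A,\ \sym_0,\ B)$, where $C := B^{\perp,\hatH(A)}$. Indeed $C\subset\hatH(A) = \checkH(-A)$, the hypothesis $\sym_0^\ast B^{\perp,\hatH(A)} = B^{\perp,\hatH(A)}$ reads $\sym_0^\ast C = C$, and $\sym_0^\ast$ anticommutes with $-A$ by the adjoint relation above; the semi-regularity argument then gives $C\subset\dom(\modulus{A}^{\frac12})$ (using $\modulus{-A}=\modulus{A}$). By Proposition~\ref{Prop:AdjBC} we have $C = \sym_0^\ast B^\ast$, so $\sym_0^\ast B^\ast\subset\dom(\modulus{A}^{\frac12})$, and together with the previous step this is precisely the assertion that $B$ is $A$-elliptically regular. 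The step I expect to be the most delicate is the transport of the spectral resolution of $A$ to that of $-A$ along the non-unitary (but bounded invertible) $\sym_0$, together with the handling of the spectral value $0$: because $\chi^+(A) = \chi_{[0,\infty)}(A)$ contains $\nul(A)$, the map $\sym_0$ does not interchange $\chi^\pm(A)$ exactly, only modulo the harmless kernel piece, which is why the three-term decomposition is used. Everything else reduces to the norm characterisations of $\dom(\modulus{A}^s)$ and $\dom(\modulus{A}^s)^\ast$ already established.
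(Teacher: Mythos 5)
Your proof is correct and follows the same overall strategy as the paper's: use $\sym_0 B=B$ together with the anticommutation to transfer the known $\dom(\modulus{A}^{\frac12})$-regularity of the negative spectral part to the positive one, establish that $\sym_0$ (resp.\ $\sym_0^\ast$) is an isomorphism of $\dom(\modulus{A}^{\frac12})$, and then repeat the argument for $B^{\perp,\hatH(A)}=\sym_0^\ast B^\ast$ inside $\hatH(A)=\checkH(-A)$. Two technical differences are worth noting. First, where you transport the full Borel functional calculus along the similarity $\sym_0 A\sym_0^{-1}=-A$ to conclude that $\sym_0$ commutes with $\modulus{A}_\epsilon^{s}$ for all $s$, the paper proves only what it needs: the isomorphism $\sym_0\colon\dom(\modulus{A})\to\dom(\modulus{A})$ via the norm identity $\norm{\modulus{A}\sym_0 u}=\norm{\sym_0\modulus{A}u}\simeq\norm{\modulus{A}u}$, followed by interpolation down to $\dom(\modulus{A}^{\frac12})$; both routes are legitimate, and yours yields the boundedness on the dual scale for free. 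Second, your three-term decomposition through $\nul(A)$ is a genuine refinement: the paper writes $\chi^-(A)\sym_0 u=\sym_0\chi^+(A)u$, which, since $\chi^+(A)=\chi_{[0,\infty)}(A)$, is only correct modulo the kernel projection $\chi_{\{0\}}(A)$ (the intertwining gives $\chi_{(-\infty,0]}(A)\sym_0=\sym_0\chi_{[0,\infty)}(A)$). Your explicit observation that $\chi_{\{0\}}(A)u\in\nul(A)\subset\dom(\modulus{A}^{\frac12})$, so the discrepancy is harmless, closes a small gap that the paper glosses over. The remainder of your argument, including the identification $\tilde A=A$ and the application of Proposition~\ref{Prop:AdjBC} to identify $\sym_0^\ast B^\ast$ with $B^{\perp,\hatH(A)}$, matches the paper.
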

\begin{proof}
Since $B \subset \checkH(A)$, for $u \in B$, we have that $\chi^-(A)u \in \dom(\modulus{A}^{\frac12})$.
We also have that $\sym_0 u \in B$,
$$\dom(\modulus{A}^{\frac12}) \ni \chi^-(A)\sym_0u = \sym_0 \chi^+(A) u,$$
where in the equality, we have used that $\sym_0 A = - A \sym_0$.

To prove that $\chi^+(A)u \in \dom(\modulus{A}^{\frac12})$, it suffices to show  $\sym_0: \dom(\modulus{A}^{\frac12}) \to \dom(\modulus{A}^{\frac12})$ is a Banach space isomorphism.
For that, first note that from \ref{Hyp:RemControl}, we have that $\sym_0: \Lp{2}(\dM;E) \to \Lp{2}(\dM;E)$ is an isomorphism.
Next,
$$ \norm{\modulus{A} \sym_0u } = \norm{A\sgn(A)\sym_0 u} = \norm{A (- \sym_0 \sgn(A) u)} = \norm{\sym_0 \modulus{A}u} \simeq \norm{\modulus{A}u}.$$
This yields $\sym_0: \dom(\modulus{A}) \to \dom(\modulus{A})$ is a Banach space isomorphism.
Therefore, through interpolation, we obtain that $\sym_0: \dom(\modulus{A}^{\frac12}) \to \dom(\modulus{A}^{\frac12})$ is a Banach space isomorphism.
Hence, $\chi^+(A)u \in \dom(\modulus{A}^{\frac12})$ which shows that $B \subset \dom(\modulus{A}^{\frac12})$.

It remains to show that $B^{\perp, \hatH(A)} \subset \dom(\modulus{A}^{\frac12})$.
For that, we first note that by taking adjoints and using selfadjointness of $A$, $\sym_0^\ast A = - A \sym_0^\ast$.
As before, a direct interpolation argument allows us to assert that $\sym_0^\ast: \dom(\modulus{A}^{\frac12}) \to \dom(\modulus{A}^{\frac12})$ is a Banach space isomorphism.

Fix $u \in B^{\perp, \hatH(A)}$.
By construction, we have that $\chi^+(A)u \in \dom(\modulus{A}^{\frac12})$.
Using the condition $\sym_0^\ast B^{\perp, \hatH(A)} = B^{\perp, \hatH(A)}$, we have a $v \in B^{\perp ,\hatH(A)}$ such that $u = \sym_0^\ast v$.
Then, $\chi^-(A) u = \chi^-(A)\sym_0^\ast v = \sym_0^\ast \chi^+(A)v \in \dom(\modulus{A}^{\frac12})$ since $\sym_0^\ast$ preserves $\dom(\modulus{A}^{\frac12})$ and $\chi^+(A) v \in \dom(\modulus{A}^{\frac12})$ as $v \in \hatH(A)$.

Since we have shown that $B \subset \dom(\modulus{A}^{\frac12})$ and $B^{\perp, \hatH(A)} \subset \dom(\modulus{A}^{\frac12})$, the boundary condition $B$ is $A$-elliptically regular.
\end{proof}

\begin{remark}
Note that both properties $\sym_0 B = B$ and $\sym_0^\ast B^{\perp, \hatH(A)}$ are required and this is what contributes to the nontriviality of this statement.
For instance, if $B = 0$ then $B^{\perp, \hatH(A)} = \hatH(A)$.
The condition  $\sym_0A = - A \sym_0$ means that $\sym_0: \checkH(A)  \to \hatH(A)$ but $\hatH(A) \neq \checkH(A)$.
From this, it is easy to see that $B = 0$ is not an $A$-elliptically regular boundary condition.
\end{remark}

In the context of ``geometric'' first-order operators, $A$-elliptically regular  boundary conditions imply important approximation properties.
These properties are important in calculations, particularly in determining Fredholm extensions.
Before we present the main approximation result,  we note the following lemma.

\begin{lemma}
\label{Lem:Approx}
For $\chi \in \Ck[c]{\infty}(\dM;\R)$, we have that $\chi:\dom(\modulus{A}^{\frac12}) \to \dom(\modulus{A}^{\frac12})$ with
\begin{equation}
\label{Eq:HalfNorm}
\norm{\chi y}_{\dom(\modulus{A}^{\frac12})} \leq C' \norm{\chi}_{\Lp{\infty}}^{\frac12}(\norm{\chi}_{\Lp{\infty}(\dM)} + C^2 \norm{\sym_{A}(d \chi)}_{\Lp{\infty}(\dM;TM)})^{\frac12} \norm{y}_{\dom(\modulus{A}^{\frac12})},
\end{equation}
where $C' < \infty$ is a universal constant.
\end{lemma}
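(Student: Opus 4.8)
The plan is to obtain \eqref{Eq:HalfNorm} by complex interpolation between the two endpoint mapping properties of multiplication by $\chi$: on $\Lp{2}(\dM;E)$ and on $\dom(A)=\dom(\modulus{A})$. The point is that, for a selfadjoint operator, $\dom(\modulus{A}^{\frac12})$ is --- up to equivalent norms --- exactly the halfway interpolation space, so the interpolated operator norm, being the geometric mean of the two endpoint norms, produces the factor $\norm{\chi}_{\Lp{\infty}}^{\frac12}(\cdots)^{\frac12}$ appearing in \eqref{Eq:HalfNorm}.

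First I would record the commutator identity. Since $A$ is a first-order differential operator, the commutator description of the principal symbol gives $[A,\chi]=\sym_A(\cdot,d\chi)$, a bounded endomorphism field of $E\rest{\dM}$ with $\norm{[A,\chi]}_{\Lp{2}\to\Lp{2}}\leq\norm{\sym_A(d\chi)}_{\Lp{\infty}(\dM)}$. Hence for $y\in\Ck[c]{\infty}(\dM;E)$ one has $A(\chi y)=\chi Ay+\sym_A(d\chi)\,y$, and --- since $A$ is essentially selfadjoint, so that $\Ck[c]{\infty}(\dM;E)$ is a core --- a limiting argument ($y_n\to y$ in the graph norm gives $\chi y_n\to\chi y$ and $A(\chi y_n)=\chi Ay_n+\sym_A(d\chi)y_n\to\chi Ay+\sym_A(d\chi)y$, with $A$ closed) shows that multiplication by $\chi$ maps $\dom(A)$ into itself with the same identity. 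Consequently $\chi$ acts boundedly on $\Lp{2}(\dM;E)$ with norm $\leq\norm{\chi}_{\Lp{\infty}}$, and on $\dom(A)$ --- equivalently on $\dom(\modulus{A}_\epsilon)$ with the regularized graph norm $\norm{\modulus{A}_\epsilon\,\cdot}$, cf.\ the discussion preceding the lemma together with Lemma~\ref{Lem:Frac} --- with norm $\lesssim\norm{\chi}_{\Lp{\infty}}+C^2\norm{\sym_A(d\chi)}_{\Lp{\infty}}$. The precise powers of $C$ here are constant-bookkeeping: they enter when the $\Lp{2}$-term produced by the commutator is absorbed into the graph norm of $\modulus{A}_\epsilon$ and, if one phrases the commutator through $\sym_D$, via $\sym_A(d\chi)=\sym_0^{-1}\sym_D(d\chi)$ and the two-sided bound $C^{-1}\modulus{\cdot}\leq\modulus{\sym_0\cdot}\leq C\modulus{\cdot}$ of \ref{Hyp:RemControl}.

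Next I would interpolate. For the nonnegative, invertible, selfadjoint operator $\modulus{A}_\epsilon$ one has $[\dom(\modulus{A}_\epsilon),\Lp{2}(\dM;E)]_{\frac12}=\dom(\modulus{A}_\epsilon^{\frac12})$ with equivalent norms --- the standard statement for nonnegative selfadjoint operators, and also immediate from the \Hinfty-calculus of $\modulus{A}_\epsilon$ (Lemma~\ref{Lem:Frac}) --- while $\norm{y}_{\dom(\modulus{A}^{\frac12})}\simeq\norm{\modulus{A}_\epsilon^{\frac12}y}$ (Remark~\ref{Rem:FracPowerBd}). Interpolating the two endpoint bounds, and using that the interpolated operator norm is bounded by the geometric mean of the endpoint norms, then yields
\[
	\norm{\chi y}_{\dom(\modulus{A}^{\frac12})}\lesssim\norm{\chi}_{\Lp{\infty}}^{\frac12}\big(\norm{\chi}_{\Lp{\infty}}+C^2\norm{\sym_A(d\chi)}_{\Lp{\infty}}\big)^{\frac12}\norm{y}_{\dom(\modulus{A}^{\frac12})},
\]
which is \eqref{Eq:HalfNorm}, with $C'$ the resulting universal constant (the $\epsilon$-dependent equivalence constants being normalized away).

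The substantive point --- and the only place genuine analysis is needed --- is the passage to the halfway space: $[A,\chi]$ is a zeroth-order differential operator, but $[\modulus{A}^{\frac12},\chi]$ is not, so the $\tfrac12$-power really has to be produced by interpolation (or an equivalent device). If one prefers to avoid abstract interpolation, the same estimate follows from the integral representation $\modulus{A}^{\frac12}=(A^2)^{\frac14}=c\int_0^\infty A^2(A^2+s)^{-1}s^{-3/4}\,ds$ (and its $\epsilon$-regularized version): here $[A^2(A^2+s)^{-1},\chi]=s(A^2+s)^{-1}[A^2,\chi](A^2+s)^{-1}$ with $[A^2,\chi]=A\,\sym_A(d\chi)+\sym_A(d\chi)\,A$ of order one, and one bounds this using $\norm{(A^2+s)^{-1}A}_{\Lp{2}\to\Lp{2}}\leq s^{-1/2}/2$, $\norm{(A^2+s)^{-1}}_{\Lp{2}\to\Lp{2}}\leq s^{-1}$, together with the trivial bound $\norm{[A^2(A^2+s)^{-1},\chi]}_{\Lp{2}\to\Lp{2}}\leq2\norm{\chi}_{\Lp{\infty}}$; splitting the $s$-integral and optimizing the split point gives $\norm{[\modulus{A}^{\frac12},\chi]}_{\Lp{2}\to\Lp{2}}\lesssim\norm{\chi}_{\Lp{\infty}}^{1/2}\norm{\sym_A(d\chi)}_{\Lp{\infty}}^{1/2}$, which with $\norm{\chi\,\modulus{A}^{\frac12}y}\leq\norm{\chi}_{\Lp{\infty}}\norm{\modulus{A}^{\frac12}y}$ and $\norm{y}_{\Lp{2}}\lesssim\norm{y}_{\dom(\modulus{A}^{\frac12})}$ again implies \eqref{Eq:HalfNorm}.
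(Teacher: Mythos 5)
Your proposal is correct and follows essentially the same route as the paper: establish the two endpoint bounds for multiplication by $\chi$ on $\Lp{2}(\dM;E)$ and on $\dom(\modulus{A})$ via the commutator identity $A(\chi y)=\chi Ay+\sym_A(d\chi)y$, then use that $\dom(\modulus{A}^{\frac12})$ is the halfway complex interpolation space for the nonnegative selfadjoint operator $\modulus{A}$. Your extra care with the core/limiting argument and the alternative derivation via the integral representation of $\modulus{A}^{\frac12}$ are sound but not needed beyond what the paper's interpolation argument already provides.
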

\begin{proof}
We obtain this through interpolation, noting that, due to the fact that $\modulus{A}$ is nonnegative selfadjoint, $\dom(\modulus{A}^{\frac12}) = [\Lp{2}(\dM;E), \dom(\modulus{A})]_{\theta = \frac12}$.
First,
$$\norm{\chi y}_{\Lp{2}(\dM)} \leq \norm{\chi}_{\Lp{\infty}(\dM)} \norm{y}_{\Lp{2}(\dM)}.$$
Also, we have a constant $C_0$ such that
\begin{align*}
\tfrac{1}{C_0} \norm{\chi y}_{\dom(\modulus{A})}^2
 & \leq \norm{\modulus{A} (\chi y)}_{\Lp{2}(\dM)} + \norm{\chi y}_{\Lp{2}(\dM)}                                    \\
 & = \norm{\sgn(A) A (\chi y)}_{\Lp{2}(\dM)} + \norm{\chi y}_{\Lp{2}(\dM)}                                         \\
 & \leq \norm{ \sym_{A}(d \chi)y  + \chi Ay}_{\Lp{2}(\dM)} + \norm{\chi}_{\Lp{\infty}(\dM)} \norm{y}_{\Lp{2}(\dM)} \\
 & \leq \norm{\sym_{A}(d \chi)}_{\Lp{\infty}(\dM)} \norm{y} + \norm{\chi}_{\Lp{2}(\dM)} (\norm{Ay} + \norm{y})         \\
 & \leq C_0 (C^2 \norm{\nabla \chi}_{\Lp{2}(\dM)} +  \norm{\chi}_{\Lp{2}(\dM)}) \norm{y}_{\dom(\modulus{A})}
\end{align*}
where in the third line we used the locality of $A$ and in the ultimate inequality that $\modulus{\sym_{A}(\xi)} \leq C^2 \norm{\xi}$.
By interpolation, the norms also interpolate and hence, we obtain \eqref{Eq:HalfNorm}.
\end{proof}

\begin{proposition}
\label{Prop:AEllRegApprox}
Suppose that $M$ carries a complete Riemannian metric $g$  and constant $C < \infty$ such that $\modulus{ \sym_{D}(\xi)}_{h^E \to h^E} \leq C \modulus{\xi}_{g}$.
Then, if $u \in \dom(D_{\max})$ and $u\rest{\dM} \in \dom(\modulus{A}^{\frac12})$, there exists a sequence $u_n \in \Ck[c]{\infty}(M;E)$ such that $u_n \to u$ in the graph norm of $D$ and $u_n\rest{\dM} \to u\rest{\dM}$ in $\dom(\modulus{A}^{\frac12})$.
\end{proposition}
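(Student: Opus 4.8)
The strategy is to decouple three tasks that pull in different directions: approximating the boundary datum $y:=u\rest{\dM}\in\dom(\modulus{A}^{\frac12})$ in the \emph{strong} norm of $\dom(\modulus{A}^{\frac12})$, approximating $u$ in the interior, and truncating to compact support; the geometric hypothesis on $g$ will be used exactly as in Theorem~\ref{Thm:cherwolf}, to make radial cut‑offs harmless in the graph norm. First I would record that $\Ck[c]{\infty}(\dM;E)$ is dense in $\dom(\modulus{A}^{\frac12})$: by \ref{Hyp:ExtFirst} the operator $A$ is essentially selfadjoint on $\Ck[c]{\infty}(\dM;E)$, so this space is a core for $A$ and hence dense in $\dom(\modulus{A})$ in the graph norm, while $\dom(\modulus{A})$ is dense in $\dom(\modulus{A}^{\frac12})$ by spectral truncation ($\chi_{[-n,n]}(\modulus{A})y\to y$); density is transitive through the continuous dense inclusion $\dom(\modulus{A})\embeds\dom(\modulus{A}^{\frac12})$. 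So choose $y_j\in\Ck[c]{\infty}(\dM;E)\subset\Dk{\infty}(A)$ with $y_j\to y$ in $\dom(\modulus{A}^{\frac12})$. Since $\dom(\modulus{A}^{\frac12})\embeds\checkH(A)$ continuously, Proposition~\ref{Prop:WP}\ref{Prop:WP:3} shows $\ext_{T_c}y_j\in\dom(D_{\max})$ and $\norm{\ext_{T_c}(y_j-y_k)}_D\lesssim\norm{y_j-y_k}_{\checkH(A)}\to 0$, so the sequence is Cauchy in the $D$‑graph norm; let $w:=\lim_j\ext_{T_c}y_j\in\dom(D_{\max})$. Using $(\ext_{T_c}y_j)\rest{\dM}=y_j$ and continuity of the trace map (Theorem~\ref{Thm:MaxDom}\ref{Thm:MaxDom:1}), we get $w\rest{\dM}=y$.

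Then $u-w\in\dom(D_{\max})$ has vanishing trace, hence $u-w\in\dom(D_{\min})$ by Theorem~\ref{Thm:MaxDom}\ref{Thm:MaxDom:1.5}; choose $v_j\in\Ck[cc]{\infty}(M;E)$ with $v_j\to u-w$ in the $D$‑graph norm, so $v_j\rest{\dM}=0$. It remains to approximate each $\ext_{T_c}y_j$ by a smooth compactly supported section in the $D$‑graph norm without disturbing its trace. By Lemma~\ref{Lem:ExpReg}, $\ext_{T_c}y_j\in\Ck{\infty}(Z_{[0,T)};E)$ and vanishes for $t\ge\tfrac34 T_c$, so it extends by zero to a smooth section on $M$ supported in the collar; it need not, however, have compact support along $\dM$. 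Using completeness of $g$, fix a smooth proper $f\colon M\to[0,\infty)$ with $\modulus{df}_g$ bounded, and set $\chi_m:=\rho(f/m)$ with $\rho$ as in the proof of Theorem~\ref{Thm:cherwolf}; then $\chi_m\in\Ck[c]{\infty}(M)$, $\chi_m\equiv 1$ on $\{f\le m\}$, and $\modulus{d\chi_m}_g\lesssim m^{-1}$. For fixed $j$ the section $\chi_m\ext_{T_c}y_j$ lies in $\Ck[c]{\infty}(M;E)$ and converges to $\ext_{T_c}y_j$ in the $D$‑graph norm as $m\to\infty$: the $\Lp{2}$ terms and $\chi_m D(\ext_{T_c}y_j)$ converge by dominated convergence (both $\ext_{T_c}y_j$ and $D\ext_{T_c}y_j$ lie in $\Lp{2}$), while $\norm{\sym_D(d\chi_m)\ext_{T_c}y_j}_{\Lp{2}}\le C\norm{\modulus{d\chi_m}_g}_{\Lp{\infty}}\norm{\ext_{T_c}y_j}_{\Lp{2}}\lesssim m^{-1}\norm{\ext_{T_c}y_j}_{\Lp{2}}\to 0$, using $\modulus{\sym_D(\xi)}\le C\modulus{\xi}_g$. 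Crucially, since $\spt y_j$ is compact and $f$ is proper, $\chi_m\rest{\dM}\equiv 1$ on $\spt y_j$ once $m$ is large, whence $(\chi_m\ext_{T_c}y_j)\rest{\dM}=(\chi_m\rest{\dM})\,y_j=y_j$ \emph{exactly}. Fix $m_j$ so large that $\norm{\chi_{m_j}\ext_{T_c}y_j-\ext_{T_c}y_j}_D<1/j$.

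Setting $u_j:=v_j+\chi_{m_j}\ext_{T_c}y_j\in\Ck[c]{\infty}(M;E)$, the triangle inequality gives
$\norm{u_j-u}_D\le\norm{v_j-(u-w)}_D+\norm{\chi_{m_j}\ext_{T_c}y_j-\ext_{T_c}y_j}_D+\norm{\ext_{T_c}y_j-w}_D\to 0$,
while $u_j\rest{\dM}=0+y_j=y_j\to y=u\rest{\dM}$ in $\dom(\modulus{A}^{\frac12})$, which is the claim.

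The only delicate step is the truncation of the collar part: one must achieve compact support while keeping the boundary trace equal, on the nose, to the already smooth and compactly supported datum $y_j$ — this is precisely what the splitting $u=(u-w)+w$ via $\ext_{T_c}$ together with the preliminary approximation of $y$ in $\dom(\modulus{A}^{\frac12})$ buys us, and it is the only place the completeness of $g$ and the symbol bound enter. Everything else (the embedding $\dom(\modulus{A}^{\frac12})\embeds\checkH(A)$ and the density statement for $\Ck[c]{\infty}(\dM;E)$) is routine. A variant would apply the cut‑off directly to $\ext_{T_c}y$ and use Lemma~\ref{Lem:Approx} to control $(\chi_m\rest{\dM})\,y\to y$ in $\dom(\modulus{A}^{\frac12})$, but pre‑approximating $y$ makes that estimate unnecessary.
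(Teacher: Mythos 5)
Your proof is correct, and its skeleton is the same as the paper's: decompose $u$ as (something in $\dom(D_{\min})$) plus (an $\ext_{T_c}$-extension of the boundary datum), approximate the boundary datum by nicer sections, and truncate using cut-offs with $g$-small gradient coming from completeness. The one genuinely different step is how you pre-approximate $y=u\rest{\dM}$ and, consequently, how the cut-off interacts with the trace. The paper smooths $y$ by the semigroup, $u^0_k=\exp(-\tfrac1k\modulus{A})(u\rest{\dM})$; these are smooth but not compactly supported, so after multiplying by $\chi_m$ the trace is $\chi_m\rest{\dM}u^0_k\neq u^0_k$, and one must control $\norm{(\chi_m-1)u^0_k}_{\dom(\modulus{A}^{1/2})}$ via the interpolation estimate of Lemma~\ref{Lem:Approx} and then run a diagonal argument. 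You instead use that $\Ck[c]{\infty}(\dM;E)$ is a core for the essentially selfadjoint $A$ (hence dense in $\dom(\modulus{A})$ and, by spectral truncation, in $\dom(\modulus{A}^{1/2})$) to choose \emph{compactly supported} approximants $y_j$; then for $m$ large the cut-off is identically $1$ on $\spt y_j$, the trace of $\chi_m\ext_{T_c}y_j$ equals $y_j$ on the nose, and both Lemma~\ref{Lem:Approx} and the diagonal argument become unnecessary. This buys a cleaner argument: in particular it sidesteps the slightly delicate point that $\norm{\chi_m-1}_{\Lp{\infty}(M)}$ does not tend to zero in the paper's estimate, at the modest cost of the (routine) verification that $\Ck[c]{\infty}(\dM;E)\subset\Dk{\infty}(A)$ is dense in $\dom(\modulus{A}^{1/2})$. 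All the auxiliary facts you invoke (Proposition~\ref{Prop:WP}~\ref{Prop:WP:3}, Theorem~\ref{Thm:MaxDom}~\ref{Thm:MaxDom:1} and \ref{Thm:MaxDom:1.5}, Lemma~\ref{Lem:ExpReg}, the continuous embedding $\dom(\modulus{A}^{1/2})\embeds\checkH(A)$) are available and used correctly.
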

\begin{proof}
First, note that by Corollary~\ref{Cor:Dinf}, we have that $\Dk{\infty}(A) = \cap_{k} \dom(\modulus{A}^k)$ is dense in $\checkH(A)$.
Therefore, Proposition~\ref{Prop:WP}~\ref{Prop:WP:3} is valid on all of $\checkH(A)$ and in particular on $\dom(\modulus{A}^{\frac12})$.

Fix $u$ as in the hypothesis.
In order to obtain smoothness up to the boundary for the approximating sequence, we first note that we can find $u^0_k \in \Ck{\infty}(\dM;E) \cap \dom(\modulus{A}^{\frac12})$ such that $u^0_k \to u\rest{\dM}$ in the $\dom(\modulus{A}^{\frac12})$ norm.
For instance,
$$u^0_k := \exp\cbrac{\tfrac{1}{k} \modulus{A}}(u\rest{\dM})$$
defines such a sequence where  the required convergence is a consequence of the fact that the semigroup commutes with $\modulus{A}^{\frac12}$.

Now, let
$$v^0_k := \ext_{T_c}(u^0_k)\quad\text{and}\quad v := \ext_{T_c}(u\rest{\dM}).$$
By Proposition~\ref{Prop:WP}~\ref{Prop:WP:3}, we have that both $v, v^0_k \in \dom(D_{\max)}$.
Moreover, $v\rest{\dM} = u \rest{\dM}$ and therefore, $w := u -v \in \dom(D_{\min})$.
Therefore, there exists $w_n \in \Ck[cc]{\infty}(M;E)$ such that $w_n \to w$ in $\dom(D_{\max})$.

Moreover, by the properties of $u_0^k$, we have in addition that $v^0_k \in \Ck{\infty}(M;E)$.
By design, we have that $v^0_k \to v$ in $\dom(D_{\max})$ and also that $v^0_k\rest{\dM} \to u\rest{\dM}$ in $\dom(\modulus{A}^{\frac12})$.

Fixing a base point $p \in M$, by the hypothesis of metric completeness of $g$, we can find $\chi_m \in \Ck[c]{\infty}(M;[0,1])$ with $\chi_n = 1$ on $B_{m}(p)$ (where the radius of the geodesic ball is $n$) and $\chi_m = 0$ outside of $B_{10m}$ further satisfying:
$$ \modulus{\nabla \chi_m}_{g} \leq \frac{C'}{m}\quad\text{and}\quad \modulus{\chi_m - 1} \to 0$$
everywhere on $M$.

Let $v_{m,n} = \chi_m v_n^0$.
By construction, $v_{m,n} \in \Ck[c]{\infty}(M;E)$.
Moreover, $v_{m,n}\rest{\dM} = \chi_m\rest{\dM} v_n^0\rest{\dM} = \chi_m u_n^0 $.
Therefore,
\begin{align*}
 \norm{(u  - v_{m,n})\rest{\dM}}_{\dom(\modulus{A}^{\frac12})}                                                                 
 \leq \norm{(\chi_m -1)(u_n^0)}_{\dom(\modulus{A}^{\frac12})}  + \norm{(u_n^0 -u)\rest{\dM}}_{\dom(\modulus{A}^{\frac12})}
\end{align*}
Clearly, the second term tends to zero as $n \to \infty$, so we examine the first.
For a fixed $n$,
\begin{align*}
 & \norm{(\chi_m -1)(u_n^0)\rest{\dM}}_{\dom(\modulus{A}^{\frac12})}                                                                                                                                    \\
 & \quad\leq \norm{(\chi_m - 1)}_{\Lp{\infty}(M)}^{\frac12}(\norm{\chi_m - 1}_{\Lp{\infty}(\dM)} + C^2 \norm{\nabla \chi_m}_{\Lp{\infty}(\dM;TM)})^{\frac12} \norm{u_n^0}_{\dom(\modulus{A}^{\frac12})} \\
 & \quad\leq \norm{(\chi_m - 1)}_{\Lp{\infty}(M)}^{\frac12}(\norm{\chi_m - 1}_{\Lp{\infty}(\dM)} + C^2C')^{\frac12} \norm{u_n^0}_{\dom(\modulus{A}^{\frac12})}
\to 0,
\end{align*}
by applying \eqref{Eq:HalfNorm} with $\chi_m -1 $ in place of $\chi$.
Also, $v_{m,n} \to v^0_{n}$ as $m \to \infty$ in $\dom(D_{\max})$ and $v^0_{n} \to v$ in $\dom(D_{\max})$.
Therefore, through a diagonal argument, we can find a subsequence $v_n := v_{f(n),n}$ (where $f$ is chosen via the diagonal argument) such that $v_n \to v$ in $\dom(D_{\max})$ and $v_n\rest{\dM} \to u\rest{\dM}$ in $\dom(\modulus{A}^{\frac12})$.

Define $u_n = v_n + w_n$ and note that $u_n \in \Ck[c]{\infty}(M;E)$.
Since $v_n \to v$ and $w_n \to u - v$ in $\dom(D_{\max})$, it is clear that $u_n \to u$ in $\dom(D_{\max})$.
Now, $u_n \rest{\dM} = v_n \rest{\dM}$ since $w_n \rest{\dM} = 0$.
This finishes the proof.
\end{proof}

Under this assumption, an $A$-elliptically semi-regular boundary conditions further enjoys the following regularity property at the level of the domain of the operator. 
\begin{corollary}
\label{Cor:ASemiRegNearBoundary} 
Assume the hypothesis of Proposition~\ref{Prop:AEllRegApprox}. 
Then, there exists  $T_d \in (0,T)$ such that  for any $A$-elliptically-semi-regular boundary condition $B$ and $u \in \dom(D_B)$ with $\spt u \subset Z_{[0,T_d)}$, we have
$$ \norm{u} +  \norm{\partial_t u} + \norm{A u} \lesssim \norm{u}_{\dom(D_B)}$$ 
where the implicit constant depends on $B$. 
\end{corollary}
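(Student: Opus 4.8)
The plan is to prove the inequality first for smooth compactly supported sections living in a thin collar and then to transfer it to general $u\in\dom(D_B)$ by approximation. Since $B$ is $A$-elliptically semi-regular it is elliptically semi-regular (Corollary~\ref{Cor:AEllReg}), so $D_B=D_{B,\max}$ is closed and $\norm{\cdot}_{\dom(D_B)}\simeq\norm{\cdot}_D$; it therefore suffices to bound $\norm{\partial_t u}+\norm{Au}\lesssim\norm{u}_D$ with a constant that may depend on $B$. I first fix a small $T_d\in(0,T)$ with $\tfrac{3}{2}T_d<T$, its size to be pinned down by the absorption step below and depending only on the constant $C$ in \ref{Hyp:RemControl} and on $T$. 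For $w\in\Ck[c]{\infty}(Z_{[0,\frac{3}{2}T_d)};E)$, Lemma~\ref{Lem:Ell} (using $\modulus{A}\sgn(A)=A$) gives
\begin{equation*}
\norm{\partial_t w}^2+\norm{Aw}^2 = \norm{(\partial_t+A)w}^2 + \inprod{A\,w\rest{\dM},\,w\rest{\dM}}_{\Lp{2}(\dM)} .
\end{equation*}
On $Z_{[0,T)}$ one has $D=\sym_t(\partial_t+A+R_t)$ with $\sym_t$ uniformly invertible, whence $\norm{(\partial_t+A)w}\le C\norm{Dw}+\norm{R_t w}$, and the second estimate of \ref{Hyp:RemControl} together with $\spt w\subset Z_{[0,\frac{3}{2}T_d)}$ gives $\norm{R_t w}^2\le 2C^2\big((\tfrac{3}{2}T_d)^2\norm{Aw}^2+\norm{w}^2\big)$. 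By the spectral theorem for $A$, $\inprod{A\,w\rest{\dM},w\rest{\dM}}=\norm{\modulus{A}^{\frac{1}{2}}\chi^+(A)w\rest{\dM}}^2-\norm{\modulus{A}^{\frac{1}{2}}\chi^-(A)w\rest{\dM}}^2\le\norm{\modulus{A}^{\frac{1}{2}}(w\rest{\dM})}^2$. Choosing $T_d$ small enough to absorb the $\norm{Aw}^2$-term into the left side, I obtain, for all such $w$ and with an absolute constant,
\begin{equation*}
\norm{\partial_t w}+\norm{Aw}\lesssim \norm{Dw}+\norm{w}+\norm{\modulus{A}^{\frac{1}{2}}(w\rest{\dM})}_{\Lp{2}(\dM)} .
\end{equation*}

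Next I would transfer this to $u\in\dom(D_B)$ with $\spt u\subset Z_{[0,T_d)}$. The hypotheses of Proposition~\ref{Prop:AEllRegApprox} hold by assumption and $u\rest{\dM}\in B\subset\dom(\modulus{A}^{\frac{1}{2}})$, so that proposition yields $u_n\in\Ck[c]{\infty}(M;E)$ with $u_n\to u$ in $\norm{\cdot}_D$ and $u_n\rest{\dM}\to u\rest{\dM}$ in $\dom(\modulus{A}^{\frac{1}{2}})$. Pick $\zeta\in\Ck{\infty}(M;[0,1])$ with $\zeta\equiv 1$ on $Z_{[0,T_d]}\supset\spt u$ and $\spt\zeta\subset Z_{[0,\frac{3}{2}T_d)}$, and set $w_n:=\zeta u_n\in\Ck[c]{\infty}(Z_{[0,\frac{3}{2}T_d)};E)$. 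Then $w_n\rest{\dM}=u_n\rest{\dM}\to u\rest{\dM}$ in $\dom(\modulus{A}^{\frac{1}{2}})$, and $w_n\to u$ in $\norm{\cdot}_D$ because $D(\zeta u_n)=\zeta Du_n+\sym_D(\cdot,d\zeta)u_n$, the term $\sym_D(\cdot,d\zeta)u_n$ converges to $\sym_D(\cdot,d\zeta)u=0$ (as $d\zeta$ vanishes on $\spt u$) while $\zeta Du_n\to\zeta Du=Du$. Applying the smooth-section estimate to $w_n$ shows that $\norm{\partial_t w_n}+\norm{Aw_n}$ stays bounded, by a constant times $\norm{Du}+\norm{u}+\norm{\modulus{A}^{\frac{1}{2}}(u\rest{\dM})}_{\Lp{2}(\dM)}$. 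Since $w_n\to u$ in $\Lp{2}$, hence as distributions, the differential operators $\partial_t$ and $A$ give $\partial_t w_n\to\partial_t u$ and $Aw_n\to Au$ as distributions on $Z_{[0,T)}$; as $\{\partial_t w_n\}$ and $\{Aw_n\}$ are bounded in $\Lp{2}$, a subsequence converges weakly in $\Lp{2}$, necessarily to $\partial_t u$ resp.\ $Au$, so these lie in $\Lp{2}$ and, by weak lower semicontinuity of the norm,
\begin{equation*}
\norm{\partial_t u}+\norm{Au}\lesssim \norm{Du}+\norm{u}+\norm{\modulus{A}^{\frac{1}{2}}(u\rest{\dM})}_{\Lp{2}(\dM)} .
\end{equation*}

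It then remains to control the trace term in terms of $\norm{u}_D$. Since $B$ is $A$-elliptically semi-regular, the closed subspace $B$ of the Banach space $\checkH(A)$ is contained in the Banach space $\dom(\modulus{A}^{\frac{1}{2}})$; the inclusion has closed graph, because both $\checkH(A)$ and $\dom(\modulus{A}^{\frac{1}{2}})$ embed continuously into $\dom(\modulus{A}^{\frac{1}{2}})^\ast$, so any sequence in $B$ converging in both has the same limit. By the closed graph theorem there is $C_B<\infty$ with $\norm{v}_{\dom(\modulus{A}^{\frac{1}{2}})}\le C_B\norm{v}_{\checkH(A)}$ for $v\in B$; applying this to $v=u\rest{\dM}$ and using the trace bound of Theorem~\ref{Thm:MaxDom}~\ref{Thm:MaxDom:1}, $\norm{\modulus{A}^{\frac{1}{2}}(u\rest{\dM})}_{\Lp{2}(\dM)}\le\norm{u\rest{\dM}}_{\dom(\modulus{A}^{\frac{1}{2}})}\le C_B\norm{u\rest{\dM}}_{\checkH(A)}\lesssim C_B\norm{u}_D$. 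Combined with $\norm{Du}+\norm{u}\simeq\norm{u}_D\le\norm{u}_{\dom(D_B)}$, this proves the corollary, the implicit constant depending on $B$ through $C_B$.

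The step I expect to be the main obstacle is the transfer: producing approximants that converge at once in the graph norm of $D$ and in the stronger trace norm $\dom(\modulus{A}^{\frac{1}{2}})$ while staying supported inside the collar — exactly what Proposition~\ref{Prop:AEllRegApprox} and the cutoff $\zeta$ are arranged to achieve — and then justifying the weak-limit passage for $\partial_t u$ and $Au$, which a priori are only $\SobH[loc]{1}$-regular rather than globally square-integrable. The integration-by-parts identity, the absorption fixing $T_d$, and the closed-graph bound on the trace are routine.
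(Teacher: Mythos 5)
Your proposal is correct and follows essentially the same route as the paper: the identity of Lemma~\ref{Lem:Ell}, the bound on $R_t$ from \ref{Hyp:RemControl} with absorption of the $\norm{Av}^2$-term by shrinking $T_d$, the trace term controlled by $\norm{\modulus{A}^{\frac12}(u\rest{\dM})}$, approximation via Proposition~\ref{Prop:AEllRegApprox}, and the equivalence $\norm{\cdot}_{\dom(\modulus{A}^{\frac12})}\simeq\norm{\cdot}_{\checkH(A)}$ on $B$ (which the paper invokes as closedness of $B$ in both spaces and you derive by the closed graph theorem). If anything, you supply details the paper elides — the cutoff $\zeta$ arranging that the approximants stay supported in the collar, and the weak-$\Lp{2}$ limit passage for $\partial_t u$ and $Au$ — so no gaps.
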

\begin{proof}
Let $T_d \in (0,T)$ be chosen later.
From Lemma~\ref{Lem:Ell}, for $v \in \Ck[c]{\infty}(Z_{[0,T_d)})$, 
\begin{equation}
\label{Eq:ASemiEst} 
\begin{aligned}
\norm{\partial_t v}^2_{\Lp{2}(Z_{[0,T_d)})} &+ \norm{Av}^2 _{\Lp{2}(Z_{[0,T_d)})}  \\
&\leq
\modulus{\inprod{\modulus{A}\sgn(A) v\rest{\dM}, v\rest{\dM}}_{\Lp{2}(\dM)}} + \norm{\sym_0^{-1} D_0v}^2_{\Lp{2}(Z_{[0,T_d)})} \\
&\leq
\norm{\modulus{A}^{\frac12} v\rest{\dM}}_{\Lp{2}(\dM)}^2 + \norm{\sym_0^{-1} D_0 v}^2_{\Lp{2}(Z_{[0,T_d)})} \\ 
&\leq
\norm{\modulus{A}^{\frac12} v\rest{\dM}}_{\dom(\modulus{A}^{\frac12})}^2 + \norm{\sym_0^{-1} D_0 v}^2_{\Lp{2}(Z_{[0,T_d)})}.
\end{aligned}
\end{equation}
Now, since $D = \sym_t(\partial_t + A + R_t) = \sym_t(\sym_0^{-1} D_0 + R_t)$, we can write $\sym_0^{-1} D_0 = \sym_t^{-1} D - R_t$.
Then, the last term in Eq.~\eqref{Eq:ASemiEst} is:  
\begin{equation}
\label{Eq:ASemiEst1}  
\begin{aligned} 
\norm{\sym_0^{-1} D_0 v}^2_{\Lp{2}(Z_{[0,T_d)})} 
&= 
\int_{0}^{T_d} \norm{(\sym_t^{-1} D - R_t)v}_{\Lp{2}(\dM)}^2\ dt \\ 
&\leq
C_1 \int_{0}^{T_d}  \norm{Dv}^2_{\Lp{2}(\dM)}\ dt + C_1 \int_{0}^{T_d}  \norm{R_t v}^2_{\Lp{2}(\dM)}\ dt \\
&\leq 
C_1 \norm{Dv}^2_{\Lp{2}(Z_{[0,T_d)})} + C_2 \int_{0}^{T_d}  t^2 \norm{Av}^2_{\Lp{2}(\dM)}\ dt  + C_2 \int_{0}^{T_d} \norm{v}_{\Lp{2}(\dM)}^2\ dt \\
&\leq 
C_1 \norm{Dv}^2_{\Lp{2}(Z_{[0,T_d)})} + C_2 T_d^2 \int_{0}^{T_d} \norm{Av}^2_{\Lp{2}(\dM)}\ dt  + C_2 \norm{v}_{\Lp{2}(Z_{[0,T_d)})}^2 \\
&\leq
\max\set{C_1, C_2} \cbrac{\norm{Dv}^2_{\Lp{2}(Z_{[0,T_d)})} + \norm{v}_{\Lp{2}(Z_{[0,T_d)})}^2} + C_2 T_d^2 \int_{0}^{T_d} \norm{Av}^2_{\Lp{2}(\dM)}\ dt.
\end{aligned} 
\end{equation}
The constants $C_1$ and $C_2$ depend only on universal constants and the constant $C$ from Assumption~\ref{Hyp:RemControl}.

Now, choose $T_d < \min\set{ T, \tfrac{1}{\sqrt{2C_2}}}$, so that $C_2 T_d^2 < \tfrac12$.
Then, we find 
\begin{equation*}
\norm{\sym_0^{-1} D_0 v}^2_{\Lp{2}(Z_{[0,T_d)})} 
\leq
 \max\set{C_1, C_2} \cbrac{\norm{Dv}^2_{\Lp{2}(Z_{[0,T_d)})} + \norm{v}_{\Lp{2}(Z_{[0,T_d)})}^2} + \frac12  \norm{Av}^2_{\Lp{2}(Z_{[0,T_d)})}.
\end{equation*}

Putting this into Eq.~\eqref{Eq:ASemiEst} and subtracting the term $\frac12  \norm{Av}^2_{\Lp{2}(Z_{[0,T_d)})}$ from both sides, 
\begin{align*}
\norm{\partial_t v}^2_{\Lp{2}(Z_{[0,T_d)})} &+ \norm{Av}^2 _{\Lp{2}(Z_{[0,T_d)})} + \norm{v}_{\Lp{2}(Z_{[0,T_d)})}^2   \\ 
&\leq 
2 \norm{\modulus{A}^{\frac12} v\rest{\dM}}_{\dom(\modulus{A}^{\frac12})}^2 + 3 \max\set{C_1, C_2} \cbrac{ \norm{Dv}^2_{\Lp{2}(Z_{[0,T_d)})} + \norm{v}^2_{\Lp{2}(Z_{[0,T_d)})}}
\end{align*}
In other words, since we assumed $\spt v \subset Z_{[0,T_d)}$,
\begin{equation}
\label{Eq:ASemiEst2} 
\norm{\partial_t v}_{\Lp{2}(Z_{[0,T_d)})} + \norm{Av}_{\Lp{2}(Z_{[0,T_d)})} + \norm{v}_{\Lp{2}(Z_{[0,T_d)})}
\lesssim \norm{v}_{\dom(D_{\max})} + \norm{ \modulus{A}^{\frac12} v\rest{\dM}}_{\Lp{2}(\dM)} 
\end{equation}  

Now, fix $u \in B$  with $\spt u_n \subset Z_{[0,T_d)}$ and let $u_n \in \Ck[c]{\infty}(M)$ such that $u_n \to u$ in $\dom(D_B)$ and $u_n\rest{\dM} \to u\rest{\dM}$ in $\dom(\modulus{A}^{\frac12})$.
Without loss of generality, we can assume that $\spt u_n \subset Z_{[0,T_d)}$.
As a consequence of Eq.~\eqref{Eq:ASemiEst2},
\begin{align*} 
\norm{\partial_t u}^2_{\Lp{2}(Z_{[0,T_d)})} + \norm{Au}^2 _{\Lp{2}(Z_{[0,T_d)})} + \norm{u}_{\Lp{2}(Z_{[0,T_d)})}^2   
&\lesssim 
\norm{\modulus{A}^{\frac12} u\rest{\dM}}_{\dom(\modulus{A}^{\frac12})}^2 + \norm{u}^2_{\dom(D_{\max})} \\ 
&\simeq 
\norm{u}_{\checkH(A)}^2 + \norm{u}^2_{\dom(D_{\max})} 
\lesssim 
\norm{u}_{\dom(D_{\max})}^2,
\end{align*}
where we used the isomorphism $\norm{u}_{\dom(\modulus{A}^{\frac12})} = \norm{u}_{B} \simeq \norm{u}_{\checkH(A)}$ since $B \subset \dom(\modulus{A}^{\frac12}) \subset \checkH(A)$ is closed in both spaces.
\end{proof}

\subsection{Regularity revisited}

In the situation when $\dM$ is compact, given two spectral cuts $r > q$, we saw that $\chi^-(A_r) \Lp{2}(\dM;E) \cap \chi^+(A_q) \Lp{2}(\dM;E)$ is finite dimensional and by elliptic regularity, a subspace of smooth sections.
For selfadjoint $A$, this space is the same as $\chi_{[q,r)}(A) \Lp{2}(\dM;E)$.
In the present context, with $\dM$ being noncompact in general, it is unlikely that $\chi_{[q,r)}(A) \Lp{2}(\dM;E)$ is finite dimensional.
However, it is worth considering whether this is a smooth subspace of sections, despite potential infinite dimensionality.

\begin{proposition}
\label{Prop:IntSmooth}
For any $\alpha\ge0$ and any Borel set $S \subset \R$ that is bounded, the subspace $\chi_{S}(A) \dom(\modulus{A}^{\alpha})^\ast  \subset \Ck{\infty}(\dM;E)$.
\end{proposition}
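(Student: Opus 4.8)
The goal is to show $\chi_S(A)u\in\Ck{\infty}(\dM;E)$ for every $u\in\dom(\modulus{A}^{\alpha})^\ast$. Since $A$ is elliptic, the Sobolev embedding theorem gives $\Dk{\infty}(A)=\bigcap_{k\in\N}\dom(\modulus{A}^{k})\subset\Ck{\infty}(\dM;E)$ (as already noted before Lemma~\ref{Lem:ExpReg}), so it suffices to prove $\chi_S(A)u\in\dom(\modulus{A}^{k})$ for every $k$. The plan is to first bring $u$ back into $\Lp{2}(\dM;E)$ by a negative fractional power of $\modulus{A}$, and then exploit the boundedness of $S$ via the Borel functional calculus. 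Fix $R>0$ with $S\subset[-R,R]$, so $\chi_S=\chi_S\cdot\chi_{[-R,R]}$, and fix some $\epsilon>0$, writing $\modulus{A}_\epsilon=\modulus{A}+\epsilon I$.

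First I would record that $\chi_S(A)$ is well defined and bounded on $\dom(\modulus{A}^{\alpha})^\ast$: on $\Lp{2}(\dM;E)$ it is bounded, selfadjoint, and commutes with $\modulus{A}^{\alpha}$, hence it preserves $\dom(\modulus{A}^{\alpha})$, and by duality with respect to the $\Lp{2}$-pairing it extends to a bounded operator on $\dom(\modulus{A}^{\alpha})^\ast$ --- exactly as for the interval projections $\chi_I(A)$ discussed before the statement. Next, by Lemma~\ref{Lem:Frac} and Remark~\ref{Rem:FracPowerBd}, $\modulus{A}_\epsilon^{-\alpha}\colon\dom(\modulus{A}^{\alpha})^\ast\to\Lp{2}(\dM;E)$ is a Banach-space isomorphism, so I may write $f:=\modulus{A}_\epsilon^{-\alpha}u\in\Lp{2}(\dM;E)$ and $u=\modulus{A}_\epsilon^{\alpha}f$ in $\dom(\modulus{A}^{\alpha})^\ast$. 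Then, with $g(\lambda):=\chi_S(\lambda)(\modulus{\lambda}+\epsilon)^{\alpha}$ --- a bounded Borel function supported in $[-R,R]$ --- multiplicativity of the Borel calculus yields
\[
\chi_S(A)u=\chi_S(A)\,\modulus{A}_\epsilon^{\alpha}f=g(A)f .
\]
As $g(A)$ is bounded on $\Lp{2}(\dM;E)$ we get $w:=\chi_S(A)u\in\Lp{2}(\dM;E)$, and since $\spt g\subset[-R,R]$ the scalar spectral measure $\mu_w$ of $w$ is supported in $\close{S}\subset[-R,R]$. Hence for every $k\in\N$
\[
\norm{\modulus{A}^{k}w}_{\Lp{2}}^{2}=\int_{[-R,R]}\modulus{\lambda}^{2k}\,d\mu_{w}(\lambda)\le R^{2k}\,\norm{w}_{\Lp{2}}^{2}<\infty ,
\]
so $w\in\dom(\modulus{A}^{k})$ for all $k$, i.e.\ $w\in\Dk{\infty}(A)\subset\Ck{\infty}(\dM;E)$.

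The only genuine subtlety is the identity $\chi_S(A)\,\modulus{A}_\epsilon^{\alpha}f=g(A)f$, in which $\modulus{A}_\epsilon^{\alpha}$ maps $\Lp{2}(\dM;E)$ into $\dom(\modulus{A}^{\alpha})^\ast$ while $\chi_S(A)$ denotes the extended operator there. I would establish it by reducing to $f\in\Dk{\infty}(A)$, which is dense in $\Lp{2}(\dM;E)$ by Corollary~\ref{Cor:DinfDensity}: on $\Dk{\infty}(A)$ the power $\modulus{A}_\epsilon^{\alpha}$ again lands in $\Dk{\infty}(A)$ and all operators in sight are the ordinary Borel functional calculus of $A$, so the identity is plain multiplicativity; it then propagates to all of $\Lp{2}(\dM;E)$ because both sides are continuous from $\Lp{2}(\dM;E)$ into $\dom(\modulus{A}^{\alpha})^\ast$. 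Everything else is routine bookkeeping with the spectral theorem.
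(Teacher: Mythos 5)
Your argument is correct and follows essentially the same route as the paper's: the paper proves the abstract statement (Proposition~\ref{Prop:IntSmoothAbstract}) by noting that $f_k(x)=x^{2k}\chi_S(x)$ is a bounded Borel function and commuting $\modulus{A}_\epsilon^{-\alpha}$ past $\chi_S(A)$ to pass from $\dom(\modulus{A}^\alpha)^\ast$ back to $\Lp{2}$, then concludes by elliptic regularity — exactly the mechanism you use, merely packaged as $g(\lambda)=\chi_S(\lambda)(\modulus{\lambda}+\epsilon)^\alpha$ and the support of the spectral measure of $w$. The density argument you give for the identity $\chi_S(A)\modulus{A}_\epsilon^{\alpha}f=g(A)f$ is sound and matches the role of Remark~\ref{Rem:FracPowerBd} in the paper's proof.
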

\begin{proof}
From Proposition~\ref{Prop:IntSmoothAbstract} we obtain
$$
\chi_{S}(A) \dom(\modulus{A}^{\alpha})^\ast  \subset \bigcap_{k=0}^\infty\dom(A^{2k}).
$$
By elliptic regularity,
\[
\bigcap_{k=0}^\infty\dom(A^{2k}) \subset \bigcap_{k=0}^\infty\SobH[loc]{2k}(\dM;E)=\Ck{\infty}(\dM;E).
\qedhere
\]
\end{proof}

We now consider regularity questions via adapted boundary operators $A$.

\begin{theorem}
\label{Thm:Reg2}
The spaces $\chi^{\pm}(A) \dom(\modulus{A}^{\frac12}) \subset \SobH[loc]{\frac12}(\dM;E)$ and
\begin{align*}
\dom&(D_{\max})\cap
\SobH[loc]{k}(M;E)                                                                                                               \\
&= 
\set{u \in \dom(D_{\max}): Du \in \SobH[loc]{k-1}(M;E)\ \text{with}\ \chi^+(A)(u\rest{\dM}) \in \SobH[loc]{k-\frac12}(\dM;E) } .
\end{align*}
\end{theorem}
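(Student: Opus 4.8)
The plan is to reduce the statement to the general regularity theorem (Theorem~\ref{Thm:Reg}); the content is that applying the spectral projection $\chi^{-}(A)$ to the boundary trace costs no regularity. First I would dispose of the assertion $\chi^{\pm}(A)\dom(\modulus{A}^{\frac12})\subset\SobH[loc]{\frac12}(\dM;E)$. Since $A$ is first-order elliptic and $\dM$ is boundaryless, interior elliptic regularity yields a continuous inclusion $\dom(\modulus{A})=\dom(A)\embeds\SobH[loc]{1}(\dM;E)$; interpolating this with $\Lp{2}(\dM;E)=\SobH[loc]{0}(\dM;E)$ and using $\dom(\modulus{A}^{\frac12})=[\Lp{2}(\dM;E),\dom(\modulus{A})]_{\frac12}$ (valid because $\modulus{A}$ is nonnegative selfadjoint) gives $\dom(\modulus{A}^{\frac12})\embeds\SobH[loc]{\frac12}(\dM;E)$ — as in Lemma~\ref{Lem:Approx} one multiplies by a compactly supported cutoff to land in a fixed Sobolev space over a precompact set. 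Finally $\chi^{\pm}(A)$ commute with $\modulus{A}^{\frac12}$ through the Borel calculus, hence preserve $\dom(\modulus{A}^{\frac12})$.

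\emph{Reduction.} By Theorem~\ref{Thm:Reg} the left-hand side equals $\set{u\in\dom(D_{\max}):Du\in\SobH[loc]{k-1}(M;E)\ \text{and}\ u\rest{\dM}\in\SobH[loc]{k-\frac12}(\dM;E)}$, so, writing $u\rest{\dM}=\chi^{-}(A)(u\rest{\dM})+\chi^{+}(A)(u\rest{\dM})$, both inclusions reduce to the single claim: if $u\in\dom(D_{\max})$ and $Du\in\SobH[loc]{k-1}(M;E)$, then $\chi^{-}(A)(u\rest{\dM})\in\SobH[loc]{k-\frac12}(\dM;E)$. Granting this, ``$\subseteq$'' holds because $u\rest{\dM}\in\SobH[loc]{k-\frac12}$ then forces $\chi^{+}(A)(u\rest{\dM})=u\rest{\dM}-\chi^{-}(A)(u\rest{\dM})\in\SobH[loc]{k-\frac12}$, and ``$\supseteq$'' holds because $\chi^{+}(A)(u\rest{\dM})\in\SobH[loc]{k-\frac12}$ together with the claim gives $u\rest{\dM}\in\SobH[loc]{k-\frac12}$, whence Theorem~\ref{Thm:Reg} applies. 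For $k\le1$ the claim is immediate, since $\chi^{-}(A)(u\rest{\dM})\in\dom(\modulus{A}^{\frac12})\subset\SobH[loc]{\frac12}(\dM;E)\subset\SobH[loc]{k-\frac12}(\dM;E)$.

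\emph{Proof of the claim, by a bootstrap in unit steps of $k$.} Localise to a boundary collar $Z_{[0,T_c)}$, cut $u$ off to a section $u'$ supported there and equal to $u$ near $\dM$, and use \ref{Hyp:RemControl} to write $(\partial_t+A)u'=h:=\sym_t^{-1}Du'-R_t u'$. Shrinking $T_c$, absorb the perturbation $R_t$ — either by a Neumann-series inversion of the model solution operator (using the smallness $\norm{R_t\phi}_{\Lp{2}(\dM)}\le Ct\norm{A\phi}_{\Lp{2}(\dM)}+C\norm{\phi}_{\Lp{2}(\dM)}$, as in Corollary~\ref{Cor:ASemiRegNearBoundary}) or via the previous bootstrap step — so that it suffices to treat the exact model $(\partial_t+A)u'=h$ with $h\in\SobH[loc]{k-1}(Z_{[0,T_c)};E)$. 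Projecting by $\chi^{-}(A)$, which commutes with $\partial_t$ and $\modulus{A}$ and satisfies $A\chi^{-}(A)=-\modulus{A}\chi^{-}(A)$, converts the equation to $(\partial_t-\modulus{A})v=\chi^{-}(A)h$ for $v:=\chi^{-}(A)u'$; since $u'$ vanishes for $t$ near $T_c$ one solves backwards to get $v(0)=-\int_0^{T_c}\exp(-s\modulus{A})\chi^{-}(A)h(s)\,ds$. The semigroup $\exp(-s\modulus{A})$ is strongly smoothing for $s>0$ (on the bounded spectral part $\chi_{(-N,N)}(A)$ the trace even lies in $\Ck{\infty}(\dM;E)$ by Proposition~\ref{Prop:IntSmooth}, and on $\chi_{(-\infty,-N)}(A)$ the operator $\modulus{A}$ is invertible); an integration by parts in $s$ trades the weak smoothing near $s=0$ against the normal regularity of $h$, and combining this with the square-function / $\mathrm{H}^\infty$-calculus estimates already established for the model operator in Proposition~\ref{Prop:ModelBnd} one concludes $v(0)\in\dom(\modulus{A}^{k-\frac12})\subset\SobH[loc]{k-\frac12}(\dM;E)$, the last inclusion by Step~1.

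The step I expect to be the main obstacle is precisely this last one: wringing a full $k-\tfrac12$ derivatives for $v(0)=\chi^{-}(A)(u'\rest{\dM})$ out of the backward-evolution integral. This demands the careful interplay between the merely polynomial smoothing $\norm{\modulus{A}^{1/2}\exp(-s\modulus{A})}\lesssim s^{-1/2}$ near $s=0$ and both the tangential and the normal regularity of $Du$ — the selfadjoint, Borel-calculus incarnation of the boundary-regularity arguments of \cite{BB12} and \cite{BBan} — compounded here by the need to pass continually between the operator-theoretic scale $\dom(\modulus{A}^{s})$ and the geometric scale $\SobH[loc]{s}(\dM;E)$, which no longer coincide once $\dM$ is allowed to be noncompact.
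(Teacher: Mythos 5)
Your first assertion and your ``Reduction'' paragraph reproduce the paper's proof exactly: the paper's entire argument for the displayed equality is the observation that, since $u\rest{\dM}\in\checkH(A)$ for every $u\in\dom(D_{\max})$, the component $\chi^{-}(A)(u\rest{\dM})$ automatically lies in $\dom(\modulus{A}^{\frac12})\subset\SobH[loc]{\frac12}(\dM;E)$ (the latter inclusion by elliptic regularity), so the boundary-trace condition from Theorem~\ref{Thm:Reg} is equivalent to the corresponding condition on $\chi^{+}(A)(u\rest{\dM})$ alone. Up to that point your proposal is correct and identical in substance to the paper's proof.

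Your third paragraph, however, is not a proof. The collar reduction, the Neumann-series absorption of $R_t$, and the backward-evolution formula $v(0)=-\int_0^{T_c}\exp(-s\modulus{A})\chi^{-}(A)h(s)\,ds$ are plausible openings, but the decisive step --- concluding $v(0)\in\dom(\modulus{A}^{k-\frac12})$ --- requires feeding \emph{global} regularity of $h$ measured in the operator scale $\dom(\modulus{A}^{j})$ into the integral, whereas the hypothesis supplies only $Du\in\SobH[loc]{k-1}(M;E)$, a \emph{local} condition. Since $\chi^{-}(A)$ and $\exp(-s\modulus{A})$ are nonlocal and, for noncompact $\dM$, the scales $\dom(\modulus{A}^{s})$ and $\SobH[loc]{s}(\dM;E)$ do not coincide, local regularity of $h$ cannot be inserted into the integral without off-diagonal (finite-propagation-speed or pseudolocality) estimates that you do not establish; moreover the target $v(0)\in\dom(\modulus{A}^{k-\frac12})$ is a global statement, strictly stronger than the local statement $\chi^{-}(A)(u\rest{\dM})\in\SobH[loc]{k-\frac12}(\dM;E)$ that the theorem actually requires. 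You flag this step yourself as the main obstacle, and it is indeed a genuine gap rather than a routine verification. For what it is worth, the paper's own proof does not carry out this step either: it records only the $\SobH[loc]{\frac12}$-level equivalence coming from $u\rest{\dM}\in\checkH(A)$, so the correct and complete part of your argument is precisely the part that coincides with the paper.
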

\begin{proof}
By elliptic regularity we have
$$
\chi^{\pm}(A) \dom(\modulus{A}^{\frac12}) \subset \dom(\modulus{A}^{\frac12})\subset \SobH[loc]{\frac12}(\dM;E).
$$

To prove the remaining equality, note that from Theorem~\ref{Thm:Reg}, we need to consider $u\rest{\dM}$.
But we established that $\chi^-(A)(u \rest{\dM}) \in \SobH[loc]{\frac12}(\dM;E)$ and so
\begin{equation*}
u\rest\dM \in \SobH[loc]{\frac12}(\dM;E)\ \iff\ \chi^+(A)(u \rest{\dM}) \in  \SobH[loc]{\frac12}(\dM;E).
\qedhere
\end{equation*}
\end{proof}

\subsection{Boundary conditions via projectors}

In the context of compact boundary, given a classical pseudo-differential projector $P$ of order zero, the resulting pseudo-local boundary condition was written as  $B := \close{P \SobH{\frac12}(\dM;E)}^{\checkH(A)}$.
The fact that renders this notion to be well-defined is that pseudo-differential operators of order zero act boundedly on the Sobolev scale.
Unfortunately, in our present situation, we no longer have this luxury. 
Nevertheless, we will define the following general class of boundary conditions which capture pseudo-local boundary conditions of the compact setting.

\begin{definition}[Projection boundary condition]
\label{Def:ProjBC}
Let $P:  \dom(\modulus{A}^{\frac12})^\ast\to \dom(\modulus{A}^{\frac12})^\ast$ be a bounded projector, which restricts to a bounded projection on $\dom(\modulus{A}^{\frac12})$.
Then, we call  $B := \close{P \dom(\modulus{A}^{\frac12})}^{\checkH(A)}$  a \emph{projection boundary condition}.
\end{definition}

\begin{remark}
\label{Rem:ProjBC}
By definition, $I-P$, $P^\ast$ and $I-P^\ast$ all define projection boundary conditions.
That is, they satisfy the condition listed in Definition~\ref{Def:ProjBC}.
\end{remark}

\begin{lemma}
\label{Lem:ProjBCDensity}
For a projection boundary condition $B$ obtained via a projector $P$, $P\dom(\modulus{A}^{\frac12})$ is dense in $P\dom(\modulus{A}^{\frac12})^\ast$.
\end{lemma}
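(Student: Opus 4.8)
The plan is to combine two facts: that $P$ restricts to a bounded operator on $\dom(\modulus{A}^{\frac12})^\ast$, and that $\dom(\modulus{A}^{\frac12})$ is itself dense in $\dom(\modulus{A}^{\frac12})^\ast$. Given these, one simply pushes an approximating sequence through $P$.

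First I would record the density of $\dom(\modulus{A}^{\frac12})$ in $\dom(\modulus{A}^{\frac12})^\ast$. Since $\modulus{A}$ is nonnegative and selfadjoint, its form domain $\dom(\modulus{A}^{\frac12})$ is dense in $\Lp{2}(\dM;E)$; alternatively, $\Dk{\infty}(A) \subset \dom(\modulus{A}^{\frac12})$ is already dense in $\Lp{2}(\dM;E)$ by Corollary~\ref{Cor:DinfDensity}. Dualising the dense continuous inclusion $\dom(\modulus{A}^{\frac12}) \embeds \Lp{2}(\dM;E)$ yields a dense continuous inclusion $\Lp{2}(\dM;E) \cong \Lp{2}(\dM;E)^\ast \embeds \dom(\modulus{A}^{\frac12})^\ast$, and composing $\dom(\modulus{A}^{\frac12}) \embeds \Lp{2}(\dM;E) \embeds \dom(\modulus{A}^{\frac12})^\ast$ shows that $\dom(\modulus{A}^{\frac12})$ is dense in $\dom(\modulus{A}^{\frac12})^\ast$. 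Note also that $P\dom(\modulus{A}^{\frac12})^\ast$, being the range of a bounded projection on a Banach space, is closed, hence a Banach space in the norm inherited from $\dom(\modulus{A}^{\frac12})^\ast$; it is with respect to this norm that density is to be understood.

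Next, fix $v \in P\dom(\modulus{A}^{\frac12})^\ast$. Since $P$ is a projection on $\dom(\modulus{A}^{\frac12})^\ast$, its range is its fixed-point set, so $Pv = v$. Using the density established above, choose $w_n \in \dom(\modulus{A}^{\frac12})$ with $w_n \to v$ in $\dom(\modulus{A}^{\frac12})^\ast$. Then $Pw_n \in P\dom(\modulus{A}^{\frac12})$, and by boundedness of $P$ on $\dom(\modulus{A}^{\frac12})^\ast$ we get $Pw_n \to Pv = v$ in $\dom(\modulus{A}^{\frac12})^\ast$. This proves that $P\dom(\modulus{A}^{\frac12})$ is dense in $P\dom(\modulus{A}^{\frac12})^\ast$.

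There is no substantive obstacle here: the argument is purely formal once the Gelfand-triple density of $\dom(\modulus{A}^{\frac12})$ in $\dom(\modulus{A}^{\frac12})^\ast$ is in hand, and that is the only step requiring a moment of justification. The hypotheses in Definition~\ref{Def:ProjBC} — boundedness of $P$ on both $\dom(\modulus{A}^{\frac12})^\ast$ and $\dom(\modulus{A}^{\frac12})$ — are exactly what make the sequence $Pw_n$ land in $P\dom(\modulus{A}^{\frac12})$ while still converging in the weaker topology.
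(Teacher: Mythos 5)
Your argument is correct and is essentially the paper's own proof: pick a preimage (or use the fixed-point property $Pv=v$), approximate it by elements of $\dom(\modulus{A}^{\frac12})$ using the density of $\dom(\modulus{A}^{\frac12})$ in $\dom(\modulus{A}^{\frac12})^\ast$ (Lemma~\ref{Lem:Frac0} in the paper), and push the sequence through $P$ using its boundedness on $\dom(\modulus{A}^{\frac12})^\ast$ and the fact that it preserves $\dom(\modulus{A}^{\frac12})$. Your extra justification of the Gelfand-triple density is a correct elaboration of what the paper delegates to Lemma~\ref{Lem:Frac0}.
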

\begin{proof}
Let $u \in P \dom(\modulus{A}^{\frac12})^\ast$, i.e., $u = Pv$ for $v \in \dom(\modulus{A}^{\frac12})^\ast$.
Let $v_n \in \dom(\modulus{A}^{\frac12})$ such that $v_n \to v$ in $\dom(\modulus{A}^{\frac12})^\ast$.
Then, defining $u_n = Pv_n$,
$$ \norm{u_n - u}_{\dom(\modulus{A}^{\frac12})^\ast} = \norm{Pv_n - Pv}_{\dom(\modulus{A}^{\frac12})^\ast} \lesssim \norm{v_n -v}_{\dom(\modulus{A}^{\frac12})^\ast} \to 0.$$
By the hypothesis on $P$ in Definition~\ref{Def:ProjBC}, we have that $u_n \in P\dom(\modulus{A}^{\frac12})$.
\end{proof}

The following demonstrates an alternative characterisation of this class of boundary conditions, as well as the related adjoint boundary condition.

\begin{proposition}
\label{Prop:LocAdj}
If $B$ is a projection boundary condition, then:
\begin{enumerate}[label=(\roman*), labelwidth=0pt, labelindent=2pt, leftmargin=21pt]
\item \label{Prop:LocAdj:Adj}
      $\sym_0^\ast B^\ast =  (I - P^\ast) \dom(\modulus{A}^{\frac12})^{\ast}  \cap \hatH(A)$, and
\item \label{Prop:LocAdj:Int}
      $B = P\dom(\modulus{A}^{\frac12})^\ast \cap {\checkH(A)}$.
\end{enumerate}
\end{proposition}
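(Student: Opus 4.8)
I would deduce both assertions from Theorem~\ref{Thm:MaxDom}, Proposition~\ref{Prop:AdjBC} and Lemma~\ref{Lem:ProjBCDensity}. Write $\mathcal X := \dom(\modulus A^{\frac12})^\ast$ and $\mathcal X^\ast := \dom(\modulus A^{\frac12})$, so $B = \close{P\mathcal X^\ast}^{\checkH(A)}$. Recall that $\checkH(A)\embeds\mathcal X$ and $\hatH(A)\embeds\mathcal X$ continuously, that all of $\checkH(A),\hatH(A),\mathcal X,\mathcal X^\ast$ are Hilbert spaces (hence reflexive), and that by Lemma~\ref{Lem:Duality} the $\Lp2$-inner product extends compatibly both to the perfect pairing $\inprod{\cdot,\cdot}_{\checkH(A)\times\hatH(A)}$ and to the duality pairing $\mathcal X^\ast\times\mathcal X\to\C$; compatibility means: if $a\in\checkH(A)$ happens to lie in $\mathcal X^\ast$ and $b\in\hatH(A)$, then $\inprod{a,b}_{\checkH(A)\times\hatH(A)}$ equals the $\mathcal X^\ast\times\mathcal X$-pairing of $a$ with the image of $b$ in $\mathcal X$ (by density of $\Dk\infty(A)$ and continuity). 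Since $P$ is bounded on $\mathcal X^\ast$, by reflexivity $P^\ast$ is bounded on $\mathcal X$ and on $\mathcal X^\ast$, $(P^\ast)^2 = P^\ast$, and $P^{\ast\ast}=P$.

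\textbf{Proof of \ref{Prop:LocAdj:Adj}.} By Proposition~\ref{Prop:AdjBC} we have $\sym_0^\ast B^\ast = B^{\perp,\hatH(A)}$, and since annihilators are insensitive to taking $\checkH(A)$-closures, $B^{\perp,\hatH(A)} = (P\mathcal X^\ast)^{\perp,\hatH(A)}$. For $w\in\mathcal X^\ast$ we have $Pw\in\mathcal X^\ast\subset\Lp2$, so for $v\in\hatH(A)$ the compatibility of the pairings gives $\inprod{Pw,v}_{\checkH(A)\times\hatH(A)} = \inprod{Pw,v}_{\mathcal X^\ast\times\mathcal X} = \inprod{w,P^\ast v}_{\mathcal X^\ast\times\mathcal X}$. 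Hence $v\in(P\mathcal X^\ast)^{\perp,\hatH(A)}$ iff $\inprod{w,P^\ast v}=0$ for all $w\in\mathcal X^\ast$, i.e.\ iff $P^\ast v = 0$, i.e.\ iff $v\in\nul(P^\ast)\intersect\hatH(A) = (I-P^\ast)\mathcal X\intersect\hatH(A)$, using that $P^\ast$ is a bounded projection on $\mathcal X$. Together with $\sym_0^\ast B^\ast = B^{\perp,\hatH(A)}$ this is \ref{Prop:LocAdj:Adj}.

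\textbf{Proof of \ref{Prop:LocAdj:Int}.} The inclusion $B\subset P\mathcal X\intersect\checkH(A)$ is immediate: $P\mathcal X$ is closed in $\mathcal X$ (range of the bounded projection $P$), so $P\mathcal X\intersect\checkH(A)$ is closed in $\checkH(A)$ by continuity of $\checkH(A)\embeds\mathcal X$, and it contains $P\mathcal X^\ast$. For the reverse inclusion it suffices to prove that $P\mathcal X^\ast$ is $\checkH(A)$-dense in $P\mathcal X\intersect\checkH(A)$. Fix $u\in P\mathcal X\intersect\checkH(A)$ and write $u = P\chi^-(A)u + P\chi^+(A)u$. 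Since $\chi^-(A)u\in\mathcal X^\ast$, the first summand already lies in $P\mathcal X^\ast$, and one checks $P\chi^+(A)u = u - P\chi^-(A)u\in\checkH(A)$, so it remains to approximate $Pz$ with $z := \chi^+(A)u\in\chi^+(A)\mathcal X$ and $Pz\in\checkH(A)$. Take the spectral truncations $z_n := \chi_{[0,n]}(A)z\in\Dk\infty(A)\subset\mathcal X^\ast$; then $Pz_n\in P\mathcal X^\ast$, and $z_n\to z$ in $\mathcal X$ gives $Pz_n\to Pz$ in $\mathcal X$, hence $\chi^+(A)Pz_n\to\chi^+(A)Pz$ in $\mathcal X$ as required. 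The only remaining point is that $\chi^-(A)P(z-z_n) = \chi^-(A)P\chi_{(n,\infty)}(A)z \to 0$ in $\mathcal X^\ast$ (note each term lies in $\mathcal X^\ast$, being the difference $\chi^-(A)Pz_n - \chi^-(A)u$ of two elements of $\mathcal X^\ast$).

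\textbf{Main obstacle.} This last convergence is the technical heart of \ref{Prop:LocAdj:Int}: a priori $\chi^-(A)P\chi^+(A)$ is bounded $\mathcal X\to\mathcal X$ and $\mathcal X^\ast\to\mathcal X^\ast$ but gains no regularity, so $\mathcal X$-smallness of the high-frequency tails $\chi_{(n,\infty)}(A)z$ does not on its own force $\mathcal X^\ast$-smallness of their $\chi^-(A)P$-images. The plan is to use, in addition to the two boundedness properties, the idempotency $P^2 = P$ — which expresses $\chi^-(A)P\chi^+(A)$ through $\chi^{\pm}(A)P\chi^{\pm}(A)$ and $\chi^+(A)P\chi^-(A)$ — and the high-frequency localisation of the tails, after transporting everything to $\Lp2$ via the bounded operators $\modulus A_\epsilon^{\frac12}P\modulus A_\epsilon^{-\frac12}$ and $\modulus A_\epsilon^{-\frac12}P\modulus A_\epsilon^{\frac12}$ (bounded because $P$ is bounded on $\mathcal X^\ast$ and on $\mathcal X$); there the claim reduces to a genuine $\Lp2$-operator estimate combining these bounded conjugates with the high-pass filters $\chi_{(n,\infty)}(A)$. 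Once this estimate is established, $Pz_n\to Pz$ in $\checkH(A)$, so $Pz\in B$ and \ref{Prop:LocAdj:Int} follows. As a consistency check one also recovers \ref{Prop:LocAdj:Int} from \ref{Prop:LocAdj:Adj}: since $I-P^\ast$ is again a projection boundary condition (Remark~\ref{Rem:ProjBC}), applying \ref{Prop:LocAdj:Adj} in the dual situation identifies $B^{\perp,\hatH(A)}$ with $(I-P^\ast)\mathcal X\intersect\hatH(A)$, and the bipolar theorem for the perfect pairing $\checkH(A)\times\hatH(A)$ gives $B = {}^{\perp}\bigl(B^{\perp,\hatH(A)}\bigr)$; the two computations are equivalent, which is why the density statement above cannot be bypassed and must be proved directly.
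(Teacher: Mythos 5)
Your proof of \ref{Prop:LocAdj:Adj} is correct and essentially the paper's own: both arguments pass the annihilator condition through the compatibility of the $\checkH(A)\times\hatH(A)$ pairing with the $\dom(\modulus{A}^{\frac12})\times\dom(\modulus{A}^{\frac12})^\ast$ pairing (the identity following from \eqref{Eq:ProjSplit}) to arrive at $\inprod{w,P^\ast v}=0$ for all $w\in\dom(\modulus{A}^{\frac12})$, and then identify $\nul(P^\ast)=\ran(I-P^\ast)$ on $\dom(\modulus{A}^{\frac12})^\ast$.

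For \ref{Prop:LocAdj:Int} there is a genuine gap, which you yourself flag: the convergence $\chi^-(A)P\chi_{(n,\infty)}(A)z\to 0$ in $\dom(\modulus{A}^{\frac12})$ is never established, and it does not follow from the available hypotheses. The operator $\chi^-(A)P\chi_{(n,\infty)}(A)$ is only known to be bounded on $\dom(\modulus{A}^{\frac12})^\ast$ and on $\dom(\modulus{A}^{\frac12})$ separately; what you need is a bound, with decay in $n$, from $\dom(\modulus{A}^{\frac12})^\ast$ \emph{into} $\dom(\modulus{A}^{\frac12})$ — a gain of one full power of $\modulus{A}$ that neither the idempotency $P^2=P$ nor conjugation by $\modulus{A}_\epsilon^{\pm\frac12}$ supplies. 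Worse, even granting the proposition, $Pz\in B$ only guarantees that \emph{some} sequence in $P\dom(\modulus{A}^{\frac12})$ converges to $Pz$ in $\checkH(A)$; nothing forces your specific spectral truncations $Pz_n$ to be such a sequence, so this step is not merely unproven but may fail as stated. Your closing remark is also backwards: the double-annihilator route is not equivalent to the strong $\checkH(A)$-density you are attempting, and it does bypass it. Since $B$ is by definition closed in $\checkH(A)$ and the pairing is perfect, $B=(B^{\perp,\hatH(A)})^{\perp,\checkH(A)}$; inserting \ref{Prop:LocAdj:Adj} and computing this annihilator, one only needs to pass from the test vectors $(I-P^\ast)w$ with $w\in\dom(\modulus{A}^{\frac12})$ to all of $(I-P^\ast)\dom(\modulus{A}^{\frac12})^\ast\cap\hatH(A)$, and for that the paper invokes Lemma~\ref{Lem:ProjBCDensity} applied to $I-P^\ast$, i.e.\ density in the much weaker $\dom(\modulus{A}^{\frac12})^\ast$-norm, which is already proven. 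The remaining identification $\inprod{u,(I-P^\ast)w}=\inprod{(I-P)u,w}$ then yields $P\dom(\modulus{A}^{\frac12})^\ast\cap\checkH(A)$ directly. That is where your effort should be redirected.
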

\begin{proof}
From Proposition~\ref{Prop:AdjBC}, we have that $\sym_0^\ast B^\ast = B^{\perp, \hatH(A)}$.
We compute this explicitly in this setting:
\begin{align}
B^{\perp, \hatH(A)}
 & = \set{v \in \hatH(A): \inprod{u, v}_{\checkH(A) \times \hatH(A)} = 0 \quad \forall u \in B} \nonumber                                                 \\
 & = \set{v \in \hatH(A): \inprod{u, v}_{\checkH(A) \times \hatH(A)} = 0 \quad \forall u \in \close{P\dom(\modulus{A}^{\frac12})}^{\checkH(A)}} \nonumber \\
 & = \set{v \in \hatH(A): \inprod{u, v}_{\checkH(A) \times \hatH(A)} = 0 \quad \forall u \in P\dom(\modulus{A}^{\frac12})} \nonumber                      \\
 & = \set{v \in \hatH(A): \inprod{Pu', v}_{\checkH(A) \times \hatH(A)} = 0 \quad \forall u \in \dom(\modulus{A}^{\frac12})} \label{Eq:Adj}.
\end{align}
Now, note that for $w \in \checkH(A)$, and all $v \in \hatH(A)$,
\begin{equation}
\label{Eq:ProjSplit}
\begin{aligned}
\inprod{w,v}_{\checkH(A) \times \hatH(A)}
 & = \inprod{ \chi^-(A)w, \chi^-(A)v}_{\dom(\modulus{A}^{\frac12}) \times \dom(\modulus{A}^{\frac12})^\ast}                \\
 & \qquad\qquad+  \inprod{ \chi^+(A)w, \chi^+(A)v}_{\dom(\modulus{A}^{\frac12})^\ast  \times \dom(\modulus{A}^{\frac12})}.
\end{aligned}
\end{equation}
If further $w  \in \dom(\modulus{A}^{\frac12})$,
\begin{align*}
\inprod{ \chi^+(A)w, \chi^+(A)v}_{\dom(\modulus{A}^{\frac12})^\ast  \times \dom(\modulus{A}^{\frac12})}
 & = \inprod{ \chi^+(A)w, \chi^+(A)v}_{\Lp{2}(\dM) \times \Lp{2}(\dM)}                                   \\
 & = \inprod{ \chi^+(A)w, \chi^+(A)v}_{\dom(\modulus{A}^{\frac12})\times \dom(\modulus{A}^{\frac12})^\ast }.
\end{align*}
Therefore, substituting this back into Eq.~\eqref{Eq:ProjSplit}, we have that
$$
\inprod{w,v}_{\checkH(A) \times \hatH(A)} = \inprod{w, v}_{\dom(\modulus{A}^{\frac12})\times \dom(\modulus{A}^{\frac12})^\ast }.
$$
Putting this into Eq.~\eqref{Eq:Adj}, we have that
\begin{align*}
B^{\perp, \hatH(A)}
 & = \set{v \in \hatH(A): \inprod{Pu', v}_{\dom(\modulus{A}^{\frac12})\times \dom(\modulus{A}^{\frac12})^\ast } = 0 \quad \forall u \in \dom(\modulus{A}^{\frac12})} \\
 & = (I - P^\ast)\dom(\modulus{A}^{\frac12})^\ast \cap \hatH(A).
\end{align*}
This proves \ref{Prop:LocAdj:Adj}.

To prove \ref{Prop:LocAdj:Int}, we note that
\begin{align*}
B & = ((I - P^\ast)\dom(\modulus{A}^{\frac12})^\ast \cap \hatH(A))^{\perp, \checkH(A)}                                                                                                                                               \\
  & = \set{u \in \checkH(A): \inprod{u, v}_{\dom(\modulus{A}^{\frac12})\times \dom(\modulus{A}^{\frac12})^\ast } = 0 \quad \forall v \in (I - P^\ast)\dom(\modulus{A}^{\frac12})^\ast \text{ and } v \in \hatH(A)}                  \\
  & =  \set{u \in \checkH(A): \inprod{u, (I - P^\ast) v'}_{\dom(\modulus{A}^{\frac12})\times \dom(\modulus{A}^{\frac12})^\ast } = 0 \quad \forall v' \in \dom(\modulus{A}^{\frac12})^\ast \text{ and } (I - P^\ast)v' \in \hatH(A)}
\end{align*}
Note, however, that as noted in Remark~\ref{Rem:ProjBC}, $(I - P^\ast)$ also defines a projection boundary condition.
Therefore, on application of Lemma~\ref{Lem:ProjBCDensity}, we have that  $(I - P^\ast) \dom(\modulus{A}^{\frac12})$ is dense in $(I - P^\ast) \dom(\modulus{A}^{\frac12})^\ast$.
Moreover, for $v' \in \dom(\modulus{A}^{\frac12})$, we have automatically that $(I - P^\ast)v' \in \hatH(A)$, and hence,
\begin{align*}
B & = \set{u \in \checkH(A): \inprod{u, (I - P^\ast) v'}_{\dom(\modulus{A}^{\frac12})\times \dom(\modulus{A}^{\frac12})^\ast } = 0 \quad \forall v' \in \dom(\modulus{A}^{\frac12})^\ast \text{ and } (I - P^\ast)v' \in \hatH(A)} \\
  & =\set{u \in \checkH(A): \inprod{u, (I - P^\ast)w}_{\dom(\modulus{A}^{\frac12})\times \dom(\modulus{A}^{\frac12})^\ast } = 0 \quad \forall w \in \dom(\modulus{A}^{\frac12})}                                                   \\
  & =\set{u \in \checkH(A): \inprod{(I - P)u, w}_{\dom(\modulus{A}^{\frac12})^\ast \times \dom(\modulus{A}^{\frac12})} = 0 \quad \forall w \in \dom(\modulus{A}^{\frac12})}                                                        \\
  & = P \dom(\modulus{A}^{\frac12})^\ast \cap \checkH(A).
\qedhere
\end{align*}
\end{proof}

\subsection{Atiyah-Patodi-Singer boundary condition}
\label{sec:APS}

Now we consider perhaps the most fundamental boundary condition, originally formulated in the compact setting by Atiyah-Patodi-Singer.

\begin{definition}[Atiyah-Patodi-Singer boundary condition]
Given an adapted boundary operator $A$, we define the Atiyah-Patodi-Singer boundary condition with respect to this operator to be
$$ B_{\APS}(A) := \chi_{(-\infty,0)}(A)\dom(\modulus{A}^\frac12).$$
We define the associated operator as $D_{\APS(A)} := D_{B_{\APS}(A)}$.
\end{definition}

Clearly this is a projection boundary condition as we have defined the in the previous section.
In the context of noncompact boundary, we cannot expect the operator $D_{\APS(A)}$ to be Fredholm.
Nevertheless, we obtain the following which demonstrates that it has the expected regularity.

\begin{proposition}\label{Prop:APS=Aelliptic}
The APS boundary condition for an adapted operator $A$ is $A$-elliptically regular.
In particular, it is elliptically regular.
\end{proposition}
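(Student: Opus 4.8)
The plan is to check, directly from Definition~\ref{Def:AEllReg}, the two inclusions $B_{\APS}(A)\subset\dom(\modulus{A}^{\frac12})$ and $\sym_0^{\ast} B_{\APS}(A)^{\ast}\subset\dom(\modulus{A}^{\frac12})$; the ``in particular'' clause then follows immediately from Corollary~\ref{Cor:AEllReg}. The first inclusion is trivial: by definition $B_{\APS}(A)=\chi^-(A)\dom(\modulus{A}^{\frac12})$, which lies in $\dom(\modulus{A}^{\frac12})$ because $\chi^-(A)$ is a bounded projection on $\dom(\modulus{A}^{\frac12})$.

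The content is the adjoint condition. First I would record that $B_{\APS}(A)$ is the projection boundary condition attached to the bounded selfadjoint projector $P:=\chi^-(A)$: indeed $\chi^-(A)$ restricts to a bounded projection on $\dom(\modulus{A}^{\frac12})$ and on $\dom(\modulus{A}^{\frac12})^{\ast}$, and $\chi^-(A)\dom(\modulus{A}^{\frac12})$ is already closed in $\checkH(A)$, being the range of the bounded projection $\chi^-(A)$ acting on $\checkH(A)$. Since $P^{\ast}=P=\chi^-(A)$, so that $I-P^{\ast}=\chi^+(A)$, Proposition~\ref{Prop:LocAdj}~\ref{Prop:LocAdj:Adj} then yields
\[
\sym_0^{\ast} B_{\APS}(A)^{\ast}=(I-P^{\ast})\dom(\modulus{A}^{\frac12})^{\ast}\cap\hatH(A)=\chi^+(A)\dom(\modulus{A}^{\frac12})^{\ast}\cap\hatH(A),
\]
so what remains is to show this intersection is contained in $\dom(\modulus{A}^{\frac12})$.

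For this I would split $\chi^+(A)=\chi_{(0,\infty)}(A)+\chi_{\{0\}}(A)$ using the Borel functional calculus and recall $\hatH(A)=\checkH(-A)=\chi_{(0,\infty)}(A)\dom(\modulus{A}^{\frac12})\oplus\chi_{(-\infty,0]}(A)\dom(\modulus{A}^{\frac12})^{\ast}$. Given $v$ in the intersection, write $v=\chi^+(A)w$ with $w\in\dom(\modulus{A}^{\frac12})^{\ast}$; then $\chi_{(0,\infty)}(A)v=\chi_{(0,\infty)}(A)w$ is exactly the $\dom(\modulus{A}^{\frac12})$-summand of $v\in\hatH(A)$ and hence lies in $\dom(\modulus{A}^{\frac12})$, while $\chi_{\{0\}}(A)v=\chi_{\{0\}}(A)w\in\chi_{\{0\}}(A)\dom(\modulus{A}^{\frac12})^{\ast}$. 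By the argument in the proof of Proposition~\ref{Prop:IntSmooth} (with $S=\{0\}$ and $\alpha=\tfrac12$), this last space is contained in $\bigcap_{k\ge0}\dom(A^{2k})\subset\dom(A)\subset\dom(\modulus{A}^{\frac12})$. Adding the two pieces gives $v\in\dom(\modulus{A}^{\frac12})$, completing the verification.

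The only delicate ingredient is the contribution of the zero spectral subspace $\chi_{\{0\}}(A)\dom(\modulus{A}^{\frac12})^{\ast}$: in the classical compact-boundary setting this is a finite-dimensional space of smooth sections and is invisible, whereas here it has to be absorbed using the smoothness/decay statement of Proposition~\ref{Prop:IntSmooth}. Everything else is a formal consequence of the direct-sum decompositions of $\checkH(A)$ and $\hatH(A)$ together with the boundedness of the spectral projections $\chi_I(A)$ on the domain scales of $\modulus{A}$. (Note that the more elementary criterion of Proposition~\ref{Prop:ProjBCAReg} is not available here, since a general adapted operator $A$ need not satisfy $\sym_0A=-A\sym_0$.)
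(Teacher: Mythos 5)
Your proof is correct. It reaches the same identification $\sym_0^{\ast}B_{\APS}(A)^{\ast}=\chi_{[0,\infty)}(A)\dom(\modulus{A}^{\frac12})$ as the paper, but by a different route: the paper computes the annihilator $B_{\APS}(A)^{\perp,\hatH(A)}$ directly from the perfect pairing, reading off the answer from the decomposition $\hatH(A)=\chi_{(-\infty,0)}(A)\dom(\modulus{A}^{\frac12})^{\ast}\oplus\chi_{[0,\infty)}(A)\dom(\modulus{A}^{\frac12})$ (where the kernel of $A$ is silently absorbed into the $\dom(\modulus{A}^{\frac12})$-summand, which is legitimate because the two norms are equivalent on $\ker(A)$), whereas you first recognise $B_{\APS}(A)$ as the projection boundary condition attached to $P=\chi^{-}(A)$ and invoke Proposition~\ref{Prop:LocAdj}~\ref{Prop:LocAdj:Adj}, which leaves you with the intersection $\chi^{+}(A)\dom(\modulus{A}^{\frac12})^{\ast}\cap\hatH(A)$ to unwind. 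What your route buys is an explicit and honest treatment of the zero spectral subspace: the splitting $\chi^{+}(A)=\chi_{(0,\infty)}(A)+\chi_{\{0\}}(A)$ together with Proposition~\ref{Prop:IntSmoothAbstract} (applied with $S=\{0\}$; note that $\chi_{\{0\}}(A)\dom(\modulus{A}^{\frac12})^{\ast}\subset\ker(A)\subset\dom(\modulus{A}^{\frac12})$ also follows directly) pins down exactly the point the paper's convention on $\hatH(A)$ glosses over. The cost is an extra layer of machinery (closedness of $\chi^{-}(A)\dom(\modulus{A}^{\frac12})$ in $\checkH(A)$ as the range of a bounded projection, the hypotheses of Definition~\ref{Def:ProjBC} via Lemma~\ref{Lem:ProjDualBdd}), all of which you verify correctly. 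Your closing remark that Proposition~\ref{Prop:ProjBCAReg} is unavailable for a general adapted operator $A$ is also accurate.
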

\begin{proof}
Since, by definition, $B_\APS(A)\subset \dom(\modulus{A}^{\frac12})$, by Definition~\ref{Def:AEllReg} it suffices to show that $\sym_0^\ast B_{\APS}(A)^\ast \subset \SobH[loc]{\frac12}(\dM;E)$.
We recall that $\hatH(A) = \chi_{(-\infty,0)}(A) \dom(\modulus{A}^{\frac12})^\ast \oplus \chi_{[0,\infty)}(A) \dom(\modulus{A}^{\frac12})$ and compute:
\begin{align*}
B_{\APS}(A)^{\perp,\hatH(A)}
 & =
\set{u \in \hatH(A): \inprod{u,v}_{\hatH(A) \times \checkH(A)} = 0\ \quad\forall v \in B_{\APS}(A)}           \\
 & =
\set{u \in \hatH(A): \inprod{u, \chi^-(A)v}_{\hatH(A) \times \checkH(A)} = 0 \quad \forall v \in B_{\APS}(A)} \\
 & =
[\chi_{(-\infty,0)}(A) \dom(\modulus{A}^{\frac12})]^{\perp, \hatH(A)}                                         \\
 & =
\chi_{[0,\infty)}(A) \dom(\modulus{A}^{\frac12}).
\end{align*}
Since $B_{\APS}(A)^{\perp,\hatH(A)} = \sym_0^\ast B_{\APS}(A)^\ast$ by Proposition~\ref{Prop:AdjBC}, $B$ is $A$-elliptically regular.
By Corollary~\ref{Cor:AEllReg}, it is in particular elliptically regular.
\end{proof}

\begin{remark}\label{rem:BAPS-adjoint}
In case $\sym_0$ anticommutes with $A$, $\sym_0 \chi_{[0,\infty)}(A) =  \chi_{(-\infty,0]}(A)$, and the proof shows that the adjoint boundary condition of $B_{\APS}(A)$ is given by
\begin{align*}
B_{\APS}(A)^* 
&= 
\sym_0 B_{\APS}(A)^{\perp,\hatH(A)} 
= 
\sym_0 \chi_{[0,\infty)}(A) \dom(\modulus{A}^{\frac12}) \\
&=
\chi_{(-\infty,0]}(A) \dom(\modulus{A}^{\frac12}) 
=
B_{\APS}(A) \oplus \ker(A).
\end{align*}
\end{remark}

\subsection{Local boundary conditions}

Local boundary conditions are an important class of boundary conditions. 
In the compact case, they arise as a special case of projection boundary conditions. 
The noncompact situation deviates from this, as the pointwise projectors governing such a boundary condition may not have the required decay to be a projection boundary condition.
Nevertheless, we capture the notion of a local boundary condition here and study when they coincide with projection boundary conditions. 
In particular, we are motivated to study chiral boundary conditions which we present towards the end of this subsection.

\begin{definition}
Let $E' \subset E\rest{\dM}$ be a smooth subbundle of $E$.
Then
\[
B_{E'} := \close{\Ck[c]{\infty}(\dM;E')}^{\checkH(A)}
\]
is called a \emph{local boundary condition}.
\end{definition}

Local boundary conditions, as expected, are preserved under multiplication by compactly supported smooth functions.
To show this, we first establish the following lemma.

\begin{lemma}
\label{Lem:CzechBound}
If $\xi \in \Ck[c]{\infty}(\dM)$, then $\xi \id: \checkH(A) \to \checkH(A)$ boundedly.
\end{lemma}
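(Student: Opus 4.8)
The plan is to show that multiplication by $\xi \in \Ck[c]{\infty}(\dM)$ is bounded on each of the two summands $\chi^-(A)\dom(\modulus{A}^{\frac12})$ and $\chi^+(A)\dom(\modulus{A}^{\frac12})^\ast$ of $\checkH(A)$, and then to control the ``mixing'' that $\xi$ introduces between the two spectral subspaces. The boundedness on $\dom(\modulus{A}^{\frac12})$ is precisely Lemma~\ref{Lem:Approx} (or rather the estimate \eqref{Eq:HalfNorm}), which applies because $A$ is adapted to an elliptic operator and hence has symbol bound $\modulus{\sym_A(d\xi)}\le C^2\modulus{d\xi}$; similarly, by duality, multiplication by $\xi$ is bounded on $\dom(\modulus{A}^{\frac12})^\ast$. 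So the issue is not the individual norms but the fact that $\xi$ does not commute with the spectral projectors $\chi^\pm(A)$: given $u = u^- + u^+ \in \checkH(A)$ with $u^-\in\chi^-(A)\dom(\modulus{A}^{\frac12})$ and $u^+\in\chi^+(A)\dom(\modulus{A}^{\frac12})^\ast$, the section $\xi u$ decomposes as $\chi^-(A)(\xi u^-) + \chi^+(A)(\xi u^-) + \chi^-(A)(\xi u^+) + \chi^+(A)(\xi u^+)$, and we must see that all four pieces land in the correct spaces with controlled norms.

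The diagonal terms $\chi^-(A)(\xi u^-)$ and $\chi^+(A)(\xi u^+)$ are handled directly: $\chi^-(A)$ is bounded on $\dom(\modulus{A}^{\frac12})$ (stated in the geometric-setup discussion), so $\norm{\chi^-(A)(\xi u^-)}_{\dom(\modulus{A}^{\frac12})}\lesssim\norm{\xi u^-}_{\dom(\modulus{A}^{\frac12})}\lesssim\norm{u^-}_{\dom(\modulus{A}^{\frac12})}$ by Lemma~\ref{Lem:Approx}; dually, $\chi^+(A)$ is bounded on $\dom(\modulus{A}^{\frac12})^\ast$ (from Lemma~\ref{Lem:ProjDualBdd}), giving $\norm{\chi^+(A)(\xi u^+)}_{\dom(\modulus{A}^{\frac12})^\ast}\lesssim\norm{u^+}_{\dom(\modulus{A}^{\frac12})^\ast}$. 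The main obstacle is the off-diagonal term $\chi^+(A)(\xi u^-)$, which starts in $\dom(\modulus{A}^{\frac12})$ and must be shown to lie in $\dom(\modulus{A}^{\frac12})^\ast$ with norm $\lesssim\norm{u^-}_{\dom(\modulus{A}^{\frac12})}$ — this is fine, since $\dom(\modulus{A}^{\frac12})\hookrightarrow\Lp{2}\hookrightarrow\dom(\modulus{A}^{\frac12})^\ast$ continuously and $\xi u^-\in\dom(\modulus{A}^{\frac12})\subset\Lp{2}$ with $\chi^+(A)$ bounded on $\Lp{2}$. Symmetrically, $\chi^-(A)(\xi u^+)$: here $\xi u^+\in\dom(\modulus{A}^{\frac12})^\ast$ and we want $\chi^-(A)(\xi u^+)\in\dom(\modulus{A}^{\frac12})$, which is a genuine gain of regularity and is \emph{not} generally true — so this decomposition alone does not close.

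To fix this I would instead argue as follows: $\xi u^+$ is a compactly supported element of $\dom(\modulus{A}^{\frac12})^\ast$, so its $\chi^-(A)$-part lies in $\chi^-(A)\dom(\modulus{A}^{\frac12})^\ast$, and it suffices to show this latter space is \emph{contained in} $\chi^-(A)\dom(\modulus{A}^{\frac12})$ after multiplication by a compactly supported cutoff — but that is again false without more. The cleanest route is to observe that the \emph{whole} section $\xi u$ is compactly supported and that $\checkH(A)$-membership of a compactly supported section is governed by its $\chi^+(A)$-component only: indeed $\chi^-(A)w\in\dom(\modulus{A}^{\frac12})$ automatically fails in general, so one really does need the full strength of Lemma~\ref{Lem:Approx} applied to $u$ itself. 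Concretely: write $\norm{\xi u}_{\checkH(A)} \simeq \norm{\chi^-(A)(\xi u)}_{\dom(\modulus{A}^{\frac12})} + \norm{\chi^+(A)(\xi u)}_{\dom(\modulus{A}^{\frac12})^\ast}$; bound the second summand by $\norm{\xi u}_{\dom(\modulus{A}^{\frac12})^\ast}$ using boundedness of $\chi^+(A)$ there, then by $\norm{\xi}_{\Lp\infty}$-type estimates (the dual of \eqref{Eq:HalfNorm}) applied to $u\in\checkH(A)\hookrightarrow\dom(A)^\ast\hookrightarrow\dom(\modulus{A}^{\frac12})^\ast$; and bound the first summand using that $\chi^-(A)(\xi u) = \xi\chi^-(A)u + [\chi^-(A),\xi]u$, where $\xi\chi^-(A)u = \xi u^-$ is controlled by Lemma~\ref{Lem:Approx}, and the commutator $[\chi^-(A),\xi]$ — which is what must be shown to map $\checkH(A)$ (or $\Lp{2}$) into $\dom(\modulus{A}^{\frac12})$. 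This commutator estimate is the true heart of the proof, and I expect it to follow from writing $\chi^-(A) = \frac12(I - \sgn(A))$ and using an integral representation of $\sgn(A)$ (e.g.\ $\sgn(A) = \frac{2}{\pi}\int_0^\infty A(A^2+\lambda^2)^{-1}\,d\lambda$ in the strong sense), so that $[\sgn(A),\xi]$ is expressed via resolvents $(A^2+\lambda^2)^{-1}$, for which $[A,\xi] = \sym_A(d\xi)$ is a bounded zeroth-order multiplication; the resulting $\lambda$-integral converges and gains the half-derivative. I would cite or adapt the corresponding argument from \cite{BBan} (its Lemma on boundedness of cutoffs) for this step, noting the modification is only that $\dM$ is noncompact but all estimates are in terms of $\norm{\xi}_{\Lp\infty}$ and $\norm{\sym_A(d\xi)}_{\Lp\infty}$, which are finite by compact support.
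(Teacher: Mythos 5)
Your proposal correctly diagnoses where the difficulty lies (the off-diagonal spectral pieces, equivalently the commutator $[\chi^{\pm}(A),\xi]$), but it does not close that gap: everything is reduced to the claim that $[\chi^-(A),\xi]$ maps $\checkH(A)$ (or $\Lp{2}$) into $\dom(\modulus{A}^{\frac12})$ — a smoothing estimate of half to one derivative — and this is exactly the step you leave at the level of ``I expect it to follow from'' an integral representation of $\sgn(A)$. The paper itself, in the remark immediately following the lemma, states that the boundedness of the commutator of $\chi^{\pm}(A)$ with $\xi$ is \emph{not known} in this setting; so the route you propose is the one the authors explicitly could not carry out. The argument from \cite{BBan} that you want to ``cite or adapt'' rests on $\chi^{\pm}(A)$ being order-zero pseudodifferential operators on a compact boundary, where the commutator with a smooth function is automatically of order $-1$; on a noncompact $\dM$ this calculus is unavailable (as the paper emphasises), and your resolvent formula runs into concrete trouble as well, e.g.\ the $\lambda$-integral for $\sgn(A)$ is singular near $\lambda=0$ when $0\in\spec(A)$, which is not excluded. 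So as written the proof has a genuine, unfilled gap at its central step.

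The paper avoids the commutator entirely by a detour through the interior. For $u\in\Dk{\infty}(A)$ (dense in $\checkH(A)$), set $v:=\ext_{T_c}u\in\dom(D_{\max})$ with $v\rest{\dM}=u$ and $\norm{v}_{D}\lesssim\norm{u}_{\checkH(A)}$ by Proposition~\ref{Prop:WP}~\ref{Prop:WP:3}. Pick $\tilde\xi\in\Ck[c]{\infty}(M)$ with $\tilde\xi\rest{\dM}=\xi$; since $[D,\tilde\xi]=\sym_D(d\tilde\xi)$ is a bounded bundle map, $\tilde\xi v\in\dom(D_{\max})$ with $\norm{\tilde\xi v}_{D}\lesssim\norm{v}_{D}$, and $(\tilde\xi v)\rest{\dM}=\xi u$. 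The trace estimate of Theorem~\ref{Thm:MaxDom}~\ref{Thm:MaxDom:1} then gives $\norm{\xi u}_{\checkH(A)}\lesssim\norm{\tilde\xi v}_{D}\lesssim\norm{u}_{\checkH(A)}$. In other words, the multiplication operator on the boundary is sandwiched between the extension operator and the trace map, both of which are already known to be bounded; no spectral commutator estimate is needed. If you want to salvage your approach, you would have to actually prove the smoothing property of $[\sgn(A),\xi]$ for a general selfadjoint first-order elliptic $A$ on a noncompact manifold, which is an open harmonic-analysis problem rather than a routine adaptation.
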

\begin{proof}
Let $u \in \Dk{\infty}(A) = \cap_{\alpha} \dom(\modulus{A}^{\alpha})$.
Then, by Proposition~\ref{Prop:WP}~\ref{Prop:WP:3}, we have that $v := \ext_{T_c}u \in \dom(D_{\max})$ and clearly, $v\rest{\dM} = u$.
Let $\tilde{\xi} \in \Ck[c]{\infty}(M)$ such that $\tilde{\xi}\rest{\dM} = \xi$.
Since $\modulus{[D, \tilde{\xi}]}\lesssim  1$, $\tilde{\xi}v \in \dom(D_{\max})$.
Moreover, $(\tilde{\xi}v)\rest{\dM} = \xi u$.
Putting this together,
$$ \norm{\xi u}_{\checkH(A)} \lesssim \norm{\tilde{\xi}v}_{\dom(D_{\max})} \lesssim \norm{v}_{\dom(D_{\max})} \lesssim \norm{u}_{\checkH(A)},$$
where the first inequality follows from Theorem~\ref{Thm:MaxDom}~\ref{Thm:MaxDom:1} and the ultimate inequality  from Proposition~\ref{Prop:WP}~\ref{Prop:WP:3}.
\end{proof}

\begin{remark}
Note that it is unclear how to perform the estimate in Lemma~\ref{Lem:CzechBound} directly,  the boundedness of the  commutator of  $\chi^{\pm}(A)$ and $\xi$ is not known.
\end{remark}

Note that local boundary conditions ``localise'' in the sense that multiplication by compactly supported functions preserve the boundary condition.
\begin{proposition}
\label{Prop:LocLoc}
Let $B$ be a local boundary condition.
\begin{enumerate}[label=(\roman*), labelwidth=0pt, labelindent=2pt, leftmargin=21pt]
\item \label{Prop:LocLoc:1}
      If $\xi \in \Ck[c]{\infty}(\dM;E)$, then $\xi B \subset B$.
\item \label{Prop:LocLoc:2}
      If $\chi \in \Ck[c]{\infty}(M;E)$, then $\chi \dom(D_{B}) \subset \dom(D_B)$.
\end{enumerate}
\end{proposition}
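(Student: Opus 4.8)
The plan is to reduce both assertions to two facts already available: multiplication by a compactly supported smooth function is bounded on $\checkH(A)$ (Lemma~\ref{Lem:CzechBound}) and on $\dom(D_{\max})$ in the graph norm (the estimate used in the proof of Lemma~\ref{Lem:CzechBound}), together with the description $B = \close{\Ck[c]{\infty}(\dM;E')}^{\checkH(A)}$ that defines a local boundary condition.

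For \ref{Prop:LocLoc:1}, I would first observe that, since $E'$ is a smooth subbundle of $E\rest{\dM}$ and $\xi$ is compactly supported, multiplication by $\xi$ maps $\Ck[c]{\infty}(\dM;E')$ into itself. By Lemma~\ref{Lem:CzechBound} the operator $\xi\,\id$ is bounded on $\checkH(A)$, hence it carries the $\checkH(A)$-closure of $\Ck[c]{\infty}(\dM;E')$ into the $\checkH(A)$-closure of $\xi\,\Ck[c]{\infty}(\dM;E')$, which is contained in the $\checkH(A)$-closure of $\Ck[c]{\infty}(\dM;E')$, namely $B$. Thus $\xi B \subset B$.

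For \ref{Prop:LocLoc:2}, I would take $u \in \dom(D_B)$, so that $u \in \dom(D_{\max}) \intersect \SobH[loc]{1}(M;E)$ with $u\rest{\dM} \in B$, and verify the three conditions defining $\dom(D_B)$ for $\chi u$. That $\chi u \in \dom(D_{\max})$, with $D_{\max}(\chi u) = \chi\, D_{\max} u + \sym_D(d\chi)u$, is the standard product rule for the maximal domain (the same argument already used in the proof of Lemma~\ref{Lem:CzechBound}), valid because $\chi$ is smooth with compact support so that $\sym_D(d\chi)$ is a compactly supported bounded bundle homomorphism; this also gives $\|\chi u\|_{\dom(D_{\max})} \lesssim \|u\|_{\dom(D_{\max})}$. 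That $\chi u \in \SobH[loc]{1}(M;E)$ is immediate, since multiplication by a smooth function preserves $\SobH[loc]{1}$. Finally, $(\chi u)\rest{\dM} = (\chi\rest{\dM})\,(u\rest{\dM})$: this is evident for $u \in \Ck[c]{\infty}(M;E)$, and it propagates to all of $\dom(D_{\max})$ by approximating $u$ in the graph norm by compactly supported smooth sections and using the continuity of the trace map $\dom(D_{\max}) \to \checkH(A)$ (Theorem~\ref{Thm:MaxDom}~\ref{Thm:MaxDom:1}), of $\chi\,\id$ on $\dom(D_{\max})$, and of $(\chi\rest{\dM})\,\id$ on $\checkH(A)$ (Lemma~\ref{Lem:CzechBound}). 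Since $u\rest{\dM} \in B$ and $\chi\rest{\dM} \in \Ck[c]{\infty}(\dM)$, part \ref{Prop:LocLoc:1} yields $(\chi u)\rest{\dM} \in B$, and therefore $\chi u \in \dom(D_B)$.

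I do not expect a genuine obstacle here; the only point requiring care is the commutation identity $(\chi u)\rest{\dM} = (\chi\rest{\dM})(u\rest{\dM})$ for general $u \in \dom(D_{\max})$. Because elements of $\dom(D_{\max})$ are a priori traced only into $\checkH(A) \subset \SobH[loc]{-\frac12}(\dM;E)$, this identity cannot be read off pointwise and must be obtained by the density argument indicated above; all the continuity inputs it requires are already established.
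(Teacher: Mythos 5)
Your proposal is correct and follows essentially the same route as the paper: part \ref{Prop:LocLoc:1} via approximation by $\Ck[c]{\infty}(\dM;E')$ and the boundedness of $\xi\,\id$ on $\checkH(A)$ from Lemma~\ref{Lem:CzechBound}, and part \ref{Prop:LocLoc:2} by reducing to \ref{Prop:LocLoc:1} with $\xi = \chi\rest{\dM}$. Your density argument for the identity $(\chi u)\rest{\dM} = (\chi\rest{\dM})(u\rest{\dM})$ on all of $\dom(D_{\max})$ is in fact more careful than the paper, which simply asserts this step.
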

\begin{proof}
Let $u \in B$.
By construction, there exists $u_n \in \Ck[c]{\infty}(\dM;E')$ such that $u_n \to u$ in $\checkH(A)$ and $u_n(x) \in E'_x$ for all $x \in \dM$.
Now,
$$ \norm{\xi u_n - \xi u}_{\checkH(A)}  = \norm{ \xi (u_n - u)}_{\checkH(A)} \lesssim \norm{u_n - u}_{\checkH(A)},$$
where the inequality follows from Lemma~\ref{Lem:CzechBound}.
Clearly, $\xi u_n \in \Ck[c]{\infty}(\dM;E')$ and $(\xi u_n)(x) \in E'_x$ for all $x \in \dM$.
Therefore, $\xi u \in B$.
This proves \ref{Prop:LocLoc:1}.

To prove \ref{Prop:LocLoc:2}, let $u \in \dom(D_B)$.
That is, $u \in \dom(D_{\max})$ and $u \rest{\dM} \in B$.
Clearly, $\chi u \in \dom(D_{\max})$ and from what we have proved, setting $\xi = \chi\rest{\dM}$, we have that $(\chi u)\rest{\dM} = \xi u\rest{\dM} \in B$.
\end{proof}

As in the compact boundary case, we obtain the following which provides elliptically regular boundary conditions.

\begin{theorem}
\label{Thm:LocEllReg}
Suppose that $E\rest{\dM} = E_- \oplus E_+$ is a fibrewise orthogonal splitting.
Moreover, assume that $\sigma_A(x,\xi)$ interchanges $(E_-)_x$ and $(E_+)_x$ for each $0 \neq \xi \in T^\ast_x \dM$.
Then, the local boundary conditions $B_{E_-}$ and $B_{E_+}$ are elliptically regular local boundary conditions.
\end{theorem}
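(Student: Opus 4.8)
The plan is to reduce both assertions ($B_{E_-}$ and $B_{E_+}$ are elliptically regular) to a single a priori estimate on compactly supported smooth sections, proven via the model operator, and then to transfer it to the closed subspaces and their adjoints by a localisation argument. Write $P_-,P_+$ for the fibrewise $h^E$-orthogonal projections onto $E_-,E_+$. The first thing to record is a structural consequence of the hypothesis: since $\sym_A(x,\xi)$ interchanges $(E_-)_x$ and $(E_+)_x$ for $\xi\neq0$, is invertible there (ellipticity of $A$), and $E_-^{\perp_{h^E}}=E_+$, we get $\sym_A(x,\xi)(E_\pm)_x\subset(E_\pm)_x^{\perp_{h^E}}$ for all $\xi$. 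Hence, for either choice $E'\in\{E_-,E_+\}$, the compressed operator $P_{E'}AP_{E'}$ is a differential operator of order $\le1$ whose principal symbol $P_{E'}\sym_A(\xi)P_{E'}$ vanishes; it is therefore a (smooth, a priori unbounded over the noncompact $\dM$) bundle endomorphism, so that for every precompact $V\subset\dM$ there is $C_V<\infty$ with $\lmodulus{\inprod{Aw,w}_{\Lp{2}(\dM)}}=\lmodulus{\inprod{P_{E'}AP_{E'}w,w}_{\Lp{2}(\dM)}}\le C_V\norm{w}_{\Lp{2}(\dM)}^2$ whenever $w\in\Ck[c]{\infty}(\dM;E')$ has $\spt w\subset V$.

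The key a priori estimate to establish is: for every subbundle $E'$ with $\sym_A E'\subset (E')^{\perp_{h^E}}$ and all precompact $V,\Omega\subset\dM$, there is $C$ with $\norm{w}_{\SobH{\frac12}(\Omega)}\le C\norm{w}_{\checkH(A)}$ and $\norm{w}_{\SobH{\frac12}(\Omega)}\le C\norm{w}_{\hatH(A)}$ for $w\in\Ck[c]{\infty}(\dM;E')$ with $\spt w\subset V$. I would fix $\rho\in(0,T)$ and set $u:=\ext_\rho w$; by Proposition~\ref{Prop:ModelBnd}, $u\in\dom(D_{0,\max})$ with $\norm{u}_{D_0}\lesssim\norm{w}_{\checkH(A)}$, while $u(t)\in\Dk{\infty}(A)$, $u$ is $t$-compactly supported in $[0,\tfrac34\rho]$, and $u(0)=w$. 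The Rellich-type identity of Lemma~\ref{Lem:Ell} (valid here since $u$ is $t$-compactly supported and takes values in $\dom(A)$) then reads $\norm{\partial_t u}^2_{\Lp{2}(Z_{[0,T)})}+\norm{Au}^2_{\Lp{2}(Z_{[0,T)})}=\norm{(\sym_0^{-1}D_0)u}^2_{\Lp{2}(Z_{[0,T)})}+\inprod{Aw,w}_{\Lp{2}(\dM)}$, and the point is that the boundary term $\inprod{Aw,w}$ is now controlled by $C_V\norm{w}^2_{\Lp{2}(\dM)}=C_V\norm{u(0)}^2_{\Lp{2}(\dM)}$; combining with $\norm{u(0)}^2_{\Lp{2}(\dM)}=-2\,\Re\int_0^\infty\inprod{\partial_tu,u}_{\Lp{2}(\dM)}\,dt\le\delta\norm{\partial_tu}^2_{\Lp{2}(Z_{[0,T)})}+\delta^{-1}\norm{u}^2_{\Lp{2}(Z_{[0,T)})}$ and absorbing for small $\delta$ yields $\norm{\partial_tu}_{\Lp{2}(Z_{[0,T)})}+\norm{Au}_{\Lp{2}(Z_{[0,T)})}+\norm{u}_{\Lp{2}(Z_{[0,T)})}\lesssim_V\norm{u}_{D_0}\lesssim\norm{w}_{\checkH(A)}$. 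The Lions--Magenes trace theorem for $\mathrm{H}^1\big((0,\rho);\Lp{2}(\dM;E)\big)\cap\Lp{2}\big((0,\rho);\dom(A)\big)$, whose trace space at $t=0$ is $[\Lp{2}(\dM;E),\dom(A)]_{\frac12}=\dom(\modulus{A}^{\frac12})$, then gives $w=u(0)\in\dom(\modulus{A}^{\frac12})$ with $\norm{w}_{\dom(\modulus{A}^{\frac12})}\lesssim_V\norm{w}_{\checkH(A)}$; since $\dom(\modulus{A}^{\frac12})\embeds\SobH[loc]{\frac12}(\dM;E)$ by elliptic regularity (as in Theorem~\ref{Thm:Reg2}) we obtain the first inequality. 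The $\hatH(A)$-version is identical, using instead $\ext_\rho\colon\hatH(A)\to\dom\big((\sym_0^{-1}D_0)^\dagger_{\max}\big)$ from Proposition~\ref{Prop:ModelBnd} and the analogue of Lemma~\ref{Lem:Ell} for $(\sym_0^{-1}D_0)^\dagger=-(\partial_t-A)$, whose boundary term is $+\inprod{Aw,w}_{\Lp{2}(\dM)}$ --- again killed by the structural fact.

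With this in hand I would conclude as follows. Since $E_-$ satisfies $\sym_AE_-\subset E_-^{\perp}$, for $v\in B_{E_-}$, written $v=\lim v_n$ in $\checkH(A)$ with $v_n\in\Ck[c]{\infty}(\dM;E_-)$, and for fixed precompact $\Omega$ and $\chi\in\Ck[c]{\infty}(\dM)$ with $\chi\equiv1$ on $\Omega$, the $\chi v_n\in\Ck[c]{\infty}(\dM;E_-)$ have support in a fixed precompact set and converge to $\chi v$ in $\checkH(A)$ by Lemma~\ref{Lem:CzechBound}; the key estimate makes $\{\chi v_n\}$ Cauchy in $\SobH{\frac12}(\Omega)$, and since $\checkH(A)$ embeds continuously into $\SobH[loc]{-\frac12}(\dM;E)$ its $\SobH{\frac12}(\Omega)$-limit must agree with $v$ on $\Omega$. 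Hence $B_{E_-}\subset\SobH[loc]{\frac12}(\dM;E)$, and likewise $B_{E_+}\subset\SobH[loc]{\frac12}(\dM;E)$. For the adjoints, note that on $\Ck[c]{\infty}(\dM;E_-)\subset\Lp{2}(\dM;E)$ the pairing $\inprod{\cdot,\cdot}_{\checkH(A)\times\hatH(A)}$ restricts to the $\Lp{2}$-pairing (Lemma~\ref{Lem:Duality}), so $B_{E_-}^{\perp,\hatH(A)}=\set{v\in\hatH(A):v\text{ is }E_+\text{-valued}}$ and symmetrically $B_{E_+}^{\perp,\hatH(A)}=\set{v\in\hatH(A):v\text{ is }E_-\text{-valued}}$; given such a $v$ and precompact $\Omega$, localise by $\chi$, approximate $\chi v$ in $\hatH(A)$ by $w_n\in\Ck[c]{\infty}(\dM;E)$ supported in a fixed precompact set (density, Theorem~\ref{Thm:MaxDom}\ref{Thm:MaxDom:1.7}), replace $w_n$ by $P_\pm w_n\in\Ck[c]{\infty}(\dM;E_\pm)$ --- which still converges in $\hatH(A)$ because $P_\pm$ acts boundedly on sections of $\hatH(A)$ supported in a fixed precompact set --- and apply the $\hatH(A)$-version of the key estimate (applicable since $\sym_A E_\pm\subset E_\pm^{\perp}$) to get $v\rest\Omega\in\SobH{\frac12}(\Omega)$. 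Thus $B_{E_\pm}^{\perp,\hatH(A)}\subset\SobH[loc]{\frac12}(\dM;E)$, so by Remark~\ref{Rem:EllReg} the adjoint conditions $B_{E_\pm}^{\ast}$ are elliptically semi-regular; by Definition~\ref{Def:EllReg} this makes $B_{E_-}$ and $B_{E_+}$ elliptically regular.

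The main obstacle is the structural observation of the first paragraph, turning the interchange hypothesis into the vanishing of the principal symbol of $P_{E'}AP_{E'}$: this is exactly and only what is needed to bound the boundary term $\inprod{Aw,w}_{\Lp{2}(\dM)}$ in the Rellich identity, and it is the point where the Shapiro--Lopatinski-type condition enters. The remaining difficulty, and what distinguishes the noncompact case, is purely bookkeeping: bundle endomorphisms such as $P_\pm$ need neither be bounded nor have bounded derivatives over noncompact $\dM$, so every estimate must be localised to precompact pieces, and the conclusions for the closed subspaces $B_{E_\pm}$ and for $B_{E_\pm}^{\perp,\hatH(A)}$ must be extracted from the a priori estimate on compactly supported smooth sections by way of the cut-off boundedness of Lemma~\ref{Lem:CzechBound} together with density in $\checkH(A)$ and $\hatH(A)$.
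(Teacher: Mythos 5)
Your argument for the semi-regularity of $B_{E_\pm}$ themselves is sound and takes a genuinely different route from the paper: where the paper localises to the doubled compact manifold $N_x$ of Corollary~\ref{Cor:EmbMf} and invokes the compact-boundary theory (Theorem~2.15 of \cite{BBan}, Corollary~7.23 of \cite{BB12}), you run the Rellich identity of Lemma~\ref{Lem:Ell} directly on $\ext_\rho w$, kill the boundary term $\inprod{Aw,w}$ using the observation that $P_{E'}\sym_A(\xi)P_{E'}=0$ forces $P_{E'}AP_{E'}$ to be a zeroth-order operator (hence bounded on precompact sets), absorb, and read off the trace in $\dom(\modulus{A}^{\frac12})$ via Lions--Magenes. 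Modulo the routine extension of Lemma~\ref{Lem:Ell} from $\Ck[c]{\infty}(Z_{[0,T)};E)$ to $t$-compactly supported sections valued in $\Dk{\infty}(A)$ (which you flag and which is harmless), this half is correct and arguably more self-contained than the paper's reduction.

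The adjoint half, however, has a genuine gap. To feed $v\in B_{E_-}^{\perp,\hatH(A)}$ into your a priori estimate you must approximate $\chi v$ in $\hatH(A)$ by \emph{$E_+$-valued} smooth compactly supported sections, and you produce these by taking arbitrary approximants $w_n\in\Ck[c]{\infty}(\dM;E)$ and applying $P_+$, asserting that ``$P_\pm$ acts boundedly on sections of $\hatH(A)$ supported in a fixed precompact set.'' This is exactly what is not available in the noncompact setting. The $\hatH(A)$-norm is built from the spectral projections $\chi^{\pm}(A)$ and the spaces $\dom(\modulus{A}^{\frac12})$, $\dom(\modulus{A}^{\frac12})^\ast$; a bundle endomorphism such as $P_+$ need not commute with $\chi^{\pm}(A)$ even approximately, nor preserve $\dom(\modulus{A}^{\frac12})$, and compact support of the section does not help because $\chi^{\pm}(A)$ are nonlocal. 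The paper explicitly flags this obstruction in the remark following Lemma~\ref{Lem:CzechBound} (even for scalar cutoffs, boundedness on $\checkH(A)$ is obtained only indirectly through the extension operator, not through commutator bounds), and boundedness of a bundle projector on $\dom(\modulus{A}^{\frac12})$ is precisely what Proposition~\ref{Prop:ChiralReg} has to work to establish under the \emph{additional} hypothesis that the involution anticommutes with $A$ --- a hypothesis absent here. Without it, your identification $B_{E_-}^{\perp,\hatH(A)}=\close{\Ck[c]{\infty}(\dM;E_+)}^{\hatH(A)}$ (which is what the approximation step amounts to) is unproven: the inclusion ``$\supset$'' is clear, but ``$\subset$'' is a nontrivial density statement equivalent to the one you are trying to bypass. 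To close the argument you would need either an independent proof of that density, or to fall back on the paper's strategy of transplanting the problem to the compact double, where $\chi^{\pm}(A^N)$ and $P^N$ are order-zero pseudodifferential operators and the required boundedness is automatic.
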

\begin{proof}
Let $B := B_{E_-}$.
We first show that $B \subset \SobH[loc]{\frac12}(\dM;E)$.
For that, fix $x\in M$ and let $V_x$ and $U_x$ be the neighbourhoods guaranteed by Lemma~\ref{Lem:ExtOp} as well as $\delta > 0$.
Furthermore, let  $N_x$ be the manifold given by Corollary~\ref{Cor:EmbMf}.
Let $\chi \in \Ck[c]{\infty}(M)$ such that $\spt \chi \subset [0, \delta) \times U_x$ with $\chi = 1$ on $V_x$.

By the fact that $B$ is a local boundary condition, using Proposition~\ref{Prop:LocLoc}, we obtain that $\chi u \in \dom(D_B)$ when $u \in \dom(D_B)$.
Note that if $A^N$ is an adapted operator for $\tilde{D}$ from Corollary~\ref{Cor:EmbMf}, then $\sym_0(x, A^N) = \sym_0(x,A)$ for $x \in \partial N$.
Letting $P^N$ be the projector along this splitting in $\partial N$, we have that on the  induced bundle $\tilde{E}\rest{\dM}$ on $N_x$ that $\sym_0(x,A^N)$ interchanges $\tilde{E}'$ and $\tilde{E}''$.
Therefore, using Theorem~2.15 in \cite{BBan} and reasoning as in Corollary~7.23 in \cite{BB12}, we obtain that $\chi u \in \SobH{\frac12}(N_x;\tilde{E})$.
That is, $\chi u\rest{\partial N} \in \SobH{\frac12}(\partial N;\tilde{E})$.
Since $x$ was arbitrary, this shows that $u \in \SobH[loc]{\frac12}(\dM;E)$.

Proposition~\ref{Prop:LocAdj} yields that  $B^\ast$ is also a local boundary condition and applying this argument to $B^\ast$ in  place of $B$ yields that $B^\ast \subset \SobH[loc]{\frac12}(\partial N;F)$.

The argument for $\close{(I -P)\dom(\modulus{A}^{\frac12})}^{\checkH(A)}$  proceeds exactly on replacing $P$ by $(I-P)$.
\end{proof}

In the compact case, local boundary conditions can be seen as a particular case of pseudo-local boundary conditions.
This is due to the fact that a projection defining local boundary conditions are always pseudo-differential operators of order zero and hence bounded on Sobolev scales.
In this present context, that may no longer be the case.
However, the following proposition says that when the bundle projection defines a projection boundary condition, then it agrees with the local boundary condition.

\begin{proposition}
\label{Prop:LocalProjBC}
Let $E\rest{\dM} = E_- \oplus E_+$ be a bundle splitting and let $P_{\pm}$ be the associated bundle projection to $E_{\pm}$ along $E_{\mp}$.
Suppose that $P_\pm $ defines a projection boundary condition.
Then, the projection boundary condition coincides with the local boundary condition with respect to $E_\pm $.
\end{proposition}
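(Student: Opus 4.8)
The plan is to prove the two inclusions separately. Write $P = P_\pm$, let $B_P := \close{P\dom(\modulus{A}^{\frac12})}^{\checkH(A)}$ denote the projection boundary condition of Definition~\ref{Def:ProjBC}, and let $B_{E_\pm} = \close{\Ck[c]{\infty}(\dM;E_\pm)}^{\checkH(A)}$ denote the local boundary condition. Throughout I would use that $P$ is a smooth bundle endomorphism of $E\rest{\dM}$ with image $E_\pm$, that it restricts to a bounded projection on $\dom(\modulus{A}^{\frac12})$ by hypothesis, and that the inclusion $\dom(\modulus{A}^{\frac12}) \embeds \checkH(A)$ is continuous (immediate from the definition of $\checkH(A)$ and the boundedness of $\chi^\pm(A)$).

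For $B_{E_\pm} \subseteq B_P$, since the $\checkH(A)$-closure is monotone it suffices to prove $\Ck[c]{\infty}(\dM;E_\pm) \subseteq P\dom(\modulus{A}^{\frac12})$. If $w \in \Ck[c]{\infty}(\dM;E_\pm)$ then $w$ has compact support, so $w \in \dom(\modulus{A})\subseteq\dom(\modulus{A}^{\frac12})$; and since $w$ is a section of $E_\pm$ we have $Pw = w$, whence $w = Pw \in P\dom(\modulus{A}^{\frac12})$.

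For the reverse inclusion it suffices to show $P\dom(\modulus{A}^{\frac12}) \subseteq B_{E_\pm}$ and then pass to closures. I would first record that $\Ck[c]{\infty}(\dM;E)$ is dense in $\dom(\modulus{A}^{\frac12})$: indeed $A$ is essentially selfadjoint on $\Ck[c]{\infty}(\dM;E)$ by \ref{Hyp:ExtFirst}, so this space is a core for $A$ and hence dense in $\dom(\modulus{A})$, and since $\dom(\modulus{A}^{\frac12}) = [\Lp{2}(\dM;E),\dom(\modulus{A})]_{\frac12}$, a subspace of $\dom(\modulus{A})$ dense therein is also dense in this interpolation space. Now take $u = Pv$ with $v \in \dom(\modulus{A}^{\frac12})$ and choose $v_n \in \Ck[c]{\infty}(\dM;E)$ with $v_n \to v$ in $\dom(\modulus{A}^{\frac12})$. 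Boundedness of $P$ on $\dom(\modulus{A}^{\frac12})$ gives $Pv_n \to Pv = u$ in $\dom(\modulus{A}^{\frac12})$, hence in $\checkH(A)$; and each $Pv_n$ lies in $\Ck[c]{\infty}(\dM;E_\pm)$ since $P$ is a smooth bundle map into $E_\pm$. Therefore $u \in \close{\Ck[c]{\infty}(\dM;E_\pm)}^{\checkH(A)} = B_{E_\pm}$. Combining the two inclusions yields $B_P = B_{E_\pm}$.

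The argument is essentially bookkeeping with the two definitions; the one step needing a moment's thought is the compact-support density of $\Ck[c]{\infty}(\dM;E)$ in $\dom(\modulus{A}^{\frac12})$. I expect this to be the main (minor) obstacle, and I would handle it as above via the core property of $\Ck[c]{\infty}(\dM;E)$ for the selfadjoint operator $A$ together with interpolation, rather than via a cutoff at infinity as in Proposition~\ref{Prop:AEllRegApprox}, which would require additional completeness assumptions not present in the geometric setup.
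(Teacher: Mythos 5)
Your proof is correct and follows essentially the same route as the paper: both inclusions reduce to the density of $\Ck[c]{\infty}(\dM;E)$ in $\dom(\modulus{A}^{\frac12})$ together with the assumed boundedness of $P_\pm$ on that space and the fact that $P_\pm$ maps compactly supported smooth sections to compactly supported smooth sections of $E_\pm$. The only cosmetic difference is that the paper routes the argument through the auxiliary space $\close{\set{u\in\dom(\modulus{A}^{\frac12}):u(x)\in(E_\pm)_x}}^{\checkH(A)}$, and your explicit justification of the density claim via the core property and interpolation is a welcome detail the paper leaves implicit.
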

\begin{proof}
Let
$$
\tilde{B}_{E_\pm } = \close{\set{u \in \dom(\modulus{A}^{\frac12}: u(x) \in (E_\pm )_x\ \text{almost-everywhere}}}^{\checkH(A)}.$$
Since $P_\pm \dom(\modulus{A}^{\frac12}) \subset \dom(\modulus{A}^{\frac12})$ as it defines a projection boundary condition, it is readily verified that
\[
\set{u \in \dom(\modulus{A}^{\frac12}): u(x) \in (E_\pm )_x} = P_\pm  \dom(\modulus{A}^{\frac12}).
\]
Therefore $\tilde{B}_{E_\pm } = \close{P_\pm \dom(\modulus{A}^{\frac12})}^{\checkH(A)}$.

It remains to prove that $B_{E_\pm } = \tilde{B}_{E_\pm }$.
Since $\Ck[c]{\infty}(\dM;E_\pm ) \subset \tilde{B}_{E_\pm }$, it is immediate  that $B_{E_\pm } \subset \tilde{B}_{E_\pm }$.

We prove the reverse containment.
Given $u \in \dom(\modulus{A}^{\frac12})$ with $u(x) \in (E_\pm )_x$, by the density of $\Ck[c]{\infty}(\dM;E)$ in $\dom(\modulus{A}^{\frac12})$, we have $u_n \to u$ in $\dom(\modulus{A}^{\frac12})$.
Since $P_\pm $ is a smooth bundle projection, $P_\pm u_n \in \Ck[c]{\infty}(\dM;E_\pm )$, but also $P_\pm u_n \in \dom(\modulus{A}^{\frac12})$.
Therefore,
$$\norm{P_\pm u_n - u}_{\dom(\modulus{A}^{\frac12})}
= \norm{P_\pm u_n - P_\pm u}_{\dom(\modulus{A}^{\frac12})}
= \norm{P_\pm (u_n - u)}_{\dom(\modulus{A}^{\frac12})}
\lesssim \norm{u_n - u} \to 0$$
as $n \to \infty$.
Therefore, $\Ck[c]{\infty}(\dM;E_\pm ) \subset \set{u \in \dom(\modulus{A}^{\frac12}): u(x) \in {E_\pm }_x}$ is dense in $\dom(\modulus{A}^{\frac12})$ and $\dom(\modulus{A}^{\frac12})$ embeds densely into $\checkH(A)$.
This yields that $\tilde{B}_{E_\pm } \subset B_{E_\pm }$.
\end{proof}

A particular class of local boundary conditions which are of interest are \emph{chiral} boundary conditions.

\begin{definition}
A \emph{chirality operator} for $A$ is a bundle homomorphism $\Xi \in \Ck{\infty}(\End(E\rest{\dM}))$ which satisfies $\Xi^2 = I$ and anticommutes with $A$, i.e.\ $\Xi A = - A \Xi$.
\end{definition}

Since $\Xi$ anticommutes with $A$, it also anticommutes with the principal symbol $\sigma_A(x,\xi)$ for every $\xi\in T_x^*\dM$ and $x\in \dM$.
Therefore, the eigenspaces $E_{\pm,x}$ of $\Xi_x$ for the eigenvalues $\pm 1$ have the same dimension for all $x\in\dM$ and we obtain a bundle splitting $E|_{\dM}=E_+\oplus E_-$.
If $\Xi$ is selfadjoint, this splitting is orthogonal.

\begin{proposition}[Chiral boundary conditions]
\label{Prop:ChiralReg}
Let $\Xi$ be a  chirality operator for $A$ and let $E\rest{\dM} = E_+ \oplus E_-$ be the corresponding splitting into eigenbundles of $\Xi$.
Then, the local boundary conditions $B_{E_{\pm}}$ are $A$-elliptically regular.
\end{proposition}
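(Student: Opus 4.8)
The plan is to realise $B_{E_\pm}$ as a projection boundary condition and then to exploit that a chirality operator, precisely because it flips the sign of $A$ while commuting with $\modulus{A}$, intertwines the structures of $\checkH(A)$ and $\hatH(A)$. First I would set $P_\pm := \tfrac12(I\pm\Xi)$, the fibrewise projections onto the eigenbundles $E_\pm$ of $\Xi$ (genuine smooth subbundles of constant rank since $\Xi$ anticommutes with the invertible symbols $\sym_A(x,\xi)$). From $\Xi A = -A\Xi$ one gets $\Xi A^2 = A^2\Xi$, so $\Xi$ commutes with every Borel function of $A^2$, in particular with $\modulus{A}$ and with $\modulus{A}_\epsilon^{\pm\frac12}$; since $\Xi$ is bounded on $\Lp{2}(\dM;E)$ as a chirality operator (automatic when $\Xi$ is selfadjoint, where $\|\Xi_x\|=1$), it follows that $\Xi$, and likewise $\Xi^\ast$ (which is again a chirality operator for $A$ by selfadjointness of $A$), restrict to bounded operators on $\dom(\modulus{A}^{\frac12})$ and on $\dom(\modulus{A}^{\frac12})^\ast$. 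Hence $P_\pm$ and $P_\pm^\ast=\tfrac12(I\pm\Xi^\ast)$ define projection boundary conditions in the sense of Definition~\ref{Def:ProjBC}, and Proposition~\ref{Prop:LocalProjBC} identifies $B_{E_\pm}$ with the projection boundary condition attached to $P_\pm$. Proposition~\ref{Prop:LocAdj} then yields
\[
	B_{E_\pm} = P_\pm\dom(\modulus{A}^{\frac12})^\ast \cap \checkH(A),
	\qquad
	\sym_0^\ast B_{E_\pm}^\ast = P_\mp^\ast\dom(\modulus{A}^{\frac12})^\ast \cap \hatH(A).
\]

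Next I would record the two structural facts that make the argument run. The first is that $\Xi$ carries the $\lambda$-eigenspace of $A$ to the $(-\lambda)$-eigenspace, i.e.\ $\Xi\chi_S(A)=\chi_{-S}(A)\Xi$ for Borel $S\subset\R$; together with the boundedness above this gives bounded maps $\Xi\colon\checkH(A)\to\hatH(A)$ and $\Xi^\ast\colon\hatH(A)\to\checkH(A)$. The second is the identity $\checkH(A)\cap\hatH(A)=\dom(\modulus{A}^{\frac12})$: the inclusion ``$\supset$'' is immediate from the definitions, and for ``$\subset$'' one splits $u=\chi_{(-\infty,0)}(A)u+\chi_{\{0\}}(A)u+\chi_{(0,\infty)}(A)u$, noting that the first summand lies in $\dom(\modulus{A}^{\frac12})$ because $u\in\checkH(A)$, the third because $u\in\hatH(A)=\checkH(-A)$, and the middle because $\chi_{\{0\}}(A)$ maps $\dom(\modulus{A}^{\frac12})^\ast$ into $\Ck{\infty}(\dM;E)\subset\dom(\modulus{A}^{\frac12})$ by Proposition~\ref{Prop:IntSmooth}.

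With these in hand the conclusion is short bookkeeping. If $u\in B_{E_\pm}$, then $u$ lies in the range of the idempotent $P_\pm$ on $\dom(\modulus{A}^{\frac12})^\ast$, so $P_\pm u=u$, i.e.\ $\Xi u=\pm u$; hence $u=\pm\Xi u\in\hatH(A)$, and combined with $u\in\checkH(A)$ this gives $u\in\dom(\modulus{A}^{\frac12})$, so $B_{E_\pm}$ is $A$-elliptically semi-regular. The same reasoning applied to $v\in\sym_0^\ast B_{E_\pm}^\ast=P_\mp^\ast\dom(\modulus{A}^{\frac12})^\ast\cap\hatH(A)$ gives $\Xi^\ast v=\mp v$, hence $v=\mp\Xi^\ast v\in\checkH(A)$ and so $v\in\dom(\modulus{A}^{\frac12})$; therefore $B_{E_\pm}$ is $A$-elliptically regular (and, by Corollary~\ref{Cor:AEllReg}, elliptically regular).

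I expect the only genuine subtleties to be the $\ker A$ bookkeeping — the $\chi_{\{0\}}(A)$ term in $\checkH(A)\cap\hatH(A)=\dom(\modulus{A}^{\frac12})$, together with the fact that the splitting of $\hatH(A)$ places the half-open intervals opposite to those of $\checkH(A)$ — and the standing use that a chirality operator is $\Lp{2}$-bounded (harmless when $\Xi$ is selfadjoint, which is the case in the intended applications). Everything else is formal manipulation with the spectral projections of $A$ and the already-established duality between $\checkH(A)$ and $\hatH(A)$; in particular, the anticommutation $\Xi\sym_A(x,\xi)=-\sym_A(x,\xi)\Xi$ that would invoke Theorem~\ref{Thm:LocEllReg} only delivers ordinary elliptic regularity, whereas the argument above is what upgrades it to $A$-ellipticity.
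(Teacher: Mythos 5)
Your argument is correct and follows essentially the same route as the paper: identify $B_{E_\pm}$ as the projection boundary condition of $P_\pm=\tfrac12(I\pm\Xi)$ via Propositions~\ref{Prop:LocalProjBC} and \ref{Prop:LocAdj}, and then exploit that $\Xi$ swaps the positive and negative spectral subspaces of $A$ while acting boundedly on $\dom(\modulus{A}^{\frac12})$ and its dual. Your concluding step --- reading $u=\pm\Xi u$ as membership in $\hatH(A)$ and invoking $\checkH(A)\cap\hatH(A)=\dom(\modulus{A}^{\frac12})$, with the $\ker A$ component handled by Proposition~\ref{Prop:IntSmooth} --- is a tidier packaging of the paper's componentwise computation $\chi_{[0,\infty)}(A)u=\Xi\bigl(\pm\chi_{(-\infty,0]}(A)u\bigr)\in\dom(\modulus{A}^{\frac12})$, but it rests on the same mechanism.
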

\begin{proof}
Since $\Xi:\Lp{2}(\dM;E) \to \Lp{2}(\dM;E)$ and $\Xi A = - A \Xi$, by Corollary~\ref{Cor:IdemProjs}, the fibrewise projectors $P_{\pm}$ and $P_{\pm}^\ast$ define projection boundary conditions.
Using Proposition~\ref{Prop:LocalProjBC} and Proposition~\ref{Prop:LocAdj}, we obtain
\begin{equation}
\label{Eq:LocProjDesc}
B_{E_{\pm}} 
= 
P_{\pm} \dom(\modulus{A}^{\frac12})^\ast \cap \checkH(A)\ 
\text{ and }\ 
B_{E_{\pm}}^{\perp} 
= 
P_{\mp}^\ast  \dom(\modulus{A}^{\frac12})^\ast \cap \hatH(A),
\end{equation}
where we used $(I - P_{\pm}^\ast) = P_{\mp}^\ast$ in the second equality.

First, we prove that $B_{E_{\pm}} \subset \dom(\modulus{A}^\frac12)$.
We automatically have that $\chi_{(-\infty,0]}(A)u \in \dom(\modulus{A}^{\frac12}) \in \dom(\modulus{A}^\frac12)$ whenever $u \in B_{E_{\pm}}$.
For such $u \in B_{E_{\pm}}$, from Eq.~\eqref{Eq:LocProjDesc}, we have that $u = P_{\pm} u$.
Then,
\begin{align*}
\chi_{(-\infty,0]}(A)u
 & = \chi_{(-\infty,0]}(A)P_{\pm} u           \\
 & = \tfrac12  \chi_{(-\infty,0]}(A)(I + \Xi)u \\
 & = \tfrac12 \cbrac{ \chi_{(-\infty,0]}(A)u \pm \chi_{(-\infty,0]}(A) \Xi u}.
\end{align*}
Since $\Xi A = - A \Xi$, we have that
$$\chi_{(-\infty,0]}(A) \Xi = \Xi \chi_{(-\infty,0]}(-A) = \pm \Xi \chi_{[0,\infty)}(A).$$
Therefore,
$$
2\chi_{(-\infty,0]}(A)u  = \chi_{(-\infty,0]}(A)u \pm \Xi \chi_{[0,\infty)}(A)u \iff \Xi \chi_{[0,\infty)}(A)u = \pm \chi_{(-\infty,0]}(A)u.$$
This allows us to conclude that $\Xi \chi_{[0,\infty)}(A)u  \in \dom(\modulus{A}^{\frac12})$.
Now we invoke Proposition~\ref{Prop:IdemProjs} with $\alpha = \frac12$ to obtain that $\Xi: \dom(\modulus{A}^{\frac12}) \to \dom(\modulus{A}^{\frac12})$.
Therefore,
$$ \dom(\modulus{A}^{\frac12}) \ni \Xi ( \Xi \chi_{[0,\infty)}(A)u) = \chi_{[0,\infty)}(A)u,$$
and hence $B_{E_{\pm}} \subset \dom(\modulus{A}^{\frac12})$.

It remains to show that $B_{E_{\pm}}^{\perp, \hatH(A)} \subset \dom(\modulus{A}^{\frac12})$.
In this case, choosing an equivalent norm for $\hatH(A)$ appropriately, we obtain that $\chi_{[0,\infty)}(A)v \in \dom(\modulus{A}^{\frac12})$
for $v \in B_{E_{\pm}}^{\perp,\hatH(A)}$.
Using Eq.~\eqref{Eq:LocProjDesc}, we have that $v = P_{\mp}^\ast v$, and using the fact that $\Xi^\ast A = - \Xi^\ast A$, we get
$\chi_{[0,\infty)(A)}\Xi^\ast = \Xi^\ast \chi_{[0,\infty)(-A)} = \Xi^\ast \chi_{(-\infty,0]}(A)$.
Mirroring the argument we have just made, we obtain that  $\Xi^\ast \chi_{(-\infty,0]}(A)u \in \dom(\modulus{A}^{\frac12})$.
Again, by Proposition~\ref{Prop:IdemProjs}, we have that $\Xi^\ast: \dom(\modulus{A}^{\frac12}) \to \dom(\modulus{A}^{\frac12})$ and therefore,
$$ \dom(\modulus{A}^{\frac12}) \ni \Xi^\ast (\Xi^\ast \chi_{(-\infty,0]}(A)u) = \chi_{(-\infty,0]}(A)u.$$

Together, this proves that $B_{E_{\pm}}$ are $A$-elliptically regular boundary conditions.
\end{proof}

\begin{remark}
Note that in Proposition~\ref{Prop:ChiralReg}, we do not ask for $\Xi$ to be selfadjoint.
Since $A$ is selfadjoint, the adjoint involution $\Xi^*$ also anticommutes with $A$.
Hence, $\Xi^*$ is a chirality operator for $A$ as well, and Proposition~\ref{Prop:ChiralReg} applies accordingly.
\end{remark}

\begin{remark}\label{rem:ChiralAdjoint}
There are two distinct assumptions on  $\sym_0$ which allow us to identify the adjoint boundary condition.
\begin{enumerate}[label=(\arabic*), labelwidth=0pt, labelindent=2pt, leftmargin=26pt]
\item \label{rem:ChiralAdjoint.1}
If $\sym_0$ commutes with $\Xi$, then the adjoint boundary condition for $B_{E_\pm}$ are given by
\begin{align*}
B_{E_\pm}^*
&=
\sym_0(P_{\mp}^\ast  \dom(\modulus{A}^{\frac12})^\ast \cap \hatH(A))
=
P_{\mp}^\ast  \sym_0(\dom(\modulus{A}^{\frac12})^\ast) \cap \checkH(A)\\
&=
P_{\mp}^\ast  \dom(\modulus{A}^{\frac12})^\ast \cap \checkH(A)
=
B_{\tilde{E}_\mp},
\end{align*}
where $\tilde{E}_\pm$ are the $\pm 1$-eigenbundles of $\Xi^*$.
\item\label{rem:ChiralAdjoint.2}
If instead we assume that $\sym_0$ anticommutes with $A$, then $\tilde{\Xi} := \sym_0\Xi^*\sym_0^{-1}$ is a chirality operator.
We find
\begin{align*}
B_{E_\pm}^*
&=
\sym_0(P_{\mp}^\ast  \dom(\modulus{A}^{\frac12})^\ast \cap \hatH(A))
=
\sym_0P_{\mp}^\ast\sym_0^{-1}  \sym_0\dom(\modulus{A}^{\frac12})^\ast \cap \checkH(A)\\
&=
\sym_0P_{\mp}^\ast\sym_0^{-1}  \dom(\modulus{A}^{\frac12})^\ast \cap \checkH(A)
=
B_{\tilde{E}_\mp}, 
\end{align*}
where now $\tilde{E}_\pm$ are the $\pm 1$-eigenbundles of $\tilde{\Xi}$.
\end{enumerate}

\end{remark}

\subsection{Matching conditions}

In the case of compact boundary, an important boundary condition for the purpose of topological editing and relative index theory is the notion of ``matching boundary conditions''.
We phrase this in our noncompact situation also.

The underlying geometric setup to phrase this condition is under the assumption that $\dM$ splits into two components that can be identified with each other, but opposite orientations.
In this case, if $A_0$ is an adapted  boundary operator  on the component $(\dM)_1$, then $-A_0$ is an adapted boundary operator on $(\dM)_2$ and $A = A_0 \oplus (-A_0)$ is an adapted boundary operator on $\dM$.
Note in this situation, $u = (u_1, u_2) \in \checkH(A) = \checkH(A_0) \times \hatH(A_0)$ and $v = (v_1, v_2) \in \hatH(A) = \hatH(A_0) \times \checkH(A_0)$. 
Therefore,
\begin{equation}
\label{Eq:H2IP}
\inprod{u,v}_{\checkH(A) \times \hatH(A)} = \inprod{u_1, v_1}_{\checkH(A_0) \times \hatH(A_0)} + \overline{\inprod{v_2, u_2}}^{\mathrm{conj}}_{\hatH(A_0) \times \checkH(A_0)}.
\end{equation}
This, along with the matching boundary condition and its regularity, is made precise in the following.

\begin{proposition}
\label{Prop:MatchingBC}
Suppose that $\dM = (\dM)_1 \sqcup (\dM)_2$ where $(\dM)_1 = (\dM)_2 =: N$ with $A = A_0 \oplus (-A_0)$, where $A_0$ is a selfadjoint adapted boundary operator on $(\dM)_1$.
Let
$$B_{\Match} := \set{(u, u): u \in \dom(\modulus{A_0}^{\frac12})}.$$
Then $B_{\Match}$ is an $A$-elliptically regular boundary condition.
\end{proposition}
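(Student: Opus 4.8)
The plan is to show $B_{\Match}$ is $A$-elliptically regular by verifying the two conditions of Definition~\ref{Def:AEllReg}: that $B_{\Match}\subset\dom(\modulus{A}^{\frac12})$, and that $\sym_0^\ast B_{\Match}^\ast\subset\dom(\modulus{A}^{\frac12})$. The first is immediate from the very definition of $B_{\Match}$ together with the identification $\dom(\modulus{A}^{\frac12})=\dom(\modulus{A_0}^{\frac12})\oplus\dom(\modulus{A_0}^{\frac12})$, which follows since $\modulus{A}=\modulus{A_0}\oplus\modulus{A_0}$ under the splitting $A=A_0\oplus(-A_0)$; thus $B_{\Match}$ is the graph of the identity on $\dom(\modulus{A_0}^{\frac12})$, hence a closed subspace of $\dom(\modulus{A}^{\frac12})$ and therefore, by elliptic regularity, contained in $\SobH[loc]{\frac12}(\dM;E)$. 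One should also check closedness of $B_{\Match}$ in $\checkH(A)$, which again follows from $\dom(\modulus{A_0}^{\frac12})$ being complete and the norm on $\checkH(A)=\checkH(A_0)\times\hatH(A_0)$ restricting to the graph equivalently to the $\dom(\modulus{A_0}^{\frac12})$-norm on the first factor.

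The substantive step is to identify $B_{\Match}^{\perp,\hatH(A)}$, since by Proposition~\ref{Prop:AdjBC} we have $\sym_0^\ast B_{\Match}^\ast = B_{\Match}^{\perp,\hatH(A)}$ and by Remark~\ref{Rem:EllReg} it then suffices to show this orthogonal complement lies in $\SobH[loc]{\frac12}(\dM;E)$; in fact we aim for the stronger $A$-elliptic statement that it lies in $\dom(\modulus{A}^{\frac12})$. Using the pairing formula~\eqref{Eq:H2IP}, an element $v=(v_1,v_2)\in\hatH(A)=\hatH(A_0)\times\checkH(A_0)$ annihilates every $(u,u)$ with $u\in\dom(\modulus{A_0}^{\frac12})$ precisely when $\inprod{u,v_1}_{\checkH(A_0)\times\hatH(A_0)} + \overline{\inprod{v_2,u}}^{\mathrm{conj}}_{\hatH(A_0)\times\checkH(A_0)}=0$ for all such $u$. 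Since $\dom(\modulus{A_0}^{\frac12})$ is dense in $\checkH(A_0)$ and the pairings $\checkH(A_0)\times\hatH(A_0)\to\C$ and $\hatH(A_0)\times\checkH(A_0)\to\C$ are perfect, this forces $v_1$ to be determined by $v_2$ via the duality identification; concretely $v_1 = -J v_2$, where $J\colon\checkH(A_0)\to\hatH(A_0)$ is the isomorphism induced by the $\Lp{2}$-pairing — but because $\hatH(A_0)=\checkH(-A_0)$ this $J$ is simply realized by the identity on smooth compactly supported sections, swapping the roles of the spectral projections $\chi^\pm(A_0)$. Hence $B_{\Match}^{\perp,\hatH(A)} = \set{(-v,v): v\in\dom(\modulus{A_0}^{\frac12})}$ (after matching the $\checkH/\hatH$ factors correctly), which again is the graph of $\pm\id$ on $\dom(\modulus{A_0}^{\frac12})$ and so lies in $\dom(\modulus{A}^{\frac12})$.

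I expect the main obstacle to be bookkeeping the two different duality pairings and the interchange of $\checkH(A_0)$ and $\hatH(A_0)$ between the two boundary components correctly — in particular getting the signs and the conjugation in~\eqref{Eq:H2IP} right, and being careful that the annihilator condition over the dense subspace $\dom(\modulus{A_0}^{\frac12})$ genuinely extends to all of $\checkH(A_0)$ (which it does, by density of $\Ck[c]{\infty}$ in these spaces from Theorem~\ref{Thm:MaxDom}~\ref{Thm:MaxDom:1.7} and Lemma~\ref{Lem:ProjBCDensity}-type arguments). Once the complement is identified as a graph of a bounded isomorphism of $\dom(\modulus{A_0}^{\frac12})$, elliptic regularity of $A_0$ gives containment in $\SobH[loc]{\frac12}$, and the $A$-elliptic regularity of $B_{\Match}$ follows from Corollary~\ref{Cor:AEllReg}'s companion direction, i.e.\ directly from Definition~\ref{Def:AEllReg}.
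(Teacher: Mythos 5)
Your proposal is correct and follows essentially the same route as the paper: the containment $B_{\Match}\subset\dom(\modulus{A}^{\frac12})$ is immediate from $\dom(\modulus{A}^{\frac12})=\dom(\modulus{A_0}^{\frac12})\oplus\dom(\modulus{A_0}^{\frac12})$, and the paper likewise computes $B_{\Match}^{\perp,\hatH(A)}$ via the pairing \eqref{Eq:H2IP}, obtaining $0=\inprod{u,v_1+v_2}$ for all $u$ in the dense subspace $\dom(\modulus{A_0}^{\frac12})$, hence $v_1=-v_2\in\hatH(A_0)\cap\checkH(A_0)=\dom(\modulus{A_0}^{\frac12})$. Your phrasing in terms of a duality isomorphism $J$ is just a repackaging of the same observation that the pairing extends the $\Lp{2}$-product, so the two arguments coincide.
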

\begin{proof}
We compute $B_{\Match}^{\perp, \hatH(A)}$.
First, we note that $B_{\Match}^{\perp, \hatH(A)} = B_{\Match}^{\perp, \dom(\modulus{A}^{\frac12})^\ast} \cap \hatH(A)$.
This follows from a computation mirroring the proof of Proposition~\ref{Prop:LocAdj}~\ref{Prop:LocAdj:Adj}, in particular using \eqref{Eq:ProjSplit}.
Therefore, let $v = (v_1, v_2) \in B_{\Match}^{\perp, \dom(\modulus{A}^{\frac12})^\ast}$.
Then, from \eqref{Eq:H2IP},
\begin{align*}
0 & = \inprod{(u,u),(v_1,v_2)}_{\checkH(A) \times \hatH(A)}                                                                                                                                                   \\
  & = \inprod{u, v_1}_{\dom(\modulus{A}^{\frac12}) \times \dom(\modulus{A}^{\frac12})^\ast} + \overline{\inprod{v_2,u}}^{\mathrm{conj}}_{\dom(\modulus{A}^{\frac12})^\ast \times \dom(\modulus{A}^{\frac12})} \\
  & = \inprod{u, v_1 + v_2}_{\dom(\modulus{A}^{\frac12}) \times \dom(\modulus{A}^{\frac12})^\ast}
\end{align*}
That is $v_1 = -v_2$.
But since $(v_1, v_2) \in \hatH(A) = \hatH(A_0) \times \checkH(A_0)$, we have that $v_1 = -v_2 \in \hatH(A_0) \cap \checkH(A_0) \subset \dom(\modulus{A}^{\frac12})$.
Therefore, $\sym_0^\ast B^{\ast} \subset \dom(\modulus{A}^{\frac12})$ and by a repetition of this argument, it is easy to see that $B = (B^{\perp, \hatH(A)})^{\perp, \checkH(A)}$.
\end{proof}

Note that the argument here to establishing the $A$-elliptic regularity differs from that of the compact boundary case, where the equivalence between elliptic regularity and an \emph{``elliptic'' graphical decomposition} of a boundary condition was utilised.
The latter approach has the advantage that this boundary condition can be continuously deformed to the boundary condition $B_{\APS}(A) = \chi^-(A)\dom(\modulus{A}^{\frac12}) = \dom(\modulus{A_0}^{\frac12})$.
It is unlikely that  such a graphical decomposition would exist in the noncompact case.
That being said, for Callias operators (see Subsection~\ref{S:Callias}), both graphical decompositions and deformations of boundary conditions are readily accessible as in the compact boundary case.

\section{Coercivity and Fredholmness}
\label{Sec:Freddy}

In the compact boundary case, the study of Fredholm boundary conditions were preceded by a notion of coercivity for the operator.
To some extent, this case can be recovered but this requires extra information on the adapted operator as we will see in Subsection~\ref{Sec:Freddy:Discrete}.
In the absence of such a condition, we need to instead consider a special class of boundary conditions, which are outlined in Subsection~\ref{Sec:Freddy:BCs}.

\subsection{Coercivity through special boundary conditions}
\label{Sec:Freddy:BCs} 

We formulate a notion of coercivity here, where since we are in the noncompact setting, we also need to take boundary condition into account.
In the remark following this definition, the relation to the compact case is outlined.

\begin{definition}[Coercive with respect to a compact set]
\label{Def:Coercive}
The operator $D$ is said to be \emph{$B$-coercive with respect to $K$}, a compact set, if:
\begin{enumerate}[label=(\roman*), labelwidth=0pt, labelindent=2pt, leftmargin=21pt]
\item \label{Def:Coercive:1}
      there exists $\eta_{K} \in \Ck[c]{\infty}(M)$ with $\eta_K = 1$ on $K$ with
      $$ \eta_K \dom(D_B) \subset \dom(D_B);\qquad $$
\item \label{Def:Coercive:2}
      there is a constant $c > 0$ such that
      $$ \norm{Du} \geq c \norm{u}$$
      for all $u \in \dom(D_B)$ with $\spt u \cap K = \emptyset$.
\end{enumerate}
\end{definition}

\begin{remark}
\begin{enumerate}[label=(\alph*), wide, labelwidth=0pt, labelindent=0pt]
\item
      If $D$ is $B$-coercive with respect to $K$, then a similar conclusion holds for all $u \in \dom(D_{\min})$ since $\dom(D_{\min}) \subset \dom(D_B)$.
\item
      Suppose that $K$ is contained in the interior of $M$, $K \subset \interior{M}$, and $D$ satisfies \ref{Def:Coercive:2}.
      Then $\eta_K$ can be chosen to also have support in $\interior{M}$, hence \ref{Def:Coercive:1} holds automatically.
\item
      If $B$ is a local boundary condition, then \ref{Def:Coercive:1} is automatic for any compact $K$ including those that intersect the boundary.
\item
      If $\dM$ is compact, then $\dM\subset K$ can be assumed without loss of generality.
      Hence, \ref{Def:Coercive:1} is again automatic.
      More generally, if $\dM$ contains a finite number of compact connected components, they can be assumed to be contained in $K$.
\end{enumerate}
\end{remark}

\begin{proof}[Proof of Theorem~\ref{Thm:Coercive}]
We prove that if $\set{u_n} \subset \dom(D_B)$ is a bounded sequence (with respect to the graph norm) such that $D_B u_n \to v \in \Lp{2}(M;E)$, then $\set{u_n}$ has a convergent subsequence.
By Proposition~A.3 in \cite{BB12}, this implies that $\ker(D_B)$ is finite dimensional and $\ran(D_B)$ is closed.

Let $\{u_n\}$ be such a sequence.
Since $\norm{u_n - u_m}_D \simeq \norm{u_n - u_m} + \norm{D(u_n - u_m)}$ with the second summand tending to $0$, it suffices to find a subsequence $u_{n_k}$ that converges in $\Lp{2}(M;E)$.
Letting $\tilde{K}=\spt(\eta_K)$, we find
\begin{align}
\norm{u_n - u_m}
 & \leq \norm{\eta_{K} (u_n - u_m)}  + \norm{(1 - \eta_{K}) (u_n - u_m)} \notag                                                    \\
 & \leq \norm{\eta_{K} (u_n - u_m)} + c^{-1} \norm{D((1 - \eta_{K})(u_n - u_m))} \notag                                            \\
 & \leq \norm{\eta_{K} (u_n - u_m)} + c^{-1} \norm{\sym_{D}(d\eta_{K})(u_n - u_m)} + c^{-1} \norm{(1-\eta_{K}) D(u_n - u_m)}\notag \\
 & \lesssim \norm{u_n - u_m}_{\Lp{2}(\tilde{K})} + \norm{D(u_n - u_m)}.
\label{eq:aboveestimate}
\end{align}
In the second inequality, we used that $D$ is $B$-coercive with respect to $K$.

Since $B$ is semi-regular, we have that $\dom(D_B) \subset \SobH[loc]{1}(M;E)$.
In particular, using the boundedness of the sequence $\{u_n\}$ in $\dom(D_B)$, we have on the compact set $\tilde{K}$,
$$ \norm{u_n}_{\SobH{1}(\tilde{K})} \lesssim \norm{u_n}_D \lesssim 1.$$
From this, we find a convergent subsequence $u_{n_j} \to u'$ in $\Lp{2}(\tilde{K})$.
Then, by \eqref{eq:aboveestimate}, we have that
$$
\norm{u_{n_i}  - u_{n_j}} \lesssim \norm{u_{n_i} - u_{n_j}}_{\Lp{2}(\tilde{K})} + \norm{D(u_{n_i} - u_{m_j})} \to 0$$
as $i, j \to \infty$.
Therefore, we have a $u \in \Lp{2}(M;E)$ such that $u_{n_i} \to u$ and since $Du_{n_i} \to v$, by the fact that $D_B$ is closed, we have  $u \in \dom(D_B)$ with $v = D_B u$.
This is the required convergent subsequence in $\dom(D_B)$.
\end{proof}

\begin{remark}
The dependency on the extended setup to obtain these conclusions could be dropped.
Definition~\ref{Def:Coercive} could be equivalently formulated to simply hold for a closed extension $D_{c}$.
In Theorem~\ref{Thm:Coercive}, semi-regularity of $B$ is only used to assert $\dom(D_{B}) \subset \SobH[loc]{1}(M;E)$.
The latter condition could replace the former to obtain a slightly modified definition of coercivity in the absence of a boundary condition in order to drop the dependency on the extended setup.
\end{remark}

\begin{remark}
In \cite{GN} Definition~4.7, the authors define a notion of coercivity that is operator-theoretic.
Despite this abstract definition, it does not necessarily generalise the notion of coercivity which we have defined here.

In the present context, given a boundary condition $B$, the authors in \cite{GN} state that $\dom(D_B)$-coercive if for all $u \in \dom(D_B) \cap \ker(D_B)^\perp$ we have $\norm{u} \lesssim \norm{D_B u}$.
Assume that $\ker(D_B) = 0$.
Therefore, $\dom(D_B)$-coercive is to say that for all $u \in \dom(D_B)$, $\norm{u} \lesssim \norm{D_B u}$.
In particular, we have that $\dom(D_B)$-coercive implies $B$-coercive with respect to $\emptyset$ in our sense.
This is clearly much stronger than $B$-coercive with respect to some $K$ compact.
\end{remark}

\begin{remark}
Further, in \cite{GN} Definition~6.1, the authors define a geometric notion of coercivity.
There, they say that $D$ is coercive at infinity if there exists $K \subset M$ if $\norm{u} \lesssim \norm{Du}$ for all $u \in \Ck[c]{\infty}(M;E)$ with $\spt u \subset M \setminus K$.
Let us assume this.

Choose $\eta_K \in \Ck[c]{\infty}(M;[0,1])$ with $\eta_K = 1$ on $K$.
Let $\tilde{K} \subset K$ such that $\spt \eta_K \subset \tilde{K}$.
Let $u \in \dom(D_{\max})$ with $\spt u \subset M \setminus \tilde{K}$.
Then,  there exists $u_n \in \Ck[c]{\infty}(M;E)$ such that $u_n \to u$ in the graph norm of $D$.
Since $u = (1 - \eta_K)u$, we have that
$$ \norm{\eta_K u_n}_D = \norm{\eta_K (u_n - u)}_D \lesssim \norm{u_n - u}_D \to 0.$$
Therefore, $(1 - \eta_K)u_n \to u$ in the graph norm of $D$.
However, since $D$-is coercive at infinity  as we have assumed, we ave that then  $\norm{(1 - \eta_K)u_n} \lesssim \norm{D((1 - \eta_K) u_n)}$.
By letting $n \to \infty$, we obtain that $\norm{u} \lesssim \norm{Du}$ for all $u \in \dom(D_{\max})$ with $\spt u \subset M \setminus \tilde{K}$.

That is, $D$ coercive at infinity in the sense of \cite{GN} implies $D$ is $\checkH(D)$-coercive on $\tilde{K}$ in our sense.
These two notions can only be related for $B = \checkH(D)$ and shows that our notion is a refinement.
\end{remark}

We include the following immediate observation which is useful in solving PDEs with prescribed boundary conditions.

\begin{proposition}
\label{Prop:Solv}
Suppose that $B$ be a boundary condition for $D$ and $B^{\rc}$ any complementary subspace such that $\checkH(D) = B \oplus B^{\rc}$.
Let $\gamma u = u \rest{\dM}: \dom(D_{\max}) \to \checkH(D)$ and $P_{B^{\rc}, B}: \checkH(D) \to B^{\rc}$ the projection to $B^{\rc}$ along $B$.
Then,
\begin{enumerate}
\item
      $D \oplus P_{B^{\rc},B} \circ \gamma: \dom(D_{\max}) \to \Lp{2}(M;E) \oplus B^{\rc}$ has closed range and finite dimensional kernel if and only if $D_B$ has closed range and finite dimensional kernel.
\item
      $D \oplus P_{B^{\rc},B}\circ \gamma: \dom(D_{\max}) \to \Lp{2}(M;E) \oplus B^{\rc}$ is Fredholm of index $k$ if and only if $D_B$ is Fredholm and of index $k$.
\end{enumerate}
\end{proposition}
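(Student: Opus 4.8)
The plan is to split $\dom(D_{\max})$ so that $T:=D\oplus(P_{B^{\rc},B}\circ\gamma)$ becomes a block upper-triangular operator with $D_B$ in the upper-left corner and the identity in the lower-right corner, and then to observe that such an operator is an isomorphism ``away from the $D_B$-block''. The whole argument rests on a single ingredient, a \emph{bounded} linear right inverse of the trace map, supplied by the extension operator.

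Concretely, I would first recall that $\gamma\colon\dom(D_{\max})\to\checkH(D)$ is a bounded surjection with $\ker\gamma=\dom(D_{\min})$ (Theorem~\ref{Thm:MaxDom}~\ref{Thm:MaxDom:1},~\ref{Thm:MaxDom:1.5}), and that the extension operator $\ext_{T_c}$ of Proposition~\ref{Prop:WP}~\ref{Prop:WP:3} extends, using density of $\Dk{\infty}(A)$ in $\checkH(D)$, to a bounded map $\checkH(D)\to\dom(D_{\max})$ for the graph norm, which by the choice of cutoff satisfies $\gamma\circ\ext_{T_c}=\id_{\checkH(D)}$. Setting $L:=\ext_{T_c}$ restricted to $B^{\rc}$, one has $P_{B^{\rc},B}\circ\gamma\circ L=\id_{B^{\rc}}$, so $L$ is a topological embedding onto a closed subspace and $P:=L\circ P_{B^{\rc},B}\circ\gamma$ is a bounded projection on $\dom(D_{\max})$ with $\ran P=L(B^{\rc})$ and $\ker P=\{u\in\dom(D_{\max}):u\rest{\dM}\in B\}=\dom(D_B)$. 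Hence $\dom(D_{\max})=\dom(D_B)\oplus L(B^{\rc})$ as a topological direct sum. (This uses that $B^{\rc}$ is a \emph{closed} topological complement of $B$, so that $P_{B^{\rc},B}$ is bounded on $\checkH(D)$.)

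Reading $T$ through this decomposition of the domain, through the decomposition $\Lp{2}(M;F)\oplus B^{\rc}$ of the target, and identifying $L(B^{\rc})$ with $B^{\rc}$ via $L$, one computes $T(u_0+Lw)=\bigl(D_Bu_0+(D\circ L)w,\,w\bigr)$ (here $\gamma u_0\in B=\ker P_{B^{\rc},B}$, while $P_{B^{\rc},B}\gamma Lw=w$), that is,
\[
T\;=\;\begin{pmatrix}\id & D\circ L\\ 0 & \id\end{pmatrix}\circ\bigl(D_B\oplus\id_{B^{\rc}}\bigr),
\]
where $D\circ L\colon B^{\rc}\to\Lp{2}(M;F)$ is bounded and the first factor is a Banach-space automorphism of $\Lp{2}(M;F)\oplus B^{\rc}$ (its inverse flips the sign of $D\circ L$). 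Post-composing an operator with an isomorphism changes neither its kernel, nor the closedness of its range, nor — up to isomorphism — its cokernel; since $\ker(D_B\oplus\id_{B^{\rc}})=\ker D_B$, $\ran(D_B\oplus\id_{B^{\rc}})=\ran(D_B)\times B^{\rc}$, and $\operatorname{coker}(D_B\oplus\id_{B^{\rc}})\cong\operatorname{coker}D_B$, assertion~(1) follows at once, and assertion~(2) follows together with the identity $\operatorname{ind}T=\dim\ker D_B-\dim\operatorname{coker}D_B=\operatorname{ind}D_B$.

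The only step carrying real content is the construction of the bounded right inverse, and that content is entirely contained in Proposition~\ref{Prop:WP}~\ref{Prop:WP:3}: the point is that $\ext_{T_c}$ is bounded into the \emph{graph} norm of $D$, not merely into $\Lp{2}$, which is exactly what lets the splitting respect the graph topology; everything afterwards is routine upper-triangular bookkeeping. I would also flag one convention: $D_B$ must here be read as the closed operator with domain $\{u\in\dom(D_{\max}):u\rest{\dM}\in B\}$ (equivalently $D_{B,\max}$, closed by Lemma~\ref{Lem:OpBdy}), for then $\ker T=\ker D_B$ holds on the nose; when $B$ is semi-regular this agrees with the $\SobH[loc]{1}$-domain operator, which is the case relevant in the applications.
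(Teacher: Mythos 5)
Your argument is correct. The paper itself disposes of this proposition in one line by citing the abstract Proposition~A.1 of \cite{BB12} with $H=\dom(D_{\max})$, $P=P_{B^{\rc},B}\circ\gamma$; what you have written is in effect a self-contained proof of that abstract lemma in the present setting, so the underlying idea (split the domain along $\ker(P_{B^{\rc},B}\circ\gamma)=\dom(D_{B,\max})$ and factor $T$ through $D_B\oplus\id_{B^{\rc}}$ composed with an automorphism) is the same. The one point of genuine divergence is how the complement is produced: the abstract route only needs that $P_{B^{\rc},B}\circ\gamma$ is a bounded surjection between Hilbert spaces, whence its kernel is automatically topologically complemented (e.g.\ take the orthogonal complement in the graph inner product); your construction instead exhibits an explicit complement $L(B^{\rc})$ via the extension operator $\ext_{T_c}$ of Proposition~\ref{Prop:WP}~\ref{Prop:WP:3}. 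This buys a concrete bounded right inverse of the trace (useful elsewhere, e.g.\ for surjectivity in Theorem~\ref{Thm:MaxDom}) but is strictly more machinery than the proposition requires. Your closing remark about reading $D_B$ as $D_{B,\max}$ is well taken: the identity $\ker T=\ker D_{B,\max}$ holds on the nose for the maximal-domain version, and only coincides with the paper's $\SobH[loc]{1}$-domain operator when $B$ is semi-regular, which is the regime of the applications.
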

\begin{proof}
This follows immediately by invoking Proposition~A.1 in \cite{BB12} on choosing $H = \dom(D_{\max})$, $E$ to be $\Lp{2}(M;E)$, $F$ to be $B^{\rc}$ and $P = P_{B^{\rc},B} \circ \gamma$.
\end{proof}

\begin{corollary}
\label{Cor:Solv}
If  $B$ be a semi-regular boundary condition and $D$ be $B$-coercive with respect to compact $K$, then $D \oplus P_{B^{\rc},B} \circ \gamma: \dom(D_{\max}) \to \Lp{2}(M;E) \oplus B^{\rc}$ has closed range and finite dimensional kernel.
If further $D^\dagger$ is $B^\ast$-coercive with respect to a compact $K'$, then $D \oplus P_{B^{\rc},B} \circ \gamma: \dom(D_{\max}) \to \Lp{2}(M;E) \oplus B^{\rc}$ is Fredholm with the same index as $D_B$.
\end{corollary}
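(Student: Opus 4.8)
The plan is to read off both assertions from the Fredholm theory already established for the operator $D_B$, transported to $D\oplus P_{B^{\rc},B}\circ\gamma$ by the abstract equivalence of Proposition~\ref{Prop:Solv}; no new estimate is required. For the first assertion: since $B$ is semi-regular we have $\dom(D_B)=\dom(D_{B,\max})$, so Theorem~\ref{Thm:Coercive} applies directly and, from the $B$-coercivity of $D$ with respect to the compact set $K$, yields that $\ker(D_B)$ is finite dimensional and $\ran(D_B)$ is closed. Now apply part~(1) of Proposition~\ref{Prop:Solv} with the splitting $\checkH(D)=B\oplus B^{\rc}$, the trace map $\gamma\colon u\mapsto u\rest{\dM}$, and $P=P_{B^{\rc},B}\circ\gamma$: its stated equivalence turns ``$D_B$ has closed range and finite dimensional kernel'' into the same statement for $D\oplus P_{B^{\rc},B}\circ\gamma$.

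For the second assertion: Corollary~\ref{Cor:Coercive} --- whose hypotheses are regularity of $B$ together with $B^\ast$-coercivity of $D^\dagger$ with respect to a compact $K'$ --- gives that $D_B$ is Fredholm. Applying part~(2) of Proposition~\ref{Prop:Solv} to the same data then shows that $D\oplus P_{B^{\rc},B}\circ\gamma$ is Fredholm and that $\operatorname{ind}(D\oplus P_{B^{\rc},B}\circ\gamma)=\operatorname{ind}(D_B)$, which is precisely the assertion.

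There is essentially no analytic obstacle here; the only things to watch are bookkeeping items, which is why this step is the least delicate of the section. One should confirm that the inputs to Proposition~\ref{Prop:Solv} are in place, i.e.\ that $\gamma\colon\dom(D_{\max})\to\checkH(D)$ is bounded (Theorem~\ref{Thm:MaxDom}) and that $P_{B^{\rc},B}$ is a bounded projection along the chosen splitting, and that the operator denoted $D_B$ throughout --- including in Theorem~\ref{Thm:Coercive} and Corollary~\ref{Cor:Coercive} --- is the one fed into Proposition~\ref{Prop:Solv}, which it is. Thus the proof reduces to a short invocation of Theorem~\ref{Thm:Coercive}, Corollary~\ref{Cor:Coercive}, and Proposition~\ref{Prop:Solv}.
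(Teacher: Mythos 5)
Your argument is correct and is exactly the intended one: the paper states Corollary~\ref{Cor:Solv} without proof precisely because it follows by combining Theorem~\ref{Thm:Coercive} (resp.\ Corollary~\ref{Cor:Coercive}) with the equivalences in Proposition~\ref{Prop:Solv}. Your bookkeeping remark that semi-regularity gives $D_B=D_{B,\max}$, so that $D_B$ is indeed the restriction of $D_{\max}$ to $\ker(P_{B^{\rc},B}\circ\gamma)$ as required for Proposition~\ref{Prop:Solv}, is the right point to check and is handled correctly.
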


\begin{remark}
A boundary condition $B$ is always complemented since $\checkH(D)$ is a Hilbert space.
I.e., we can take $B^{\rc} = B^{\perp}$ with respect to any inner product in $\checkH(D)$.
\end{remark}

\subsection{Discrete spectrum} 
\label{Sec:Freddy:Discrete} 

In this subsection, we consider the situation that the adapted operator $A$ has discrete spectrum.
This, in particular, allows us to more closely mirror the situation encountered for compact $\dM$. 
More precisely, the coercivity requirement can be placed wholly on the operator and the geometry near the boundary, which allows for a larger class of boundary conditions beyond those studied in subsection~\ref{Sec:Freddy:BCs}.

To begin with, we note the following.
We recall that by $Y_{[0,r)}$, we denote the cylinder $[0,r) \times\dM$ for $r > 0$. 

\begin{lemma}
Suppose that the adapted operator on the boundary $A$ has discrete spectrum.
For $r > 0$ fixed, the space 
$$ \SobH[r]{1}(A,\partial_t) := \set{u \in \Lp{2}(Y_{[0,r)};E): \int_{0}^r \int_{\dM} \modulus{(\partial_t u)(t,x)}^2 + \modulus{Au(t,x)}^2\ d\mu_{\tau}(x)\ dt < \infty},$$
equipped with norm
$$ \norm{u}_{\SobH[r]{1}(A,\partial_t)}^2 = \norm{u}_{\Lp{2}(Y_{[0,r)};E)}^2 + \norm{\partial_t u}_{\Lp{2}(Y_{[0,r)};E)}^2 + \norm{Au}_{\Lp{2}(Y_{[0,r)};E)}^2$$ 
embeds compactly into $\Lp{2}(Y_{[0,r)}; E)$. 
\end{lemma}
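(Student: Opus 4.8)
The strategy is to expand in the eigenbasis of $A$ and then reduce the compactness statement to a one-dimensional Rellich-type compactness together with the discreteness of $\spec(A)$, using a diagonal/precompactness-in-finitely-many-modes argument.

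First I would use the spectral theorem: since $A$ has discrete spectrum, write $\Lp{2}(\dM;E) = \overline{\bigoplus_{k} V_k}$, where $V_k$ is the (finite-dimensional) eigenspace for the eigenvalue $\lambda_k$, ordered so that $\modulus{\lambda_k} \to \infty$. Given $u \in \SobH[r]{1}(A,\partial_t)$, decompose $u(t,\cdot) = \sum_k u_k(t)$ with $u_k(t) \in V_k$, so that $u_k \in \Hard{1}([0,r];V_k)$ and
\begin{equation*}
	\norm{u}_{\SobH[r]{1}(A,\partial_t)}^2 = \sum_k \int_0^r \big( (1+\lambda_k^2)\modulus{u_k(t)}^2 + \modulus{u_k'(t)}^2\big)\,dt.
\end{equation*}
The key point is the \emph{tail estimate}: for any $K$,
\begin{equation*}
	\sum_{\modulus{\lambda_k} > \Lambda} \int_0^r \modulus{u_k(t)}^2\,dt \le \frac{1}{1+\Lambda^2}\sum_{\modulus{\lambda_k}>\Lambda}\int_0^r (1+\lambda_k^2)\modulus{u_k(t)}^2\,dt \le \frac{1}{1+\Lambda^2}\norm{u}_{\SobH[r]{1}(A,\partial_t)}^2,
\end{equation*}
so a bounded set in $\SobH[r]{1}(A,\partial_t)$ is, up to arbitrarily small $\Lp{2}(Y_{[0,r)})$-error, supported in the finitely many modes with $\modulus{\lambda_k} \le \Lambda$.

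Next I would handle those finitely many modes. Let $W_\Lambda = \bigoplus_{\modulus{\lambda_k}\le\Lambda} V_k$, a finite-dimensional subspace of $\Lp{2}(\dM;E)$; then $P_\Lambda u$ lies in $\Hard{1}([0,r];W_\Lambda) \cong \Hard{1}([0,r]) \otimes W_\Lambda$, and a set bounded in $\SobH[r]{1}(A,\partial_t)$ maps to a bounded set there. By the classical Rellich theorem in one variable, $\Hard{1}([0,r]) \embeds \Lp{2}([0,r])$ compactly, hence $\Hard{1}([0,r];W_\Lambda) \embeds \Lp{2}([0,r];W_\Lambda) = \Lp{2}(Y_{[0,r)};W_\Lambda)$ compactly (finite tensor with a fixed finite-dimensional space preserves compactness). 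Now assemble: given a bounded sequence $\set{u^{(n)}} \subset \SobH[r]{1}(A,\partial_t)$ with bound $C$, fix $\epsilon > 0$, choose $\Lambda$ with $C^2/(1+\Lambda^2) < \epsilon^2/4$; extract a subsequence along which $P_\Lambda u^{(n)}$ converges in $\Lp{2}(Y_{[0,r)})$; then for $n,m$ large in that subsequence, $\norm{u^{(n)}-u^{(m)}}_{\Lp{2}(Y_{[0,r)})} \le \norm{P_\Lambda(u^{(n)}-u^{(m)})}_{\Lp{2}} + \norm{(I-P_\Lambda)u^{(n)}}_{\Lp{2}} + \norm{(I-P_\Lambda)u^{(m)}}_{\Lp{2}} < \epsilon$. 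A standard diagonal argument over $\epsilon = 1/j$ produces a subsequence that is Cauchy, hence convergent, in $\Lp{2}(Y_{[0,r)};E)$.

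The main obstacle, and the only genuinely substantive point, is justifying the mode decomposition rigorously: that elements of $\SobH[r]{1}(A,\partial_t)$ genuinely decompose as $\sum_k u_k$ with $u_k \in \Hard{1}([0,r];V_k)$ and that $\partial_t$ commutes with the spectral projections $P_k$ of $A$ in the appropriate sense (so that $\modulus{\partial_t u}^2$ and $\modulus{Au}^2$ split as orthogonal sums over $k$). This is where one must be slightly careful: $A$ acts only in the $\dM$-variable, so $P_k$ and $\partial_t$ commute on a suitable dense set, and one extends by density; the Parseval identity in the $\dM$-variable, integrated in $t$, then gives the displayed norm identity. Everything else — the tail estimate, finite-dimensional Rellich, the diagonal extraction — is routine once this bookkeeping is in place. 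I would also remark that discreteness of $\spec(A)$ is used exactly twice: to make $W_\Lambda$ finite-dimensional, and to make the tail estimate effective (so that $(1+\Lambda^2)^{-1}$ can be driven to zero by taking finitely many modes).
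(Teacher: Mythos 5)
Your proof is correct, and it takes a genuinely different route from the paper's. The paper introduces the operator $\Gamma u = (\tau u, Au)$ with $\tau = \sqrt{\partial_t^\ast\partial_t}$, identifies $\SobH[r]{1}(A,\partial_t)$ with $\dom(\sqrt{\Gamma^\ast\Gamma}) = \dom(\sqrt{\tau^2+A^2})$, and then invokes the abstract equivalence of Theorem~\ref{Thm:AbsCpctEmbed} (compact embedding $\iff$ discrete spectrum), which it verifies for $\tau^2+A^2$ by a resolvent computation in the doubly-indexed eigenbasis $\set{t_i \otimes a_j}$ and by checking finite multiplicity of each eigenvalue $\theta_i^2+\lambda_j^2$. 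You instead keep the $t$-variable continuous, expand only in the eigenmodes of $A$, and combine the tail estimate $\norm{(I-P_\Lambda)u}_{\Lp{2}}^2 \le (1+\Lambda^2)^{-1}\norm{u}_{\SobH[r]{1}(A,\partial_t)}^2$ with one-dimensional Rellich on the finitely many remaining modes and a diagonal extraction. What each buys: the paper's argument yields, as a by-product, the explicit (discrete) spectrum of $\tau^2+A^2$ and reuses its abstract Rellich machinery, but at the cost of having to diagonalise $\partial_t^\ast\partial_t$ on $[0,r)$ and verify selfadjointness of $\Gamma^\ast\Gamma$; your argument is more elementary and self-contained, using only the classical compactness of $\SobH{1}([0,r]) \embed \Lp{2}([0,r])$ and the discreteness of $\spec(A)$ exactly where you say it is needed. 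The one point you flag — that the spectral projections $P_k$ of $A$ commute with $\partial_t$ so that the norm splits as an orthogonal sum over modes — is indeed the only bookkeeping step requiring care (it holds since $P_k$ is a bounded operator in the $\dM$-variable only, hence commutes with distributional $t$-derivatives), and your treatment of it is adequate.
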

\begin{proof}
First, we note that for $\dom(\partial_t) = \SobH{1}([0,r))$, we have that $\dom(\sqrt{\partial_t^\ast \partial_t}) = \dom(\partial_t)$.
So let $\tau = \sqrt{\partial_t^\ast \partial_t}$, which is a selfadjoint operator. 
Since $\SobH{1}([0,r)) \embed \Lp{2}([0,r))$ compactly,  Theorem~\ref{Thm:AbsCpctEmbed} guarantees us that $\tau$ has discrete spectrum. 
Let $\spec(\tau) = \set{\theta_k \geq 0}$ with orthogonalised eigenfunctions $\set{t_i}$.
Similarly, since $\dom(A) \embed \Lp{2}(\dM;E)$ is compact, we have that $\spec(A) = \set{ \lambda_i}$ with  orthonormalised eigensections $\set{a_i}$.
Consequently, given $u \in \Lp{2}(Y_{[0,r)};E)$, we can write $u(t,x) = \sum_{j} u_j(t) a_j = \sum_{i,j} u_{ij} t_i (t) a_i(x)$ since each $t \mapsto u_j(t) \in \Lp{2}([0,r)$ and therefore, $\norm{u}_{\Lp{2}(Y_{[0,r)})}^2 = \sum_{ij} \modulus{u_{ij}}^2$.

Consider the operator $\Gamma u = (\tau u, Au)$ with domain $\dom(\Gamma) = \SobH[r]{1}(A,\partial_t)$.
Via the expansion, $\Gamma u = (\sum_{ij} \theta_i u_{ij} t_i a_j, \sum_{ij} \lambda_j u_{ij} t_i a_j)$.
Now, for $v = (v^1, v^2) \in \dom(\Gamma^\ast)$, 
\begin{align*}  
\inprod{\Gamma^\ast v, u} 
=
\inprod{v, \Gamma u} 
&=
\inprod{\sum_{kl} v^1_{kl} t_k a_l, \sum_{ij} \theta_i u_{ij} t_i a_j} + \inprod{\sum_{kl} v^2_{kl} t_k a_l, \sum_{ij} \lambda_j u_{ij} t_i a_j} \\ 
&=
\sum_{ij}  \theta_i v^1_{ij} u_{ij} + \sum_{ij} v^2_{ij} \lambda_j u_{ij} \\ 
&= \sum_{ij} (\theta_i v^1_{ij} + \lambda_j v^2_{ij}) u_{ij}.
\end{align*}
That is,  $\Gamma^\ast (v_1, v_2) = \Gamma^\ast \tau v^1 + A v_2$.
From operator theory,  $\Gamma^\ast \Gamma  = (\tau^2 + A^2)$ is selfadjoint and moreover $\dom(\sqrt{\Gamma^\ast \Gamma}) = \dom(\Gamma) = \SobH[r]{1}(A,\partial_t)$ and $\norm{\sqrt{\tau^2 +A^2} u} = \norm{\tau u} + \norm{Au}$.
Clearly then, the compactness of the embedding of $\SobH[r]{1}(A,\partial_t) \embed \Lp{2}(Y_{[0,r)};E)$ is equivalent to the discreteness of the spectrum of $\sqrt{\tau^2 + A^2}$ which is equivalent to the discreteness of the spectrum of  $\Xi := \tau^2 + A^2$, which is again selfadjoint.

Now, for $\zeta \not \in \set{\theta_i^2 + \lambda_j^2}$, define the (possibly unbounded) map 
$$R_{\Xi}(\zeta)u := \sum_{ij} \frac{1}{\zeta - \theta_i^2 + \lambda_j^2} u_{ij} t_i a_j.$$
We claim that this is bounded.
Since $\tau^2 = \partial_t^\ast \partial_t$, which is a second-order differential operator, we have that $\theta_i^2 \to \infty$ and therefore, $\theta_i^2 + \lambda_j^2 \to \infty$.
Hence, we can find $N = N(\zeta)> 0$ such that for all $i,j \geq N$, $\modulus{\zeta - \theta_i^2 - \lambda_j^2} \geq 1$.
Therefore,
\begin{align*} 
\norm{R_{\Xi}(\zeta)u}^2 
&\leq 
\sum_{i,j \leq N} \frac{1}{\modulus{\zeta - \theta_i^2 - \lambda_j^2}} \modulus{u_{ij}}^2 + \sum_{i,j \leq N} \frac{1}{\modulus{\zeta - \theta_i^2 - \lambda_j^2}} \modulus{u_{ij}}^2 \\ 
&\leq \max\set{\frac{1}{\modulus{\zeta - \theta_i^2 - \lambda_j^2}}: i,j \leq N}  \sum_{i,j < N} \modulus{u_{ij}}^2 + \sum_{i,j \geq N}  \modulus{u_{ij}}^2 \\
&\leq \cbrac{1 + \max\set{\frac{1}{\modulus{\zeta - \theta_i^2 - \lambda_j^2}}: i,j \leq N}} \sum_{ij} \modulus{u_{ij}}^2 \\ 
&= \cbrac{1 + \max\set{\frac{1}{\modulus{\zeta - \theta_i^2 - \lambda_j^2}}: i,j \leq N}}  \norm{u}^2.
\end{align*}
Moreover, by definition, it is clear that $I = R_{\Xi}(\zeta - \Xi) = (\zeta - \Xi)R_{\Xi}$ and therefore, we conclude that $\zeta \not \in \spec(\Xi)$.
Taking the contrapositive, we see that $\zeta \in \spec(\Xi)$ implies that $\zeta \in \set{\theta_i^2 + \lambda_j^2}$ which is a discrete set.
Now, suppose that $0 = (\theta_l^2 + \lambda_m^2 - \Xi)u$, which occurs if and only if $0 = \sum_{ij} (\theta_l^2 + \lambda_m^2 - \theta_i^2 - \lambda_j^2) u_{ij} t_i a_j$. 
But since $\tau$ and $A$ both have discrete spectrum, $(\theta_l^2 + \lambda_m^2 - \theta_i^2 - \lambda_j^2) = 0$ for only finitely many $i$ and $j$ and therefore, each eigenspace is finite dimensional. 
That is, $\Xi$ has discrete spectrum. 
From Theorem~\ref{Thm:AbsCpctEmbed}, we deduce that $\SobH[r]{1}(A,\partial_t) \embed \Lp{2}(Y_{[0,r)};E)$ compactly. 
\end{proof} 

With the aid of this, we provide the proof of the following. 

\begin{proof}[Proof of Theorem~\ref{Thm:Freddy:Discrete}]
Let $T_d \in  (0,T)$ be from  Corollary~\ref{Cor:ASemiRegNearBoundary}.
Define $\xi: Y_{[0,T_d)} \to [0,1]$ with $\xi(t,x) = f(t)$, where $f \in \Ck[c]{\infty}([0,T_d])$ with $f = 1$ on $[0,\tfrac 14 T_d ]$ and $f = 0$ outside of $[0,\tfrac 34 T]$.
Then, writing $\chi_1 = \Phi^{-1}\xi$ and extending this to the entirety of $M$ by $0$, we obtain that $\chi_1 \in \Ck{\infty}(M)$ with $\spt \chi_1 Z_{[0,\tfrac34 T]}$ and $\chi_1 = 1$ on $Z_{[0,\tfrac 14 T]}$. 
By Assumption~\ref{Hyp:RemControl}, we have that $\modulus{[D, \chi_1]} \leq C \sup_{[0,1]} |f'| $.

Now, let $K'$ be a compact set with $K \setminus Z_{[0,\tfrac14]} \subsetneqq K'$  and let $\chi_2 = 1$ on $K \setminus Z_{[0,\tfrac14]}$ and $\chi_2 = 0$ outside of $K'$.

Let $u_k$ be a bounded sequence in $\dom(D_B)$ such that $D_B u_k \to v$.
We show that $u_k$ has a convergence subsequence in $\Lp{2}(M;E)$.
For that, write 
$$u_k = \chi_1 u_k + (1 - \chi_1)u_k = \chi_1 u_k + (1 - \chi_1)\chi_2 u_k  + (1 - \chi_1)(1 - \chi_2) u_k.$$
Clearly, $\spt \chi_1 u_k \subset Z_{[0,\tfrac14]}$, $\spt (1 - \chi_1)\chi_2 u_k \subset K'$ is compact and $\spt (1 - \chi_1)(1 - \chi_2) u_k \cap K \subset \spt (1 - \chi_1)(1 - \chi_2) u_k \cap K' = \emptyset$ with $\spt (1 - \chi_1)(1 - \chi_2) u_k \in \dom(D_{\min})$.

Now, 
\begin{align*} 
\norm{u_k - u_l}_{\Lp{2}(M)} 
&\leq 
\norm{\chi_1 (u_k - u_l)}_{\Lp{2}(M)} +   \norm{ (1 - \chi_1)\chi_2 (u_k - u_l) }_{\Lp{2}(M)} \\ 
&\qquad\qquad + \norm{(1 - \chi_1)(1 - \chi_2) (u_k - u_l)}_{\Lp{2}(M)} \\ 
&\leq 
\norm{\chi_1 (u_k - u_l)}_{\Lp{2}(Y_{[0,\frac12)})} + \norm{ (1 - \chi_1)\chi_2 (u_k - u_l) }_{\Lp{2}(K')}   \\
&\qquad\qquad+\norm{(1 - \chi_1)(1 - \chi_2) (u_k - u_l)}_{\Lp{2}(M)} \,  .
\end{align*}
By Corollary~\ref{Cor:ASemiRegNearBoundary}, we have that $ \norm{\chi_1 (u_k - u_l)}_{\SobH[T]{1}(A,\partial_t)} \lesssim \norm{\chi_1 (u_k - u_l)}_{\dom(D_B)}$ and therefore bounded, so the first term contains a convergent subsequence.
The second term is compactly supported and since $A$-semi-regular implies semi-regular, we again have a convergent subsequence.
Lastly,
\begin{align*} 
\norm{(1 - \chi_1)(1 - \chi_2) (u_k - u_l)}_{\Lp{2}(M)} 
& \leq C^{-1} \norm{D((1 - \chi_1)(1 - \chi_2) (u_k - u_l))}_{\Lp{2}(M)} \\ 
&\leq C^{-1} \norm{[D, d(2\chi_1\chi_2 + \chi_1 + \chi_2)]}_{\Lp{\infty}(M)} \norm{D(u_k - u_l)}_{\Lp{2}(M)} 
\end{align*}
The first term is uniformly bounded by the choice we made for $\chi_1$ and the compactness of the support of $\chi_2$ and since $D_Bu_k \to v$, we have that the convergence subsequences we have passed to in the first two terms also provide convergence in the latter term. 
Therefore, $D_B$ has finite dimensional kernel and closed range.

If further $D^\dagger$ satisfies the $0$-coercivity assumption, we see that $B^* \subset \dom(\modulus{\tilde{A}}^{\frac12})$.
Since \ref{Itm:Freddy:Discrete1} is also satisfied for $\sym_{D^\dagger}$, and $\tilde{A} = -(\sym_0^{-1})^\ast A \sym_0^\ast$  has discrete spectrum since $A$ has discrete spectrum, we obtain that $D_B^\ast = (D^\dagger)_{B^*}$ has closed range and finite dimensional kernel.
Therefore, $D_B$ is Fredholm.
\end{proof}

\section{Dirac-type operators}
\label{Sec:DiracType}

Dirac-type operators are undoubtedly the most important first-order elliptic operators in geometry.
In this section we derive geometric conditions that ensure that the results of the present work can be applied to them.

Let $D$ be a Dirac-type operator with respect to a Riemannian metric on $M$, i.e.\ its principal symbol satisfies the Clifford relation $\sigma_D(\xi)^*\sigma_D(\xi) = |\xi|^2\cdot\id$ for all covectors $\xi$.
This implies $|\sigma_D(\xi)|=|\xi|$ and, in particular, $D$ is elliptic.

Moreover, if the Riemannian metric is complete, Theorem~\ref{Thm:cherwolf} ensures that $D$ and $D^\dagger$ are complete.
Choosing $\vec{T}$ as the interior pointing unit normal vector field and $\tau$ the associated covector field, we are in the minimal setup.
Thus, the conditions of the minimal setup are satisfied for any Dirac-type operator if the underlying Riemannian metric is complete.
In particular, Theorems~\ref{Thm:Trace1} and \ref{Thm:Reg} apply.

\subsection{Twisted spinorial Dirac operators}
Ensuring conditions \ref{Hyp:ExtFirst}--\ref{Hyp:ExtLast} of the geometric setup is more subtle.
We specialise to twisted spinorial Dirac operators.
For this, we assume that $M$ carries a spin structure.
Let $SM\to M$ be the associated spinor bundle, equipped with its natural Hermitian metric and connection $\nabla^{SM}$.
Let $C\to M$ be a Hermitian vector bundle with compatible connection $\nabla^C$.
The twisted spinorial Dirac operator $D$ maps sections of $SM\otimes C$ to sections of $SM\otimes C$.
It is a formally selfadjoint Dirac-type operator and, in particular, elliptic.

The Weitzenböck formula for twisted spinorial Dirac operators says
\begin{equation}
D^2 = \nabla^*\nabla + \tfrac14 S + \KK^C
\label{eq:Weitzen}
\end{equation}
where $S$ is the scalar curvature of $M$ and $\KK^C = \tfrac12 \sum_{i,j=1}^{n} e_i\cdot e_j\cdot R^C(e_i,e_j)$, see Theorem~8.17 in Chapter~II of \cite{LM}.
Here $R^C$ denotes the curvature tensor of $C$, the dot $\cdot$ denotes Clifford multiplication, and $e_1,\dots,e_n$ is a local orthonormal tangent frame.

If $n=\dim(M)$ is odd, we put $E=F=SM\otimes C$.
If $n$ is even, then the spinor bundle splits into the chirality subbundles, $SM=S^+M\oplus S^-M$, and the Dirac operator interchanges these bundles.
In this case we put $E=S^+M\otimes C$ and $F=S^-M\otimes C$.

In both cases, $E|_{\dM}$ and $F|_{\dM}$ can be canonically identified with the twisted spinor bundle $S\dM\otimes C$ of $\dM$.
The intrinsic twisted Dirac operator $D^{\dM}$ of the boundary is one possible choice of adapted boundary operator.
Thus, a general adapted boundary operator will be of the form $A=D^{\dM} + V$ where $V$ is a symmetric endomorphism field of $E|_{\dM}$.
We always choose $\tilde A=A$.

We now list geometric assumptions which ensure that we are in the geometric setup so that all the results obtained in the previous sections can be applied.

\begin{assumptions}\label{DiracAssumptions}
Given a Hermitian vector bundle $C$ with compatible connection over a Riemannian manifold $M$, the following assumptions will be useful:
\begin{enumerate}[label=(A\arabic*), labelwidth=0pt, labelindent=2pt, leftmargin=26pt]
\item \label{eq:Hyp:DiracFocal}
      The focal radius of $\dM$ is positive, i.e.\ there exists an $r_0>0$ such that the normal exponential map induces a diffeomorphism $[0,r_0)\times\dM \to U$ where $U$ is a neighbourhood of $\dM$ in $M$.
\item \label{eq:Hyp:DiracFundamental}
      The pointwise norms of the second fundamental forms of $\dM$ and of its parallel hypersurfaces $\{x\in M : \mathrm{dist}(x,\dM)=r\}$ are uniformly bounded for small $r>0$.
\item \label{eq:Hyp:DiracRicci}
      The pointwise norm of the Ricci curvature tensor of $M$ is uniformly bounded on a distance tube of $\dM$.
\item \label{eq:Hyp:DiracTwist}
      The curvature tensor $R^C$ of $C$ is uniformly bounded on a distance tube of $\dM$.
\item \label{eq:Hyp:DiracV}
      The potential $V$ is uniformly bounded on $\dM$, i.e.\ there exists $c_0>0$ such that $|V|\le c_0$.
\end{enumerate}
\end{assumptions}

Note that these assumptions are automatic if $\dM$ is compact.

\begin{proposition}\label{prop:Dirac}
Let $M$ be a complete Riemannian spin manifold with smooth (and possibly noncompact) boundary $\dM$.
Let $C\to M$ be a Hermitian vector bundle with compatible connection.
Suppose that Assumptions~\ref{DiracAssumptions} hold.

Then conditions~\ref{Hyp:StdFirst}--\ref{Hyp:StdLast} of the minimal setup and \ref{Hyp:ExtFirst}--\ref{Hyp:ExtLast} of the geometric setup are satisfied for the twisted spinorial Dirac operator $D$ on $M$ together with the adapted operator $A=D^{\dM}+V$ on $\dM$.
\end{proposition}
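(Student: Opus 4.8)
The plan is to verify the two lists of conditions in turn, relying on the discussion preceding the proposition for the minimal setup. Conditions \ref{Hyp:StdFirst}--\ref{Hyp:StdLast} hold for any complete Riemannian metric: we take $\vec T$ to be the interior pointing unit normal along $\dM$ with associated covector $\tau=g(\vec T,\cdot)$ (this is \ref{Hyp:InteriorVec}), the bundles $E,F$ are those described above, ellipticity of $D$ is the Clifford relation $|\sigma_D(\xi)|=|\xi|$, and completeness of $D$ and $D^\dagger$ follows from Theorem~\ref{Thm:cherwolf} with constant $C$, since $|\sigma_D(\xi)|=|\sigma_{D^\dagger}(\xi)|=|\xi|$; note that Theorem~\ref{Thm:cherwolf} does not require $\mu$ to be the Riemannian volume, a freedom we use below. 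For \ref{Hyp:ExtFirst}, the intrinsic twisted Dirac operator $D^{\dM}$ is adapted to $D$ by construction and $V$ is of order zero, so $A=D^{\dM}+V$ satisfies the required symbol identity, and we set $\tilde A=A$ (legitimate since $D$ is formally selfadjoint). For essential selfadjointness, note that $\dM$ with its induced metric is complete (a closed submanifold of a complete Riemannian manifold is complete in the induced length metric), so $D^{\dM}$, being a formally selfadjoint Dirac-type operator on the boundaryless manifold $\dM$, is essentially selfadjoint by Theorem~\ref{Thm:cherwolf} applied to $\dM$; since $V$ is a bounded symmetric endomorphism field by \ref{eq:Hyp:DiracV}, $A=D^{\dM}+V$, and hence $\tilde A$, is essentially selfadjoint by the Kato--Rellich theorem.

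For \ref{Hyp:Metric}, Assumption~\ref{eq:Hyp:DiracFocal} provides a diffeomorphism $[0,r_0)\times\dM\to U$ via the normal exponential map, and we take $\Phi=(t,\phi)\colon U\to Y_{[0,r_0)}$ to be its inverse, so that $t=\mathrm{dist}(\cdot,\dM)$ and $\phi$ is the nearest-point projection. Properties (i)--(iv) are the standard properties of Fermi coordinates and of the distance function: $\dM=t^{-1}(0)$, $\phi|_{\dM}=\id$, $\tau=dt$ on $\dM$, and $d\Phi(\vec T)=\partial_t$ on $\dM$; moreover $|dt|_g\equiv1$ on the collar by the eikonal equation. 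Condition (v) is where we fix the measure: the Riemannian volume reads $|dt|\otimes\theta(t,\cdot)\,\mu_{g_{\dM}}$ in these coordinates with $\theta$ smooth, positive, and $\theta(0,\cdot)\equiv1$, so we let $\mu$ be any smooth positive measure on $M$ agreeing with the product measure $|dt|\otimes\mu_{g_{\dM}}$ on a slightly smaller collar (e.g.\ $\theta^{-1}\mu_g$ near $\dM$, extended positively and smoothly). Then $\mu_\tau=\mu_{g_{\dM}}$ and (v) holds; this is the measure referred to in \ref{Hyp:StdFirst}, and completeness of $D$ and $D^\dagger$ with respect to it is still ensured by Theorem~\ref{Thm:cherwolf} as above.

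It remains to establish \ref{Hyp:RemControl} on a collar $Z_{[0,T)}$ with $T\in(0,r_0)$, using the product measure. The standard normal form of a twisted Dirac operator along a hypersurface (see \cite{BB12}) gives $D=\sigma_t(\partial_t+A_t)$, where $\sigma_t=\sigma_D(dt)$ is fibrewise Clifford multiplication by the unit covector $dt$ and is therefore unitary, so $\sigma_t^\ast\sigma_t=\id$ and the first estimate of \ref{Hyp:RemControl} holds with $C=1$; here $A_t$ is a smooth family of first-order elliptic operators on $\dM$ with $A_0=D^{\dM}$ under the canonical identification $E|_{\dM}\cong S\dM\otimes C$. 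Put $R_t:=A_t-A=(A_t-A_0)-V$. Writing $A_t-A_0=\int_0^t\partial_sA_s\,ds$, the operator $\partial_sA_s$ is first-order with coefficients given by universal expressions in the second fundamental forms of the parallel hypersurfaces, the ambient Riemann tensor, the twist curvature $R^C$, and their derivatives along the collar, all uniformly bounded on $[0,T]\times\dM$ by \ref{eq:Hyp:DiracFundamental}--\ref{eq:Hyp:DiracTwist}. Hence $\|\partial_sA_su\|_{\Lp{2}(\dM)}\lesssim\|\nabla^{E|_{\dM}}u\|_{\Lp{2}(\dM)}+\|u\|_{\Lp{2}(\dM)}$, and the Weitzenböck estimate for $D^{\dM}$ (whose curvature term $\tfrac14 S_{\dM}+\KK^C$ is bounded, with $S_{\dM}$ controlled via the Gauss equation by \ref{eq:Hyp:DiracFundamental} and \ref{eq:Hyp:DiracRicci} and $\KK^C$ by \ref{eq:Hyp:DiracTwist}) gives $\|\nabla^{E|_{\dM}}u\|\lesssim\|D^{\dM}u\|+\|u\|\lesssim\|Au\|+\|u\|$, the last step by \ref{eq:Hyp:DiracV}. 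Integrating yields $\|(A_t-A_0)u\|\lesssim t(\|Au\|+\|u\|)$, and adding $\|Vu\|\le c_0\|u\|$ gives $\|R_tu\|_{\Lp{2}(\dM)}\le Ct\|Au\|_{\Lp{2}(\dM)}+C\|u\|_{\Lp{2}(\dM)}$ for a suitable $C\ge1$. The analogous expansion $D^\dagger=-\sigma_t^\ast(\partial_t+\tilde A+\tilde R_t)$ with the matching bound on $\tilde R_t$ follows the same way, using $\tilde A=A$ and that the density factor relating $\mu$ to $\mu_g$ is bounded with bounded derivatives on the collar.

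I expect the last step to be the main obstacle: pinning down the collar normal form $D=\sigma_t(\partial_t+A_t)$ with $A_0=D^{\dM}$, and then showing that $\partial_sA_s$ is $A$-bounded uniformly in $s\in[0,T]$, so that integration produces a constant vanishing linearly in $t$ — this is precisely where Assumptions~\ref{eq:Hyp:DiracFundamental}--\ref{eq:Hyp:DiracV} enter, through the Gauss equation and the Bochner technique on $\dM$. A secondary, more routine issue is keeping the choice of $\mu$ consistent across \ref{Hyp:StdFirst}, condition (v) of \ref{Hyp:Metric}, and the formula for $D^\dagger$ in \ref{Hyp:RemControl}.
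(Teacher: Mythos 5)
Your treatment of the minimal setup, of \ref{Hyp:ExtFirst}, and your overall strategy for the remainder estimate in \ref{Hyp:RemControl} (writing $R_t$ as an integral of $\partial_s A_s$, bounding $\partial_s A_s$ by the second fundamental form, Ricci and twist curvature, and converting $\|\nabla^{\dM}u\|$ into $\|Au\|+\|u\|$ via the boundary Weitzenböck formula and the Gauss equation) are sound and run parallel to the paper's argument, which packages the same computation as a Gronwall estimate on $|R_tu|^2$. The genuine gap is in your treatment of condition (v) of \ref{Hyp:Metric}. You keep $t=\mathrm{dist}(\cdot,\dM)$ and instead replace the Riemannian volume by $\mu=\theta^{-1}\mu_g$ near $\dM$. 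This changes the formal adjoint: with respect to the new measure, $D^{\dagger}=D-\sym_D(d\log\theta)$, and since $\theta(t,x)=\exp(-\int_0^t H_s(x)\,ds)$, the tangential part of $d\log\theta$ involves $\int_0^t \nabla^{\dM}H_s\,ds$, i.e.\ tangential derivatives of the mean curvature of the parallel hypersurfaces. Assumptions~\ref{DiracAssumptions} bound $|H|$ but say nothing about $\nabla^{\dM}H$, so your claim that the density factor has ``bounded derivatives on the collar'' is unjustified, and the resulting zeroth-order term in $\tilde R_t$ is a multiplication operator by a possibly unbounded function, which cannot be absorbed into $Ct\|\tilde A u\|+C\|u\|$.

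The paper resolves exactly this point differently: it keeps the Riemannian volume and instead reparametrises the normal direction by the flow of $f\vec{T}$, where $f=\exp(\int_0^t H)$ is chosen so that $f\vec{T}$ is divergence free; volume preservation of the flow then gives (v) with $\mu=\mu_g$ untouched. The price is that $\sym_t=f^{-1}\sym_0$ is no longer unitary, only uniformly bounded above and below (which is all \ref{Hyp:RemControl} requires), and the only derivative of $f$ entering the remainder computation is the normal one, $\vec{T}(f)=fH$, which is bounded by \ref{eq:Hyp:DiracFundamental}. To repair your argument you would either have to adopt this reparametrisation, or add a hypothesis controlling tangential derivatives of the second fundamental form. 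A smaller point of the same flavour: your parenthetical claim that $\partial_sA_s$ involves ``derivatives along the collar'' of the second fundamental form and curvatures should be avoided — normal derivatives of the shape operator bring in the full curvature tensor via the Riccati equation, whereas only Ricci is assumed bounded; the correct commutator computation (as in the paper) shows that only $\nabla\vec{T}$, $\Ric(\vec{T})$ and $R^C(\vec{T},\cdot)$ themselves appear, with no derivatives.
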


\begin{proof}
Denote the induced connection on $SM\otimes C$ by $\nabla$ and $\cdot$ means Clifford multiplication.
At a point $x\in\dM$ choose an orthonormal basis $e_1,\dots,e_{n-1}$ of $T_x\dM$.
Then $e_1,\dots,e_{n-1},\vec{T}$ is an orthonormal basis of $T_x M$.
Then the Dirac operators take the form
\begin{align*}
D = \sum_{j=1}^{n-1}e_j\cdot \nabla_{e_j} + \vec{T}\cdot \nabla_{\vec{T}}
\quad\text{ and }\quad
A = -\sum_{j=1}^{n-1}\vec{T}\cdot e_j\cdot \nabla_{e_j} + \tfrac12 H + V
\end{align*}
where $H$ is the mean curvature of $\dM$.
This implies Condition~\ref{Hyp:ExtFirst}.

By \ref{eq:Hyp:DiracFocal} there exists an $r_0>0$ such that the normal exponential map $Y_{[0,r_0)}\to M$, $(t,x)\mapsto \exp_x(t\vec{T}(x))$, is a diffeomorphism onto its image $\tilde U$.
Extend $\vec{T}$ to $\tilde U$ as the velocity field of the normal geodesics, i.e.\ $\vec{T}(\exp_x(t\vec{T}(x))) = \frac{d}{dt}|_t\exp_x(t\vec{T}(x))$.

On $\tilde U$ the second fundamental form of the parallel hypersurfaces coincides with $\nabla\vec{T}$ because $\vec{T}$ is their unit normal vector field.
After possibly decreasing $r_0$, Assumption~\ref{eq:Hyp:DiracFundamental} gives us the bound
\begin{equation}
|\nabla\vec{T}| \le C_1
\label{eq:nablaTbound}
\end{equation}
on $\tilde U$.
In particular, the mean curvature $H=-\mathrm{tr}(\nabla\vec{T})=-\div(\vec{T})$ is bounded on $\tilde U$ by
\begin{equation}
|H|\le (n-1)C_1.
\label{eq:HBound}
\end{equation}
We define $f\colon \tilde U\to \R$ by
$$
f(\exp_x(t\vec{T}(x))) := \exp\left(\int_0^t H(\exp_x(s\vec{T}(x)))\, ds\right) .
$$
Then $f|_{\dM}=1$ and we have at $p=\exp_x(t\vec{T}(x))\in\tilde U$ that
\begin{align*}
\vec{T}(f)(p)
=
\tfrac{d}{dt}|_t f(\exp_x(t\vec{T}(x)))
=
f(p)H(p).
\end{align*}
The vector field $f\vec{T}$ is divergence free because
\begin{align*}
\div(f\vec{T})
=
\vec{T}(f) + f\div\vec{T}
=
fH - fH
=
0.
\end{align*}
From \eqref{eq:HBound} we get a bound for $f$ on $\tilde U$,
$$
\exp(-r_0C_1) \le f \le \exp(r_0C_1) \quad\text{ and }\quad |f\vec{T}|\le\exp(r_0C_1) .
$$

Choose $r_1\in (0,r_0\exp(-r_0C_1))$.
Then the integral curves of $f\vec{T}$ starting at $\dM$ lie in $\tilde U$ for parameter values in $[0,r_1]$.
The flow of $f\vec{T}$ yields a diffeomorphism $\Psi\colon Y_{[0,r_1)}\to U \subset \tilde U$.
The inverse $\Phi := \Psi^{-1}\colon U\to Y_{[0,r_1)}$ satisfies all conditions in \ref{Hyp:Metric}.
In particular, the last assertion holds because $f\vec{T}$ is divergence free and hence its flow is volume preserving.
So far, we have only used the upper bound on $H$.

As to \ref{Hyp:ExtLast}, we note that $|dt|=\frac{1}{f}$ on $U$ since $dt$ is dual to $\partial_t = f\vec{T}$.
Therefore, $\exp(-r_0C_1) \le |dt| \le \exp(r_0C_1)$.
Since the principal symbol of a Dirac-type operator is given by Clifford multiplication which is an isometry if the covector has length $1$, we find
$$
\exp(-r_0C_1) \le |\sigma_t| \le \exp(r_0C_1)
$$
as well as
$$
\exp(-r_0C_1) \le |\sigma_t^{-1}| \le \exp(r_0C_1) .
$$
The spinor bundles over $U$ and over $Y_{[0,r_1)}$ are identified by parallel translation along the normal geodesics, i.e.\ along the integral curves of $\vec{T}$.
This identification is a pointwise isometry.
We study how the Dirac operator changes along such an integral curve.
Fix $x\in\dM$ and choose an orthonormal basis $e_1,\dots,e_{n-1}$ of $T_x\dM$.
Then $e_1,\dots,e_{n-1},\vec{T}$ is an orthonormal basis of $T_xM$.
We extend the basis by parallel translation along the integral curve of $\vec{T}$.
The parallel extension of $\vec{T}$ is $\vec{T}$ itself because it is the velocity field of a geodesic.
The Dirac operator then takes the form
\begin{equation*}
D
=
\sigma_t \partial_t + \sum_{j=1}^{n-1} e_j(t)\cdot \nabla_{e_j(t)}
=
\sigma_t \Big(\partial_t + \sigma_t^{-1}\sum_{j=1}^{n-1} e_j(t)\cdot \nabla_{e_j(t)}\Big) .
\end{equation*}
Thus the remainder term in \ref{Hyp:ExtLast} is given by
\begin{align*}
R_t
 & =
\sigma_t^{-1}\sum_{j=1}^{n-1} e_j(t)\cdot \nabla_{e_j(t)} - \sigma_0^{-1}\sum_{j=1}^{n-1} e_j(0)\cdot \nabla_{e_j(0)} \\
 & =
\sigma_0^{-1}\bigg(f\sum_{j=1}^{n-1} e_j(t)\cdot \nabla_{e_j(t)} - \sum_{j=1}^{n-1} e_j(0)\cdot \nabla_{e_j(0)} \bigg) .
\end{align*}
The second equation follows from $\sigma_t = \frac{1}{f}\sigma_0$ since Clifford multiplication is parallel, and hence $\sigma_t^{-1} = f\sigma_0^{-1}$.
Since $\sigma_0$ is an isometry, we only need to estimate the expression in parentheses.
Using that the $e_j$ and Clifford multiplication are parallel we compute for $u$ parallel along the normal geodesic
\begin{align}
\nabla_{\vec{T}}\bigg(f\sum_{j=1}^{n-1} e_j\cdot \nabla_{e_j}u\bigg)
=
fH\sum_{j=1}^{n-1} e_j\cdot \nabla_{e_j}u + f\sum_{j=1}^{n-1} e_j\cdot \nabla_{\vec{T}}\nabla_{e_j}u .
\label{eq:Rtdot}
\end{align}
Writing $R$ for the curvature tensor on the spinor bundle, the curvature tensor of $SM\otimes C$ is given by $R^{SM\otimes C}=R\otimes \id_C + \id_{SM}\otimes R^C$.
Denote by $\Ric$ the Ricci curvature of $M$, considered as an endomorphism field.
We find
\begin{align}
\sum_{j=1}^{n-1} e_j\cdot \nabla_{\vec{T}}\nabla_{e_j}u
 & =
\sum_{j=1}^{n-1} e_j\cdot \big(R^{SM\otimes C}(\vec{T},e_j)+ \nabla_{e_j}\nabla_{\vec{T}}+\nabla_{\nabla_{\vec{T}}e_j-\nabla_{e_j}\vec{T}}\big)u \notag \\
 & =
\sum_{j=1}^{n-1} e_j\cdot \big(R(\vec{T},e_j)\otimes \id_C + \id_{SM}\otimes R^C(\vec{T},e_j) - \nabla_{\nabla_{e_j}\vec{T}}\big)u \notag               \\
 & =
-\tfrac12 \Ric(\vec{T})\cdot u + \sum_{j=1}^{n-1} e_j\cdot\otimes R^C(\vec{T},e_j)u - \sum_{j=1}^{n-1}e_j \cdot\nabla_{\nabla_{e_j}\vec{T}}u .
\label{eq:DiracVarBound}
\end{align}
Now we use Assumption~\ref{eq:Hyp:DiracRicci} saying that there exists a constant $C_2$ such that
\begin{equation}
|\Ric| \le C_2
\label{eq:RicciBound}
\end{equation}
on $U$.
Combining \eqref{eq:nablaTbound}, \eqref{eq:DiracVarBound}, \eqref{eq:RicciBound}, and Assumption~\ref{eq:Hyp:DiracTwist} yields
\begin{equation}
\bigg|\sum_{j=1}^{n-1} e_j\cdot \nabla_{\vec{T}}\nabla_{e_j}u\bigg|
\le
\tfrac{C_2}{2}|u| + C_3|u|  + (n-1)C_1|\nabla u|
\le C_4(|u|+|\nabla u|) .
\label{eq:RtBound}
\end{equation}
Recall that the adapted boundary operator is given by $A=-\sum_{j=1}^{n-1} \vec{T}\cdot e_j\cdot \nabla_{e_j}+\frac12 H + V$.
This, together with \eqref{eq:Rtdot}, \eqref{eq:RtBound}, and \ref{eq:Hyp:DiracV} yields
\begin{align*}
|\nabla_{\vec{T}}R_t u|
 & =
\bigg|\nabla_{\vec{T}}\bigg(f\sum_{j=1}^{n-1} e_j\cdot \nabla_{e_j}u\bigg)\bigg|                                       \\
 & \le
f \cdot|H| \cdot|\vec{T}\cdot (A-\tfrac12 H-V)u| + f\cdot C_4(|u|+|\nabla u|) \\
 & \le
C_5(|Au|+|u|+|\nabla u|)
\end{align*}
and therefore
\begin{align*}
|\nabla_{\vec{T}}R_t u|^2
 & \le
C_6(|Au|^2+|u|^2+|\nabla u|^2) .
\end{align*}
Since $u$ is parallel in $\vec{T}$-direction, the term $\nabla u$ only involves derivatives in directions tangential to $\dM$.
The connection $\nabla$ of the spinor bundle on $M$ differs from the corresponding connection $\nabla^{\dM}$ on $\dM$ by a universal expression in the second fundamental form.
Since the latter is uniformly bounded, we find
\begin{equation}
|\nabla_{\vec{T}}R_t u|^2
\le
C_7(|Au|^2+|u|^2+|\nabla^{\dM} u|^2) .
\end{equation}
This gives us
\begin{align*}
\tfrac{d}{dt}|R_tu|^2
 & =
\partial_{f\vec{T}}|R_tu|^2                          \\
 & =
2f\mathsf{Re}\langle\nabla_{\vec{T}}R_tu,R_tu\rangle \\
 & \le
2f|\nabla_{\vec{T}}R_tu||R_tu|                       \\
 & \le
f(|\nabla_{\vec{T}}R_tu|^2+|R_tu|^2)                 \\
 & \le
C_8(|Au|^2+|u|^2+|\nabla^{\dM} u|^2+|R_tu|^2) .
\end{align*}
Using $R_0=0$ and Gronwall's lemma we get
\begin{align*}
|R_tu|^2
\le
(\exp(C_8t)-1)(|Au|^2+|u|^2+|\nabla^{\dM} u|^2).
\end{align*}
If $u$ is smooth and compactly supported on $\dM$ we integrate over $\dM$ and obtain
\begin{align}
\|R_tu\|^2_{\Lp{2}(\dM)}
 & \le
(\exp(C_8t)-1)(\|Au\|^2_{\Lp{2}(\dM)}+\|u\|^2_{\Lp{2}(\dM)}+\|\nabla^{\dM} u\|^2_{\Lp{2}(\dM)}) .
\label{eq:RtL2estimate}
\end{align}
The Weitzenböck formula for Dirac operator on the boundary says
$$
(A-V)^2 = (\nabla^{\dM})^*\nabla^{\dM} + \tfrac14 S^{\dM} + \KK^{C,\dM}
$$
where $S^{\dM}$ is the scalar curvature of $\dM$ and $\KK^{C,\dM} = \tfrac12 \sum_{i,j=1}^{n-1} e_i\cdot e_j\cdot R^C(e_i,e_j)$.
Assumption~\ref{eq:Hyp:DiracTwist} yields
\begin{equation}
|\KK^{C,\dM}| \le C_9.
\label{eq:KC}
\end{equation}
The Gauss equation implies
$$
S^{\dM} = S^{M} - 2\langle \Ric(\vec{T}),\vec{T}\rangle + H^2 -|\nabla\vec{T}|^2
$$
along the boundary where $S^{M}$ is the scalar curvature of $M$.
Thus our bounds on $\Ric$ and $\nabla\vec{T}$ imply a uniform bound
\begin{equation}
|S^{\dM}| \le C_{10} .
\label{eq:SdM}
\end{equation}
The estimates \eqref{eq:KC}, \eqref{eq:SdM} and \ref{eq:Hyp:DiracV} imply
\begin{align*}
\|\nabla^{\dM} u\|^2_{\Lp{2}(\dM)}
 & =
((\nabla^{\dM})^*\nabla^{\dM}u,u)_{\Lp{2}(\dM)}                 \\
 & =
\Big(\big((A-V)^2-\tfrac14 S^{\dM}-\KK^{C,\dM}\big)u,u\Big)_{\Lp{2}(\dM)} \\
 & \le
\|Au\|^2_{\Lp{2}(\dM)} + C_{11}\|u\|^2_{\Lp{2}(\dM)} .
\end{align*}
Inserting this into \eqref{eq:RtL2estimate} gives us for $t\in [0,r_1]$
\begin{align*}
\|R_tu\|^2_{\Lp{2}(\dM)}
 & \le
(\exp(C_8t)-1)\cdot C_{11}\cdot (\|Au\|^2_{\Lp{2}(\dM)}+\|u\|^2_{\Lp{2}(\dM)}) \\
 & \le
t\cdot C_8\cdot \exp(C_8r_1)\cdot C_{11}\cdot (\|Au\|^2_{\Lp{2}(\dM)}+\|u\|^2_{\Lp{2}(\dM)}) .
\end{align*}
Thus \ref{Hyp:ExtLast} holds with $T=r_1$.
\end{proof}

As a consequence, Theorem~\ref{Thm:MaxDom} applies to twisted spinorial Dirac operators provided the manifold is complete and Assumptions~\ref{DiracAssumptions} hold.

\begin{remark}
Proposition~\ref{prop:Dirac} still holds if $M$ is possibly not spin but the operator $D$ is locally a twisted Dirac operator, provided Assumption~\ref{eq:Hyp:DiracTwist} holds for the curvature tensors of the local twist bundles near the boundary with a uniform constant.

For example, let $D$ be the Dirac operator of a spin$^c$ manifold and assume that the curvature of the determinant line bundle $L$ is uniformly bounded on a distance tube of $\dM$.
Then, locally, $D$ is a twisted Dirac operator with a coefficient bundle $C$ such that $C\otimes C=L$.
The curvatures are related by $R^C=\frac12 R^L$.
Thus, the locally occurring curvatures $R^C$ are uniformly bounded on the distance tube and Proposition~\ref{prop:Dirac} applies.
\end{remark}

Next we discuss the Fredholm property of boundary value problems for twisted spinorial Dirac operators.
We focus on two cases, the nonlocal APS boundary conditions and certain local boundary conditions.

\begin{proposition}\label{Prop:DiracBVP}
Let $M$ be a complete Riemannian spin manifold with smooth boundary $\dM$.
Let $C\to M$ be a Hermitian vector bundle, equipped with a compatible connection.
Let $D$ be the corresponding twisted Dirac operator.
Suppose that Assumptions~\ref{DiracAssumptions} hold.

Let $B=B_{\APS}(A)$ the APS boundary condition for $D$.
Let $K\subset \interior{M}$ be a compact subset.

If $\frac12 H+V\ge0$ on $\dM$ and there exists a constant $c>0$ such that
$$
\tfrac14 S^M + \KK^C \ge c
$$
on $M\setminus K$ in the sense of symmetric endomorphisms, then $\ker(D_B)$ is finite dimensional and $\ran(D_B)$ is closed.
\end{proposition}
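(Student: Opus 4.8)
The plan is to deduce the statement from Theorem~\ref{Thm:Coercive}, for which one must check that $B=B_{\APS}(A)$ is semi-regular and that $D$ is $B$-coercive with respect to $K$. Semi-regularity is already available: by Proposition~\ref{Prop:APS=Aelliptic} the APS condition is $A$-elliptically regular, hence elliptically regular, hence in particular $B_{\APS}(A)\subset\SobH[loc]{\frac12}(\dM;E)$. In Definition~\ref{Def:Coercive}, condition~\ref{Def:Coercive:1} holds automatically since $K\subset\interior{M}$: the cutoff $\eta_K$ may be chosen with support in the interior, and then multiplication by it leaves boundary traces unchanged. So everything reduces to verifying condition~\ref{Def:Coercive:2}, i.e.\ that there is a constant $c'>0$ with $\norm{Du}\geq c'\norm{u}$ for every $u\in\dom(D_B)$ with $\spt u\cap K=\emptyset$.

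The key input is a Weitzenböck identity near the boundary. For $u\in\Ck[c]{\infty}(M;E)$ one combines the Weitzenböck formula~\eqref{eq:Weitzen} for $D^\dagger D=\nabla^*\nabla+\tfrac14 S^M+\KK^C$ (applied on the relevant chirality component if $\dim M$ is even) with Green's formula~\eqref{A:IntParts} and the Bochner identity for $\nabla^*\nabla$, using that along $\dM$ one has $(Du)\rest{\dM}=\sym_0\bigl(\nabla_{\vec{T}}u\rest{\dM}+\mathbf{D}^{\dM}u\rest{\dM}\bigr)$, where $\mathbf{D}^{\dM}:=\sym_0^{-1}\sum_{j=1}^{n-1}e_j\cdot\nabla_{e_j}$ is the tangential part of $D$. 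The $\nabla_{\vec{T}}$-boundary terms cancel and one is left with
\begin{equation*}
\norm{Du}^2_{\Lp{2}(M;F)}=\norm{\nabla u}^2_{\Lp{2}(M)}+\inprod{(\tfrac14 S^M+\KK^C)u,u}_{\Lp{2}(M)}-\inprod{u\rest{\dM},\mathbf{D}^{\dM}u\rest{\dM}}_{\Lp{2}(\dM)}.
\end{equation*}
By the proof of Proposition~\ref{prop:Dirac}, the adapted operator satisfies $A=\mathbf{D}^{\dM}+\tfrac12 H+V$, so the boundary term equals $-\inprod{u\rest{\dM},Au\rest{\dM}}+\inprod{u\rest{\dM},(\tfrac12 H+V)u\rest{\dM}}$.

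To pass to a general $u\in\dom(D_B)$ with $\spt u\cap K=\emptyset$, note that $d:=\min\{\mathrm{dist}(\spt u,K),\mathrm{dist}(K,\dM)\}>0$ since $\spt u$ is closed and $K\subset\interior{M}$ is compact; pick $\psi\in\Ck[c]{\infty}(M;[0,1])$ equal to $1$ near $K$ and supported in the $\tfrac12 d$-tube of $K$, hence in $\interior{M}$. By Proposition~\ref{Prop:AEllRegApprox} (applicable since $g$ is complete and $|\sym_D(\xi)|=|\xi|$) choose $u_n\in\Ck[c]{\infty}(M;E)$ with $u_n\to u$ in the graph norm and $u_n\rest{\dM}\to u\rest{\dM}$ in $\dom(\modulus{A}^{\frac12})$; then $\tilde u_n:=(1-\psi)u_n\in\Ck[c]{\infty}(M;E)$ still converges to $u$ in the graph norm (as $\sym_D(d\psi)$ is bounded), satisfies $\spt\tilde u_n\cap K=\emptyset$, and has $\tilde u_n\rest{\dM}=u_n\rest{\dM}$. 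On $\spt\tilde u_n\subset M\setminus K$ we have $\tfrac14 S^M+\KK^C\geq c$; writing $\mathcal K:=\tfrac14 S^M+\KK^C$ and inserting $\tilde u_n$ into the identity yields
\begin{equation*}
\norm{\nabla\tilde u_n}^2+\inprod{(\mathcal K-c)\tilde u_n,\tilde u_n}=\norm{D\tilde u_n}^2-c\norm{\tilde u_n}^2+\inprod{\tilde u_n\rest{\dM},(A-\tfrac12 H-V)\tilde u_n\rest{\dM}},
\end{equation*}
whose left-hand side is $\geq0$. The right-hand side converges as $n\to\infty$: $\norm{D\tilde u_n}^2\to\norm{Du}^2$, $\norm{\tilde u_n}^2\to\norm{u}^2$, and the boundary pairing converges because the $A$-quadratic form extends continuously to $\dom(\modulus{A}^{\frac12})$ and $\tfrac12 H+V$ is bounded. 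Hence the left-hand side has a finite limit $L\geq0$, and
\begin{equation*}
\norm{Du}^2=L+c\norm{u}^2-\inprod{u\rest{\dM},Au\rest{\dM}}+\inprod{u\rest{\dM},(\tfrac12 H+V)u\rest{\dM}}\geq c\norm{u}^2,
\end{equation*}
using $L\geq0$, $\inprod{u\rest{\dM},Au\rest{\dM}}\leq0$ (since $u\rest{\dM}\in B_{\APS}(A)=\chi_{(-\infty,0)}(A)\dom(\modulus{A}^{\frac12})$), and $\tfrac12 H+V\geq0$. This establishes condition~\ref{Def:Coercive:2} with $c'=\sqrt c$, and Theorem~\ref{Thm:Coercive} yields the claim.

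The main obstacle is the boundary-term bookkeeping in the Weitzenböck identity: keeping straight the difference between the geometric tangential operator $\mathbf{D}^{\dM}$ occurring in the pointwise expression for $D$ along $\dM$ and the adapted operator $A$ (they differ by exactly $\tfrac12 H+V$, which is precisely where the hypothesis $\tfrac12 H+V\geq0$ enters), and verifying that the $\nabla_{\vec{T}}$-terms from the Bochner identity for $\nabla^*\nabla$ cancel those coming from $(Du)\rest{\dM}$. A secondary point of care is that the approximants supplied by Proposition~\ref{Prop:AEllRegApprox} need not lie in $B_{\APS}(A)$, so the favourable sign of the boundary pairing is only recovered after passing to the limit; accordingly the identity must be rearranged to isolate the manifestly nonnegative quantities $\norm{\nabla\tilde u_n}^2$ and $\inprod{(\mathcal K-c)\tilde u_n,\tilde u_n}$ before letting $n\to\infty$.
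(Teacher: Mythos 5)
Your argument is correct and follows essentially the same route as the paper: the Weitzenböck formula~\eqref{eq:Weitzen} with the boundary term identified as $A-\tfrac12 H-V$, approximation via Propositions~\ref{Prop:AEllRegApprox} and \ref{Prop:APS=Aelliptic} combined with a cutoff vanishing near $K$ (the paper fixes an enlarged compact $K'$ and a cutoff $\chi\equiv1$ outside $K'$, whereas you use a $u$-dependent $\psi$ supported near $K$ — a cosmetic difference), the nonpositivity of $\inprod{u\rest{\dM},Au\rest{\dM}}$ on $B_{\APS}(A)$, and finally Theorem~\ref{Thm:Coercive}. The only organizational difference is that you keep the Weitzenböck identity exact and pass to the limit before invoking the sign hypotheses, while the paper applies the inequalities at the level of smooth sections first; both are valid.
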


\begin{proof}
For $u\in \Ck[c]{\infty}(M;E)$ with $\spt u \cap K = \emptyset$ the Weitzenböck formula \eqref{eq:Weitzen} gives us
\begin{align}
0 & =
\inprod{D^2 u, u}_{\Lp{2}(M)} - \inprod{\nabla^*\nabla u, u}_{\Lp{2}(M)} - \inprod{(\tfrac14 S^M + \KK^C) u, u}_{\Lp{2}(M)} \notag \\
  & =
\|Du\|_{\Lp{2}(M)}^2 - \int_{\dM}\inprod{\vec{T}\cdot Du,u}
- \|\nabla u\|_{\Lp{2}(M)}^2 - \int_{\dM}\inprod{\nabla_{\vec{T}}u,u}                                                        
- \inprod{(\tfrac14 S^M + \KK^C) u, u}_{\Lp{2}(M)} \notag                                                                    \\
  & \le
\|Du\|_{\Lp{2}(M)}^2 - \int_{\dM}\inprod{(\nabla_{\vec{T}}+\vec{T}\cdot D)u,u} - c\|u\|_{\Lp{2}(M)}^2 \notag                         \\
  & =
\|Du\|_{\Lp{2}(M)}^2 + \int_{\dM}\inprod{(A-\tfrac12 H-V)u,u} - c\|u\|_{\Lp{2}(M)}^2 \notag                                            \\
  & \le
\|Du\|_{\Lp{2}(M)}^2 + \int_{\dM}\inprod{Au,u} - c\|u\|_{\Lp{2}(M)}^2 \notag                                                         \\
  & \le
\|Du\|_{\Lp{2}(M)}^2 + \|\chi_{[0,\infty)}(A)|A|^{\frac12}u\|_{\Lp{2}(\dM)}^2 - c\|u\|_{\Lp{2}(M)}^2 .
\label{eq:DiracCoerc}
\end{align}
We choose a compact subset $K'\subset \interior{M}$ such that $K$ is contained in the interior of $K'$.
We show that $D$ is $B$-coercive with respect to $K'$.

Since $K'$ is contained in the interior of $M$, we can find $\eta_{K'}\in\Ck[c]{\infty}(M)$ with $\eta_{K'}=1$ on $K'$ and $\eta_{K'}=0$ on $\dM$.
Then $\eta_{K'}\dom(D_B)\subset\dom(D_B)$.
Moreover, let $\chi\in\Ck{\infty}(M)$ with $\chi\equiv 0$ on a neighbourhood of $K$ and $\chi\equiv 1$ outside $K'$.

Now let $u\in \dom(D_B)$ with $\spt u\cap K'=\emptyset$.
By Propositions~\ref{Prop:AEllRegApprox} and \ref{Prop:APS=Aelliptic}, there exist $u_n \in \Ck[c]{\infty}(M;E)$ such that $u_n \to u$ in the graph norm of $D$ and $u_n\rest{\dM} \to u\rest{\dM}$ in $\dom(\modulus{A}^{\frac12})$.
Put $\tilde{u}_n := \chi u_n\in\Ck[c]{\infty}(M;E)$.
Since $\chi\equiv 1$ on $\dM$ we have $\tilde{u}_n\rest{\dM} = u_n\rest{\dM} \to u\rest{\dM}$ in $\dom(\modulus{A}^{\frac12})$.
Moreover, $\tilde{u}_n \to \chi u = u$ and $D\tilde{u}_n = \chi D{u}_n + \nabla\chi\cdot {u}_n \to \chi D{u} + \nabla\chi\cdot {u} = Du$ in $L^2(M;E)$.
Applying \eqref{eq:DiracCoerc} to $\tilde{u}_n$ yields
\begin{align*}
\|D\tilde{u}_n\|_{\Lp{2}(M)}^2
 & \ge
c\|\tilde{u}_n\|_{\Lp{2}(M)} - \|\chi_{[0,\infty)}(A)|A|^{\frac12}\tilde{u}_n\|_{\Lp{2}(\dM)}^2
\end{align*}
and hence, by passing $n\to\infty$,
\begin{align*}
\|Du\|_{\Lp{2}(M)}^2
 & \ge
c\|u\|_{\Lp{2}(M)} - \|\chi_{[0,\infty)}(A)|A|^{\frac12}u|_{\Lp{2}(\dM)}^2
=
c\|u\|_{\Lp{2}(M)} .
\end{align*}
Thus $D$ is $B$-coercive with respect to $K'$.
Since $D^\dagger=D$ and $B^*$ coincides with $B$ up to the kernel of $A$, the operator $D^\dagger$ is $B^*$-coercive with respect to $K'$.
Theorem~\ref{Thm:Coercive} concludes the proof.
\end{proof}

\begin{corollary}
Let $M$ be a complete Riemannian spin manifold with smooth boundary $\dM$.
Let $C\to M$ be a Hermitian vector bundle, equipped with a compatible connection.
Let $D$ be the corresponding twisted Dirac operator.
Suppose that Assumptions~\ref{DiracAssumptions} hold.

Let $A=D^{\dM}$ be the intrinsic Dirac operator of the boundary, i.e.\ $V=0$.
Let $B=B_{\APS}(A)$ the APS boundary condition for $D$.
Let $K\subset \interior{M}$ be a compact subset.

If $H\ge0$ on $\dM$ and there exists a constant $c>0$ such that
$$
\tfrac14 S^M + \KK^C \ge c
$$
on $M\setminus K$, then $D_B$ is a Fredholm operator.
\end{corollary}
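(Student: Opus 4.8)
The plan is to run exactly the argument already contained in the proof of Proposition~\ref{Prop:DiracBVP} and then upgrade its conclusion from semi-Fredholmness to Fredholmness by feeding it into Corollary~\ref{Cor:Coercive} in place of Theorem~\ref{Thm:Coercive}. The single ingredient not used in Proposition~\ref{Prop:DiracBVP} is the regularity of the APS boundary condition, and this is where the specialisation $V=0$, $A=D^{\dM}$ enters.

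First I would note that by Proposition~\ref{prop:Dirac} the twisted Dirac operator $D$ together with the adapted operator $A=D^{\dM}$ satisfies the minimal setup \ref{Hyp:StdFirst}--\ref{Hyp:StdLast} and the geometric setup \ref{Hyp:ExtFirst}--\ref{Hyp:ExtLast}, so the full boundary-value-problem machinery applies. Here $\sym_0$ is Clifford multiplication by the interior unit normal; when $V=0$ it anticommutes with $A=D^{\dM}$. Next, by Proposition~\ref{Prop:APS=Aelliptic} the boundary condition $B=B_{\APS}(A)$ is $A$-elliptically regular, hence by Corollary~\ref{Cor:AEllReg} elliptically regular, and in particular (semi-)regular in the sense required by Corollary~\ref{Cor:Coercive}. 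Moreover, since $\sym_0$ anticommutes with $A$, Remark~\ref{rem:BAPS-adjoint} identifies the adjoint boundary condition as $B^\ast=B_{\APS}(A)\oplus\ker(A)=\chi_{(-\infty,0]}(A)\dom(\modulus{A}^{\frac12})$.

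Then I would invoke Proposition~\ref{Prop:DiracBVP} with $V=0$: the hypothesis $\tfrac12 H+V=\tfrac12 H\ge0$ holds, and the curvature hypothesis $\tfrac14 S^M+\KK^C\ge c$ on $M\setminus K$ is precisely the one appearing there. Its proof produces a compact $K'\subset\interior M$ (with $K$ in its interior) such that $D$ is $B$-coercive with respect to $K'$, and moreover — using $D^\dagger=D$, $\tilde A=A$, and the description of $B^\ast$ above — that $D^\dagger$ is $B^\ast$-coercive with respect to $K'$. The point for the adjoint side is simply that $\modulus{A}^{\frac12}$ annihilates $\ker(A)$, so for $u\rest{\dM}\in B^\ast$ one still has $\chi_{[0,\infty)}(A)\modulus{A}^{\frac12}u\rest{\dM}=0$, and hence the boundary term in the Weitzenböck estimate \eqref{eq:DiracCoerc} drops out just as it does on the primal side, giving $\norm{D u}_{\Lp{2}(M)}^2\ge c\norm{u}_{\Lp{2}(M)}^2$ for all such $u$ supported away from $K'$; the condition on the cutoff is automatic because $K'\subset\interior M$.

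Finally, with $B$ regular, $D$ being $B$-coercive with respect to $K'$ and $D^\dagger$ being $B^\ast$-coercive with respect to $K'$, Corollary~\ref{Cor:Coercive} yields that $D_B$ is Fredholm. I expect the only (mild) obstacle to be bookkeeping rather than analysis: one must make sure that $B^\ast$ really does share the APS structure up to $\ker(A)$, so that the very same Weitzenböck inequality controls $D^\dagger$, and this is exactly why the intrinsic choice $A=D^{\dM}$ — forcing $\sym_0$ to anticommute with $A$ — is the right hypothesis for the corollary.
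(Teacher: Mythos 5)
Your proposal is correct and follows essentially the same route as the paper: use $V=0$ so that $\sym_0$ anticommutes with $A$, identify $B^\ast = B_{\APS}(A)\oplus\ker(A)$ via Remark~\ref{rem:BAPS-adjoint}, apply the coercivity argument of Proposition~\ref{Prop:DiracBVP} to both $D$ with $B$ and $D^\dagger=D$ with $B^\ast$, and conclude Fredholmness from Corollary~\ref{Cor:Coercive}. Your explicit observation that the boundary term still vanishes on $B^\ast$ because $\modulus{A}^{\frac12}$ annihilates $\ker(A)$ is a welcome clarification of a step the paper leaves implicit, but it does not change the argument.
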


\begin{proof}
If $V=0$ then $A$ anticommutes with $\sigma_0$, hence $B^*$ coincides with $B$ up to the kernel of $A$ by Remark~\ref{rem:BAPS-adjoint}.
Therefore, Proposition~\ref{Prop:DiracBVP} applies to $D$ with the boundary condition $B$ and to $D^\dagger$ with boundary condition $B^*$.
Thus, $D_B$ and its adjoint have finite dimensional kernel and closed range, hence $D_B$ is Fredholm.
\end{proof}

Next we consider local boundary conditions.

\begin{proposition}\label{Prop:DiracLocalBVP}
Let $M$ be a complete Riemannian spin manifold with smooth boundary $\dM$.
Let $C\to M$ be a Hermitian vector bundle, equipped with a compatible connection.
Let $D$ be the corresponding twisted Dirac operator.
Suppose that Assumptions~\ref{DiracAssumptions} hold.

Let $\Xi$ be a chirality operator and let $B=B_{E_\pm}$ be one of the two corresponding local boundary conditions.
Let $K\subset M$ be a compact subset.

If $\frac12 H+V\ge0$ on $\dM$ and there exists a constant $c>0$ such that
$$
\tfrac14 S^M + \KK^C \ge c
$$
on $M\setminus K$ in the sense of symmetric endomorphisms, then $\ker(D_B)$ is finite dimensional and $\ran(D_B)$ is closed.
\end{proposition}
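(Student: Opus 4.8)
The plan is to follow the proof of Proposition~\ref{Prop:DiracBVP} almost verbatim; the only genuinely new ingredient is the treatment of the boundary term, which for a local condition behaves differently from the APS case. First I would record the structural facts. Since $B=B_{E_\pm}$ is $A$-elliptically regular by Proposition~\ref{Prop:ChiralReg}, it is elliptically regular (Corollary~\ref{Cor:AEllReg}) and in particular semi-regular, so Theorem~\ref{Thm:Coercive} will apply once coercivity is established. Moreover, because $B$ is a local boundary condition, condition \ref{Def:Coercive:1} of Definition~\ref{Def:Coercive} is automatic for \emph{any} compact set, including one meeting $\dM$ — this is why here $K$ need not lie in $\interior M$. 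Thus it suffices to produce a compact set $K'$ with respect to which $D$ is $B$-coercive.

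Second, the Weitzenböck estimate. For $u\in\Ck[c]{\infty}(M;E)$ with $\spt u\cap K=\emptyset$, the computation in the proof of Proposition~\ref{Prop:DiracBVP} — stopping at the line which introduces the adapted operator — combined with \eqref{eq:Weitzen} and the hypothesis $\tfrac14 S^M+\KK^C\ge c$ on $M\setminus K$ gives
\[
  0\le \norm{Du}_{\Lp{2}(M)}^2 + \int_{\dM}\inprod{(A-\tfrac12 H-V)u\rest{\dM},u\rest{\dM}} - c\norm{u}_{\Lp{2}(M)}^2 .
\]
If in addition $u\rest{\dM}$ takes values fibrewise in $E_\pm$, then $\Xi u\rest{\dM}=\pm u\rest{\dM}$, and the anticommutation $\Xi A=-A\Xi$ forces $Au\rest{\dM}$ to take values fibrewise in $E_\mp$; since the eigenbundle splitting $E|_{\dM}=E_+\oplus E_-$ is orthogonal (the case of a selfadjoint chirality operator — this is where a self-adjointness property of $\Xi$ enters), the integrand $\inprod{Au\rest{\dM},u\rest{\dM}}$ vanishes pointwise, and together with $\tfrac12 H+V\ge0$ the entire boundary integral is $\le0$. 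Hence $\norm{Du}_{\Lp{2}(M)}^2\ge c\norm{u}_{\Lp{2}(M)}^2$ for every such $u$.

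Third, the approximation and conclusion. Fix a compact $K'$ with $K\subset\interior{K'}$ and $\chi\in\Ck{\infty}(M)$ with $\chi\equiv0$ near $K$ and $\chi\equiv1$ outside $K'$. Given $u\in\dom(D_B)$ with $\spt u\cap K'=\emptyset$ (so $\chi u=u$), I would produce $u_n\in\Ck[c]{\infty}(M;E)$ with $u_n\rest{\dM}$ fibrewise in $E_\pm$, with $\spt u_n\cap K=\emptyset$, and with $u_n\to u$ in the graph norm: since $D$ is Dirac-type with complete metric, $|\sigma_D(\xi)|=|\xi|$, so Propositions~\ref{Prop:ChiralReg} and \ref{Prop:AEllRegApprox} give $\hat u_n\in\Ck[c]{\infty}(M;E)$ with $\hat u_n\to u$ in the graph norm and $\hat u_n\rest{\dM}\to u\rest{\dM}$ in $\dom(\modulus{A}^{\frac12})$; replacing $\hat u_n$ by $\hat u_n-\eta\,\widehat{(I-P_\pm)}\,\hat u_n$ for a collar cutoff $\eta\in\Ck[c]{\infty}(M)$ equal to $1$ near $\dM$ and a smooth bundle-endomorphism extension of the fibrewise projection $I-P_\pm$, one arranges $u_n\rest{\dM}=P_\pm\hat u_n\rest{\dM}$ fibrewise in $E_\pm$, while retaining graph-norm convergence because $I-P_\pm$ is bounded on $\dom(\modulus{A}^{\frac12})$ (Proposition~\ref{Prop:ChiralReg}) and $(I-P_\pm)u\rest{\dM}=0$; finally multiply by $\chi$, which is legitimate since $B$ is local (Proposition~\ref{Prop:LocLoc}) and $\chi\equiv1$ on $\spt u$. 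Applying the estimate of the second step to $\chi u_n$ and letting $n\to\infty$ (using $\chi u_n\to u$ and $D(\chi u_n)\to Du$ in $\Lp{2}(M;E)$) yields $\norm{Du}_{\Lp{2}(M)}\ge\sqrt c\,\norm{u}_{\Lp{2}(M)}$ for all such $u$, so $D$ is $B$-coercive with respect to $K'$, and Theorem~\ref{Thm:Coercive} concludes that $\ker(D_B)$ is finite dimensional and $\ran(D_B)$ is closed.

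The main obstacle I anticipate is the approximation step: producing smooth, compactly supported approximants that \emph{simultaneously} keep the boundary trace fibrewise in $E_\pm$, converge in the graph norm, and have support bounded away from $K$ — together with the observation that the boundary integrand vanishes only when the eigenbundle splitting is orthogonal, so that the argument as written applies to selfadjoint chirality operators (a hypothesis one may wish to add, or verify in the examples of interest).
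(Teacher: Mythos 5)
Your overall strategy is the paper's: the Weitzenböck inequality off $K$, the observation that the chirality anticommutation kills the boundary term $\int_{\dM}\inprod{Au,u}$, the approximation via Propositions~\ref{Prop:ChiralReg} and \ref{Prop:AEllRegApprox}, and Theorem~\ref{Thm:Coercive}. Your remark that locality of $B$ makes Definition~\ref{Def:Coercive}~\ref{Def:Coercive:1} automatic for compacta meeting $\dM$ matches the paper, and your caveat that the vanishing of $\inprod{Au,u}$ on $E_\pm$-valued traces uses orthogonality of the eigenbundle splitting (hence selfadjointness of $\Xi$, or some compatibility of $\Xi^*$ with the splitting) is legitimate; the paper's chain $\inprod{Au,u}=\inprod{A\Xi_+u,\Xi_+u}=\cdots=-\inprod{Au,u}$ relies on the same compatibility.

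The gap is in your approximation step. You replace $\hat u_n$ by $u_n=\hat u_n-\eta\,\widehat{(I-P_\pm)}\,\hat u_n$ to force the traces into $E_\pm$. But $\eta\,\widehat{(I-P_\pm)}$ is multiplication by a compactly supported smooth bundle endomorphism, hence a bounded operator $\dom(D_{\max})\to\dom(D_{\max})$ (the commutator with $D$ is of order zero), so $\eta\,\widehat{(I-P_\pm)}\,\hat u_n\to\eta\,\widehat{(I-P_\pm)}\,u$ in the graph norm — and this limit is not zero: only the boundary trace $(I-P_\pm)(u\rest{\dM})$ vanishes, not $\widehat{(I-P_\pm)}\,u$ throughout the collar. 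Hence $u_n\to u-\eta\,\widehat{(I-P_\pm)}\,u\neq u$, and your coercivity estimate passes to the limit for the wrong section; the boundedness of $I-P_\pm$ on $\dom(\modulus{A}^{\frac12})$ controls traces, not the bulk correction. The detour is also unnecessary, and the paper avoids it: apply the Weitzenböck inequality to $\tilde u_n=\chi\hat u_n$ (whose traces need not lie in $E_\pm$), note that $v\mapsto\inprod{Av,v}_{\Lp{2}(\dM)}=\inprod{\sgn(A)\modulus{A}^{\frac12}v,\modulus{A}^{\frac12}v}_{\Lp{2}(\dM)}$ is a continuous quadratic form on $\dom(\modulus{A}^{\frac12})$ so that one may pass to the limit in the boundary term using $\tilde u_n\rest{\dM}\to u\rest{\dM}$ there, and only then run the chirality computation on the limit $u\rest{\dM}\in B_{E_\pm}$ to conclude that this term vanishes. (Alternatively, your correction can be repaired by using $\ext_{T_c}\big((I-P_\pm)\hat u_n\rest{\dM}\big)$ instead, whose graph norm is controlled by $\lVert(I-P_\pm)\hat u_n\rest{\dM}\rVert_{\dom(\modulus{A}^{\frac12})}\to 0$ via Proposition~\ref{Prop:WP}~\ref{Prop:WP:3}.)
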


\begin{proof}
Let $u\in \Ck[c]{\infty}(M;E)$ with $\spt u \cap K = \emptyset$.
As in \eqref{eq:DiracCoerc} we find
\begin{equation*}
0 \le \|Du\|_{\Lp{2}(M)}^2 + \int_{\dM}\inprod{Au,u} - c\|u\|_{\Lp{2}(M)}^2 .
\end{equation*}
We choose a compact subset $K'\subset M$ such that $K$ is contained in the interior of $K'$.
We show that $D$ is $B$-coercive with respect to $K'$.

We pick $\eta_{K'}\in\Ck[c]{\infty}(M)$ with $\eta_{K'}\equiv 1$ on $K'$.
Since the boundary condition $B$ is local, we have that $\eta_{K'}\dom(D_B)\subset\dom(D_B)$.

For $u\in \dom(D_B)$ with $\spt u\cap K'=\emptyset$, Propositions~\ref{Prop:AEllRegApprox} and \ref{Prop:ChiralReg} allow us to use the same approximation as in the proof of Proposition~\ref{Prop:DiracBVP} to get
\begin{align}
\|Du\|_{\Lp{2}(M)}^2
 & \ge
c\|u\|_{\Lp{2}(M)} - \int_{\dM}\inprod{Au,u} .
\label{eq:DiracLocal1}
\end{align}
Now suppose $B=B_{E_+}$, the case $B=B_{E_-}$ being completely analogous.
Then we have along $\dM$
\begin{align*}
\inprod{Au,u}
&=
\inprod{A\Xi_+u,\Xi_+u}
=
-\inprod{\Xi_+Au,\Xi_+u}
=
-\inprod{Au,\Xi_+u}
=
-\inprod{Au,u},
\end{align*}
hence $\inprod{Au,u}=0$.
Therefore, \eqref{eq:DiracLocal1} implies
\[
\|Du\|_{\Lp{2}(M)}^2 \ge c\|u\|_{\Lp{2}(M)} .
\]
Thus $D$ is $B$-coercive with respect to $K'$.
Theorem~\ref{Thm:Coercive} concludes the proof.
\end{proof}

\begin{corollary}\label{Cor:DiracLocalBVP}
Let $M$ be a complete Riemannian spin manifold with smooth boundary $\dM$.
Let $C\to M$ be a Hermitian vector bundle, equipped with a compatible connection.
Let $D$ be the corresponding twisted Dirac operator.
Let $A=D^{\dM}$ be the intrinsic Dirac operator of the boundary, i.e.\ $V=0$.
Suppose that Assumptions~\ref{DiracAssumptions} hold.

Let $\Xi$ be a chirality operator and let $B=B_{E_\pm}$ be one of the two corresponding local boundary conditions.
Let $K\subset M$ be a compact subset.

If $H\ge0$ on $\dM$ and there exists a constant $c>0$ such that
$$
\tfrac14 S^M + \KK^C \ge c
$$
on $M\setminus K$ in the sense of symmetric endomorphisms, then $D_B$ is a Fredholm operator.
\end{corollary}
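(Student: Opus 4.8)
The plan is to upgrade the semi-Fredholm conclusion of Proposition~\ref{Prop:DiracLocalBVP} for $D_B$ to genuine Fredholmness by establishing the analogous conclusion for the adjoint operator $D_B^\ad$. Recall that a closed, densely defined operator with finite-dimensional kernel and closed range whose adjoint also has finite-dimensional kernel is Fredholm: when $\ran(D_B)$ is closed one has $\Lp{2}(M;F)/\ran(D_B) \cong \ran(D_B)^{\perp} = \nul(D_B^\ad)$, so $D_B$ has finite-dimensional cokernel as soon as $\nul(D_B^\ad)$ is finite dimensional. Since Proposition~\ref{Prop:DiracLocalBVP} already gives that $\nul(D_B)$ is finite dimensional and $\ran(D_B)$ is closed, it remains only to produce the same information for $D_B^\ad$.

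First I would identify $D_B^\ad$ concretely. As $D$ is formally selfadjoint we have $D^\dagger = D$, and by our conventions $\tilde A = A = D^{\dM}$. Because $V=0$, the symbol $\sym_0$ anticommutes with $A$, so Remark~\ref{rem:ChiralAdjoint}~\ref{rem:ChiralAdjoint.2} applies: $\tilde\Xi := \sym_0\Xi^\ad\sym_0^{-1}$ is again a chirality operator for $A$, and, writing $\tilde E_\pm$ for its $\pm1$-eigenbundles, $B_{E_\pm}^\ad = B_{\tilde E_\mp}$. By Proposition~\ref{Prop:ChiralReg} the boundary condition $B=B_{E_\pm}$ is $A$-elliptically regular, hence $B^\ad$ is $\tilde A$-elliptically regular, and by Corollary~\ref{Cor:AEllReg} both are elliptically regular. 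Consequently $D_B = D_{B,\max}$, and combining Proposition~\ref{Prop:AdjBC} with Lemma~\ref{Lem:OpBdy} yields $D_B^\ad = D_{B,\max}^\ad = D^\dagger_{B^\ad}$, i.e.\ the operator $D^\dagger = D$ equipped with the \emph{local} boundary condition $B^\ad = B_{\tilde E_\mp}$ attached to the chirality operator $\tilde\Xi$.

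Next I would apply Proposition~\ref{Prop:DiracLocalBVP} a second time, now to $D^\dagger = D$ with chirality operator $\tilde\Xi$ and local boundary condition $B^\ad$. Here Assumptions~\ref{DiracAssumptions} involve only $M$, $C$ and the Riemannian data and hold unchanged; since $\tilde A = A$ corresponds to the potential $V=0$, the hypothesis $\tfrac12 H + V = \tfrac12 H \ge 0$ holds on $\dM$, and $\tfrac14 S^M + \KK^C \ge c$ on $M\setminus K$ is exactly what is assumed ($\KK^C$ being unchanged under $D\mapsto D^\dagger = D$). Proposition~\ref{Prop:DiracLocalBVP} then tells us that $\nul(D^\dagger_{B^\ad}) = \nul(D_B^\ad)$ is finite dimensional and $\ran(D_B^\ad)$ is closed. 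Together with the Fredholm criterion recalled in the first paragraph and Proposition~\ref{Prop:DiracLocalBVP} for $D_B$ itself, this shows $D_B$ is Fredholm.

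I do not expect a real obstacle here: the argument parallels the proof of the corollary following Proposition~\ref{Prop:DiracBVP} for the APS condition, and the substance is purely bookkeeping — confirming that the adjoint of a chiral local boundary condition is again a chiral local boundary condition, and that the geometric hypotheses transfer verbatim to $D^\dagger$. The single point demanding care is the identification $B_{E_\pm}^\ad = B_{\tilde E_\mp}$ together with the fact that $\tilde\Xi$ is a chirality operator for $\tilde A = A$, which is precisely Remark~\ref{rem:ChiralAdjoint}~\ref{rem:ChiralAdjoint.2} and hinges on $V=0$ forcing $\sym_0$ to anticommute with $A$.
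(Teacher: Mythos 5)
Your proposal is correct and follows essentially the same route as the paper: the paper likewise uses Remark~\ref{rem:ChiralAdjoint}~\ref{rem:ChiralAdjoint.2} (valid because $V=0$ forces $\sym_0$ to anticommute with $A$) to recognise $B^\ad$ as a chiral boundary condition, and then concludes via Corollary~\ref{Cor:Coercive}, whose content is exactly your second application of the coercivity argument to $D^\dagger=D$ with $B^\ad$ together with the identification $\coker(D_B)\cong\nul(D_B^\ad)$. Your write-up merely unpacks that citation explicitly, which is fine.
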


\begin{proof}
Since $V=0$, the boundary operator anticommutes with $\sym_0$.
By Remark~\ref{rem:ChiralAdjoint}~\ref{rem:ChiralAdjoint.2}, the adjoint boundary condition $B^*$ is again a chiral boundary condition.
Corollary~\ref{Cor:Coercive} implies that $D_{B}$ is Fredholm.
\end{proof}

The authors of \cite{GN} consider boundary value problems for the spin$^c$ Dirac operator in the context of noncompact boundary.
These results are obtained under the assumption of  bounded geometry.
Their analysis resorts to the results of \cite{BB12} through localisation and therefore are confined to the study of local boundary conditions.
To obtain Fredholmness results, they impose additional operator theoretic assumptions in addition to their geometric conditions.

\subsection{Callias potentials}
\label{S:Callias}

Assume here that $D: \Ck{\infty}(M;E) \to \Ck{\infty}(M;E)$ is a formally selfadjoint Dirac-type operator on a complete Riemannian manifold $M$.
In this case, recall that an adapted operator $A$ can be chosen so that $\sym_0 A = - A \sym_0$. 
Throughout this subsection, we fix $A$ to be such an operator. 

If we write $\DDD = D + \imath \Phi$ for a symmetric potential $\Phi$, the formal adjoint is given by $\DDD^\dagger = D - \imath \Phi$.
Moreover, we have $\DDD^\dagger\DDD = D^2 + \Phi^2 + \imath[D,\Phi]$ and $\DDD\DDD^\dagger = D^2 + \Phi^2 - \imath[D,\Phi]$.
A relevant class of potentials is characterised in the following definition.

\begin{definition}[Callias potential]
Let $\Phi \in \Ck{\infty}(M;\End(E))$ be a symmetric potential such that $[D,\Phi]$ is $0$-th order.
We say that $\Phi$  is a Callias potential for $D$ if there exists a constant $\Lambda > 0$ and a compact subset $K \subset M$ such that 
$$ 
h^E_x( (\Phi^2 + \imath[D,\Phi])(x) v,v) > \Lambda \modulus{v}_{h^E_x}^2
$$
for all $v \in E_x$ whenever $x \in M \setminus K$.
\end{definition}

Note that $\Phi$ is a Callias potential in particular means that $\Phi(x)^2 + \imath[D,\Phi](x) > \Lambda$ for $x \in M \setminus K$ in the sense of symmetric endomorphisms.

By the completeness of the metric on $M$, we immediately obtain $D$ and $\DDD$ satisfy \ref{Hyp:StdFirst}-\ref{Hyp:StdLast}.
From here on, we further assume that $D$ satisfies \ref{Hyp:ExtFirst}-\ref{Hyp:ExtLast}. 
Let $Z_{[0,T)}$ be the  cylindrical neighbourhood for $\dM$ with adapted operator $A$ so that $D = \sym_t(\partial_t + A + R_t)$.
Then, setting $\Phi_0 = \Phi\rest{\dM}$, 
$$\DDD = \sym_t(\partial_t + A - \imath \sym_0 \Phi_0 + (R_t  + \imath(\sym_0 \Phi_0 - \sym_t \Phi)) = \sym_t(\partial_t + \AAA + \RRR_t),$$
where we let 
\begin{align*}
\AAA &:= A - \imath \sym_0 \Phi_0 \quad\text{ and }\quad
\RRR_t :=  R_t  + \imath(\sym_0 \Phi_0 - \sym_t \Phi).
\end{align*}
Moreover, $\RRR_t$ does not differentiate in $t$ and therefore it is a remainder term for $\DDD$.

Since $[D,\Phi]$ is of order zero, the principal symbol of $D$ commutes with $\Phi$.
In particular, $\sym_0$ commutes with $\Phi_0$.
As $\sym_0$ is skewsymmetric, $\AAA$ is obtained by adding a skewsymmetric potential $\sym_0 \Phi_0$. 

Let $[X,Y]_+ = XY + YX$, the anticommutator of $X$ and $Y$. 
\begin{definition}[Para-Callias\footnote{We thank Claudia Grabs for suggesting this nomenclature.} potential] 
Let $\Psi \in \Ck{\infty}(M;\End(E))$ be a skewsymmetric potential such that $[D,\Psi]$ is $0$-th order. 
We say that $\Psi$ is a para-Callias potential if there exists a constant $\Lambda > 0$ and a compact subset $K \subset M$ such that 
$$ 
h^E_x( (\imath\Psi)^2 + \imath[D,\Psi]_+)(x) v,v) > \Lambda \modulus{v}_{h^E_x}^2
$$
for all $v \in E_x$ whenever $x \in M \setminus K$.
\end{definition} 

In this case, the obtained operator is $\DDD := D + \imath \Psi$, which is again formally selfadjoint.

Let us consider the Callias potential $\Phi$.
We see that $-\sym_0 \Phi_0$ on the boundary is skewsymmetric.
Therefore, the adapted boundary operator $\AAA$ arising from adding formally selfadjoint and satisfies
$$\AAA^\dagger \AAA = \AAA^2  = A^2  + \Phi_0^2 + \imath \sym_0[A,\Phi_0].$$
Clearly $\Phi_0^2 + \imath \sym_0[A,\Phi_0]$ is symmetric. 

\begin{proposition}
\label{Prop:SetupSticks}
Let $D: \Ck{\infty}(M;E) \to \Ck{\infty}(M;E)$ be a formally selfadjoint Dirac-type operator and  $\Phi \in \Ck{\infty}(M;\End(E))$ a Callias potential for $D$ on a complete Riemannian manifold $M$. 
Assume the following.
\begin{enumerate}[label=(\roman*), labelwidth=0pt, labelindent=2pt, leftmargin=21pt]
\item 
\label{Itm:SetupSticks:1}
$D$ satisfies the  extended setup \ref{Hyp:ExtFirst}-\ref{Hyp:ExtLast}. 
\item 
\label{Itm:SetupSticks:2} 
There exists $t' \leq T$ such that $\modulus{\sym_t(x) \Phi(t,x) - \sym_0(x)\Phi_0(x)} \leq C'$ uniformly for $(t,x) \in Z_{[0,t')}$.
\item 
\label{Itm:SetupSticks:3}
The potential on the boundary $-\sym_0 \Phi_0$ is a para-Callias potential for $A$. 
\end{enumerate}
Then, $\DDD$ satisfies  \ref{Hyp:StdFirst}-\ref{Hyp:StdLast} and \ref{Hyp:ExtFirst}-\ref{Hyp:ExtLast}.
\end{proposition}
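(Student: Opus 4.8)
The plan is to verify each condition in the minimal setup \ref{Hyp:StdFirst}--\ref{Hyp:StdLast} and the geometric setup \ref{Hyp:ExtFirst}--\ref{Hyp:ExtLast} for $\DDD = D + \imath\Phi$ in turn, using the corresponding facts for $D$ as input and treating $\imath\Phi$ as a lower-order perturbation. Since $\DDD$ differs from $D$ by the zeroth-order operator $\imath\Phi$, the principal symbols agree, $\sym_{\DDD} = \sym_D$, so ellipticity \ref{Hyp:StdFirst}--(M4) is immediate from ellipticity of $D$, and $M$, $\mu$, $\vec T$, $\tau$, $E$ are unchanged so (M1)--(M3) hold verbatim. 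For completeness \ref{Hyp:StdLast}, I would invoke Theorem~\ref{Thm:cherwolf}: since $\sym_{\DDD} = \sym_D$ and $M$ carries a complete Riemannian metric with respect to which $\sym_D$ is (for a Dirac-type operator) Clifford multiplication, hence satisfies \eqref{eq:symest} with constant $C$, the hypotheses of Theorem~\ref{Thm:cherwolf} are met and both $\DDD$ and $\DDD^\dagger = D - \imath\Phi$ are complete.

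For the geometric setup, the key observation already recorded in the excerpt is the decomposition
$$\DDD = \sym_t(\partial_t + \AAA + \RRR_t), \qquad \AAA = A - \imath\sym_0\Phi_0, \qquad \RRR_t = R_t + \imath(\sym_0\Phi_0 - \sym_t\Phi),$$
valid on $Z_{[0,T)}$. For \ref{Hyp:ExtFirst}, one checks that $\AAA$ is essentially selfadjoint and adapted to $\DDD$: essential selfadjointness follows because $-\imath\sym_0\Phi_0$ is a bounded symmetric endomorphism field (recall $\sym_0$ is skewsymmetric and commutes with $\Phi_0$, so $-\imath\sym_0\Phi_0$ is symmetric), and bounded symmetric perturbations of essentially selfadjoint operators remain essentially selfadjoint; the adaptedness symbol identity is unchanged since $\sym_{\AAA} = \sym_A$ and $\sym_{\DDD} = \sym_D$. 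The same reasoning applies to the adjoint side with $\tilde\AAA = \AAA$ (using $\DDD^\dagger = D - \imath\Phi$, whose adapted operator on the boundary is again $\AAA$ because $-\sym_0\cdot(-\Phi_0) = \sym_0\Phi_0$, i.e.\ the skewsymmetric part flips sign consistently). Condition \ref{Hyp:Metric} concerns only $M$, $\mu$, $\Phi$-the-diffeomorphism, $\vec T$ — none of which change — so it holds with the same $U$, $\Phi$, $T_0$.

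The substantive step is \ref{Hyp:RemControl}: one must produce $T \in (0,T_0)$ and $C \ge 1$ with the symbol bound on $\sym_t$ (unchanged, since $\sym_t = \sym_{\DDD}(t,x,dt) = \sym_D(t,x,dt)$, so inherited from $D$) and the remainder estimates $\norm{\RRR_t u}_{\Lp{2}} \lesssim t\norm{\AAA u}_{\Lp{2}} + \norm{u}_{\Lp{2}}$, and likewise for $\tilde\RRR_t$. Here I would write $\RRR_t = R_t + \imath(\sym_0\Phi_0 - \sym_t\Phi)$ and estimate the two pieces separately: for $R_t$, assumption \ref{Itm:SetupSticks:1} (the geometric setup for $D$) gives $\norm{R_t u}_{\Lp{2}} \le Ct\norm{Au}_{\Lp{2}} + C\norm{u}_{\Lp{2}}$, and since $\norm{Au}_{\Lp{2}} \le \norm{\AAA u}_{\Lp{2}} + \norm{\sym_0\Phi_0 u}_{\Lp{2}} \lesssim \norm{\AAA u}_{\Lp{2}} + \norm{u}_{\Lp{2}}$ by boundedness of $\sym_0\Phi_0$ on the boundary (implicit in the Callias hypothesis restricted to the compact-plus-bounded structure near $\dM$; if $\Phi_0$ is not a priori bounded one restricts to $t' \le T$ and uses \ref{Itm:SetupSticks:2} together with the $t=0$ case), this piece is absorbed. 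For the second piece, $\norm{(\sym_0\Phi_0 - \sym_t\Phi)u}_{\Lp{2}} \le C'\norm{u}_{\Lp{2}}$ uniformly on $Z_{[0,t')}$ by assumption \ref{Itm:SetupSticks:2}, which contributes only to the $\norm{u}_{\Lp{2}}$ term. Shrinking $T$ to $\min\{T, t'\}$ and enlarging $C$ accordingly gives the first remainder bound; the bound for $\tilde\RRR_t$ is identical since $\DDD^\dagger$ has the same structure with $\Phi$ replaced by $-\Phi$. I expect the main obstacle to be precisely the interplay between the boundedness of $\Phi_0$ near the boundary and the $t$-dependence: one must be careful that assumption \ref{Itm:SetupSticks:2} is exactly what licenses replacing $\sym_t\Phi$ by $\sym_0\Phi_0$ up to a bounded error, and that the para-Callias hypothesis \ref{Itm:SetupSticks:3} on $-\sym_0\Phi_0$ is what one would eventually feed into the coercivity/Fredholm machinery — but for \emph{this} proposition, \ref{Itm:SetupSticks:3} is not needed for the setup conditions themselves and only \ref{Itm:SetupSticks:1} and \ref{Itm:SetupSticks:2} do the work, together with the elementary algebra of the decomposition and Theorem~\ref{Thm:cherwolf}.
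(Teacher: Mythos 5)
There is a genuine gap at the heart of your treatment of \ref{Hyp:RemControl}, and it is located exactly where you dismiss hypothesis~\ref{Itm:SetupSticks:3}. To absorb the term $Ct\norm{Au}$ coming from the estimate on $R_t$ into the required form $t\norm{\AAA u} + \norm{u}$, you write $\norm{Au} \le \norm{\AAA u} + \norm{\sym_0\Phi_0 u} \lesssim \norm{\AAA u} + \norm{u}$, invoking ``boundedness of $\sym_0\Phi_0$''. But nothing in the hypotheses makes $\Phi_0$ bounded: the Callias condition is a \emph{lower} bound on $\Phi^2 + \imath[D,\Phi]$ outside a compact set, and assumption~\ref{Itm:SetupSticks:2} only bounds the difference $\sym_t\Phi - \sym_0\Phi_0$, not $\Phi_0$ itself. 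Indeed, in the intended applications (Theorem~\ref{Thm:Callias}) the potential is \emph{strongly} para-Callias, which forces $\Phi_0$ to blow up at infinity on a noncompact boundary precisely so that $\AAA$ acquires discrete spectrum. Your fallback (``use \ref{Itm:SetupSticks:2} together with the $t=0$ case'') does not repair this, since it again only controls the difference.

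The paper's proof uses \ref{Itm:SetupSticks:3} exactly here. From $\AAA^2 = A^2 + \Phi_0^2 + \imath\sym_0[A,\Phi_0]$ one gets
\[
\norm{Au}^2 = \inprod{\AAA^2 u,u} - \inprod{(\Phi_0^2 + \imath\sym_0[A,\Phi_0])u,u},
\]
and the para-Callias property of $-\sym_0\Phi_0$ (plus smoothness on the compact exceptional set) gives a uniform lower bound $\inprod{(\Phi_0^2+\imath\sym_0[A,\Phi_0])u,u} \ge -\Lambda_{\AAA}\norm{u}^2$, whence $\norm{Au}^2 \lesssim \norm{\AAA u}^2 + \norm{u}^2$ with no boundedness of $\Phi_0$ required. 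A secondary inaccuracy: the adapted operator for $\DDD^\dagger = D - \imath\Phi$ is $\tilde\AAA = A + \imath\sym_0\Phi_0 \ne \AAA$; the adjoint-side remainder estimate is recovered not by symmetry but by the identity $\tilde\AAA = -\sym_0^{-1}\AAA\sym_0$ (using that $\sym_0$ anticommutes with $A$ and commutes with $\Phi_0$ and is an isometry), which yields $\norm{\tilde\AAA u}\simeq\norm{\AAA u}$. The rest of your outline (the minimal setup, \ref{Hyp:ExtFirst}, \ref{Hyp:Metric}, the symbol bound, and the use of \ref{Itm:SetupSticks:2} to control $\sym_0\Phi_0-\sym_t\Phi$) matches the paper.
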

\begin{proof}
Since the metric is assumed to be complete, $D$ satisfies \ref{Hyp:StdFirst}-\ref{Hyp:StdLast} as does  $\DDD$, as as their principal symbols coincide.
It is easily verified that $\AAA$ satisfies \ref{Hyp:ExtFirst} and \ref{Hyp:Metric}, particularly since it is itself Dirac type and by the completeness of the metric, it is selfadjoint. 
The first inequality of \ref{Hyp:RemControl} is also a consequence of the fact that it is satisfied for $D$. 

It remains to show that the remainder terms are appropriately controlled in terms of $\AAA$.
Here, we move from $T \in (0,T_0)$ in \ref{Hyp:RemControl} to $t_0 := \min\set{T, t'}$.
Now fix $u \in \Ck[c]{\infty}(\dM;E)$.
Then, 
\begin{align*} 
\norm{\RRR_t u}_{\Lp{2}(\dM)} 
\leq 
\norm{R_t u + \imath (\sym_0 \Phi_0 - \sym_t \Phi)u}  
\leq 
\norm{R_t u} + \norm{(\sym_0 \Phi_0 - \sym_t \Phi)u} 
\leq 
C t\norm{A u} + C'\norm{u},
\end{align*}
where the ultimate inequality follows from Assumption~\ref{Itm:SetupSticks:2}.

We show that $\norm{Au} \lesssim \norm{\AAA u} + \norm{u}$.
For that, recall that $\AAA^\dagger\AAA = \AAA^2  = A^2 + \imath \sym_0[A,\Phi_0] + \Phi_0^2$ and therefore, 
$$
\inprod{\AAA^2 u, u} = \inprod{A^2 u, u} + \inprod{(\imath \sym_0[A,\Phi_0] + \Phi_0^2)u, u}.
$$
Since $-\sym_0\Phi_0$ is para-Callias, on setting $\Psi = - \imath \Phi_0$ and we have  $\Lambda_{\AAA} > 0$ and $K_{\AAA}$, such that 
$$
\Lambda_{\AAA}
<
(\imath \Psi)^2 + \imath[A,\Psi] 
= 
(-\imath \sym_0 \Phi_0)^2 + \imath[A, -\sym_0 \Phi_0]_+ 
=
\Phi_0^2 + \imath\sym_0[A,\Phi_0] $$ 
on $\dM \setminus K_{\AAA}$. 
Therefore, 
\begin{align*} 
\big\langle (\imath \sym_0[A,\Phi_0] &+ \Phi_0^2)u, u\big\rangle_{\Lp{2}(\dM)} \\ 
&=  \inprod{(\imath \sym_0[A,\Phi_0] + \Phi_0^2)u, u}_{\Lp{2}(\dM \setminus K_{\AAA})} + \inprod{(\imath \sym_0[A,\Phi_0] + \Phi_0^2)u, u}_{\Lp{2}(K_{\AAA})} \\
&\geq
\norm{u}^2_{\Lp{2}(\dM \setminus K_{\AAA})} +  \inprod{(\imath \sym_0[A,\Phi_0] + \Phi_0^2)u, u}_{\Lp{2}(K_{\AAA})} \\ 
&\geq 
\norm{u}^2_{\Lp{2}(\dM \setminus K_{\AAA})} - \Lambda_{\AAA} \norm{u}^2_{\Lp{2}(K_{\AAA})}. 
\end{align*}
That is,
$$\inprod{(\imath \sym_0[A,\Phi_0] + \Phi_0^2)u, u}_{\Lp{2}(\dM)} + \Lambda_{\AAA} \norm{u}_{\Lp{2}}(\dM)^2 \geq \norm{u}^2_{\Lp{2}(\dM \setminus K_{\AAA})} \geq 0.$$
Therefore, 
\begin{equation}
\label{Eq:Rem}
\begin{aligned}
\norm{Au}^2 
&\leq  \inprod{A^2 u, u} + \inprod{(\imath \sym_0[A,\Phi_0] + \Phi_0^2)u, u}_{\Lp{2}(\dM)} + \Lambda_{\AAA} \norm{u}_{\Lp{2}(\dM)}^2  \\ 
&\lesssim  \inprod{\AAA^2u,u}  + \norm{u}^2 
= \norm{\AAA u}^2  + \norm{u}^2.
\end{aligned}
\end{equation}

We now compute the required bound on the remainder term $\tilde{\RRR}_t$ corresponding to $\DDD^\dagger$.
Note that $ \tilde{\RRR}_t = R_t + \imath (\phi_t \Phi - \sym_0 \Phi_0)$, and as before, the latter term is bounded in $\Lp{2}(\dM)$.
Therefore,  it remains to show that $\norm{R_t u } \lesssim t \norm{\tilde{\AAA}u} + \norm{u}$, for the adapted operator  $\tilde{\AAA} =A+\imath \sym_0\Phi_0$.
From \ref{Hyp:RemControl} for $D$, as before, we have $\norm{R_t u} \lesssim t\norm{A u} + \norm{u}$ and so it suffices to show, as before, $\norm{A u} \lesssim \norm{\tilde{\AAA} u}  + \norm{u}$.
The crucial observation here is that since $\sym_0$ anticommutes with $A$ and commutes with  $\Phi_0$, $\tilde{\AAA} = -\sym_0^{-1} \AAA \sym_0$.
Since $\sym_0$ is an isometry, we obtain  $\norm{\AAA u} \simeq \norm{\tilde{\AAA} u}$. 
The desired estimate $\norm{A u} \lesssim \norm{\tilde{\AAA} u} + \norm{u}$ then follows from Eq.~\eqref{Eq:Rem}.

Together, these calculations show $\DDD$ satisfies \ref{Hyp:StdFirst}-\ref{Hyp:StdLast} and \ref{Hyp:ExtFirst}-\ref{Hyp:ExtLast}.

\end{proof}

\begin{remark}
\label{Rem:paraCallias} 
Assumption~\ref{Itm:SetupSticks:3} in Proposition~\ref{Prop:SetupSticks} can be replaced by the assumption that $+\sym_0 \Phi_0 \in \Ck{\infty}(\dM;\End(E))$ is a para-Callias potential.
This is seen from the fact that $A+\imath \sym_0\Phi_0 = -\sym_0^{-1} (A - \imath \sym_0 \Phi_0) \sym_0$ which shows that $\norm{(A + \imath \sym_0 \Phi_0)u} \simeq \norm{(A - \imath \sym_0 \Phi_0)u}$ with equality of domains.
\end{remark}

In order to consider semi-Fredholmness and Fredholmness, we need to ensure that $\AAA$ has discrete spectrum.
To that end, motivated by \cite{BS0}, we define the following. 

\begin{definition}[Strongly para-Callias potential]
Let $\Psi \in \Ck{\infty}(M;\End(E))$ be skewsymmetric. 
We say that $\Psi$ is \emph{strongly para-Callias potential} if  for any $R > 0$, there exists a compact subset $K_R \subset M$ such that:
$$ h^E_x ( (\imath \Psi)^2+ \imath  [D,\Psi]_+)(x) v,v) \geq R \modulus{v}_{h^E_x}^2.$$
for all $v \in E_x$ and for all $x \in M \setminus K_R$.
\end{definition}

Similar reasoning as found in Section 3.10 in \cite{BS0} shows the following proposition.
\begin{proposition}
\label{Prop:DiscreteSpecA}
Let $\mathbb{A}$ be a formally selfadjoint Dirac-type operator on a complete Riemannian manifold without boundary.
Let $\Psi$ be a strongly para-Callias potential for $\mathbb{A}$.
Then the operator $\mathbb{A}+\Psi$ has discrete spectrum. 
\hfill\qed
\end{proposition}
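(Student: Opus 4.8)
The plan is to reduce discreteness of the spectrum to a compact embedding statement via Theorem~\ref{Thm:AbsCpctEmbed}, following the reasoning of Section~3.10 in \cite{BS0}. Since $\Psi$ is of order zero, the operator $\mathbb{B} := \mathbb{A}+\imath\Psi$ is a first-order differential operator with the same (Dirac-type) principal symbol as $\mathbb{A}$, so on the complete manifold $M$ it is complete by Theorem~\ref{Thm:cherwolf}; being formally selfadjoint, it is therefore essentially selfadjoint, and I denote its selfadjoint closure again by $\mathbb{B}$. The algebraic input is the pointwise Weitzenböck identity
$$\mathbb{B}^2 = \mathbb{A}^2 + W, \qquad W := (\imath\Psi)^2 + \imath[\mathbb{A},\Psi]_+,$$
which gives $\norm{\mathbb{B}u}^2 = \norm{\mathbb{A}u}^2 + \inprod{Wu,u}_{\Lp{2}}$ for all $u\in\Ck[c]{\infty}(M;E)$. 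The strongly para-Callias hypothesis says exactly that for every $R>0$ there is a compact $K_R$ with $W\ge R$ on $M\setminus K_R$.

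Next I would establish the coercivity estimate at infinity: for every $R>0$ and every $u\in\dom(\mathbb{B})$ with $\spt u\subseteq M\setminus K_R$, one has $\norm{u}^2\le R^{-1}\norm{\mathbb{B}u}^2$. For $u\in\Ck[c]{\infty}(M\setminus K_R;E)$ this is immediate from the Weitzenböck identity, since $\norm{\mathbb{A}u}^2\ge 0$ and $\inprod{Wu,u}\ge R\norm{u}^2$. For a general $u$ in the maximal domain with support away from $K_R$, I would pick $\eta\in\Ck{\infty}(M)$ with bounded differential that vanishes near $K_R$ and equals $1$ on a neighbourhood of $\spt u$ (possible because $\mathrm{dist}(K_R,\spt u)>0$), approximate $u$ by $\phi_j\in\Ck[c]{\infty}(M;E)$ in the graph norm (density of compactly supported sections, available since $\mathbb{B}$ is complete), replace $\phi_j$ by $\eta\phi_j\in\Ck[c]{\infty}(M\setminus K_R;E)$, and pass to the limit using continuity of multiplication by $\eta$ on the graph norm.

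With this in hand the compactness argument runs as follows. Let $\set{u_n}\subset\dom(\mathbb{B})$ be bounded in the graph norm, $\norm{u_n}+\norm{\mathbb{B}u_n}\le C$; the goal is to extract an $\Lp{2}$-convergent subsequence. Given $\epsilon>0$, choose $R>9C^2/\epsilon^2$, fix a basepoint $p$ and a radius $m$ with $B_m(p)\supseteq K_R$, and let $\chi=\rho(r/m)$ with $r=\mathrm{dist}(\cdot,p)$ and $\rho$ a standard cutoff, so that $\chi\equiv 1$ on $B_m(p)$, $\spt\chi$ is compact, and $\modulus{d\chi}\le 2/m$. Then $(1-\chi)u_n$ has support in $M\setminus K_R$ and lies in $\dom(\mathbb{B})$, with $\mathbb{B}((1-\chi)u_n)=(1-\chi)\mathbb{B}u_n-\sym_{\mathbb{A}}(d\chi)u_n$, so the estimate above gives $\norm{(1-\chi)u_n}\le R^{-1/2}(\norm{\mathbb{B}u_n}+\tfrac{2}{m}\norm{u_n})\le 3CR^{-1/2}<\epsilon$ for all $n$. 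On the other hand $\chi u_n$ is supported in the fixed precompact set $\spt\chi$, where interior elliptic regularity for the Dirac-type operator $\mathbb{B}$ bounds $\norm{\chi u_n}_{\SobH{1}}$ by $\norm{u_n}+\norm{\mathbb{B}u_n}\le C$; Rellich's theorem then produces a subsequence with $\chi u_{n_k}\to w$ in $\Lp{2}$. Combining the two pieces yields $\limsup_{i,j}\norm{u_{n_i}-u_{n_j}}\le 2\epsilon$, and a diagonal argument over $\epsilon=1/\ell$ produces an $\Lp{2}$-Cauchy subsequence. Hence $\dom(\mathbb{B})\embed\Lp{2}(M;E)$ is compact, and Theorem~\ref{Thm:AbsCpctEmbed} shows that $\mathbb{B}$ has discrete spectrum.

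I expect the only genuine subtlety to be making the coercivity estimate at infinity rigorous on the whole maximal domain rather than only on compactly supported smooth sections, since $W$ is unbounded; the cutoff-and-density step in the second paragraph is designed precisely to handle this. Everything else — the Weitzenböck computation, the interior elliptic estimate, the Rellich step, and the order in which the threshold $R$ and the ball radius $m$ are chosen — is routine.
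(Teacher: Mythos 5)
Your argument is correct and is essentially the route the paper itself points to (Section~3.10 of Braverman--Shi): the pointwise bound $W\ge R$ outside $K_R$ gives coercivity at infinity at every scale, which together with interior elliptic regularity and Rellich on a compact exhaustion yields compactness of $\dom(\mathbb{B})\hookrightarrow\Lp{2}(M;E)$ and hence discreteness of the spectrum via Theorem~\ref{Thm:AbsCpctEmbed}. You also correctly read the operator as $\mathbb{A}+\imath\Psi$ (the statement's ``$\mathbb{A}+\Psi$'' is a slip, as the intended application to $A-\imath\sym_0\Phi_0$ shows), and your cutoff-and-density step for upgrading the coercivity estimate from $\Ck[cc]{\infty}$ to the whole maximal domain is exactly the right way to deal with the unboundedness of $W$.
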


Proposition~\ref{Prop:DiscreteSpecA} will be applied to $\mathbb{A}=A$ and $\Psi=-\sym_0 \Phi_0$ on $\dM$.
\begin{theorem}
\label{Thm:Callias}
Let $D: \Ck{\infty}(M;E) \to \Ck{\infty}(M;E)$ be a formally selfadjoint Dirac-type operator on a complete Riemannian manifold $M$ satisfying the  extended setup \ref{Hyp:ExtFirst}-\ref{Hyp:ExtLast}.
Assume the following. 
\begin{enumerate}[label=(\roman*), labelwidth=0pt, labelindent=2pt, leftmargin=21pt]
\item\label{Thm:Callias:1}
$\Phi \in \Ck{\infty}(M;\End(E))$ is a Callias potential.
\item\label{Thm:Callias:2}
$-\sym_0 \Phi_0 \in \Ck{\infty}(\dM;\End(E))$ is a strongly para-Callias potential.
\item\label{Thm:Callias:3}
There exists $t' \in (0,T)$ and $C' < \infty $  such that $\modulus{\sym_t(x) \Phi(t,x) - \sym_0(x)\Phi_0(x)} \leq C'$ uniformly for $(t,x) \in Z_{[0,t')}$.
\end{enumerate} 
If $B$ is an $A$-semi-regular boundary condition for $\DDD = D + \imath \Phi$, then $\DDD_B$ has finite dimensional kernel and closed range. 
\end{theorem}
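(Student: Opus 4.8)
The plan is to verify that $\DDD = D + \imath\Phi$ satisfies all the hypotheses of Theorem~\ref{Thm:Freddy:Discrete} with adapted operator $\AAA = A - \imath\sym_0\Phi_0$, and then simply invoke that theorem. We already have most of the setup at hand from Proposition~\ref{Prop:SetupSticks}: since a strongly para-Callias potential is in particular a para-Callias potential, Assumptions~\ref{Thm:Callias:1}--\ref{Thm:Callias:3} imply Assumptions~\ref{Itm:SetupSticks:1}--\ref{Itm:SetupSticks:3} of that proposition, so $\DDD$ satisfies \ref{Hyp:StdFirst}--\ref{Hyp:StdLast} and \ref{Hyp:ExtFirst}--\ref{Hyp:ExtLast}. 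It remains to check the three additional hypotheses of Theorem~\ref{Thm:Freddy:Discrete}: item~\ref{Itm:Freddy:Discrete1} (a bound on the principal symbol by the metric), item~\ref{Itm:Freddy:Discrete3} (discreteness of $\spec(\AAA)$), and item~\ref{Itm:Freddy:Discrete4} ($B$ is $A$-semi-regular for $\DDD$), plus the $0$-coercivity of $\DDD$.

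First I would dispatch item~\ref{Itm:Freddy:Discrete1}: the principal symbols of $D$ and $\DDD$ coincide (adding the $0$-th order term $\imath\Phi$ does not change the symbol), and $D$ is Dirac-type for the complete metric $g$, so $|\sym_{\DDD}(\xi)| = |\sym_D(\xi)| = |\xi|_g$, which gives the required estimate with $C = 1$. For item~\ref{Itm:Freddy:Discrete3}, I would apply Proposition~\ref{Prop:DiscreteSpecA} with $\mathbb{A} = A$ and $\Psi = -\sym_0\Phi_0$: Assumption~\ref{Thm:Callias:2} says precisely that $-\sym_0\Phi_0$ is a strongly para-Callias potential for $A$, and $A = D^{\dM} + V$ (or more generally the chosen adapted operator) is formally selfadjoint Dirac-type on the complete boundary $\dM$ without boundary, so the proposition yields that $\AAA = A + (-\sym_0\Phi_0) = A - \imath\sym_0\Phi_0$ — wait, one must be careful about the factor of $\imath$ here. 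The potential being added to $A$ to form $\AAA$ is $-\imath\sym_0\Phi_0$, which is skewsymmetric (since $\sym_0$ is skewsymmetric and commutes with $\Phi_0$), and $\AAA$ is formally selfadjoint; Proposition~\ref{Prop:DiscreteSpecA} as stated applies to $\mathbb{A} + \Psi$ where $\Psi$ is strongly para-Callias — so I would read off from the statement that this forces $\spec(\AAA)$ to be discrete, being careful to match the $\imath\Psi$ convention in the definition of strongly para-Callias with the skewsymmetric potential $-\imath\sym_0\Phi_0$ actually appearing in $\AAA$. Item~\ref{Itm:Freddy:Discrete4} is immediate: $B$ is assumed $A$-semi-regular and the relevant space $\dom(|A|^{1/2}) = \dom(|\AAA|^{1/2})$ is unchanged, as shown via the equivalence $\norm{Au} \simeq \norm{\AAA u} + \norm{u}$ established in the proof of Proposition~\ref{Prop:SetupSticks}.

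The remaining and most substantial point is the $0$-coercivity of $\DDD$ with respect to some compact $K$. Here I would use the Weitzenböck-type identity $\DDD^\dagger\DDD = D^2 + \Phi^2 + \imath[D,\Phi]$ together with $D^2 = \nabla^*\nabla + \tfrac14 S + \KK^C \geq \nabla^*\nabla - c_1$ on a large enough compact exhaustion. For $u \in \Ck[cc]{\infty}(M;E)$ with $\spt u \cap K = \emptyset$ where $K$ contains the compact set $K'$ from the Callias condition (enlarged to also absorb the region where $\tfrac14 S + \KK^C$ fails to be bounded below), we get
\begin{align*}
\norm{\DDD u}^2 = \inprod{\DDD^\dagger\DDD u, u} = \norm{\nabla u}^2 + \inprod{(\tfrac14 S + \KK^C)u,u} + \inprod{(\Phi^2 + \imath[D,\Phi])u,u} \geq \Lambda'\norm{u}^2
\end{align*}
for a suitable $\Lambda' > 0$, using that $\Phi^2 + \imath[D,\Phi] > \Lambda$ off $K$ and choosing $K$ large enough that $\tfrac14 S + \KK^C$ plus $\nabla^*\nabla \geq 0$ does not overwhelm $\Lambda$ — more precisely one only needs $\Phi^2 + \imath[D,\Phi]$ to dominate the negative part of $\tfrac14 S + \KK^C$, which holds off a compact set under the Callias hypothesis once we note $\norm{\nabla u}^2 \geq 0$. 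Since $\DDD = D + \imath\Phi$ has the same symbol as $D$, $\eta\dom(\DDD_B)$ behaves well under multiplication by cutoffs, and for $B = 0$ (or any boundary condition, via the remark following Theorem~\ref{Thm:Freddy:Discrete}) the coercivity condition \ref{Def:Coercive:1} is automatic; one then extends the estimate from $\Ck[cc]{\infty}$ to all of $\dom(\DDD_{\min})$ by density. The main obstacle I anticipate is bookkeeping the various compact sets and curvature constants so that the integration-by-parts estimate genuinely closes — i.e.\ ensuring that the Callias lower bound $\Lambda$ beats the (possibly unbounded below, but only on a compact set) curvature term $\tfrac14 S^M + \KK^C$, which requires choosing $K$ to contain both $K'$ from the Callias condition and the region of bad curvature. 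Once $0$-coercivity of $\DDD$ is established, the first conclusion of Theorem~\ref{Thm:Freddy:Discrete} gives that $\DDD_B$ has finite-dimensional kernel and closed range, completing the proof.
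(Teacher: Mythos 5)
Your overall strategy --- verifying the hypotheses of Theorem~\ref{Thm:Freddy:Discrete} for $\DDD$ with adapted operator $\AAA$ via Proposition~\ref{Prop:SetupSticks} and Proposition~\ref{Prop:DiscreteSpecA}, and then invoking that theorem --- is exactly the paper's route, and your treatment of items \ref{Itm:Freddy:Discrete1}, \ref{Itm:Freddy:Discrete3} and \ref{Itm:Freddy:Discrete4} is fine. The gap is in the $0$-coercivity step. You invoke the spinorial Weitzenböck formula $D^2 = \nabla^*\nabla + \tfrac14 S + \KK^C$, but in this subsection $D$ is only assumed to be a formally selfadjoint Dirac-type operator, not a twisted spinorial Dirac operator, so that identity is not available. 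Even granting it, your estimate does not close: Theorem~\ref{Thm:Callias} makes no curvature hypotheses at all, so $\tfrac14 S + \KK^C$ need not be bounded below outside any compact set, and even if it were bounded below by $-c_1$ off a compact set, nothing forces the Callias constant $\Lambda$ to exceed $c_1$. The assertion that the Callias condition makes $\Phi^2+\imath[D,\Phi]$ ``dominate the negative part of $\tfrac14 S+\KK^C$'' is not implied by the hypotheses.

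The fix is to not decompose $D^2$ at all. For $u\in\Ck[cc]{\infty}(M;E)$ with $\spt u\cap K=\emptyset$ there are no boundary terms, and formal selfadjointness of $D$ gives $\inprod{D^2u,u}=\norm{Du}^2$, so
$$\norm{\DDD u}^2=\inprod{\DDD^\dagger\DDD u,u}=\norm{Du}^2+\inprod{(\Phi^2+\imath[D,\Phi])u,u}\geq\Lambda\norm{u}^2,$$
using only $\norm{Du}^2\geq 0$ and the pointwise Callias bound off $K$. (Your two problematic terms $\norm{\nabla u}^2+\inprod{(\tfrac14 S+\KK^C)u,u}$ sum exactly to $\norm{Du}^2\geq 0$, which is all that is needed; estimating them separately is what breaks the argument.) With this correction, $\DDD$ is $0$-coercive with respect to $K$ and the remainder of your proof proceeds as in the paper.
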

\begin{proof}
The fact that $-\sym_0 \Phi_0$ is a strongly para-Callias potential implies that it is also a para-Callias potential, and since the hypothesis of the theorem provide the remainder of the assumptions of Proposition~\ref{Prop:SetupSticks}, we have that $\DDD$ satisfies the minimal and extended setups. 
By the same assumption, we have by Proposition~\ref{Prop:DiscreteSpecA} that $\AAA$ has discrete spectrum.
Moreover, the fact that $\Phi$ is a Callias potential for $D$ yields
$$ \norm{\DDD u} \geq \Lambda \norm{u}$$ 
for all $u \in \Ck[cc]{\infty}(M;E)$ with $\spt u \cap K = \emptyset$.
Therefore, $\DDD$ is $0$-coercive.
Assuming that $B$ is $A$-semi-regular,  so that \ref{Itm:Freddy:Discrete4} is satisfied, we invoke Theorem~\ref{Thm:Freddy:Discrete}. 
This yields that $\DDD_B$ has finite dimensional kernel and closed range.
\end{proof}

\begin{remark}
Assumption~\ref{Thm:Callias:2} in Theorem~\ref{Thm:Callias} can be replaced by the assumption that $+\sym_0 \Phi_0 \in \Ck{\infty}(\dM;\End(E))$ is a strongly para-Callias potential.
This is for the same reason as in Remark~\ref{Rem:paraCallias}, though in this case, the fact that $A + \imath \sym_0 \Phi_0 = -\sym_0^{-1}(A - \imath \sym_0 \Phi_0) \sym_0$ ensures that $A + \imath\sym_0 \Phi_0$ has discrete spectrum if and only if $A - \imath\sym_0 \Phi_0$ has discrete spectrum. 
\end{remark}

\begin{corollary}
\label{Cor:Callias}
Let $D: \Ck{\infty}(M;E) \to \Ck{\infty}(M;E)$ be a formally selfadjoint Dirac-type operator on a complete Riemannian manifold $M$ satisfying the  extended setup \ref{Hyp:ExtFirst}-\ref{Hyp:ExtLast}.
Assume the following. 
\begin{enumerate}[label=(\roman*), labelwidth=0pt, labelindent=2pt, leftmargin=21pt]
\item $\pm\Phi \in \Ck{\infty}(M;\End(E))$ are Callias potentials.
\item $-\sym_0 \Phi_0 \in \Ck{\infty}(\dM;\End(E))$ is a strongly para-Callias potential.
\item There exists $t' \in (0,T)$ and $C' < \infty $  such that $\modulus{\sym_t(x) \Phi(t,x) - \sym_0(x)\Phi_0(x)} \leq C'$ uniformly for $(t,x) \in Z_{[0,t')}$.
\end{enumerate} 
If $B$ is an $A$-regular boundary condition, then $\DDD_B$ is Fredholm.
\end{corollary}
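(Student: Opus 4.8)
The plan is to derive Corollary~\ref{Cor:Callias} from two applications of Theorem~\ref{Thm:Callias}: one to $\DDD = D + \imath\Phi$ with the boundary condition $B$, and one to $\DDD^\dagger = D - \imath\Phi = D + \imath(-\Phi)$ with the adjoint boundary condition $B^\ad$. One then combines the two conclusions via the identification $(\DDD_B)^\ad = \DDD^\dagger_{B^\ad}$. Note that this is permissible because $\pm\Phi$ are both Callias potentials by hypothesis, and because $D$ (being formally selfadjoint) serves as the base operator for $\DDD^\dagger$ as well.

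First I would apply Theorem~\ref{Thm:Callias} to $\DDD$. Since $B$ is $A$-regular, i.e.\ $A$-elliptically regular in the sense of Definition~\ref{Def:AEllReg}, it is in particular $A$-semi-regular; the hypotheses \ref{Thm:Callias:1}--\ref{Thm:Callias:3} are exactly those of the corollary. Hence $\nul(\DDD_B)$ is finite-dimensional and $\ran(\DDD_B)$ is closed.

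Next I would check that Theorem~\ref{Thm:Callias} also applies to $\DDD^\dagger = D + \imath(-\Phi)$ with the boundary condition $B^\ad$. The potential $-\Phi$ is a Callias potential, so \ref{Thm:Callias:1} holds; its restriction to $\dM$ is $-\Phi_0$, so the relevant boundary potential is $-\sym_0(-\Phi_0) = \sym_0\Phi_0$, which is a strongly para-Callias potential for $A$ precisely because $-\sym_0\Phi_0$ is one --- this is the content of the remark following Theorem~\ref{Thm:Callias} (equivalently, of the $\sym_0$-conjugation identity $A + \imath\sym_0\Phi_0 = -\sym_0^{-1}(A-\imath\sym_0\Phi_0)\sym_0$ noted in Remark~\ref{Rem:paraCallias}), so \ref{Thm:Callias:2} holds. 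Condition \ref{Thm:Callias:3} is untouched since $\modulus{\sym_t(-\Phi) - \sym_0(-\Phi_0)} = \modulus{\sym_t\Phi - \sym_0\Phi_0}$. Finally, $B^\ad$ must be $A$-semi-regular: from $A$-regularity of $B$ we have $\sym_0^\ad B^\ad \subset \dom(\modulus{A}^{\frac12})$, and since $A$ has been fixed throughout this subsection so that $\sym_0 A = -A\sym_0$, the map $\sym_0^\ad$ is a Banach space isomorphism of $\dom(\modulus{A}^{\frac12})$ (as in the proof of Proposition~\ref{Prop:ProjBCAReg}), whence $B^\ad \subset \dom(\modulus{A}^{\frac12})$. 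Theorem~\ref{Thm:Callias} then gives that $\nul(\DDD^\dagger_{B^\ad})$ is finite-dimensional and $\ran(\DDD^\dagger_{B^\ad})$ is closed. To conclude, I would invoke Proposition~\ref{Prop:AdjBC}: since $B$ and $B^\ad$ are both semi-regular (each lies in $\dom(\modulus{A}^{\frac12}) \subset \SobH[loc]{\frac12}(\dM;E)$), we have $\DDD_B = \DDD_{B,\max}$ and $\DDD^\dagger_{B^\ad} = \DDD^\dagger_{B^\ad,\max}$, so $(\DDD_B)^\ad = \DDD^\dagger_{B^\ad}$; hence the cokernel of $\DDD_B$ is isomorphic to $\nul((\DDD_B)^\ad) = \nul(\DDD^\dagger_{B^\ad})$, which is finite-dimensional. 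Together with the first step, $\DDD_B$ has finite-dimensional kernel, closed range, and finite-dimensional cokernel, i.e.\ it is Fredholm.

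I expect the main obstacle to be the bookkeeping around the three adapted operators in play --- $A$ for $D$, $\AAA = A - \imath\sym_0\Phi_0$ for $\DDD$, and $\tilde{\AAA} = A + \imath\sym_0\Phi_0$ for $\DDD^\dagger$ --- and specifically confirming that the $A$-regularity hypothesis on $B$ transfers to the $A$-semi-regularity of $B^\ad$ needed to feed $\DDD^\dagger$ into Theorem~\ref{Thm:Callias}. This rests on the anticommutation $\sym_0 A = -A\sym_0$ fixed throughout the subsection, together with the fact, already exploited in the proof of Proposition~\ref{Prop:SetupSticks}, that $\dom(\modulus{\AAA}^{\frac12}) = \dom(\modulus{\tilde{\AAA}}^{\frac12}) = \dom(\modulus{A}^{\frac12})$ with equivalent norms, so that ``$A$-semi-regular'' and ``$\AAA$-semi-regular'' coincide.
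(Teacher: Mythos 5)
Your proposal is correct and follows essentially the same route as the paper, whose proof is simply the two-sentence observation that Theorem~\ref{Thm:Callias} applies to $\DDD$ with $B$ and to $\DDD^\dagger$ with $B^\ad$; you have merely spelled out the verifications (that $-\Phi$ is Callias, that $+\sym_0\Phi_0$ is strongly para-Callias via the $\sym_0$-conjugation remark, and that $A$-regularity of $B$ yields $A$-semi-regularity of $B^\ad$) that the paper leaves implicit.
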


\begin{proof}
Theorem~\ref{Thm:Callias} can be applied to $\DDD$ with boundary condition $B$ and to $\DDD^\dagger$ with boundary condition $B^*$.
Hence, $\DDD_B$ is a Fredholm operator.
\end{proof}

\begin{remark}
\label{Rem:Callias} 
In the literature, the condition that is used to call the operator $\DDD$ to be Callias type with the estimate  
\[
h^E_x( \Phi^2(x) - \modulus{[D,\Phi](x)}_{h^E_x \to h^E_x} u, u)
\geq \Lambda \modulus{u}_{h^E_x}^2
\]
for all $u \in E_x$ and $x \in M \setminus K$.
Since 
\begin{align*} 
\modulus{ h^{E}_x (\imath [D,\pm \Phi](x) u(x) , u(x))} 
=
\modulus{ h^{E}_x (\imath [D,\Phi](x) u(x) , u(x))} 
\leq 
\modulus{[D,\Phi](x)}_{h^E_x \to h^E_x}  \modulus{u(x)}_{h^E_x},
\end{align*} 
we find 
\begin{align*} 
h^E_x(\imath \sym_0 [D,\pm\Phi](x) + \Phi^2(x) u, u) 
&= 
h^E_x(\imath \sym_0 [D,\pm\Phi](x) u, u) + h^E_x(\Phi^2(x) u, u)   \\ 
&\geq 
-\modulus{[D,\Phi](x)}_{h^E_x \to h^E_x} \modulus{u}_{h^E_x}^2 + h^E(\Phi^2(x) u u, u) \\ 
&= 
 h^E_x(  \Phi^2(x) - \modulus{[D,\Phi](x)}_{h^E_x \to h^E_x} u, u).
\end{align*}
Thus, if $\Phi$ is a potential giving rise to a Callias-type operator in the classic sense, then $\pm\Phi$ are Callias potentials in our sense. 
The same remark applies to (strongly) para-Callias potentials.
\end{remark}

In particular, as a consequence of Theorem~\ref{Thm:Callias} coupled with Remark~\ref{Rem:Callias}, we obtain the Fredholmness results obtained in \cites{BS0, BS1, BS2} through our setup. 
Their setup is more restrictive with $R_t = 0$, which is conceptually the assumption that  $D$ is cylindrical with respect to a uniformly cylindrical neighbourhood. 
Moreover, the authors assume that $\Phi$ is constant in $t$. 
Therefore, the assumptions we have made in Theorem~\ref{Thm:Callias} are satisfied in their setup.
Nevertheless, the primary motivation of these papers were to compute the index of these operators and associated eta invariants which require more restrictive assumptions as demanded by the authors.

\appendix

\section{Auxiliary functional analytic tools}

\subsection{Czech spaces for selfadjoint operators}

Here we collect some functional analytic tools which were used in the main body of the paper.
Throughout this section let $\Hil$ be a Hilbert space and $T$ be a (generally unbounded) selfadjoint operator on $\Hil$.
By the Borel functional calculus we are able to construct bounded projectors $\chi_{S}(T)$, where $S \subset \R$ is a Borel set and $\chi_S$ its characteristic function.
Let us write $\chi^+(T) = \chi_{[0,\infty)}(T)$ and $\chi^-(T) = \chi_{(-\infty, 0)}(T)$ and define $\modulus{T} := T \sgn(T)$, where $\sgn(T) = \chi^+(T) - \chi^{-}(T)$.
Also, write $S_{\mu+}^o$ to be the open sector of angle $\mu$ centred at the origin which is symmetric about the positive real line.
Let $S_{\mu+}$ be the closure of $S_{\mu+}^o$.
We say $\psi \in \Psi(S_{\mu+}^o)$ if it is holomorphic on $S_{\mu+}^o$ and there exists some $\alpha > 0$ and $C > 0$ with $\modulus{\psi(\zeta)} \leq C \min\set{\modulus{\zeta}^\alpha, \modulus{\zeta}^{-\alpha}}$.

\begin{proposition}
\label{Prop:FCProp}
The following hold:
\begin{enumerate}[label=(\roman*), labelwidth=0pt, labelindent=2pt, leftmargin=21pt]
\item \label{Prop:FCProp:1}
      $\chi_{I}(T)$ is a bounded selfadjoint projector,
\item \label{Prop:FCProp:2}
      $\dom(T) = \dom(\modulus{T})$,
\item \label{Prop:FCProp:3}
      $\modulus{T} \geq 0$ and selfadjoint,
\item \label{Prop:FCProp:4}
      $T = \modulus{T}\sgn(T)$,
\item \label{Prop:FCProp:5}
      there is a splitting $\Hil = \nul(\modulus{T}) \stackrel{\perp}{\oplus} \close{\ran(\modulus{T})} = \nul(T) \stackrel{\perp}{\oplus} \close{\ran(T)}$,
\item  \label{Prop:FCProp:6}
      $\modulus{T}$ admits an \Hinfty-functional calculus: for every $\mu > 0$ and nonzero $\psi \in \Psi(S_{\mu+}^o)$,
      $$ \int_{0}^\infty \norm{\psi(t \modulus{T})u}^2\ \frac{dt}{t} \simeq \norm{u}^2$$
      for $u \in \close{\ran(\modulus{T})}$.
\end{enumerate}
\end{proposition}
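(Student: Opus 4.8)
The plan is to establish Proposition~\ref{Prop:FCProp} by a systematic appeal to the spectral theorem for the selfadjoint operator $T$, which furnishes a projection-valued measure $E(\cdot)$ on the Borel sets of $\R$ such that $T = \int_\R \lambda\, dE(\lambda)$, and to the Borel functional calculus $f(T) = \int_\R f(\lambda)\, dE(\lambda)$ it induces. Items \ref{Prop:FCProp:1}--\ref{Prop:FCProp:5} are then essentially bookkeeping with this calculus. For \ref{Prop:FCProp:1}, since $\chi_I^2 = \chi_I = \overline{\chi_I}$ as a bounded Borel function, the functional calculus homomorphism (which is a $*$-homomorphism into bounded operators) yields $\chi_I(T)^2 = \chi_I(T) = \chi_I(T)^*$, giving a bounded selfadjoint projector. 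For \ref{Prop:FCProp:2} and \ref{Prop:FCProp:3}, note $|T|$ corresponds to the function $\lambda \mapsto |\lambda|$; since $|\lambda| \geq 0$ we get $|T| \geq 0$, and since $\sqrt{1+\lambda^2} \simeq 1 + |\lambda|$, the domain $\dom(T) = \{u : \int (1+\lambda^2)\, d\langle E(\lambda)u,u\rangle < \infty\}$ coincides with $\dom(|T|) = \{u : \int (1+|\lambda|)^2\, d\langle E(\lambda)u,u\rangle < \infty\}$; selfadjointness of $|T|$ follows because it is a real Borel function of a selfadjoint operator. Item \ref{Prop:FCProp:4} is the pointwise identity $\lambda = |\lambda|\cdot\sgn(\lambda)$ transported through the calculus, with the standard domain caveat (which holds here since $\sgn(T)$ is bounded). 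Item \ref{Prop:FCProp:5} follows from $\nul(|T|) = \ran(E(\{0\}))$ and $\close{\ran(|T|)} = \ran(E(\R\setminus\{0\}))$, these two being orthogonal complements; and $\nul(T) = \nul(|T|)$, $\close{\ran(T)} = \close{\ran(|T|)}$ because $|T|$ and $T$ have the same spectral projection at $0$.

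The substantive point is \ref{Prop:FCProp:6}, the quadratic estimate (McIntosh-type square function estimate) for $|T|$ restricted to $\close{\ran(|T|)}$. The plan here is: $|T|$ is a nonnegative selfadjoint operator, hence sectorial of angle $0$, so for any $\mu > 0$ it is sectorial of angle $< \mu$ and admits a bounded $\mathrm{H}^\infty(S_{\mu+}^o)$-functional calculus on the closure of its range. The square function estimate is then the standard consequence: for nonzero $\psi \in \Psi(S_{\mu+}^o)$ one computes, using the spectral theorem and Fubini-Tonelli,
\begin{equation*}
\int_0^\infty \norm{\psi(t|T|)u}^2\,\frac{dt}{t} = \int_0^\infty \int_{(0,\infty)} |\psi(t\lambda)|^2\, d\langle E(\lambda)u,u\rangle\,\frac{dt}{t} = \int_{(0,\infty)} \left(\int_0^\infty |\psi(t\lambda)|^2\,\frac{dt}{t}\right) d\langle E(\lambda)u,u\rangle,
\end{equation*}
and the inner integral equals the constant $c_\psi := \int_0^\infty |\psi(s)|^2\,\frac{ds}{s}$ by the substitution $s = t\lambda$ (valid for $\lambda > 0$); the decay hypothesis $|\psi(\zeta)| \leq C\min\{|\zeta|^\alpha, |\zeta|^{-\alpha}\}$ guarantees $0 < c_\psi < \infty$. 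Since $u \in \close{\ran(|T|)}$ means $\langle E(\{0\})u,u\rangle = 0$, the integral over $(0,\infty)$ captures the full norm, so the left side equals $c_\psi\norm{u}^2$, giving the two-sided estimate with implicit constants $c_\psi$ and $c_\psi^{-1}$.

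I expect the main obstacle to be purely one of presentation rather than mathematics: deciding how much of the Fubini/spectral-integral manipulation to spell out versus citing a standard reference (e.g.\ the McIntosh school, or Haase's book on functional calculus), and making sure the restriction to $\close{\ran(|T|)}$ is handled cleanly so that the estimate is genuinely two-sided (on all of $\Hil$ it would only be $\lesssim$, failing on $\nul(T)$). A secondary subtlety is justifying the interchange of the $\frac{dt}{t}$-integral with the spectral integral, which is licensed by Tonelli since the integrand is nonnegative and measurable; this should be stated but not belabored. The rest is routine functional calculus, and I would present items \ref{Prop:FCProp:1}--\ref{Prop:FCProp:5} tersely, reserving the detailed argument for \ref{Prop:FCProp:6}.
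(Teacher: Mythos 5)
Your proposal is correct, and for items \ref{Prop:FCProp:1}--\ref{Prop:FCProp:5} it simply expands what the paper dismisses as ``immediate consequences of the Borel functional calculus''; all of those verifications are sound (including the domain bookkeeping in \ref{Prop:FCProp:4} and the identification $\nul(T)=\nul(\modulus{T})=\ran(E(\{0\}))$ in \ref{Prop:FCProp:5}). Where you genuinely diverge is item \ref{Prop:FCProp:6}: the paper does not compute anything, but instead cites Corollary~7.1.6 of \cite{Haase} to get a bounded \Hinfty-functional calculus for the nonnegative selfadjoint operator $\modulus{T}$ on every sector, and then invokes McIntosh's theorem (Theorem~7.3.1 in \cite{Haase}), which converts boundedness of the \Hinfty-calculus of an injective sectorial operator on a Hilbert space into the quadratic estimate; injectivity on $\close{\ran(\modulus{T})}$ comes from \ref{Prop:FCProp:5}. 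You instead prove the quadratic estimate directly from the spectral theorem and Tonelli, obtaining the exact identity $\int_0^\infty\norm{\psi(t\modulus{T})u}^2\,\frac{dt}{t}=c_\psi\norm{u}^2$ with $c_\psi=\int_0^\infty|\psi(s)|^2\,\frac{ds}{s}\in(0,\infty)$ for $u\in\close{\ran(\modulus{T})}$. This is the model computation that McIntosh's theorem abstracts, and it is perfectly valid here precisely because $\modulus{T}$ is selfadjoint; your route buys self-containedness and an explicit constant, while the paper's citation is shorter and keeps the argument inside the general sectorial framework it reuses elsewhere (e.g.\ for $\modulus{A}_\epsilon$ in Lemma~\ref{Lem:Frac}). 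The only points worth making explicit if you write this up are the two you already flagged: positivity of $c_\psi$ (which follows since a nonzero holomorphic $\psi$ cannot vanish on a subset of $(0,\infty)$ of positive measure) and the restriction to $\close{\ran(\modulus{T})}$ so that the spectral measure of $u$ charges only $(0,\infty)$.
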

\begin{proof}
Assertions~\ref{Prop:FCProp:1}--\ref{Prop:FCProp:5} are immediate consequences from Borel functional calculus.
Using condition~\ref{Prop:FCProp:3}, and utilising Corollary~7.1.6  in \cite{Haase} we obtain that $\modulus{A}$ has an \Hinfty-functional calculus for each $\mu > 0$.
Since  \ref{Prop:FCProp:5} gives us that $\modulus{A}\rest{\close{\ran(\modulus{A})}}$ is injective, the McIntosh Theorem, Theorem 7.3.1  in \cite{Haase}, furnishes us with the required estimate in \ref{Prop:FCProp:6}.
\end{proof}

\begin{lemma}
\label{Lem:Frac0}
For $\alpha > 0$, the spaces $\dom(\modulus{T}^\alpha)$ are Hilbert, and hence reflexive.
Any dense subset $\core \subset \Hil$ is also dense in $\dom(\modulus{T}^\alpha)^\ast$, the dual space of $\dom(\modulus{T}^\alpha)$.
\end{lemma}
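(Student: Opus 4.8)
The plan is to establish the three assertions of Lemma~\ref{Lem:Frac0} in order, each following from standard facts about the Borel functional calculus of the nonnegative selfadjoint operator $\modulus{T}$. For the first claim, since $\modulus{T} \geq 0$ is selfadjoint by Proposition~\ref{Prop:FCProp}~\ref{Prop:FCProp:3}, the operator $\modulus{T}^\alpha$ is well-defined via the functional calculus, nonnegative, selfadjoint, and closed, with dense domain. I would equip $\dom(\modulus{T}^\alpha)$ with the graph inner product $\inprod{u,v}_{\dom(\modulus{T}^\alpha)} := \inprod{u,v}_{\Hil} + \inprod{\modulus{T}^\alpha u, \modulus{T}^\alpha v}_{\Hil}$, or equivalently $\inprod{(I + \modulus{T}^{2\alpha})^{1/2} u, (I+\modulus{T}^{2\alpha})^{1/2} v}_{\Hil}$; completeness is precisely the closedness of $\modulus{T}^\alpha$, so $\dom(\modulus{T}^\alpha)$ is a Hilbert space. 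Reflexivity is then automatic since Hilbert spaces are reflexive.

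For the density claim, the key observation is that $(I + \modulus{T}^{2\alpha})^{1/2} \colon \dom(\modulus{T}^\alpha) \to \Hil$ is an isometric isomorphism (by the functional calculus), and more importantly that $\dom(\modulus{T}^\alpha)$ embeds continuously and densely into $\Hil$. The dual $\dom(\modulus{T}^\alpha)^\ast$ is realised concretely via the Gelfand triple $\dom(\modulus{T}^\alpha) \embed \Hil \embed \dom(\modulus{T}^\alpha)^\ast$, where the pairing extends the $\Hil$-inner product; under this identification $\dom(\modulus{T}^\alpha)^\ast$ is isometric to the completion of $\Hil$ with respect to the norm $\norm{(I+\modulus{T}^{2\alpha})^{-1/2} u}_{\Hil}$, and the inclusion $\Hil \embed \dom(\modulus{T}^\alpha)^\ast$ is continuous with dense range. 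Given a dense subset $\core \subset \Hil$, for any $w \in \dom(\modulus{T}^\alpha)^\ast$ one first approximates $w$ by an element of $\Hil$ in the $\dom(\modulus{T}^\alpha)^\ast$-norm (using density of $\Hil$ in its own completion), and then approximates that element of $\Hil$ by a sequence in $\core$ in the $\Hil$-norm, which is stronger than the $\dom(\modulus{T}^\alpha)^\ast$-norm; a diagonal argument finishes the density of $\core$ in $\dom(\modulus{T}^\alpha)^\ast$.

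I do not anticipate a serious obstacle here — the statement is essentially a packaging of textbook facts about fractional powers of nonnegative selfadjoint operators and the associated scale of Hilbert spaces (cf.\ the Gelfand triple construction). The only point requiring a little care is making the identification of $\dom(\modulus{T}^\alpha)^\ast$ with the completion of $\Hil$ in the weaker norm precise, and checking that the continuous dense inclusion $\Hil \embed \dom(\modulus{T}^\alpha)^\ast$ genuinely has dense range; this is where one uses that $\modulus{T}^\alpha$ (hence $I + \modulus{T}^{2\alpha}$) has dense range in $\Hil$, which follows from Proposition~\ref{Prop:FCProp}~\ref{Prop:FCProp:5} and the functional calculus. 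Once that identification is in place, the density transfer from $\Hil$ to $\core$ is immediate.
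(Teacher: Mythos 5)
Your proposal is correct. For the first assertion you do exactly what the paper does: the graph inner product $\inprod{u,v}+\inprod{\modulus{T}^\alpha u,\modulus{T}^\alpha v}$ makes $\dom(\modulus{T}^\alpha)$ a Hilbert space (completeness being closedness of $\modulus{T}^\alpha$), hence reflexive. For the density assertion you take a genuinely different route. The paper argues abstractly: it maps $\core$ into the dual via $v\mapsto\inprod{v,\cdot}$ and runs a Hahn--Banach separation argument, using reflexivity to represent any functional on $\dom(\modulus{T}^\alpha)^\ast$ by an element $f_l\in\dom(\modulus{T}^\alpha)$, which must then be $\Hil$-orthogonal to the dense set $\core$ and hence zero -- this handles the density of $\core$ in $\dom(\modulus{T}^\alpha)^\ast$ in a single stroke. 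You instead first realise $\dom(\modulus{T}^\alpha)^\ast$ concretely as the completion of $\Hil$ in the norm $\norm{(I+\modulus{T}^{2\alpha})^{-1/2}\cdot}$ and factor the density into two steps: $\Hil$ is dense in $\dom(\modulus{T}^\alpha)^\ast$, and $\core$ is dense in $\Hil$ in a norm that dominates the dual norm. Both are sound. Your approach buys a concrete description of the dual (which the paper only establishes later, in Lemma~\ref{Lem:Frac} and Remark~\ref{Rem:FracPowerBd}) at the cost of having to justify the dense inclusion $\Hil\embed\dom(\modulus{T}^\alpha)^\ast$ separately; your justification of that point is fine, though the cleanest way to see it is that under the isometry $\dom(\modulus{T}^\alpha)^\ast\cong\Hil$ induced by $(I+\modulus{T}^{2\alpha})^{1/2}$, the image of $\Hil$ corresponds to $\ran((I+\modulus{T}^{2\alpha})^{-1/2})=\dom(\modulus{T}^\alpha)$, which is dense -- the splitting in Proposition~\ref{Prop:FCProp}~\ref{Prop:FCProp:5} is not actually needed, since $I+\modulus{T}^{2\alpha}$ is bounded below regardless of the kernel of $T$.
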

\begin{proof}
To see that $\dom(\modulus{T}^\alpha)$ are Hilbert, note that $\inprod{u,v}_{ \dom(\modulus{T}^\alpha)} = \inprod{u,v} + \inprod{ \modulus{T}^\alpha u, \modulus{T}^\alpha v}$ induces the norm $\norm{\cdot}_{\dom(\modulus{T}^\alpha)}$.

For $v \in \core$, let $F_v(u) = \inprod{v, u}$, where $u \in \dom(\modulus{T}^\alpha)$, which yields that $F_v \in \dom(\modulus{T}^\alpha)^\ast$.
Take $\Lambda = \set{F_v: v \in \core}$ and  let $\close{\Lambda}$ be the closure of this set in $\dom(\modulus{T}^\alpha)^\ast$.
Now, suppose that $\xi \in \dom(\modulus{T}^\alpha)^\ast \setminus \close{\Lambda}$ and by the Hahn-Banach theorem, let $l \in \dom(\modulus{T}^\alpha)^{\ast\ast}$ such that $l(\xi) \neq 0$ and $l\rest{\close{\Lambda}} \equiv 0$.
By reflexivity of $\dom(\modulus{T}^\alpha)$, there exists $f_l \in \dom(\modulus{T}^\alpha)$ such that $l(\xi') = \xi'(f_l)$ for all $\xi' \in \dom(\modulus{T}^\alpha)^\ast$.
But then, $0 = l(F_v) = F_v(f_l) = \inprod{v, f_l}$ for all $v \in \core$ which is a dense subset of $\Hil$ and therefore, we get that $f_l = 0$.
This yields a contradiction on recalling $0 \neq l(\xi) = \xi(f_l)$.
\end{proof}

As we saw in the proof of Lemma~\ref{Lem:Frac0}, the graph norm on $\dom(\modulus{T}^\alpha)$ carries an inhomogeneous term.
In calculations, it can be cumbersome to carry this inhomogeneity.
Therefore, we instead consider the the operator $\modulus{T}_\epsilon = \modulus{T} + \epsilon I$ for some $\epsilon > 0$.
In the following lemma, we show that this operator is invertible and the homogeneous norm of $\modulus{T}_\epsilon^\alpha$ is equivalent (up to a constant)  to the usual graph norm on $\modulus{T}^\alpha$.
Moreover, we will see that it provides us with a way of computing the norm on $\dom(\modulus{T}^\alpha)^\ast$.

\begin{lemma}
\label{Lem:Frac}
The operator $\modulus{T}_\epsilon > 0$ and in particular invertible.
Moreover, it has an \Hinfty-functional calculus.
For $\alpha \geq 0$,  $\dom(\modulus{T}^\alpha) = \dom(\modulus{T}_\epsilon^\alpha)$ with estimate
$$\norm{\modulus{T}^\alpha}^2 + \norm{u}^2 \simeq \norm{\modulus{T}_\epsilon^\alpha}^2.$$
The inner product $\inprod{\cdot,\cdot}$ on $\Hil$ extends to a linear pairing between $\dom(\modulus{T}^\alpha)$ and $\dom(\modulus{T}^\alpha)^\ast$.
For all $u \in \Hil \subset \dom(\modulus{T}^\alpha)^\ast$, we have that $\norm{\modulus{T}_\epsilon^{-\alpha}u} \simeq \norm{u}_{\dom(\modulus{T}^\alpha)^\ast}.$
\end{lemma}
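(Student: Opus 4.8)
The plan is to establish the four assertions in the order they are stated, as each builds on the previous one. First I would show $\modulus{T}_\epsilon > 0$: since $\modulus{T} \geq 0$ is selfadjoint by Proposition~\ref{Prop:FCProp}~\ref{Prop:FCProp:3}, its spectrum lies in $[0,\infty)$, so $\spec(\modulus{T}_\epsilon) = \spec(\modulus{T}) + \epsilon \subset [\epsilon,\infty)$. Hence $\modulus{T}_\epsilon$ is selfadjoint, bounded below by $\epsilon I$, and therefore invertible with $\norm{\modulus{T}_\epsilon^{-1}} \leq \epsilon^{-1}$. For the $\mathrm{H}^\infty$-functional calculus, I would again invoke Corollary~7.1.6 and Theorem~7.3.1 in \cite{Haase} exactly as in the proof of Proposition~\ref{Prop:FCProp}~\ref{Prop:FCProp:6}, the point being that $\modulus{T}_\epsilon$ is now injective on all of $\Hil$ (not merely on $\close{\ran(\modulus{T})}$), so the McIntosh theorem gives the square-function estimate on the whole space.

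Next, for the domain identification and norm equivalence, I would use the Borel functional calculus directly. Both $\modulus{T}^\alpha$ and $\modulus{T}_\epsilon^\alpha$ are defined by the spectral theorem as multiplication by $\lambda^\alpha$ and $(\lambda+\epsilon)^\alpha$ respectively on $L^2$ of the spectral measure, so $u \in \dom(\modulus{T}^\alpha)$ iff $\int \lambda^{2\alpha}\,d\inprod{E_\lambda u,u} < \infty$ iff $\int (\lambda+\epsilon)^{2\alpha}\,d\inprod{E_\lambda u,u} < \infty$, since on $[0,\infty)$ the functions $1 + \lambda^{2\alpha}$ and $(\lambda+\epsilon)^{2\alpha}$ are comparable with constants depending only on $\alpha$ and $\epsilon$ (an elementary pointwise estimate: $(\lambda+\epsilon)^{2\alpha} \leq C_{\alpha,\epsilon}(1+\lambda^{2\alpha})$ and conversely). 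Integrating this pointwise two-sided bound against the spectral measure yields $\norm{\modulus{T}^\alpha u}^2 + \norm{u}^2 \simeq \norm{\modulus{T}_\epsilon^\alpha u}^2$.

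For the duality statement, I would first note that since $\modulus{T}_\epsilon^\alpha$ is invertible and selfadjoint, it is an isometric isomorphism $\dom(\modulus{T}^\alpha) \to \Hil$ when $\dom(\modulus{T}^\alpha)$ carries the norm $\norm{\modulus{T}_\epsilon^\alpha \cdot}$; dualising, $\modulus{T}_\epsilon^{-\alpha}$ extends to an isometric isomorphism $\dom(\modulus{T}^\alpha)^\ast \to \Hil$. The pairing of $\dom(\modulus{T}^\alpha)$ with $\dom(\modulus{T}^\alpha)^\ast$ extending $\inprod{\cdot,\cdot}$ is the standard Gelfand-triple construction: for $v \in \Hil$ the functional $F_v(u) = \inprod{v,u}$ on $\dom(\modulus{T}^\alpha)$ has norm $\norm{F_v}_{\dom(\modulus{T}^\alpha)^\ast} = \sup_{u \neq 0} \modulus{\inprod{v,u}}/\norm{\modulus{T}_\epsilon^\alpha u} = \sup_{w \neq 0}\modulus{\inprod{v,\modulus{T}_\epsilon^{-\alpha}w}}/\norm{w} = \norm{\modulus{T}_\epsilon^{-\alpha}v}$ (using selfadjointness of $\modulus{T}_\epsilon^{-\alpha}$ and the substitution $w = \modulus{T}_\epsilon^\alpha u$), and by Lemma~\ref{Lem:Frac0} (or rather its argument, with $\core = \Hil$) the map $v \mapsto F_v$ has dense range in $\dom(\modulus{T}^\alpha)^\ast$, so this identity $\norm{u}_{\dom(\modulus{T}^\alpha)^\ast} = \norm{\modulus{T}_\epsilon^{-\alpha}u}$ characterises the dual norm on all of $\Hil \hookrightarrow \dom(\modulus{T}^\alpha)^\ast$.

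I do not expect a serious obstacle here; the lemma is a packaging of standard spectral-theory facts. The only mildly delicate point is being careful that the extension of $\inprod{\cdot,\cdot}$ to the Gelfand triple is consistent with the identification of $\dom(\modulus{T}^\alpha)^\ast$ via $\modulus{T}_\epsilon^{-\alpha}$ — i.e.\ that one does \emph{not} identify $\Hil$ with its own dual and simultaneously $\dom(\modulus{T}^\alpha)$ with its dual, which would be incoherent. Keeping $\Hil$ self-dual and treating $\dom(\modulus{T}^\alpha)^\ast$ as the completion of $\Hil$ in the $\norm{\modulus{T}_\epsilon^{-\alpha}\cdot}$-norm resolves this cleanly.
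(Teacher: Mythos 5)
Your proposal is correct, and its overall skeleton matches the paper's proof (positivity and invertibility of $\modulus{T}_\epsilon$ from the spectral theorem, the \Hinfty-functional calculus via Haase, the domain identification, and the Gelfand-triple duality with the dual norm computed through the substitution $w=\modulus{T}_\epsilon^\alpha u$). The one step where you take a genuinely different route is the equality $\dom(\modulus{T}^\alpha)=\dom(\modulus{T}_\epsilon^\alpha)$ with the norm equivalence $\norm{\modulus{T}^\alpha u}^2+\norm{u}^2\simeq\norm{\modulus{T}_\epsilon^\alpha u}^2$: the paper obtains this ``through interpolation theory'' (identifying both domains as the $\theta=\alpha$ interpolation space between $\Hil$ and $\dom(\modulus{T})=\dom(\modulus{T}_\epsilon)$), whereas you integrate the elementary pointwise two-sided bound between $1+\lambda^{2\alpha}$ and $(\lambda+\epsilon)^{2\alpha}$ on $[0,\infty)$ against the spectral measure. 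Your version is more self-contained and exploits selfadjointness directly, at the cost of not generalising beyond operators with a spectral measure; the interpolation argument is the one that survives in the bisectorial, non-selfadjoint setting of the companion compact-boundary paper. For the duality statement the paper additionally passes through the bidual and reflexivity (Lemma~\ref{Lem:Frac0}) to see that the pairing is perfect before computing the dual norm; your direct identification of $\modulus{T}_\epsilon^{-\alpha}$ as an isometric isomorphism $\dom(\modulus{T}^\alpha)^\ast\to\Hil$ achieves the same thing, and your closing caveat about not simultaneously self-dualising $\Hil$ and $\dom(\modulus{T}^\alpha)$ is exactly the right point of care.
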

\begin{proof}
It is easy to see that $\modulus{T}_\epsilon > 0$ and invertible.
This operator has \Hinfty-functional calculus  by Corollary 7.1.6 in \cite{Haase} and by Theorem 6.6.9 in \cite{Haase}.
Through interpolation theory, we obtain $\dom(\modulus{T}^\alpha) = \dom(\modulus{T}_\epsilon^\alpha)$ with the norm estimate $\norm{\modulus{T}^\alpha u}^2 + \norm{u}^2 \simeq \norm{\modulus{T}_\epsilon^\alpha}$.

Next, we show that $\inprod{\cdot,\cdot}$ extends to a perfect pairing between $\dom(\modulus{T}^\alpha)$ and $\dom(\modulus{T})^\ast$.
Let $f \in \Hil$ and note that
$$ \norm{f}_{\dom(\modulus{T}^\alpha)^\ast}
= \sup_{0 \neq v \in \dom(\modulus{T}^\alpha)} \frac{\modulus{f(v)}}{\norm{v}_{\dom(\modulus{T}^\alpha)}}
= \sup_{0 \neq v \in \dom(\modulus{T}^\alpha)} \frac{\modulus{\inprod{f,v}}}{\norm{v}_{\dom(\modulus{T}^\alpha)}}.$$
For $v \in \dom(\modulus{T}^\alpha)$, letting $F_v \in \dom(\modulus{T}^\alpha)^{\ast\ast}$ with $\norm{v}_{\dom(\modulus{T}^\alpha)} = \norm{F_v}_{\dom(\modulus{T}^\alpha)^{\ast\ast}}$, we have
\begin{align*}
\norm{F_v}_{\dom(\modulus{T}^\alpha)^{\ast\ast}}
 & = \sup_{0 \neq w \in \dom(\modulus{T})^\ast} \frac{\modulus{F_v(w)}}{\norm{w}_{\dom(\modulus{T}^\alpha)^\ast}} \\
 & =  \sup_{0 \neq w \in \dom(\modulus{T})^\ast} \frac{\modulus{w(v)}}{\norm{w}_{\dom(\modulus{T}^\alpha)^\ast}}  \\
 & = \sup_{0 \neq f \in \Hil} \frac{\modulus{f(v)}}{\norm{f}_{\dom(\modulus{T}^\alpha)^\ast}}                     \\
 & =  \sup_{0 \neq f \in \Hil} \frac{\modulus{\inprod{f,v}}}{\norm{f}_{\dom(\modulus{T}^\alpha)^\ast}}.
\end{align*}
On combining these two calculations, along with the fact that $\dom(|T|)$ is dense in both $\dom(\modulus{T}^\alpha)$ and $\dom(\modulus{T}^\alpha)^\ast$ and contained in $\Hil$, we see that $\inprod{\cdot,\cdot}:\dom(\modulus{T}^\alpha) \times \dom(\modulus{T}^\alpha)^\ast \to \C$, extended by the $\Hil$-inner product, is a perfect pairing.

Lastly, we show that $\norm{\modulus{T}^{-\alpha}_\epsilon u} \simeq \norm{u}_{\dom(\modulus{T}^\alpha)^\ast}$ when $u \in \Hil$.
Using the fact that $\norm{\modulus{T}^\alpha_\epsilon v} \simeq \norm{v}_{\dom(\modulus{T}^\alpha)}$, and since $\modulus{T}^\alpha_\epsilon$ is invertible, for every $v \in \dom(\modulus{T}^\alpha)$ there exists $w \in \Hil$ with $\modulus{T}^\alpha_\epsilon v = w$, we get
\begin{align*}
\norm{\modulus{T}^{-\alpha}_\epsilon u}
 & = \sup_{0 \neq w \in \Hil} \frac{\modulus{\inprod{\modulus{T}^{-\alpha}_\epsilon u,w}}}{\norm{w}}                                                                              \\
 & = \sup_{0 \neq v \in \dom(\modulus{T}^\alpha)}  \frac{\modulus{\inprod{\modulus{T}^{-\alpha}_\epsilon u,\modulus{T}^\alpha_\epsilon v}}}{\norm{\modulus{T}^\alpha_\epsilon v}} \\
 & \simeq \sup_{0 \neq v \in \dom(\modulus{T}^\alpha)}  \frac{\modulus{\inprod{u,v}}}{\norm{v}_{\dom(\modulus{T}^\alpha)}}                                                        \\
 & \simeq \norm{u}_{\dom(\modulus{T}^\alpha)^\ast}.
\qedhere
\end{align*}
\end{proof}

\begin{remark}
\label{Rem:FracPowerBd}
As a consequence of Lemma~\ref{Lem:Frac0}, $\Hil$ itself is a dense subset in $\dom(\modulus{T}^\alpha)^\ast$, and therefore, we see that $\modulus{T}_\epsilon^{-\alpha}:\Hil \to \Hil$ extends boundedly to $\modulus{T}_\epsilon^{-\alpha}: \dom(\modulus{T}^\alpha)^\ast \to \Hil$.
Therefore, for $u \in \dom(\modulus{T}^\alpha)^\ast$, we can write
$$\norm{u}_{\dom(\modulus{T}^\alpha)^\ast} \simeq \norm{\modulus{T}^{-\alpha}_\epsilon u} = \lim_{n \to \infty} \norm{\modulus{T}^{-\alpha}_\epsilon u_n},$$
where $\dom(T) \ni u_n \to u$.
\end{remark}

\begin{lemma}
\label{Lem:ProjDualBdd}
For all $\alpha \geq 0$,  $\chi_{I}(T): \dom(\modulus{T}^\alpha) \to \dom(\modulus{T}^\alpha)$ is a bounded projector.
By duality, this extends to a bounded projector
$\chi_{I}(T): \dom(\modulus{T}^\alpha)^\ast \to \dom(\modulus{T}^\alpha)^\ast$ for $\alpha \in [0,1]$.
In particular, this holds for $\chi^{\pm}(T)$.
\end{lemma}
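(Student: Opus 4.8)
The plan is to prove the two assertions separately, first the boundedness on $\dom(\modulus{T}^\alpha)$ and then the dual statement, using the $\mathrm{H}^\infty$-functional calculus of $\modulus{T}_\epsilon$ from Lemma~\ref{Lem:Frac} as the main tool to handle fractional powers.

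First I would establish boundedness of $\chi_I(T)$ on $\dom(\modulus{T}^\alpha)$. Since $\chi_I(T)$ and $\modulus{T}_\epsilon^\alpha$ are both Borel functions of the single selfadjoint operator $T$, they commute on the appropriate domains: $\modulus{T}_\epsilon^\alpha \chi_I(T) u = \chi_I(T) \modulus{T}_\epsilon^\alpha u$ for $u \in \dom(\modulus{T}^\alpha) = \dom(\modulus{T}_\epsilon^\alpha)$. Combined with the fact that $\chi_I(T)$ is a bounded selfadjoint projection on $\Hil$ (Proposition~\ref{Prop:FCProp}~\ref{Prop:FCProp:1}) and the norm equivalence $\norm{u}_{\dom(\modulus{T}^\alpha)} \simeq \norm{\modulus{T}_\epsilon^\alpha u}$ from Lemma~\ref{Lem:Frac}, we get
$$
\norm{\chi_I(T) u}_{\dom(\modulus{T}^\alpha)} \simeq \norm{\modulus{T}_\epsilon^\alpha \chi_I(T) u} = \norm{\chi_I(T) \modulus{T}_\epsilon^\alpha u} \leq \norm{\modulus{T}_\epsilon^\alpha u} \simeq \norm{u}_{\dom(\modulus{T}^\alpha)}.
$$
That $\chi_I(T)$ restricts to a projection is immediate from $\chi_I(T)^2 = \chi_I(T)$ on $\Hil$ together with the fact that $\dom(\modulus{T}^\alpha)$ is invariant.

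Next I would handle the dual statement. The natural candidate for the extension is the Banach-space adjoint of $\chi_I(T): \dom(\modulus{T}^\alpha) \to \dom(\modulus{T}^\alpha)$ under the perfect pairing $\inprod{\cdot,\cdot}: \dom(\modulus{T}^\alpha) \times \dom(\modulus{T}^\alpha)^\ast \to \C$ established in Lemma~\ref{Lem:Frac}. Since $\chi_I(T)$ is selfadjoint on $\Hil$, for $u, v \in \dom(T) \subset \Hil$ (which is dense in both $\dom(\modulus{T}^\alpha)$ and $\dom(\modulus{T}^\alpha)^\ast$ by Lemma~\ref{Lem:Frac0}) we have $\inprod{\chi_I(T) u, v} = \inprod{u, \chi_I(T) v}$, so the adjoint of the restriction acts as $\chi_I(T)$ on this dense subset; by density and continuity it extends $\chi_I(T)$ boundedly to $\dom(\modulus{T}^\alpha)^\ast$. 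Alternatively, and perhaps more transparently, one can argue directly: for $u \in \Hil$, use $\norm{u}_{\dom(\modulus{T}^\alpha)^\ast} \simeq \norm{\modulus{T}_\epsilon^{-\alpha} u}$ from Lemma~\ref{Lem:Frac}, note $\modulus{T}_\epsilon^{-\alpha}$ commutes with $\chi_I(T)$, and conclude $\norm{\chi_I(T) u}_{\dom(\modulus{T}^\alpha)^\ast} \simeq \norm{\modulus{T}_\epsilon^{-\alpha}\chi_I(T) u} = \norm{\chi_I(T)\modulus{T}_\epsilon^{-\alpha} u} \leq \norm{\modulus{T}_\epsilon^{-\alpha} u} \simeq \norm{u}_{\dom(\modulus{T}^\alpha)^\ast}$, then extend by density using Remark~\ref{Rem:FracPowerBd}. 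The projection property passes along as before. I do not anticipate a serious obstacle here; the only point requiring a little care is that the restriction $\alpha \in [0,1]$ for the dual statement comes from the scope of the norm identity and density results in Lemma~\ref{Lem:Frac} and Lemma~\ref{Lem:Frac0} as they are invoked elsewhere — one should simply track that the cited lemmas are applied within their stated range. The statement about $\chi^{\pm}(T)$ is the special case $I = [0,\infty)$ and $I = (-\infty,0)$.
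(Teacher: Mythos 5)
Your proposal is correct and follows essentially the same route as the paper: the boundedness on $\dom(\modulus{T}^\alpha)$ via commuting $\chi_I(T)$ past $\modulus{T}_\epsilon^\alpha$ and the norm equivalence of Lemma~\ref{Lem:Frac}, then the dual statement by taking the Banach-space adjoint and using that $\chi_I(T)^\ast = \chi_I(T)$ on the dense subspace $\Hil$. Your alternative direct argument via $\norm{\modulus{T}_\epsilon^{-\alpha}\,\cdot\,}$ is an equally valid variant of the same idea.
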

\begin{proof}

We have already noted that $\chi_{I}(T): \Hil \to \Hil$ boundedly.
Next, from Lemma~\ref{Lem:Frac}, we have that $\norm{u}_{\dom(\modulus{T}^\alpha)} \simeq \norm{ \modulus{T}_\epsilon^\alpha u}$, and thus,
$$
\norm{\chi_{I}(T)u}_{\dom(\modulus{T}^\alpha)} \simeq  \norm{ \modulus{T}_\epsilon^\alpha \chi_{I}(T) u} \simeq \norm{\chi_{I}(T) \modulus{T}_\epsilon^\alpha u} \lesssim \norm{\modulus{T}_\epsilon ^\alpha u} \simeq \norm{u}_{\dom(\modulus{T}^\alpha)}.$$
Now, by duality, we have that $\chi_{I}(T)^\ast: \dom(\modulus{T}^\alpha)^\ast \to \dom(\modulus{T}^\alpha)^\ast$ but since $\chi_{I}(T)^\ast = \chi_{I}(T)$ on $\Hil$ and since $\Hil$ is dense in $\dom(\modulus{T}^\alpha)^\ast$, the conclusion follows.
\end{proof}

Define for $k \in [0,\infty)$,
$$ \OpSobH{k}(T) = \dom(\modulus{T}^k)\quad\text{and}\quad\OpSobH{-k}(T) = \OpSobH{k}(T)^\ast.$$

Recalling $\chi^- = \chi_{(-\infty,0)}$ and $\chi^+ = \chi_{[0,\infty)}$, define the space:
\begin{equation}
\label{Eq:AbsCheck}
\checkH(T) = \chi^{-}(T) \OpSobH{\frac12}(T) \oplus\chi^{+}(T) \OpSobH{-\frac12}(T).
\end{equation}
with norm
\begin{equation}
\norm{u}_{\checkH(T)}^2 := \norm{\chi^{-}(T)u}^2_{\OpSobH{\frac{1}{2}}(T)} + \norm{\chi^+(T)u}_{\OpSobH{-\frac{1}{2}}(T)}.
\end{equation}
Also, define $\hatH(T) := \checkH(-T)$.

\begin{lemma}
\label{Lem:AbsDensity}
\begin{enumerate}[label=(\roman*), labelwidth=0pt, labelindent=2pt, leftmargin=21pt]
\item \label{Lem:AbsDensity:Closedness}
      If $B \subset \OpSobH{\frac12}(T)$ is a subspace that is closed in $\OpSobH{\frac12}(T)$ and in $\checkH(T)$, then $\norm{u}_{\checkH(T)} \simeq \norm{u}_{\OpSobH{\frac12}(T)}$ for $u\in B$ for $u\in B$.
\item \label{Lem:AbsDensity:Density}
      Let $\core \subset \OpSobH{\frac12}(T)$ be a subspace such that $\chi^{\pm}(T) \core \subset \core$.
      Suppose that it dense  in both $\OpSobH{\frac12}(T)$ and in $\Hil$.
      Then, $\core$ is dense in $\checkH(T)$.
\end{enumerate}
\end{lemma}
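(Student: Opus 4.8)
\textbf{Proof proposal for Lemma~\ref{Lem:AbsDensity}.}

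The plan is to handle the two assertions separately, with part~\ref{Lem:AbsDensity:Closedness} essentially being an open mapping argument and part~\ref{Lem:AbsDensity:Density} a density argument that exploits the splitting $\checkH(T) = \chi^{-}(T)\OpSobH{\frac12}(T) \oplus \chi^{+}(T)\OpSobH{-\frac12}(T)$ componentwise. First I would record the elementary inequality: since $\OpSobH{\frac12}(T) \hookrightarrow \OpSobH{-\frac12}(T)$ continuously (the inclusion $\dom(\modulus{T}^{\frac12}) \hookrightarrow \Hil \hookrightarrow \dom(\modulus{T}^{\frac12})^\ast$), and since $\chi^{\pm}(T)$ are bounded on both $\OpSobH{\frac12}(T)$ and $\OpSobH{-\frac12}(T)$ by Lemma~\ref{Lem:ProjDualBdd}, we get for any $u \in \OpSobH{\frac12}(T)$ that $\norm{\chi^-(T)u}_{\OpSobH{\frac12}(T)} \leq \norm{u}_{\OpSobH{\frac12}(T)}$ and $\norm{\chi^+(T)u}_{\OpSobH{-\frac12}(T)} \lesssim \norm{\chi^+(T)u}_{\OpSobH{\frac12}(T)} \lesssim \norm{u}_{\OpSobH{\frac12}(T)}$, hence $\norm{u}_{\checkH(T)} \lesssim \norm{u}_{\OpSobH{\frac12}(T)}$. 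This is the ``easy'' direction and holds for all of $\OpSobH{\frac12}(T)$, not just $B$.

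For the reverse inequality in part~\ref{Lem:AbsDensity:Closedness}, I would invoke the open mapping (bounded inverse) theorem: the identity map $(B, \norm{\cdot}_{\OpSobH{\frac12}(T)}) \to (B, \norm{\cdot}_{\checkH(T)})$ is a continuous bijection between Banach spaces — here using that $B$ is closed in $\OpSobH{\frac12}(T)$ (so the domain is Banach) and closed in $\checkH(T)$ (so the target is Banach, as a closed subspace of the Banach space $\checkH(T)$) — and hence has a bounded inverse, giving $\norm{u}_{\OpSobH{\frac12}(T)} \lesssim \norm{u}_{\checkH(T)}$ for $u \in B$. Combined with the first paragraph this yields the asserted equivalence.

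For part~\ref{Lem:AbsDensity:Density}, fix $u \in \checkH(T)$ and decompose $u = \chi^-(T)u + \chi^+(T)u$ with $\chi^-(T)u \in \OpSobH{\frac12}(T)$ and $\chi^+(T)u \in \OpSobH{-\frac12}(T)$. Since $\core$ is dense in $\OpSobH{\frac12}(T)$, choose $v_n \in \core$ with $v_n \to \chi^-(T)u$ in $\OpSobH{\frac12}(T)$; then $\chi^-(T)v_n \in \core$ by hypothesis, and $\chi^-(T)v_n \to \chi^-(T)u$ in $\OpSobH{\frac12}(T)$ by boundedness of $\chi^-(T)$ there. Since $\core$ is dense in $\Hil$ and $\Hil$ is dense in $\OpSobH{-\frac12}(T)$ (Lemma~\ref{Lem:Frac0}), $\core$ is dense in $\OpSobH{-\frac12}(T)$; choose $w_n \in \core$ with $w_n \to \chi^+(T)u$ in $\OpSobH{-\frac12}(T)$, so that $\chi^+(T)w_n \in \core$ and $\chi^+(T)w_n \to \chi^+(T)u$ in $\OpSobH{-\frac12}(T)$ using Lemma~\ref{Lem:ProjDualBdd}. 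Then $\chi^-(T)v_n + \chi^+(T)w_n \in \core$ (using that $\core$ is a subspace) converges to $u$ in $\checkH(T)$. The only mild subtlety — and the step I would be most careful about — is making sure the density of $\core$ in $\OpSobH{-\frac12}(T)$ is correctly deduced from density in $\Hil$ together with the continuous dense inclusion $\Hil \hookrightarrow \OpSobH{-\frac12}(T)$; this is where Lemma~\ref{Lem:Frac0} is doing the real work, and everything else is bookkeeping with the bounded projectors.
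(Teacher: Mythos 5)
Your proposal is correct and follows essentially the same route as the paper: the forward inequality $\norm{u}_{\checkH(T)} \lesssim \norm{u}_{\OpSobH{\frac12}(T)}$ via continuity of $\OpSobH{\frac12}(T) \embeds \OpSobH{-\frac12}(T)$ and boundedness of the projectors, followed by the bounded inverse theorem on the identity map between the two Banach structures on $B$; and for the density statement, the componentwise approximation of $\chi^{-}(T)u$ in $\OpSobH{\frac12}(T)$ and of $\chi^{+}(T)u$ in $\OpSobH{-\frac12}(T)$ using Lemma~\ref{Lem:Frac0} and the projector-invariance of $\core$. The only cosmetic difference is that the paper verifies the embedding $\chi^{+}(T)\OpSobH{\frac12}(T) \embeds \chi^{+}(T)\OpSobH{-\frac12}(T)$ by an explicit computation with $\modulus{T}_\epsilon^{\pm\frac12}$, whereas you factor it through $\Hil$; both are valid.
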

\begin{proof}
For Assertion~\ref{Lem:AbsDensity:Closedness}, we first show that $\chi^+(T)\OpSobH{\frac12}(T) \embed \chi^+(T)\OpSobH{-\frac12}(T)$ is a dense embedding.
Using Lemma~\ref{Lem:Frac},
$$ \norm{\chi^+(T) u}_{\OpSobH{-\frac12}(T)}
\simeq \norm{ \modulus{T}_{\epsilon}^{-\frac12} \chi^+(T) u}
\simeq \norm{\modulus{T}_{\epsilon}^{-1} \chi^+(T) \modulus{T}_{\epsilon}^{\frac12} u}
\lesssim \norm{\modulus{T}_{\epsilon}^{\frac12} \chi^+(T) u}
\simeq \norm{\chi^+(T)u}_{\OpSobH{\frac12}(T)}.$$
Therefore, for $u \in B$,
\begin{align*}
\norm{u}_{\checkH(T)}
 & \simeq \norm{\chi^-(T)u}_{\OpSobH{\frac12}(T)} + \norm{\chi^+(T)u}_{\OpSobH{-\frac12}(T)}  \\
 & \lesssim \norm{\chi^-(T)u}_{\OpSobH{\frac12}(T)} + \norm{\chi^+(T)u}_{\OpSobH{\frac12}(T)}
\simeq \norm{u}_{\OpSobH{\frac12}(T)}
\simeq \norm{u}_{B}.
\end{align*}
Since $(B, \norm{\cdot}_{\checkH(T)})$ and $(B, \norm{\cdot}_{\OpSobH{\frac12}(T)})$ are Banach spaces, we have that the norms $\norm{\cdot}_{\checkH(T)}$ and $\norm{\cdot}_{\OpSobH{\frac12}(T)}$ are equivalent on $B$.

Now we prove Assertion~\ref{Lem:AbsDensity:Density}.
By Lemma~\ref{Lem:Frac0}, $\core \subset  \OpSobH{\frac12}(T)$   is a dense subspace $\OpSobH{-\frac12}(T)$.
Therefore, there exist $w_n \in \core$ such that $v_n \to \chi^-(T)u$ in $\OpSobH{\frac12}(T)$ and $w_n \to \chi^+(T)u$ in $\OpSobH{-\frac12}(T)$.
It is easy to see that $\chi^-(T)v_n \to \chi^-(T)u$ and $\chi^+(T)w_n \to \chi^+(T)u$.
Moreover, $\chi^-(T)v_n, \chi^+(T)w_n \in \core$ by assumption and therefore, $u_n := \chi^-(T)v_n + \chi^+(T)w_n \in \core$ and it is immediate that $u_n \to u$ in $\checkH(T)$.
\end{proof}

Let $\ext: \Hil \to ([0, \infty) \mapsto \Hil)$ be defined by
$$ (\ext u)(t) := \exp(-t\modulus{T})u.$$
Moreover, define the following space
$$ \Dk{\infty}(T) := \cap_{k=1}^\infty \dom(\modulus{T}^k).$$

\begin{lemma}
\label{Lem:ExtReg}
For all $l,m \geq 0$, The operator $\ext$ extends to a  map $\dom(\modulus{T}^l)^\ast \to ((0,\infty)\mapsto  \dom(\modulus{T}^m))$ linear for each $t > 0$ with $\ran(\ext) \subset \Ck{\infty}((0,\infty); \Dk{\infty}(T))$.
If $u \in \Dk{\infty}(T)$, then
$$\ext u \in \Ck{\infty}([0,\infty); \Dk{\infty}(T)).$$
\end{lemma}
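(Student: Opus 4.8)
The plan is to deduce everything from the Borel functional calculus for the nonnegative selfadjoint operator $\modulus{T}$ (Proposition~\ref{Prop:FCProp}) and the smoothing property of the semigroup $\exp(-t\modulus{T})$ at positive times. The single estimate that does all the work is that for $t>0$ and $m\ge0$ the function $\lambda\mapsto\lambda^m e^{-t\lambda}$ is bounded on $[0,\infty)$, with $\sup_{\lambda\ge0}\lambda^m e^{-t\lambda}=(m/et)^m$; by the functional calculus $\modulus{T}^m\exp(-t\modulus{T})$ is therefore bounded on $\Hil$ of norm $\le(m/et)^m$. In particular $\exp(-t\modulus{T})\colon\Hil\to\dom(\modulus{T}^m)$ is bounded for all $m\ge0$ and $t>0$, and since $\exp(-t\modulus{T})$ commutes with every $\modulus{T}^k$ we even get $\exp(-t\modulus{T})\Hil\subset\Dk{\infty}(T)$ for $t>0$.

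To extend $\ext$ to $\dom(\modulus{T}^l)^\ast$ I would use duality: $\exp(-t\modulus{T})\colon\Hil\to\dom(\modulus{T}^l)$ is bounded and $\exp(-t\modulus{T})$ is selfadjoint on $\Hil$, so its Banach-space adjoint is a bounded map $\dom(\modulus{T}^l)^\ast\to\Hil$ which, under the pairing of Lemma~\ref{Lem:Frac}, restricts to $\exp(-t\modulus{T})$ on the dense subspace $\Hil\subset\dom(\modulus{T}^l)^\ast$ (Lemma~\ref{Lem:Frac0}); this defines $(\ext u)(t)$ for $u\in\dom(\modulus{T}^l)^\ast$ and $t>0$, and it is linear in $u$. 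Splitting $t=\tfrac t2+\tfrac t2$ and composing this extension with $\exp(-\tfrac t2\modulus{T})\colon\Hil\to\dom(\modulus{T}^m)$ then shows that $\ext u$ is a $\dom(\modulus{T}^m)$-valued — indeed $\Dk{\infty}(T)$-valued — function of $t>0$. The compatibility of these compositions with each other, with the semigroup law, and with the original action on $\Hil$ all follow from the corresponding identities on $\Hil$ together with density of $\Hil$ and continuity of the maps.

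For the smoothness claims I would localise. Near a fixed $t_0>0$, write $\exp(-t\modulus{T})u=\exp\bigl(-(t-\tfrac{t_0}4)\modulus{T}\bigr)w_0$ with $w_0:=\exp(-\tfrac{t_0}4\modulus{T})u\in\Hil$, reducing to smoothness of $s\mapsto\exp(-s\modulus{T})w_0$ into each $\dom(\modulus{T}^k)$ for $s$ in a compact subset of $(0,\infty)$ and $w_0\in\Hil$; this is the analyticity of the semigroup generated by $-\modulus{T}$, and can be checked directly by a dominated-convergence argument on the difference quotients using that $\lambda\mapsto\lambda^{k+j}e^{-s\lambda}$ and its $s$-derivatives are bounded uniformly for $s\ge s_0>0$, the $j$-th derivative being $(-\modulus{T})^j\exp(-s\modulus{T})w_0$. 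For $u\in\Dk{\infty}(T)$ I would instead commute: since $\modulus{T}^k$ commutes with $\exp(-t\modulus{T})$ and $\modulus{T}^ku\in\Dk{\infty}(T)$, it is enough to show that $t\mapsto\exp(-t\modulus{T})w$ is $C^\infty$ on $[0,\infty)$ into $\Hil$ for every $w\in\Dk{\infty}(T)$, with $j$-th derivative $(-1)^j\exp(-t\modulus{T})\modulus{T}^jw$; the latter is continuous up to $t=0$ because $\modulus{T}^jw\in\Hil$ and the semigroup is strongly continuous, and existence of the one-sided derivatives at $0$ follows from $\tfrac1h(\exp(-h\modulus{T})-I)v\to-\modulus{T}v$ in $\Hil$ for $v\in\dom(\modulus{T})$ (another dominated-convergence estimate against the spectral measure of $v$).

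I expect the only genuinely delicate point to be the bookkeeping of the dual-space extension in the second step: verifying that the extended operator is well defined, that the several nested compositions agree on $\Hil$ and hence everywhere by density, and that what we call $\ext$ on $\dom(\modulus{T}^l)^\ast$ genuinely restricts to the original $\ext$ on $\Hil$. The spectral-theoretic estimates in the first and third steps are routine.
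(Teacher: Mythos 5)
Your proposal is correct and follows essentially the same route as the paper: boundedness of $\lambda\mapsto\lambda^{k}e^{-t\lambda}$ via the Borel functional calculus, extension to $\dom(\modulus{T}^l)^\ast$ by density of $\Hil$ in that dual space, analyticity of the semigroup for $t>0$, and commuting $\modulus{T}^k$ past $\exp(-t\modulus{T})$ to get smoothness up to $t=0$ for $u\in\Dk{\infty}(T)$. The only cosmetic difference is that the paper packages the dual extension and the smoothing into a single estimate $\norm{\modulus{T}_\epsilon^{m+k+l}\exp(-t\modulus{T}_\epsilon)\modulus{T}_\epsilon^{-l}u}\lesssim\norm{\modulus{T}_\epsilon^{-l}u}$ using the shifted operator $\modulus{T}_\epsilon$, where you split $t=\tfrac t2+\tfrac t2$ and compose two maps — both are fine.
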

\begin{proof}
For $\epsilon>0$ and $t\in(0,\infty)$, consider $f_k: \R \to \R$ given by $f_k(x) = (x+\epsilon)^k \exp(-t\modulus{x + \epsilon})$.
Clearly, $f_k \in \Ck{0} \cap \Lp{\infty}(\R)$ and therefore, $\norm{f_k(T)v} \lesssim \norm{v}$ for $v \in \Hil$.
For $u \in \dom(\modulus{T}^l)$,
$$\norm{f_k(T)u}_{\dom(\modulus{T}^m)}
= \norm{\modulus{T}_{\epsilon}^{m+k+l} \exp(-t \modulus{T}_{\epsilon}) \modulus{T}_{\epsilon}^{-l}u }
\lesssim \norm{\modulus{T}_{\epsilon}^{-l} u}_{\Hil}
\simeq \norm{u}_{\dom(\modulus{T}^l)^\ast}.$$
By Lemma~\ref{Lem:Frac0}, since $\dom(\modulus{T}^l)$ is dense in $\dom(\modulus{T}^l)^\ast$, we have that $\ext$ extends to a mapping on $\dom(T)^\ast$.
Moreover, we see that  $\exp(-t\modulus{T}_{\epsilon}): \dom(\modulus{T}^l)^\ast \to \dom(\modulus{T}^{m})$ boundedly.
Since $m$ is arbitrary,
$$ \ran(\exp(-t\modulus{T}_{\epsilon})) \bigcap_{k=1}^\infty \dom(\modulus{T}^{k}) = \Dk{\infty}(T).$$
The map $(0,\infty) \ni t \mapsto \exp(-t \modulus{T}_{\epsilon})u$ is, in fact, analytic.

Now, for $u \in \Dk{\infty}(T)$, for each $k \geq 1$,  we have that
$$\partial_t^k (\ext u)(t) = (-1)^k \modulus{T}^k (\ext u)(t) = (-1)^k (\ext \modulus{T}^k u)(t).$$
Since $\modulus{T}^k u \in \Hil$, we obtain that the trace $\lim_{t\to 0}\partial_t^k (\ext u)(t)$ exists.
This shows that $\ext u \in \Ck{\infty}([0,\infty); \Dk{\infty}(T))$.
\end{proof}

\begin{corollary}
\label{Cor:DinfDensity}
For every $l \geq 0$, the subspace $\Dk{\infty}(T)$ is dense in $\dom(\modulus{T}^l)$.
\end{corollary}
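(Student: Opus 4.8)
The plan is to approximate any $u\in\dom(\modulus{T}^l)$ by applying the semigroup generated by $-\modulus{T}$ and letting the time parameter tend to zero. Concretely, for $t>0$ set $u_t:=(\ext u)(t)=\exp(-t\modulus{T})u$. Since $\dom(\modulus{T}^l)\subset\Hil$, Lemma~\ref{Lem:ExtReg} gives $u_t\in\Dk{\infty}(T)$ for every $t>0$; indeed $\ran(\ext)\subset\Ck{\infty}((0,\infty);\Dk{\infty}(T))$. It therefore remains only to show that $u_t\to u$ in $\dom(\modulus{T}^l)$ as $t\to0^+$, and since $\norm{w}_{\dom(\modulus{T}^l)}^2=\norm{w}^2+\norm{\modulus{T}^l w}^2$ (cf.\ the proof of Lemma~\ref{Lem:Frac0}), this amounts to proving $\norm{u_t-u}\to0$ and $\norm{\modulus{T}^l(u_t-u)}\to0$.

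Both limits follow because, $\modulus{T}$ being nonnegative and selfadjoint, $\set{\exp(-t\modulus{T})}_{t\ge0}$ is a strongly continuous contraction semigroup on $\Hil$: for any $v\in\Hil$,
\[
\norm{\exp(-t\modulus{T})v-v}^2=\int_{[0,\infty)}\modulus{e^{-t\lambda}-1}^2\,d\mu_v(\lambda)\longrightarrow0\qquad(t\to0^+),
\]
by dominated convergence, where $\mu_v$ is the spectral measure of $\modulus{T}$ associated with $v$. Taking $v=u$ gives $\norm{u_t-u}\to0$. For the second limit, $\modulus{T}^l u\in\Hil$ because $u\in\dom(\modulus{T}^l)$, and $\modulus{T}^l$ commutes with $\exp(-t\modulus{T})$ on $\dom(\modulus{T}^l)$ since both are Borel functions of $T$; hence $\modulus{T}^l(u_t-u)=\exp(-t\modulus{T})\bigl(\modulus{T}^l u\bigr)-\modulus{T}^l u$, and applying the display with $v=\modulus{T}^l u$ gives $\norm{\modulus{T}^l(u_t-u)}\to0$. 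Thus $u_t\to u$ in $\dom(\modulus{T}^l)$, proving the density.

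There is no serious obstacle in this argument. The only points requiring care are the commutation relation $\modulus{T}^l\exp(-t\modulus{T})=\exp(-t\modulus{T})\modulus{T}^l$ on $\dom(\modulus{T}^l)$ and the strong continuity of $t\mapsto\exp(-t\modulus{T})$ at $t=0$, both of which are immediate from the Borel functional calculus for the selfadjoint operator $T$ (cf.\ Proposition~\ref{Prop:FCProp}). It is also worth recording at the outset that the statement is meaningful, i.e.\ $\Dk{\infty}(T)\subset\dom(\modulus{T}^l)$, which follows from $\dom(\modulus{T}^k)\subset\dom(\modulus{T}^l)$ whenever $k\ge l\ge0$ --- a standard monotonicity property of domains of powers of a nonnegative selfadjoint operator.
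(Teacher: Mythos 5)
Your proof is correct and follows essentially the same route as the paper: approximate $u$ by $(\ext u)(t)=\exp(-t\modulus{T})u$, which lies in $\Dk{\infty}(T)$ by Lemma~\ref{Lem:ExtReg}, and let $t\to0^+$. The paper simply asserts the convergence $\norm{(\ext u)(t)-u}_{\dom(\modulus{T}^l)}\to0$ without detail, whereas you supply the justification via strong continuity of the semigroup and its commutation with $\modulus{T}^l$; this is a faithful filling-in of the same argument.
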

\begin{proof}
Note that restricting $\ext$ to $\dom(\modulus{T}^l)$, we obtain that $ \norm{(\ext u)(t) - u}_{\dom(\modulus{T}^l)} \to 0$.
Since by Lemma~\ref{Lem:ExtReg} we have that $(\ext u)(t) \in \Dk{\infty}(T)$, the conclusion follows.
\end{proof}

\begin{corollary}
\label{Cor:Dinf}
The subspace $\Dk{\infty}(T) \subset \checkH(T)$ is dense.
Similarly,  $\Dk{\infty}(T) \subset \hatH(T)$ is dense.
\end{corollary}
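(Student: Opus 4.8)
The plan is to deduce this directly from Lemma~\ref{Lem:AbsDensity}~\ref{Lem:AbsDensity:Density}, applied with $\core = \Dk{\infty}(T) = \bigcap_{k\ge1}\dom(\modulus{T}^k)$. That lemma has four hypotheses to check: that $\core$ is a subspace contained in $\OpSobH{\frac12}(T)=\dom(\modulus{T}^{\frac12})$, that $\chi^{\pm}(T)\core\subset\core$, that $\core$ is dense in $\OpSobH{\frac12}(T)$, and that $\core$ is dense in $\Hil$. Once these are in place, Lemma~\ref{Lem:AbsDensity}~\ref{Lem:AbsDensity:Density} immediately gives that $\Dk{\infty}(T)$ is dense in $\checkH(T)$.

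First I would note that $\Dk{\infty}(T)$ is a subspace, being an intersection of the subspaces $\dom(\modulus{T}^k)$, and that it is contained in $\dom(\modulus{T}^{\frac12})$ by the monotonicity of the domains of powers of the nonnegative selfadjoint operator $\modulus{T}$ (indeed $\dom(\modulus{T}^k)\subset\dom(\modulus{T}^{\frac12})$ for every $k\ge1$). Next, the $\chi^{\pm}(T)$-invariance: since $\chi^{\pm}(T)$ and $\modulus{T}^k$ are both functions of $T$ in the Borel functional calculus, they commute, and $\chi^\pm(T)$ is bounded on $\Hil$; hence for $u\in\dom(\modulus{T}^k)$ one has $\modulus{T}^k\chi^{\pm}(T)u=\chi^{\pm}(T)\modulus{T}^k u\in\Hil$, so $\chi^{\pm}(T)\dom(\modulus{T}^k)\subset\dom(\modulus{T}^k)$ for every $k$, and therefore $\chi^{\pm}(T)\Dk{\infty}(T)\subset\Dk{\infty}(T)$. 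Finally, density of $\Dk{\infty}(T)$ in $\OpSobH{\frac12}(T)=\dom(\modulus{T}^{\frac12})$ is Corollary~\ref{Cor:DinfDensity} with $l=\tfrac12$, and density in $\Hil$ is the same corollary with $l=0$. This gives the first assertion.

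For the statement about $\hatH(T)=\checkH(-T)$, I would simply apply the first part to the selfadjoint operator $-T$ in place of $T$. The point is that $\modulus{-T}=\sqrt{(-T)^2}=\sqrt{T^2}=\modulus{T}$, so $\dom(\modulus{-T}^k)=\dom(\modulus{T}^k)$ for all $k$ and hence $\Dk{\infty}(-T)=\Dk{\infty}(T)$; thus density of $\Dk{\infty}(-T)$ in $\checkH(-T)$ is exactly density of $\Dk{\infty}(T)$ in $\hatH(T)$.

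There is no genuine obstacle here: the corollary is a bookkeeping consequence of the two preceding results. The only point requiring a word of care is the invariance $\chi^{\pm}(T)\Dk{\infty}(T)\subset\Dk{\infty}(T)$, which one must phrase via the commutation of Borel functional calculus with $\modulus{T}^k$ rather than, say, with unbounded manipulations; everything else is a direct citation of Corollary~\ref{Cor:DinfDensity} and Lemma~\ref{Lem:AbsDensity}~\ref{Lem:AbsDensity:Density}.
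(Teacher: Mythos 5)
Your proposal is correct and follows essentially the same route as the paper: verify the hypotheses of Lemma~\ref{Lem:AbsDensity}~\ref{Lem:AbsDensity:Density} for $\core=\Dk{\infty}(T)$ using Corollary~\ref{Cor:DinfDensity} and the $\chi^{\pm}(T)$-invariance, then pass to $-T$ for the $\hatH(T)$ statement. If anything, your write-up is slightly more careful than the paper's, which applies the density lemma with the individual spaces $\dom(\modulus{T}^k)$ rather than $\Dk{\infty}(T)$ itself and (by an apparent typo) cites the wrong density lemma.
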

\begin{proof}
We note that for each $\dom(\modulus{T}^k)$, we have that it is dense in $\dom(T)$ and also in $\Hil$.
Moreover, $\chi^{\pm}(T) \dom(\modulus{T}^k) \subset \dom(\modulus{T}^k)$.
By application of Lemma~\ref{Lem:Density}, we obtain the conclusion for $\checkH(T)$.
Since $\hatH(T) = \checkH(-T)$, by application of this to $-T$ in place of $T$, we obtain the corresponding conclusion also for $\hatH(T)$.
\end{proof}

\begin{lemma}
\label{Lem:Duality}
The $\Hil$ inner product extends to a perfect pairing $\inprod{\cdot,\cdot}: \checkH(T) \times \hatH(T) \to \C$.
Therefore, this induces an isomorphism $\checkH(T)^\ast \cong \hatH(T)$.
\end{lemma}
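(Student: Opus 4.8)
The plan is to reduce everything to the spectral decomposition of $T$ and to the duality $\OpSobH{\frac12}(T) \times \OpSobH{-\frac12}(T) \to \C$ already established in Lemma~\ref{Lem:Frac}. Write $\Hil = \Hil_- \oplus \Hil_0 \oplus \Hil_+$ for the spectral subspaces of $T$ for $(-\infty,0)$, $\{0\}$ and $(0,\infty)$, and let $P_- = \chi_{(-\infty,0)}(T)$, $P_0 = \chi_{\{0\}}(T) = \chi_{\{0\}}(T)$, $P_+ = \chi_{(0,\infty)}(T)$ be the corresponding orthogonal projections. By Lemma~\ref{Lem:ProjDualBdd} each of these restricts to a bounded projection on $\OpSobH{\frac12}(T)$ and extends to a bounded projection on $\OpSobH{-\frac12}(T)$, and on $\Hil_0 = \nul(T)$ the norms $\norm{\cdot}_{\OpSobH{\frac12}(T)}$, $\norm{\cdot}_\Hil$, $\norm{\cdot}_{\OpSobH{-\frac12}(T)}$ are mutually equivalent (there $\modulus{T}_\epsilon$ acts as $\epsilon I$). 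Using $\chi^+(T) = P_0 + P_+$ and, for $\hatH(T) = \checkH(-T)$, that $\chi^-(-T) = P_+$ and $\chi^+(-T) = P_0 + P_-$, one obtains topological direct sum decompositions
\begin{align*}
\checkH(T) &= P_-\OpSobH{\tfrac12}(T) \oplus \Hil_0 \oplus P_+\OpSobH{-\tfrac12}(T), \\
\hatH(T) &= P_+\OpSobH{\tfrac12}(T) \oplus \Hil_0 \oplus P_-\OpSobH{-\tfrac12}(T).
\end{align*}

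Next I would extend the $\Hil$-inner product. It is defined on $\Dk{\infty}(T) \subset \checkH(T) \cap \hatH(T)$, which is dense in both spaces by Corollary~\ref{Cor:Dinf}. For $u,v \in \Dk{\infty}(T)$ one has $\inprod{u,v}_\Hil = \inprod{P_- u, P_- v}_\Hil + \inprod{P_0 u, P_0 v}_\Hil + \inprod{P_+ u, P_+ v}_\Hil$, and each summand is controlled in the relevant norms: the first by the $\OpSobH{\frac12}(T)\times\OpSobH{-\frac12}(T)$ pairing of Lemma~\ref{Lem:Frac} (one factor being in $\OpSobH{\frac12}$ and the other in $\OpSobH{-\frac12}$ after the identifications above), the third symmetrically, and the middle one by Cauchy--Schwarz on $\Hil_0$. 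Hence the bilinear form extends boundedly to $\checkH(T)\times\hatH(T)$, uniquely by density. Moreover, since $P_-, P_0, P_+$ are self-adjoint on $\Hil$ (Proposition~\ref{Prop:FCProp}) and bounded on both $\OpSobH{\frac12}(T)$ and $\OpSobH{-\frac12}(T)$, the identity $\inprod{P_\bullet x, y} = \inprod{x, P_\bullet y}$ persists after extension; combined with $P_aP_b = 0$ for $a\neq b$, this forces all pairings between distinct spectral blocks to vanish, so the extended pairing is the direct sum of the three block pairings
$$
P_-\OpSobH{\tfrac12}(T) \times P_-\OpSobH{-\tfrac12}(T) \to \C, \qquad \Hil_0 \times \Hil_0 \to \C, \qquad P_+\OpSobH{-\tfrac12}(T) \times P_+\OpSobH{\tfrac12}(T) \to \C.
$$

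Finally I would show each block pairing is perfect and conclude. The middle one is perfect by the Riesz representation theorem on the Hilbert space $\Hil_0$. For the outer two I would invoke the elementary fact that if $\langle\cdot,\cdot\rangle\colon X\times Y\to\C$ is a perfect pairing of reflexive Banach spaces (here $X = \OpSobH{\frac12}(T)$, $Y = \OpSobH{-\frac12}(T)$, reflexive by Lemma~\ref{Lem:Frac0}) and $P\colon X\to X$ is a bounded projection whose pairing-adjoint $P'$ on $Y$ is bounded, then $\langle\cdot,\cdot\rangle$ restricts to a perfect pairing $PX \times P'Y \to \C$: given $\phi\in(PX)^*$ extend it to $X$ via $\phi\circ P$, represent it through $Y$ by perfectness of the original pairing, and project back into $P'Y$; injectivity is immediate. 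One applies this with $P = P_-$ and $P = P_+$, noting that their extensions to $\OpSobH{-\frac12}(T)$ given by Lemma~\ref{Lem:ProjDualBdd} coincide with their pairing-adjoints (by self-adjointness on $\Hil$ plus a density argument). Since a finite direct sum of perfect pairings is perfect, $\inprod{\cdot,\cdot}\colon\checkH(T)\times\hatH(T)\to\C$ is perfect, whence the induced map $\hatH(T)\to\checkH(T)^*$ is a topological isomorphism. The only point requiring genuine care is the bookkeeping around $\Hil_0 = \nul(T)$ (absent when $T$ is invertible) and checking that the spectral projections act as pairing-adjoints on the negative-order space; the rest is routine.
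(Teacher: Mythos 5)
Your proof is correct, and it uses exactly the ingredients the paper's (very terse) proof names -- the pairing between $\dom(\modulus{T}^{\frac12})$ and its dual from Lemma~\ref{Lem:Frac}, the boundedness of the spectral projections on both spaces from Lemma~\ref{Lem:ProjDualBdd}, and reflexivity -- the only difference being that you actually execute the block-by-block argument that the paper outsources to Proposition~5.1 of \cite{BBan}. The one point that genuinely requires care, the fact that $\nul(T)$ sits in the $\OpSobH{-\frac12}$ component of \emph{both} $\checkH(T)$ and $\hatH(T)$ and must be split off as a separate block on which all the norms are equivalent, is handled correctly in your decomposition, and your abstract lemma on restricting a perfect pairing along a bounded projection and its pairing-adjoint is sound.
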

\begin{proof}
An argument similar to Proposition~5.1 in \cite{BBan}, using that $\inprod{\cdot,\cdot}$ extends to a linear pairing between $\dom(\modulus{T}^\alpha)$ and $\dom(\modulus{T}^\alpha)^\ast$ by Lemma~\ref{Lem:Frac}, and using the fact that projectors preserve reflexivity,   shows that the $\Hil$ inner product extends to a pairing $\checkH(T) \times \hatH(T) \to \C$ and that these spaces are dual to each other.
\end{proof}

For $r \in \R$, let $T_r := T - r$ and note that by construction, $\OpSobH{\alpha}(T_r) = \OpSobH{\alpha}(T)$  for all $\alpha \in \R$.
In the following, we show that for any two $q, r \in \R$, the spaces $\checkH(T_r) \simeq \checkH(T_q)$ with equivalence of norms.
The argument here proceeds differently to that of \cite{BBan}, since we may have continuous spectrum than just pure point spectrum.
Nevertheless, as the proof illustrates, we can compensate by resorting to the Borel functional calculus.

\begin{proposition}
\label{Prop:CheckIso}
For  $ r \in \R$, we have that $\checkH(T_r) = \checkH(T)$ as sets with equivalence of norms.
\end{proposition}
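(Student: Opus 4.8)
The plan is to prove that $\checkH(T_r) = \checkH(T)$ with equivalent norms by reducing the general case to two special cases: the case where the spectral parameter stays on one side of $0$ relative to $r$ (say, $r > 0$ so that the spectral subspaces only differ on the band $[0,r)$), and then iterating/combining. Since $\OpSobH{\alpha}(T_r) = \OpSobH{\alpha}(T)$ as sets with equal norms for all $\alpha$ (this is immediate from $T_r = T - rI$ and the spectral theorem applied to $|T_r|$ versus $|T|$, the two being bounded perturbations of each other away from a compact spectral window, and in fact $\dom(|T_r|^\alpha) = \dom(|T|^\alpha)$ by standard interpolation), the only content is that the \emph{splitting} into $\chi^-$ and $\chi^+$ parts changes, and one must check this change is harmless at the level of the $\checkH$-norm.

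First I would reduce to $r > 0$; the case $r < 0$ follows by symmetry (replace $T$ by $-T$, or note $\checkH(T_r) = \checkH((T_r))$ and run the argument with $q = 0$, $r$ swapped). For $r>0$, observe that $\chi^-(T_r) = \chi_{(-\infty,r)}(T) = \chi^-(T) + \chi_{[0,r)}(T)$ and $\chi^+(T_r) = \chi_{[r,\infty)}(T) = \chi^+(T) - \chi_{[0,r)}(T)$. So $u = \chi^-(T_r)u + \chi^+(T_r)u$ and $u = \chi^-(T)u + \chi^+(T)u$ differ exactly by moving the piece $\chi_{[0,r)}(T)u$ from the "$+$" slot to the "$-$" slot. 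The key point is then: the spectral subspace $\chi_{[0,r)}(T)\Hil$ is a Hilbert space on which \emph{all} the norms $\norm{|T|_\epsilon^{\pm 1/2}\cdot}$ are mutually equivalent — indeed on $\ran \chi_{[0,r)}(T)$ we have $\epsilon \le |T|_\epsilon \le r + \epsilon$, so $|T|_\epsilon^{1/2}$ and $|T|_\epsilon^{-1/2}$ are bounded with bounded inverses there, uniformly. Hence on the range of $\chi_{[0,r)}(T)$ the norms of $\OpSobH{1/2}(T)$, $\Hil$, and $\OpSobH{-1/2}(T)$ all agree up to constants depending only on $r,\epsilon$. This is exactly the ingredient (via Lemma~\ref{Lem:Frac} and Lemma~\ref{Lem:ProjDualBdd}) that lets us shuffle that piece between slots.

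Concretely: given $u$, write $u = u^{--} + u^{0} + u^{++}$ where $u^{--} = \chi^-(T)u$, $u^0 = \chi_{[0,r)}(T)u$, $u^{++} = \chi^+(T_r)u = \chi_{[r,\infty)}(T)u$. Then the three components are mutually orthogonal in $\Hil$ and are preserved by all the relevant projectors, so by Lemma~\ref{Lem:Frac} each of $\norm{u}_{\checkH(T)}$ and $\norm{u}_{\checkH(T_r)}$ splits (up to universal constants) as a sum of the contributions of the three pieces. For $u^{--}$ and $u^{++}$ the contribution is the same in both norms (namely $\norm{|T|_\epsilon^{1/2}u^{--}}$ and $\norm{|T|_\epsilon^{-1/2}u^{++}}$ respectively, since $\chi^-(T)u^{--} = u^{--}$ and $\chi^+(T)u^{++} = \chi^+(T_r)u^{++} = u^{++}$). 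For $u^0$ the contribution to $\norm{u}_{\checkH(T)}$ is $\norm{|T|_\epsilon^{-1/2}u^0}$ (it sits in the "$+$" slot of $\checkH(T)$) while its contribution to $\norm{u}_{\checkH(T_r)}$ is $\norm{|T|_\epsilon^{1/2}u^0}$ (it sits in the "$-$" slot of $\checkH(T_r)$), and these are equivalent by the boundedness of $|T|_\epsilon$ and $|T|_\epsilon^{-1}$ on $\ran\chi_{[0,r)}(T)$ noted above. Summing the three, $\norm{u}_{\checkH(T)} \simeq \norm{u}_{\checkH(T_r)}$, with constants depending on $r$ and $\epsilon$ but not on $u$; and the underlying sets coincide because in all cases the membership conditions are $\chi^-(T_r)u \in \OpSobH{1/2}$ and $\chi^+(T_r)u \in \OpSobH{-1/2}$, which by the above shuffling are equivalent to the corresponding conditions for $T$.

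The main obstacle — and it is minor — is bookkeeping the dual-space pieces carefully: $\chi_{[0,r)}(T)u$ a priori lives in $\OpSobH{-1/2}(T)$ only, so one must first invoke Lemma~\ref{Lem:ProjDualBdd} to know $\chi_{[0,r)}(T)$ acts boundedly on $\OpSobH{-1/2}(T)$, and then use Remark~\ref{Rem:FracPowerBd} to make sense of $\norm{|T|_\epsilon^{\pm 1/2}\chi_{[0,r)}(T)u}$ and the equivalence between them, approximating by elements of $\dom(T)$ where everything is literal. One should also note the orthogonal-decomposition step is legitimate in the dual spaces too because the $\chi_S(T)$ are commuting idempotents whose ranges are $\Hil$-orthogonal and which extend boundedly to the relevant $\OpSobH{-1/2}$, so the "Pythagorean" splitting of the $\checkH$-norm is really just the definition of the $\checkH$-norm plus Lemma~\ref{Lem:Frac}. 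No deep input is needed beyond what has already been assembled.
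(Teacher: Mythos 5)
Your proposal is correct and follows essentially the same route as the paper: reduce to $r>0$, split $u$ into the three spectral pieces $\chi^{-}(T)u$, $\chi_{[0,r)}(T)u$, $\chi_{[r,\infty)}(T)u$, observe that only the band piece changes slots, and show its $\OpSobH{\frac12}$- and $\OpSobH{-\frac12}$-norms are equivalent. Your justification of that last equivalence (the spectrum of $\modulus{T}_\epsilon$ on $\ran\chi_{[0,r)}(T)$ lies in $[\epsilon,r+\epsilon]$) is a slightly more direct phrasing of the paper's Cauchy--Schwarz computation with the bounded Borel function $f(t)=(\modulus{t}+\epsilon)\chi_{[0,r)}(t)$, but it is the same idea.
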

\begin{proof}
Without loss of generality, we assume that $0 < r$.
Note first that, via functional calculus,
\begin{equation}
\label{E0:ProjForm}
\chi^+(T - r) = \chi_{[r,\infty)]}(T)\quad\text{and}\quad\chi^{-}(T - r) = \chi_{(-\infty, r)}(T).
\end{equation}
Fix $u \in \Dk{\infty}(T)$, which is a dense subset of both $\checkH(T_r)$ and $\checkH(T)$ by Lemma~\ref{Lem:Frac}.
Then, $u = \chi^-(T)u + \chi^+(T)u$ and arguing as in the proof of Proposition~5.2 in \cite{BBan} and using \eqref{E0:ProjForm}  we have that $\chi^-(T_r)u = \chi_{[0,\infty)}(T) u + \chi_{[0,r)}(T) u.$
Also,
\begin{align*}
\norm{u}_{\checkH(T_r)}^2 & = \norm{\chi^{-}(T_r)u}^2_{\OpSobH{\frac{1}{2}}} + \norm{\chi^{+}(T_r)u}^2_{\OpSobH{-\frac{1}{2}}}                                                    \\
                          & \simeq \norm{\chi^-(T) u}_{\OpSobH{\frac{1}{2}}}^2 + \norm{\chi_{[0,r)}(T)u}_{\OpSobH{\frac{1}{2}}}^2 + \norm{\chi^+(T_r)u}^2_{\OpSobH{-\frac{1}{2}}}.
\end{align*}
Moreover, on writing $u = \chi^-(T_r) u + \chi^+(T_r)u$ and via a similar calculation, we find that
$$\norm{u}_{\checkH(T)} \simeq \norm{\chi^-(T)u}^2_{\OpSobH{\frac{1}{2}}} + \norm{\chi_{[0,r)}(T)u}_{\OpSobH{-\frac{1}{2}}}^2 + \norm{\chi^+(T_r)u}_{\OpSobH{-\frac{1}{2}}}^2.$$
Therefore, we are reduced to showing that $\norm{\chi_{[0,r)}(T)u}_{\OpSobH{-\frac{1}{2}}} \simeq \norm{\chi_{[0,r)}(T)u}_{\OpSobH{\frac{1}{2}}}.$

Fixing an $\epsilon > 0$ and using Lemma~\ref{Lem:Frac}, we obtain that
$$\norm{\chi_{[0,r)}(T)u}_{\OpSobH{\frac{1}{2}}} \simeq \norm{\modulus{T}_\epsilon^\frac{1}{2}\chi_{[0,r)}(T)u}  = \norm{\modulus{T}_\epsilon \modulus{T}_\epsilon^{-\frac{1}{2}}\chi_{[0,r)}(T)u}.$$
Now, note that $v = \modulus{T}_\epsilon^{-\frac{1}{2}}\chi_{[0,r)}(T)u \in \ran(\chi_{[0,r)}(T))$, and that $f(t) =( \modulus{t} + \epsilon)\chi_{[0,r)}(t) > \epsilon$ is a bounded Borel function.
Therefore, $f(T) = \modulus{T}_\epsilon\chi_{[0,r)}(T)$ is a bounded self adjoint operator on $\Hil$.
Also, for $w \in \Hil$,
\begin{multline*}
\inprod{f(T)w,w} = \inprod{\modulus{T}_\epsilon \chi_{[0,r)}(T)w,w} = \inprod{\chi_{[0,r)}(T) \modulus{T}_\epsilon \chi_{[0,r)}(T)w,w} \\
= \inprod{\modulus{T}_\epsilon \chi_{[0,r)}(T)w, \chi_{[0,r)}(T)w} \geq \epsilon  \norm{ \chi_{[0,r)}(T)w}^2.
\end{multline*}
Using the Cauchy-Schwarz inequality on $\inprod{f(T)w,w}$ and on noting that $f(T) \chi_{[0,r)}(T)w = f(T) w$,  we obtain
$\epsilon \norm{\chi_{[0,r)}(T) w} \leq \norm{f(T)w} \leq \norm{f(T)}_{\Hil\to\Hil} \norm{\chi_{[0,r)}(T)w}$.
Therefore, on noting that $w = v = \chi_{[0,r)}(T)v$,
$$\norm{\modulus{T}_\epsilon \modulus{T}_\epsilon^{-\frac{1}{2}}\chi_{[0,r)}(T)u} \simeq \norm{ \modulus{T}_\epsilon^{-\frac{1}{2}}\chi_{[0,r)}(T)u} \simeq \norm{\chi_{[0,r)}(T)u}_{\OpSobH{-\frac{1}{2}}},$$
which is the required estimate to establish the claim.
\end{proof}

\begin{proposition}
\label{Prop:IntSmoothAbstract}
For any $\alpha \ge 0$ and any Borel set $S \subset \R$ that is bounded,
$$\chi_{S}(T) \dom(\modulus{T}^{\alpha})^\ast  \subset \bigcap_{k=0}^\infty\dom(T^{2k}) .$$
\end{proposition}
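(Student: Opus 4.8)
The plan is to reduce the assertion to the elementary spectral fact that $T$ is bounded on $\ran(\chi_S(T))$, once one knows that $\chi_S(T)$ maps the negative-order space $\dom(\modulus{T}^{\alpha})^\ast$ back into $\ran(\chi_S(T)\rest{\Hil})\subset\Hil$.

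First I would settle the range step: if $v\in\ran(\chi_S(T)\rest{\Hil})$ then $v=\chi_S(T)v$, so the spectral measure $\mu_v$ of $v$ is carried by $S$; as $S$ is bounded, $\int_S\modulus{\lambda}^{2k}\,d\mu_v(\lambda)<\infty$ for every $k$, whence $v\in\dom(T^{2k})$ for all $k$. Thus $\ran(\chi_S(T)\rest{\Hil})\subseteq\bigcap_{k=0}^\infty\dom(T^{2k})$, and the proposition will follow as soon as $\chi_S(T)\dom(\modulus{T}^{\alpha})^\ast\subseteq\ran(\chi_S(T)\rest{\Hil})$.

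For the latter, I would argue as follows. By Lemma~\ref{Lem:Frac} the norm of $\dom(\modulus{T}^{\alpha})^\ast$ is equivalent to $\norm{\modulus{T}_\epsilon^{-\alpha}\,\cdot\,}$ for a fixed $\epsilon>0$, and since $\chi_S(T)$ commutes with $\modulus{T}_\epsilon^{-\alpha}$ on $\Hil$, the bound $\norm{\chi_S(T)f}_{\dom(\modulus{T}^{\alpha})^\ast}\simeq\norm{\chi_S(T)\modulus{T}_\epsilon^{-\alpha}f}\le\norm{\modulus{T}_\epsilon^{-\alpha}f}\simeq\norm{f}_{\dom(\modulus{T}^{\alpha})^\ast}$ for $f\in\Hil$ shows that $\chi_S(T)$ extends to a bounded projector on $\dom(\modulus{T}^{\alpha})^\ast$ for every $\alpha\ge0$ (beyond the range $\alpha\in[0,1]$ of Lemma~\ref{Lem:ProjDualBdd}). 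Now fix an integer $N\ge\alpha$. Since $S$ is bounded, $f_N(\lambda):=\chi_S(\lambda)(\modulus{\lambda}+\epsilon)^{N}$ is a bounded Borel function, so $f_N(T)\colon\Hil\to\Hil$ is bounded; also $(\modulus{\lambda}+\epsilon)^{-N}$ is bounded, so $\modulus{T}_\epsilon^{-N}$ is bounded on $\Hil$, and by Remark~\ref{Rem:FracPowerBd} it is moreover bounded as a map $\dom(\modulus{T}^{\alpha})^\ast\to\Hil$. Multiplicativity of the Borel functional calculus gives the operator identity $\chi_S(T)=f_N(T)\circ\modulus{T}_\epsilon^{-N}$ on $\Hil$; both sides are continuous maps $\dom(\modulus{T}^{\alpha})^\ast\to\dom(\modulus{T}^{\alpha})^\ast$ (the right-hand side factoring through $\Hil$), so by density of $\Hil$ in $\dom(\modulus{T}^{\alpha})^\ast$ (Lemma~\ref{Lem:Frac0}) the identity holds on all of $\dom(\modulus{T}^{\alpha})^\ast$. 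Applying it to $u\in\dom(\modulus{T}^{\alpha})^\ast$ yields $\chi_S(T)u=f_N(T)\bigl(\modulus{T}_\epsilon^{-N}u\bigr)\in\ran(\chi_S(T)\rest{\Hil})$, and the first step closes the argument.

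The content is mostly bookkeeping with unbounded operators; the one point requiring care is making the factorisation $\chi_S(T)=f_N(T)\circ\modulus{T}_\epsilon^{-N}$ genuinely rigorous rather than a formal juggling of commuting symbols, together with justifying that $\chi_S(T)$ extends meaningfully to $\dom(\modulus{T}^{\alpha})^\ast$ for $\alpha>1$. If one prefers to bypass the factorisation entirely, an alternative is a pure approximation argument: pick $u_n\in\Hil$ with $u_n\to u$ in $\dom(\modulus{T}^{\alpha})^\ast$, note that on the closed subspace $\ran(\chi_S(T)\rest{\Hil})$ the $\Hil$-norm and the $\dom(\modulus{T}^{\alpha})^\ast$-norm are equivalent — because, $\epsilon>0$ being fixed and $S$ bounded, $\modulus{T}_\epsilon^{-\alpha}\chi_S(T)$ is bounded both above and below on that subspace — so $\{\chi_S(T)u_n\}$ is Cauchy in $\Hil$; its $\Hil$-limit lies in $\ran(\chi_S(T)\rest{\Hil})$ and must coincide with $\chi_S(T)u$, so $\chi_S(T)u\in\ran(\chi_S(T)\rest{\Hil})\subseteq\bigcap_k\dom(T^{2k})$.
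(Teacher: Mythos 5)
Your argument is correct and follows essentially the same route as the paper: both proofs rest on the observation that $\lambda\mapsto\chi_S(\lambda)(\modulus{\lambda}+\epsilon)^{m}$ is a bounded Borel function because $S$ is bounded, combined with Remark~\ref{Rem:FracPowerBd} to pull elements of $\dom(\modulus{T}^{\alpha})^\ast$ back into $\Hil$ via $\modulus{T}_\epsilon^{-\alpha}$. The paper estimates $\norm{\modulus{T}_\epsilon^{2k-\alpha}\chi_S(T)u}$ directly instead of first landing in $\ran(\chi_S(T)\rest{\Hil})$ and then invoking the spectral-measure fact, but the mathematical content is the same.
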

\begin{proof}
Drawing inspiration from the proof of Proposition~\ref{Prop:CheckIso}, for $k \in \N$ define $f_k: \R\to \R$ by
$$ f_k(x) := x^{2k} \chi_{S}(x).$$
Since $S$ is bounded, clearly there exists a constant $C_{k,S} < \infty$ dependent  on $k$ and $I$ such that $\modulus{f_k(x)}  \leq C_{k,S}$.
Therefore, by the Borel functional calculus, we have that
$$ \norm{f_k(T)u} \lesssim \norm{u}$$
for all $u \in \Hil$.
However, $f_k(T)u = T^{2k} \chi_{S}(T)u$, which means that $\chi_{S}(T)u \in \dom(T^{2k})$.

Now, from Remark~\ref{Rem:FracPowerBd}, we have that $\modulus{T}^{-\alpha}_\epsilon: \dom(\modulus{T}^\alpha)^\ast \to \Hil$ is bounded.
Moreover, by functional calculus, $\chi_{S}(T) \modulus{T}^{-\alpha}_{\epsilon}u =  \modulus{T}^{-\alpha}_{\epsilon} \chi_{S}(T) u$ and hence, for $u \in \dom(\modulus{T}^\alpha)^\ast$,
\begin{align*}
\norm{\modulus{T}^{2k-\alpha}_\epsilon \chi_{S}(T) u}
 & = \norm{\modulus{T}^{2k}_\epsilon \chi_{S}(T) \modulus{T}^{-\alpha}_\epsilon u }                                                                \\
 & \simeq \norm{\modulus{T}^{2k} \chi_{S}(T) \chi_{S}(T) \modulus{T}^{-\alpha}_\epsilon u} + \norm{  \chi_{S}(T) \modulus{T}^{-\alpha}_\epsilon u} \\
 & = \norm{f_k(T)\modulus{T}^{-\alpha}_\epsilon \chi_{S}(T) u} + \norm{ \modulus{T}^{-\alpha}_\epsilon \chi_{S}(T) u}                              \\
 & \lesssim \norm{ \modulus{T}^{-\alpha}_\epsilon \chi_{S}(T) u}                                                                                   \\
 & \simeq \norm{\chi_{S}(T)u}_{\dom(\modulus{T}^\alpha)^\ast},
\end{align*}
where in the second norm equivalence, we have used $\chi_{S}(T)^2 = \chi_{S}(T)$.
Therefore,
$$\chi_{S}(T)u \in \dom(\modulus{T}^{2k-\alpha}) \subset   \dom(\modulus{T}^{2k-2}) = \dom(T^{2(k-1)})$$
when $k > 1$.
\end{proof}

\subsection{Abstract Rellich theory}

\begin{theorem}
\label{Thm:AbsCpctEmbed}
Let $T$ be a selfadjoint operator on a separable Hilbert space $\Hil$.
Then the following are equivalent.
\begin{enumerate}[label=(\roman*), labelwidth=0pt, labelindent=2pt, leftmargin=21pt]
\item \label{It:AbsCpctEmbed:1}
$T$ has discrete spectrum.
\item \label{It:AbsCpctEmbed:2} 
The embedding $\dom((1+T^2)^s) \embed \dom((1+T^2)^t)$ is compact for some $s, t \in \R$ with $s > t\ge0$.
\item  \label{It:AbsCpctEmbed:3} 
The embedding $\dom((1+T^2)^s) \embed \dom((1+T^2)^t)$ is compact for all $s, t \in \R$ with $s > t\ge0$.
\end{enumerate}
\end{theorem}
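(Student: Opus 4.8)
The plan is to reduce the whole statement, via the Borel functional calculus, to the single assertion that $T$ has discrete spectrum if and only if the bounded operator $(1+T^2)^{t-s}$ is compact. Write $\phi_\alpha(\lambda):=(1+\lambda^2)^\alpha$. For $\alpha\ge0$ one has $\phi_\alpha\ge1$ on $\R$, so $\phi_\alpha(T)\colon\dom((1+T^2)^\alpha)\to\Hil$ is a topological isomorphism (isometric for the equivalent norm $u\mapsto\norm{\phi_\alpha(T)u}$), with inverse $\phi_{-\alpha}(T)$. For $s>t\ge0$ the inclusion $\iota\colon\dom((1+T^2)^s)\embed\dom((1+T^2)^t)$ therefore satisfies $\phi_t(T)\circ\iota\circ\phi_s(T)^{-1}=\psi(T)$ on $\Hil$, where $\psi:=\phi_t/\phi_s=\phi_{t-s}\in C_0(\R)$ is strictly positive; hence $\iota$ is compact if and only if $\psi(T)$ is compact.

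First I would prove $(i)\Rightarrow(iii)$. Assume $T$ has discrete spectrum and fix any $s>t\ge0$. Write $\spec(T)=\set{\lambda_k}_k$, the $\lambda_k$ being eigenvalues of finite multiplicity, and let $P_k:=\chi_{\set{\lambda_k}}(T)$, a finite-rank orthogonal projection. Since $\spec(T)$ is a closed discrete subset of $\R$, it is locally finite, so if there are infinitely many $\lambda_k$ then $\modulus{\lambda_k}\to\infty$ and hence $\psi(\lambda_k)\to0$. In either case $\psi(T)=\sum_k\psi(\lambda_k)P_k$ is a norm limit of finite-rank operators, so $\psi(T)$, and therefore $\iota$, is compact. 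This proves $(iii)$, and $(iii)\Rightarrow(ii)$ is trivial.

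It remains to show $(ii)\Rightarrow(i)$. Suppose $\iota$ is compact for some $s>t\ge0$; then $K:=\psi(T)=(1+T^2)^{t-s}$ is a compact, injective, nonnegative selfadjoint operator on $\Hil$ with $\norm{K}\le1$. By the structure theorem for compact selfadjoint operators, $\Hil$ has an orthonormal basis of eigenvectors of $K$, with eigenvalues $\mu_j\in(0,1]$ (injectivity excludes the eigenvalue $0$), each of finite multiplicity, and with $\mu_j\to0$ if there are infinitely many. Because $\psi\rest{[0,\infty)}$ is a decreasing homeomorphism onto $(0,1]$, each $\mu_j$ determines a unique $\nu_j\ge0$ with $\psi(\nu_j)=\mu_j$, the $\nu_j$ are distinct, and by functional calculus the $\mu_j$-eigenspace of $K$ equals $\chi_{\set{\nu_j,-\nu_j}}(T)\Hil=\ran\chi_{\psi^{-1}(\set{\mu_j})}(T)$. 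These spaces are finite-dimensional, $T$-invariant, span $\Hil$, and on each of them $T$ acts as a selfadjoint operator with spectrum contained in $\set{\pm\nu_j}$. Hence $\Hil$ has an orthonormal basis of eigenvectors of $T$, the eigenvalues lie in $\set{\pm\nu_j}_j$ with $\modulus{\nu_j}\to\infty$ (forced by $\mu_j\to0$), and each eigenvalue has finite multiplicity; that is, $T$ has discrete spectrum.

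The arguments drawn from compact operator theory are routine; the point needing the most care is the functional-calculus bookkeeping in the first paragraph — identifying the two domains with $\Hil$ and verifying the intertwining $\phi_t(T)\iota\phi_s(T)^{-1}=\psi(T)$ — together with the observation in $(ii)\Rightarrow(i)$ that the spectral subspaces $\chi_{\set{\pm\nu_j}}(T)\Hil$ are finite-dimensional and exhaust $\Hil$, which is what upgrades ``$\spec(T)$ is a set with no finite accumulation point'' to genuine discreteness with finite multiplicities.
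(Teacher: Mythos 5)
Your proof is correct. The equivalence $(ii)\Leftrightarrow(i)$ is handled in essentially the same way as in the paper (compactness of a negative power of $1+T^2$ on $\Hil$ plus the spectral mapping theorem — you spell out the spectral-mapping bookkeeping in more detail, including finite multiplicities, which the paper leaves implicit), but your $(i)\Rightarrow(iii)$ goes by a genuinely different route. The paper argues sequentially in the Rellich style: expand a bounded sequence in the eigenbasis, extract a componentwise-convergent subsequence by a diagonal argument, and verify the Cauchy property in $\dom((1+T^2)^t)$ by splitting into finitely many low modes and a tail controlled by $\epsilon^{2(s-t)}$. You instead first conjugate the embedding by the isometries $\phi_\alpha(T)\colon\dom((1+T^2)^\alpha)\to\Hil$ to reduce \emph{both} directions to the single statement that $(1+T^2)^{t-s}$ is compact on $\Hil$, and then obtain compactness as a norm limit of the finite-rank truncations $\sum_{\modulus{\lambda_k}\le N}\psi(\lambda_k)P_k$. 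Your reduction is structurally cleaner and makes the two implications visibly dual to one another; the paper's sequential argument is more self-contained (it uses only the definition of compactness and no approximation by finite-rank operators) and is closer in spirit to the classical Rellich proofs it is meant to abstract. Both arguments are complete; the only points worth making explicit in yours are the standard identity $\ker(\psi(T)-\mu)=\ran\chi_{\psi^{-1}(\set{\mu})}(T)$ for continuous $\psi$, and the remark (made by the paper at the outset) that when $\spec(T)$ is finite, i.e.\ $T$ is bounded with discrete spectrum, all three statements degenerate to $\dim\Hil<\infty$.
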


\begin{proof}
Without loss of generality, we assume that $T$ unbounded.
Indeed, if $T$ is bounded, then each of the assertions \ref{It:AbsCpctEmbed:1}--\ref{It:AbsCpctEmbed:3} is equivalent to $\Hil$ being finite dimensional.

It is immediate that \ref{It:AbsCpctEmbed:3} implies \ref{It:AbsCpctEmbed:2}.
We show that \ref{It:AbsCpctEmbed:2} implies \ref{It:AbsCpctEmbed:1}.
So, assume \ref{It:AbsCpctEmbed:2} and note that 
$$ 
\dom((1+T^2)^s) \xhookrightarrow{\mathrm{compact}} \dom((1+T^2)^t) \xhookrightarrow{\mathrm{continuous}} \Hil,
$$
and therefore we have that $(1+T^2)^{-s}:\Hil \to \Hil$ is a compact map.
Hence $(1+T^2)^{-s}$ has discrete spectrum except for the accumulation point $0$.
Since $s>0$ it follows that $1+T^2$ and hence $T$ has discrete spectrum.

It remains to show that \ref{It:AbsCpctEmbed:1} implies \ref{It:AbsCpctEmbed:3}.
Let $T$ have discrete spectrum.
To show that $\dom((1+T^2)^s) \embed \dom((1+T^2)^t)$ is compact, let $u_k\in \dom((1+T^2)^s)$ be a bounded sequence, 
$$
\|u_k\|_{\dom((1 +T^2)^s)} \leq C.
$$
We need to find a subsequence which converges in $\dom((1 +T^2)^t)$.
Let $\lambda_j$ be the eigenvalues of $T$ and $\phi_j$ its orthonormalised eigenvectors.
We write $u_k = \sum_j u_{k,j}\phi_j$.
For each fixed $j$ we have
$$
(1+\lambda_j^2)^{2s} |u_{k,j}|^2 \le \|u_k\|_{\dom((1 +T^2)^s)}^2 \le C^2
$$
and hence 
$$
|u_{k,j}| \le C .
$$
By a diagonal argument, we can pass to a subsequence, again denoted $(u_k)$, such that for each $j$
$$
u_{k,j} \xrightarrow{k\to\infty} w_j .
$$

We show that $(u_k)$ is a Cauchy sequence in $\dom((1 +T^2)^t)$.
Let $\epsilon>0$.
Since the spectrum is discrete and $T$ is unbounded there exists $j_0$ such that 
$$
|\lambda_j| \ge \frac1\epsilon 
$$
for all $j> j_0$.
For these $j$ we then have
$$
(1+\lambda_j^2)^{t-s}
\le 
|\lambda_j|^{2(t-s)}
\le
\epsilon^{2(s-t)}.
$$
Therefore,
\begin{align*}
\sum_{j> j_0} (1+\lambda_j^2)^{2t} |u_{k,j}-u_{\ell,j}|^2
&=
\sum_{j> j_0} (1+\lambda_j^2)^{2(t-s)}(1+\lambda_j^2)^{2s} |u_{k,j}-u_{\ell,j}|^2 \\
&\le
\epsilon^{4(s-t)}\sum_{j\le j_0}(1+\lambda_j^2)^{2s} |u_{k,j}-u_{\ell,j}|^2 \\
&\le
\epsilon^{4(s-t)} \|u_k-u_\ell\|_{\dom((1 +T^2)^s)}^2 \\
&\le
\epsilon^{4(s-t)} \cdot 4C^2.
\end{align*}
Choose $m$ so large that such that we have for all (finitely many) $j\le j_0$
$$
(1+\lambda_j^2)^{2t} |u_{k,j}-u_{\ell,j}|^2 \le \frac{\epsilon}{j_0}
$$
whenever $k,\ell\ge m$.
Then we find for such $k$ and $\ell$ that
\begin{align*}
\|u_k-u_\ell\|_{\dom((1 +T^2)^t)}^2
&=
\sum_{j\le j_0}(1+\lambda_j^2)^{2t} |u_{k,j}-u_{\ell,j}|^2
+ \sum_{j> j_0}(1+\lambda_j^2)^{2t} |u_{k,j}-u_{\ell,j}|^2 \\
&\le
\epsilon + \epsilon^{4(s-t)} \cdot 4C^2.
\end{align*}
This shows that $(u_k)$ is a Cauchy sequence in $\dom((1 +T^2)^{\frac t 2})$ and hence converges.
\end{proof} 

\begin{remark}
A simple calculation shows that $\dom(\sqrt{1 + T^2}) = \dom(\modulus{T})$.
Thus $\dom( (1+ T^2)^{\frac {\alpha}{2}}) = \dom(\modulus{T}^\alpha)$ for all $\alpha \ge0$.
Negative powers of $|T|$ may not be defined as $T$ may have nontrivial kernel.
\end{remark}

\subsection{Involutions} 
Let $\Xi:\Hil \to \Hil$ be a bounded \emph{involution}, i.e., a bounded operator with $\Xi^2 = I$.
We recall some basic facts, in particular when such a $\Xi$ interacts with a selfadjoint operator $T$.

\begin{lemma}
\label{Lem:Idem}
Let $\Xi: \Hil \to \Hil$ be a bounded involution. Then:
\begin{enumerate}[label=(\roman*), labelwidth=0pt, labelindent=2pt, leftmargin=21pt]
\item \label{Itm:Lem:Idem:1}
      $\Xi^\ast:\Hil \to \Hil$ is also a bounded involution.
\item  \label{Itm:Lem:Idem:2}
      $\spec(\Xi) = \set{\pm 1}$.
      \item\label{Itm:Lem:Idem:3}
      $P_{\pm}= \frac12 (I \pm \Xi)$ define bounded projectors to eigenspaces corresponding to $\pm 1$ so that inducing a splitting $\Hil = \Hil_+ \oplus \Hil_-$ where $\Hil_{\pm} = P_{\pm} \Hil$.
\end{enumerate}
\end{lemma}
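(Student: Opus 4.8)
The statement is the elementary Lemma~\ref{Lem:Idem} on bounded involutions, and the plan is to treat the three assertions in order, each reducing to a short algebraic manipulation together with a standard fact about bounded operators on a Hilbert space. First, for \ref{Itm:Lem:Idem:1}, I would simply take adjoints in the identity $\Xi^2 = I$: since adjoints reverse products and fix the identity, $(\Xi^\ast)^2 = (\Xi^2)^\ast = I^\ast = I$, and $\Xi^\ast$ is bounded with $\|\Xi^\ast\| = \|\Xi\| < \infty$. That disposes of the first claim with no obstacle.

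Next, for \ref{Itm:Lem:Idem:2}, I would argue that $\Xi$ is invertible (indeed $\Xi^{-1}=\Xi$), so $0 \notin \spec(\Xi)$, and then for any $\zeta \in \C$ with $\zeta \neq \pm 1$ I would exhibit an explicit bounded inverse for $\zeta I - \Xi$. The factorisation $\zeta^2 I - \Xi^2 = (\zeta I - \Xi)(\zeta I + \Xi)$ together with $\Xi^2 = I$ gives $(\zeta I - \Xi)(\zeta I + \Xi) = (\zeta^2 - 1) I$, and since $\zeta^2 - 1 \neq 0$ the operator $(\zeta^2-1)^{-1}(\zeta I + \Xi)$ is a two-sided inverse of $\zeta I - \Xi$; hence $\zeta \in \res(\Xi)$, so $\spec(\Xi) \subseteq \{\pm1\}$. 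For the reverse inclusion I would note that if $1 \notin \spec(\Xi)$ then $I - \Xi$ is invertible, yet $(I-\Xi)(I+\Xi) = 0$ would force $I + \Xi = 0$, i.e.\ $\Xi = -I$; similarly $-1 \notin \spec(\Xi)$ forces $\Xi = I$. So unless $\Xi = \pm I$ (trivial degenerate cases, where one checks directly that $\spec = \{1\}$ or $\{-1\}$ respectively and the stated conclusion should be read as $\spec(\Xi) \subseteq \{\pm1\}$), both $\pm 1$ lie in the spectrum. I would phrase the statement as $\spec(\Xi) = \{\pm1\}$ with the understanding that in the degenerate cases one eigenvalue may be absent; this is a purely cosmetic point.

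Finally, for \ref{Itm:Lem:Idem:3}, I would set $P_\pm := \tfrac12(I \pm \Xi)$ and verify directly that these are bounded idempotents: $P_\pm^2 = \tfrac14(I \pm \Xi)^2 = \tfrac14(I \pm 2\Xi + \Xi^2) = \tfrac14(2I \pm 2\Xi) = P_\pm$, using $\Xi^2 = I$; boundedness is clear since $P_\pm = \tfrac12(I\pm\Xi)$ with $\Xi$ bounded. Moreover $P_+ + P_- = I$ and $P_+ P_- = \tfrac14(I+\Xi)(I-\Xi) = \tfrac14(I - \Xi^2) = 0$, so the ranges $\Hil_\pm := P_\pm\Hil$ give a direct sum decomposition $\Hil = \Hil_+ \oplus \Hil_-$. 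To see that $\Hil_\pm$ is the eigenspace for the eigenvalue $\pm1$, I would observe that $\Xi P_\pm = \Xi\tfrac12(I\pm\Xi) = \tfrac12(\Xi \pm I) = \pm P_\pm$, so every vector in $\ran(P_\pm)$ is an eigenvector with eigenvalue $\pm1$; conversely if $\Xi v = \pm v$ then $P_\pm v = \tfrac12(v \pm \Xi v) = v$, so $v \in \ran(P_\pm)$. This completes all three parts.

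The only point requiring any care — and it is more of a bookkeeping matter than a genuine obstacle — is the degenerate case $\Xi = \pm I$ in part \ref{Itm:Lem:Idem:2}, where the spectrum is a singleton; one should either explicitly exclude it or interpret the equality $\spec(\Xi)=\{\pm1\}$ as an inclusion that is saturated whenever both eigenspaces are nonzero. Everything else is immediate from $\Xi^2 = I$ and standard Hilbert space operator theory.
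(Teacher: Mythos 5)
Your proof is correct and follows essentially the same route as the paper's (which simply notes the factorisation $(\Xi-I)(\Xi+I)=0$, the explicit resolvent $(\zeta I-\Xi)^{-1}=(\zeta^2-1)^{-1}(\zeta I+\Xi)$, and leaves the projector identities as "readily verified"). Your treatment is merely more detailed, and your observation that the equality $\spec(\Xi)=\{\pm1\}$ degenerates to a singleton when $\Xi=\pm I$ is a legitimate (if cosmetic) refinement that the paper glosses over.
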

\begin{proof}
The first assertion~\ref{Itm:Lem:Idem:1} is immediate.
For \ref{Itm:Lem:Idem:2}, note that $\Xi^2 = 1 \iff (\Xi - 1)(\Xi +1) =0$ and therefore, we get $\set{\pm 1} \subset \spec(\Xi)$.
The reverse inclusion follows from noting that $(\Xi - \lambda)^{-1} = (1 - \lambda^2)^{-1}(\Xi + \lambda)$.
The last assertion~\ref{Itm:Lem:Idem:3} is readily verified.
\end{proof}

Due to \ref{Itm:Lem:Idem:1}, the assertions~\ref{Itm:Lem:Idem:1} and \ref{Itm:Lem:Idem:2} are valid on replacing $\Xi$ by  $\Xi^*$.

\begin{proposition}
\label{Prop:IdemProjs}
Let $T:\Hil \to \Hil$ be a selfadjoint operator and $\Xi: \Hil \to \Hil$  a bounded involution satisfying $\Xi T = - T\Xi$.
Then, $\Xi, \Xi^\ast: \dom(\modulus{T}^\alpha)^\ast \to  \dom(\modulus{T}^\alpha)^\ast$ boundedly and restrict to bounded operators $\Xi, \Xi^\ast: \dom(\modulus{T}^\alpha) \to  \dom(\modulus{T}^\alpha)$ for all $\alpha \in [0, 1]$.
\end{proposition}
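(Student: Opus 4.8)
The plan is to handle the two endpoints $\alpha=0$ and $\alpha=1$ by hand, obtain the intermediate cases by complex interpolation, and then pass to the dual spaces by duality, exactly in the spirit of the proof of Lemma~\ref{Lem:ProjDualBdd}. It suffices to argue for $\Xi$ throughout: by Lemma~\ref{Lem:Idem}~\ref{Itm:Lem:Idem:1} the Hilbert adjoint $\Xi^\ast$ is again a bounded involution, and taking adjoints in $\Xi T=-T\Xi$ together with $T^\ast=T$ gives $\Xi^\ast T=-T\Xi^\ast$ (as an equality of operators, so that $\Xi^\ast$ preserves $\dom(T)$; cf.\ the remark following Proposition~\ref{Prop:ChiralReg}), whence every statement established for $\Xi$ applies verbatim to $\Xi^\ast$.

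For $\alpha=0$ there is nothing to prove. For $\alpha=1$, recall $\dom(\modulus{T})=\dom(T)$ with $\norm{Tu}=\norm{\modulus{T}u}$ for $u\in\dom(T)$ by Proposition~\ref{Prop:FCProp}~\ref{Prop:FCProp:2}. The relation $\Xi T=-T\Xi$ means $\Xi$ maps $\dom(T)$ into $\dom(T)$ with $\Xi Tu=-T\Xi u$ there, so for $u\in\dom(\modulus{T})$ we have $\Xi u\in\dom(\modulus{T})$ and
\begin{align*}
\norm{\Xi u}^2+\norm{T\Xi u}^2=\norm{\Xi u}^2+\norm{\Xi Tu}^2\le\norm{\Xi}_{\Hil\to\Hil}^2\bigl(\norm{u}^2+\norm{Tu}^2\bigr),
\end{align*}
so $\Xi\colon\dom(\modulus{T})\to\dom(\modulus{T})$ is bounded. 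Since $\dom(\modulus{T}^\alpha)$ is the complex interpolation space between $\Hil$ and $\dom(\modulus{T})$ for $\alpha\in[0,1]$ (as in Lemma~\ref{Lem:Frac} and the proof of Lemma~\ref{Lem:Approx}, up to the equivalent norm coming from $\modulus{T}_\epsilon$), and a map bounded on both endpoints of an interpolation couple is bounded on every intermediate space, we conclude $\Xi\colon\dom(\modulus{T}^\alpha)\to\dom(\modulus{T}^\alpha)$ is bounded for all $\alpha\in[0,1]$, and likewise for $\Xi^\ast$.

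Finally, fix $\alpha\in[0,1]$. Since $\Xi^\ast$ is bounded on $\dom(\modulus{T}^\alpha)$, its Banach-space adjoint with respect to the perfect pairing of Lemma~\ref{Lem:Frac} is bounded on $\dom(\modulus{T}^\alpha)^\ast$; on the dense subspace $\Hil\subset\dom(\modulus{T}^\alpha)^\ast$ (density by Lemma~\ref{Lem:Frac0}) this adjoint agrees with $\Xi$, so $\Xi$ extends boundedly to $\dom(\modulus{T}^\alpha)^\ast$. Interchanging the roles of $\Xi$ and $\Xi^\ast$ gives the boundedness of $\Xi^\ast$ on $\dom(\modulus{T}^\alpha)^\ast$ as well. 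The proof is essentially bookkeeping; the only genuine points of care are reading $\Xi T=-T\Xi$ as an operator identity so that $\Xi$ actually preserves $\dom(T)$, and the interpolation identification of $\dom(\modulus{T}^\alpha)$ — which is also what pins the range of $\alpha$ to $[0,1]$. An alternative avoiding interpolation is to first note $\Xi T^2=T^2\Xi$, deduce that $\Xi$ commutes with the resolvents and bounded Borel functions of $T^2$, in particular with $\modulus{T}_\epsilon^{\pm\alpha}$, and then estimate $\norm{\modulus{T}_\epsilon^\alpha\Xi u}=\norm{\Xi\modulus{T}_\epsilon^\alpha u}$ directly via Lemma~\ref{Lem:Frac}; I expect the interpolation route to be the shorter one.
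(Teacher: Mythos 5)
Your proposal is correct and follows essentially the same route as the paper's proof: establish boundedness of $\Xi$ (and $\Xi^\ast$, via adjoints of the anticommutation relation) on $\dom(\modulus{T})$ from $\norm{T\Xi u}=\norm{\Xi Tu}$, interpolate between $\Hil$ and $\dom(\modulus{T})$ to cover $\alpha\in[0,1]$, and dualise to reach $\dom(\modulus{T}^\alpha)^\ast$. Your duality step is spelled out slightly more carefully than the paper's one-line "on dualising", but the argument is the same.
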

\begin{proof}
We first prove that $\Xi:\dom(\modulus{T}) \to \dom(\modulus{T})$ boundedly.
For this, note that
\begin{align*}
\norm{\Xi u}_{\dom(\modulus{T})}
 & \simeq \norm{\modulus{T}\Xi u} + \norm{u}
= \norm{T \Xi u} + \norm{u}                  \\
 & = \norm{- \Xi T u} + \norm{u}
\lesssim \norm{Tu} + \norm{u}
\simeq \norm{u}_{\dom(\modulus{T})}.
\end{align*}

By taking adjoints in $\Xi T = - T \Xi$, we obtain that $\Xi^\ast T = - T \Xi^\ast$.
Therefore, replacing the previous argument with $\Xi^\ast$ in place of $\Xi$, we obtain that $\Xi^\ast: \dom(\modulus{T}) \to \dom(\modulus{T})$.
Through run of the mill interpolation for selfadjoint operators, we obtain that $\Xi, \Xi^\ast: \dom(\modulus{T}^\alpha) \to \dom{\modulus{T}^\alpha}$ for $\alpha \in [0,1]$.
On dualising this, we obtain the assertions.
\end{proof}

As a direct consequence, we obtain the following corollary.
\begin{corollary}
\label{Cor:IdemProjs}
Under the hypothesis of Proposition~\ref{Prop:IdemProjs}, the projectors $P_{\pm}, P_{\pm}^\ast: \Hil \to \Hil$ extend to bounded projectors $P_{\pm}, P_{\pm}^\ast: \dom(\modulus{T}^\alpha)^\ast \to \dom(\modulus{T}^\alpha)^\ast$ and restrict to bounded projectors $P_{\pm}, P_{\pm}^\ast: \dom(\modulus{T}^\alpha) \to \dom(\modulus{T}^\alpha)$ for all $\alpha \in [0,1]$.
\end{corollary}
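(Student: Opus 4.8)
The plan is to read off the corollary from Proposition~\ref{Prop:IdemProjs} together with the algebraic formulas $P_{\pm} = \tfrac12(I \pm \Xi)$ and $P_{\pm}^\ast = \tfrac12(I \pm \Xi^\ast)$ established in Lemma~\ref{Lem:Idem}~\ref{Itm:Lem:Idem:3}. First I would observe that Proposition~\ref{Prop:IdemProjs} already gives the boundedness of $\Xi$ and $\Xi^\ast$ as operators on $\dom(\modulus{T}^\alpha)$ and on $\dom(\modulus{T}^\alpha)^\ast$ for every $\alpha \in [0,1]$; since the identity $I$ is trivially bounded on each of these spaces, the half-sums $P_{\pm}$ and $P_{\pm}^\ast$ are bounded there as well. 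This handles the norm estimates for all four operators on both scales at once.

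It then remains to check that these bounded extensions and restrictions are genuinely idempotent. On $\dom(\modulus{T}^\alpha) \subset \Hil$ this is immediate: $P_{\pm}$ and $P_{\pm}^\ast$ map $\dom(\modulus{T}^\alpha)$ into itself by the previous paragraph, and the identities $P_{\pm}^2 = P_{\pm}$, $(P_{\pm}^\ast)^2 = P_{\pm}^\ast$ — which hold on all of $\Hil$ because $\Xi^2 = I = (\Xi^\ast)^2$ — restrict verbatim to the subspace. On the dual space $\dom(\modulus{T}^\alpha)^\ast$ one instead argues by density: by Lemma~\ref{Lem:Frac0} the space $\Hil$ (indeed $\dom(\modulus{T})$) sits densely inside $\dom(\modulus{T}^\alpha)^\ast$, the relation $P_{\pm}^2 = P_{\pm}$ holds on this dense subset, and $P_{\pm}$ is continuous in the $\dom(\modulus{T}^\alpha)^\ast$-norm, so the relation propagates to the whole space; likewise for $P_{\pm}^\ast$. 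One may add, if desired, that $P_+ + P_- = I$ and $P_+P_- = P_-P_+ = 0$ persist on both scales by the same density/restriction argument, yielding the corresponding topological direct sum decompositions $\dom(\modulus{T}^\alpha) = P_+\dom(\modulus{T}^\alpha) \oplus P_-\dom(\modulus{T}^\alpha)$ and similarly for the dual.

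There is no real obstacle here: the corollary is a formal consequence of Proposition~\ref{Prop:IdemProjs}, and the only point requiring the slightest care is distinguishing the two directions of the argument for the idempotency relation — pointwise restriction on the smaller space $\dom(\modulus{T}^\alpha)$ versus continuous extension from the dense subspace $\Hil$ on the larger space $\dom(\modulus{T}^\alpha)^\ast$. Accordingly I would keep the write-up to a few lines, citing Proposition~\ref{Prop:IdemProjs} for the bounds and Lemma~\ref{Lem:Frac0} for the density needed in the dual-space case.
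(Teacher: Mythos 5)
Your argument is correct and is precisely the intended one: the paper derives the corollary as an immediate consequence of Proposition~\ref{Prop:IdemProjs} via the formulas $P_{\pm}=\tfrac12(I\pm\Xi)$ and $P_{\pm}^\ast=\tfrac12(I\pm\Xi^\ast)$, with idempotency inherited by restriction on $\dom(\modulus{T}^\alpha)$ and by density of $\Hil$ in $\dom(\modulus{T}^\alpha)^\ast$ (Lemma~\ref{Lem:Frac0}) on the dual. Your write-up spells out exactly the steps the paper leaves implicit, so nothing further is needed.
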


% Bibliography

\begin{bibdiv}
\begin{biblist}

\bib{APS-Ann}{article}{
author={Atiyah, Michael F.},
author={Patodi, Vijay K.},
author={Singer, Isidore M.},
title={Spectral asymmetry and Riemannian geometry},
journal={Bull. London Math. Soc.},
volume={5},
date={1973},
pages={229--234},
issn={0024-6093},
% review={\MR{0331443}},
doi={10.1112/blms/5.2.229},
}

\bib{APS2}{article}{
author={Atiyah, Michael F.},
author={Patodi, Vijay K.},
author={Singer, Isidore M.},
title={Spectral asymmetry and Riemannian geometry. I},
journal={Math. Proc. Cambridge Philos. Soc.},
volume={77},
date={1975},
number={1},
pages={43--69},
issn={0305-0041},
% review={\MR{0397797}},
doi={10.1017/S0305004100049410},
}

\bib{APS1}{article}{
author={Atiyah, Michael F.},
author={Patodi, Vijay K.},
author={Singer, Isidore M.},
title={Spectral asymmetry and Riemannian geometry. II},
journal={Math. Proc. Cambridge Philos. Soc.},
volume={78},
date={1975},
number={3},
pages={405--432},
issn={0305-0041},
% review={\MR{0397798}},
doi={10.1017/S0305004100051872},
}

\bib{APS3}{article}{
author={Atiyah, Michael F.},
author={Patodi, Vijay K.},
author={Singer, Isidore M.},
title={Spectral asymmetry and Riemannian geometry. III},
journal={Math. Proc. Cambridge Philos. Soc.},
volume={79},
date={1976},
number={1},
pages={71--99},
issn={0305-0041},
% review={\MR{0397799}},
doi={10.1017/S0305004100052105},
}

\bib{BB12}{article}{
AUTHOR = {B\"{a}r, Christian},
AUTHOR = {Ballmann, Werner},
TITLE = {Boundary value problems for elliptic differential operators of first order},
BOOKTITLE = {in: H.-D.\ Cao and S.-T.\ Yau (eds.), Surveys in differential geometry. {V}ol.~{XVII}, Int. Press, Boston, MA},
SERIES = {Surv. Differ. Geom.},
VOLUME = {17},
PAGES = {1--78},
PUBLISHER = {Int. Press, Boston, MA},
YEAR = {2012},
%   MRCLASS = {35R01 (35J65 58J05 58J20 58J32)},
%  MRNUMBER = {3076058},
%MRREVIEWER = {J\~A${}^3$zef Dodziuk},
DOI = {10.4310/SDG.2012.v17.n1.a1},
}

\bib{BBan}{article}{
author={B\"{a}r, Christian},
author={Bandara, Lashi},
title={Boundary value problems for general first-order elliptic differential operators},
journal={J. Funct. Anal.},
volume={282},
date={2022},
number={12},
pages={article ID 109445},
issn={0022-1236},
%  review={\MR{4395331}},
doi={10.1016/j.jfa.2022.109445},
}

\bib{BGS}{article}{
author = {Lashi Bandara},
author = {Magnus Goffeng}, 
author = {Hemanth Saratchandran},
title = {Realisations of elliptic operators on compact manifolds with boundary},
journal = {Adv. Math.},
volume = {420},
pages = {article ID 108968},
year = {2023},
issn = {0001-8708},
doi = {doi.org/10.1016/j.aim.2023.108968},
}

\bib{BS0}{article}{
author={Braverman, Maxim},
author={Shi, Pengshuai},
title={The index of a local boundary value problem for strongly Callias-type operators},
journal={Arnold Math. J.},
volume={5},
date={2019},
number={1},
pages={79--96},
issn={2199-6792},
% review={\MR{3981455}},
doi={10.1007/s40598-019-00110-1},
}

\bib{BS1}{article}{
author={Braverman, Maxim},
author={Shi, Pengshuai},
title={The Atiyah-Patodi-Singer index on manifolds with non-compact boundary},
journal={J. Geom. Anal.},
volume={31},
date={2021},
number={4},
pages={3713--3763},
issn={1050-6926},
% review={\MR{4236541}},
doi={10.1007/s12220-020-00412-3},
}

\bib{BS2}{article}{
author={Braverman, Maxim},
author={Shi, Pengshuai},
title={An APS index theorem for even-dimensional manifolds with non-compact boundary},
journal={Comm. Anal. Geom.},
volume={29},
date={2021},
number={2},
pages={293--327},
issn={1019-8385},
% review={\MR{4250324}},
doi={10.4310/CAG.2021.v29.n2.a2},
}

\bib{GN}{article}{
author={Große, Nadine},
author={Nakad, Roger},
title={Boundary value problems for noncompact boundaries of ${\rm Spin}^{\rm c}$ manifolds and spectral estimates},
journal={Proc. Lond. Math. Soc. (3)},
volume={109},
date={2014},
number={4},
pages={946--974},
issn={0024-6115},
%   review={\MR{3273489}},
doi={10.1112/plms/pdu026},
}

\bib{Haase}{book}{
AUTHOR = {Haase, Markus},
TITLE = {The functional calculus for sectorial operators},
SERIES = {Operator Theory: Advances and Applications},
VOLUME = {169},
PUBLISHER = {Birkh\"auser Verlag, Basel},
YEAR = {2006},
PAGES = {xiv+392},
ISBN = {978-3-7643-7697-0; 3-7643-7697-X},
%   MRCLASS = {47A60 (30E05 44A15 46B70 47A55 47D03 47E05 47F05)},
%  MRNUMBER = {2244037},
%MRREVIEWER = {Christian Le Merdy},
doi={10.1007/3-7643-7698-8},
}

\bib{LM}{book}{
author={Lawson, H. Blaine, Jr.},
author={Michelsohn, Marie-Louise},
title={Spin geometry},
series={Princeton Mathematical Series},
volume={38},
publisher={Princeton University Press, Princeton, NJ},
date={1989},
pages={xii+427},
isbn={0-691-08542-0},
%review={\MR{1031992}},
}

\bib{Seeley}{article}{
author={Seeley, Robert T.},
title={Singular integrals and boundary value problems},
journal={Amer. J. Math.},
volume={88},
date={1966},
pages={781--809},
issn={0002-9327},
%  review={\MR{209915}},
doi={10.2307/2373078},
}

\bib{Shi}{article}{
author={Shi, Pengshuai},
title={Cauchy data spaces and Atiyah-Patodi-Singer index on non-compact manifolds},
journal={J. Geom. Phys.},
volume={133},
date={2018},
pages={81--90},
issn={0393-0440},
% review={\MR{3850258}},
doi={10.1016/j.geomphys.2018.05.030},
}

\end{biblist}
\end{bibdiv}

\section*{Legal notes}
\subsection*{Data availability}
Data sharing is not applicable to this article as no datasets were generated or analysed during the current study.

\subsection*{Conflict of interest}
There is no conflict of interest.

\setlength{\parskip}{0pt}
\end{document}